\let\cal\mathcal
\def\AA{{\cal A}}
\def\BB{{\cal B}}
\def\CC{{\cal C}}
\def\DD{{\cal D}}
\def\EE{{\cal E}}
\def\FF{{\cal F}}
\def\II{{\cal I}}
\def\NN{{\cal N}}
\def\SS{{\cal S}}
\def\TT{{\cal T}}
\def\UU{{\cal U}}
\def\WW{{\cal W}}
\let\blb\mathbb
\def\bK{{\blb K}}
\def\bZ{{\blb Z}}
\def\bN{{\blb N}}
\def\bZ{{\blb Z}}
\newcommand{\Set}{\mathrm{Set}}
\newcommand{\boldZ}{\mathbf{Z}}
\newcommand{\inflation}{\rightarrowtail}
\newcommand{\deflation}{\twoheadrightarrow}
\author{Ruben Henrard}
\address{Ruben Henrard \\ Universiteit Hasselt \\ Campus Diepenbeek \\ Departement WNI \\ 3590 Diepenbeek \\ Belgium}
\email{ruben.henrard@uhasselt.be}
\author{Adam-Christiaan van Roosmalen}
\address{Adam-Christiaan van Roosmalen \\ Universiteit Hasselt \\ Campus Diepenbeek \\ Departement WNI \\ 3590 Diepenbeek \\ Belgium}
\email{adamchristiaan.vanroosmalen@uhasselt.be}
\title{Derived categories of (one-sided) exact categories and their localizations}
\newtheorem{theorem}{Theorem}[section]
\newtheorem{proposition}[theorem]{Proposition}
\newtheorem{lemma}[theorem]{Lemma}
\newtheorem{corollary}[theorem]{Corollary}
\theoremstyle{definition}
\newtheorem{definition}[theorem]{Definition}
\newtheorem{remark}[theorem]{Remark}
\newtheorem{example}[theorem]{Example}
\newtheorem{construction}[theorem]{Construction}
\DeclareMathOperator{\coker}{coker}
\DeclareMathOperator{\Ext}{Ext}
\DeclareMathOperator{\Hom}{Hom}
\DeclareMathOperator{\Adm}{Adm}
\DeclareMathOperator{\Fun}{Fun}
\DeclareMathOperator{\Mor}{Mor}
\DeclareMathOperator{\Mod}{Mod}
\DeclareMathOperator{\modd}{mod}
\DeclareMathOperator{\Ab}{\mathsf{Ab}}
\DeclareMathOperator{\LCA}{\mathsf{LCA}}
\DeclareMathOperator{\LC}{\mathsf{LC}}
\DeclareMathOperator{\Ob}{Ob}
\DeclareMathOperator{\cone}{cone}
\DeclareMathOperator{\Tot}{Tot}
\DeclareMathOperator{\Proj}{Proj}
\DeclareMathOperator{\Inj}{Inj}
\DeclareMathOperator{\can}{can}
\DeclareMathOperator{\real}{real}
\DeclareMathOperator{\Homology}{\mathbf{H}}
\DeclareMathOperator{\C}{\mathbf{C}}
\DeclareMathOperator{\D}{\mathbf{D}}
\DeclareMathOperator{\DAb}{\mathbf{D}_{\mathcal{A}}^b}
\DeclareMathOperator{\CAb}{\mathbf{C}_{\mathcal{A}}^b}
\DeclareMathOperator{\Cnm}{\mathbf{C}^{[n,m]}}
\DeclareMathOperator{\Conen}{\mathbf{C}^{[1,n]}}
\DeclareMathOperator{\Conenmone}{\mathbf{C}^{[1,n-1]}}
\DeclareMathOperator{\Seq}{Seq}
\DeclareMathOperator{\Seqb}{Seq^b}
\DeclareMathOperator{\K}{\mathbf{K}}
\DeclareMathOperator{\Ac}{\mathbf{Ac}}
\DeclareMathOperator{\Acb}{\mathbf{Ac}^b}
\DeclareMathOperator{\AcC}{\mathbf{Ac}_{\mathbf{C}}}
\DeclareMathOperator{\AcbC}{\mathbf{Ac}^b_{\mathbf{C}}}
\DeclareMathOperator{\AcK}{\mathbf{Ac}_{\mathbf{K}}}
\DeclareMathOperator{\Cm}{\mathbf{C}^-}
\DeclareMathOperator{\Km}{\mathbf{K}^-}
\DeclareMathOperator{\Dm}{\mathbf{D}^-}
\DeclareMathOperator{\Cb}{\mathbf{C}^b}
\DeclareMathOperator{\Db}{\mathbf{D}^b}
\DeclareMathOperator{\Kb}{\mathbf{K}^b}
\DeclareMathOperator{\Kab}{\mathbf{K}^{-,b}}
\DeclareMathOperator{\Cdg}{\mathbf{C}_{dg}}
\DeclareMathOperator{\Cbdg}{\mathbf{C}^b_{dg}}
\DeclareMathOperator{\Dbdg}{\mathbf{D}^b_{dg}}
\DeclareMathOperator{\Acbdg}{\mathbf{Ac}^b_{dg}}
\DeclareMathOperator{\Cbinf}{\mathbf{C}^b_{\infty}}
\DeclareMathOperator{\Dbinf}{\mathbf{D}^b_{\infty}}
\DeclareMathOperator{\Acbinf}{\mathbf{Ac}^b_\infty}
\DeclareMathOperator{\Ndg}{N_{dg}}
\DeclareMathOperator{\Ho}{Ho}
\DeclareMathOperator{\Idem}{Idem}
\newcommand{\StabInfCat}{\mathrm{Cat}_\infty^{\mathrm{Ex}}}
\newcommand{\dq}{{/\mkern-6mu/}}
\DeclareMathOperator{\add}{add}
\newcommand{\myitem}[1]{%
\item[#1]\protected@edef\@currentlabel{#1}%
}
\subjclass[2010]{18E10, 18E30, 18E35; 22B05}
\begin{document}

\begin{abstract}
We consider the quotient of an exact or one-sided exact category $\EE$ by a so-called percolating subcategory $\AA$.  For exact categories, such a quotient is constructed in two steps.  Firstly, one localizes $\EE$ at a suitable class $S_\AA \subseteq \Mor(\EE)$ of morphisms.  The localization $\EE[S_\AA^{-1}]$ need not be an exact category, but will be a one-sided exact category.  Secondly, one can construct the exact hull $\EE\dq \AA$ of $\EE[S_\AA^{-1}]$ and show that this satisfies the 2-universal property of a quotient amongst exact categories.

In this paper, we show that this quotient $\EE \to \EE \dq \AA$ induces a Verdier localization $\Db(\EE) \to \Db(\EE \dq \AA)$ of bounded derived categories.  Specifically, (i) we study the derived category of a one-sided exact category, (ii) we show that the localization $\EE \to \EE[S_\AA^{-1}]$ induces a Verdier quotient $\Db(\EE) \to \Db(\EE[S^{-1}_\AA])$, and (iii) we show that the natural embedding of a one-sided exact category $\FF$ into its exact hull $\overline{\FF}$ lifts to a derived equivalence $\Db(\FF) \to \Db(\overline{\FF})$.

We furthermore show that the Verdier localization is compatible with several enhancements of the bounded derived category, so that the above Verdier localization can be used in the study of localizing invariants, such as non-connective $K$-theory.
\end{abstract}

\maketitle
\tableofcontents

\section{Introduction}

Abelian categories and their derived categories provide a framework for homological algebra. There are various meaningful, more general settings in which one can still use some homological machinery. For example, the category of vector bundles on a variety and the category of filtered objects of an abelian category are not abelian but they are exact categories in the sense of Quillen (see \cite{Quillen73}).

Quotients of abelian categories are well understood (see \cite{Gabriel62}).  Let $\AA$ be a Serre subcategory of an abelian category $\EE$.  There is an exact quotient functor $Q\colon \EE \to \EE/\AA$ which is universal among all exact functors $F\colon \EE \to \FF$ with $F(\AA) = 0$ (here, $\FF$ is an abelian category).  Such quotients occur often as localizations in algebra and geometry.  Similar statements have been investigated for exact categories; see for example \cite{Cardenas98,Schlichting04}.

In \cite{HenrardvanRoosmalen19}, the authors introduced a (left or right) percolating subcategory $\AA$ of an exact category $\EE$ and showed that there is an exact quotient functor $Q\colon \EE \to \EE \dq \AA$ in the 2-category of exact categories.  In contrast to the aforementioned quotients, the functor $Q$ here is not given by a localization at a class of morphisms.  Instead, the quotient in constructed in two steps.  In the first step, one introduces a class of morphisms $S_\AA$ of $\EE$ and considers the localization $\EE \to \EE[S_\AA^{-1}]$.  The localization $\EE[S_\AA^{-1}]$ need not be a Quillen exact category, but one can show that it is a one-sided exact category (in the sense of \cite{BazzoniCrivei13, Rump11}, see theorem \ref{theorem:MainTheoremPartI}).  The next step consists of taking the exact hull of the category $\EE[S_\AA^{-1}],$ as described in \cite{Rosenberg11} or section \ref{section:ExactHull} in this paper; this exact hull is the quotient category $\EE \dq \AA.$

In this paper, we show that the localization sequence $\AA\to \EE \to \EE \dq \AA$ induces a Verdier localization sequence on the level of the bounded derived categories.  The following statement combines theorems \ref{Theorem:ExactHullIntroduction} and \ref{Theorem:MainTheoremIntroduction} below (see corollary \ref{Corollary:MainTheoremExact} in the text).

\begin{theorem}\label{theorem:IntroductionQuotientsForExact}
Let $\AA$ be a (left or right) percolating subcategory of an exact category $\EE.$  There is an induced Verdier localization sequence 
\[\DAb(\EE) \to \Db(\EE) \stackrel{Q}{\rightarrow} \Db(\EE \dq \AA).\]
\end{theorem}

Here, $\DAb(\EE)$ is the thick subcategory of $\Db(\EE)$ generated by all complexes with entries in $\AA$.  To establish this result, we consider three main steps: we study the derived category of the localization $\EE[S^{-1}_\AA]$ (or more generally, of a one-sided exact category, see section \ref{subsection:DerivedCategory}), we study the derived localization functor $\Db(\EE)\to \Db(\EE[S^{-1}_\AA])$ (see section \ref{subsection:IntroductionLocalizing}), and we study the exact hull of a one-sided exact category (see section \ref{subsection:IntroductionExactHull}). 

We now briefly discuss each of these main parts.

\subsection{Deriving one-sided exact categories}\label{subsection:DerivedCategory}

Quillen exact categories are additive categories together with a distinguished class of kernel-cokernel pairs called \emph{conflations}. The kernel morphism of a conflation is called an \emph{inflation} and the cokernel morphism is called a \emph{deflation}. The axioms of exact categories can be partitioned into two dual sets of axioms: one set only referring to the inflation-side and one set only referring to the deflation-side. Requiring only one of these sets leads to the notion of \emph{one-sided exact categories} (see \cite{BazzoniCrivei13,Rump11}).  Similar one-sided exact structures occur in various places in the literature.  Indeed, these are additive versions of left/right exact categories \cite{Rosenberg11} and homological categories \cite{BorceuxBourn04}, and are closely related to Waldhausen categories and categories with fibrations \cite{Weibel13}).

Recall that for an abelian category $\AA$, the derived category can be constructed from the category of complexes $\C(\AA)$ by inverting all \emph{quasi-isomorphisms} (i.e., all morphisms between complexes which induce isomorphisms on homology), thus $\D(\AA) = \C(\AA)[S^{-1}]$ where $S$ is the set of all quasi-isomorphisms.  One can simplify the construction by factoring the localization as $\C(\AA) \to \K(\AA) \to \C(\AA)[S^{-1}]$, where $\K(\AA)$ is the homotopy category, and the functor $\K(\AA) \to \C(\AA)[S^{-1}]$ is a Verdier localization (see \cite{Verdier96} or \cite[\S1.2]{Keller96}).  Specifically, $\K(\AA) / \Ac(\AA) \simeq \C(\AA)[S^{-1}] = \D(\AA)$ where $\Ac(\AA)$ is the category of all acyclic complexes (i.e.~complexes with zero homology).

In the setting of exact categories, even though the notion of homologies is not readily available, the notion of an acyclic complex is easily expressible within the framework: a complex is \emph{acyclic} if it can be obtained by splicing conflations (or, equivalently, all differentials are admissible maps and the image of one differential is the kernel of the ensuing one).  In \cite[lemma~1.1]{Neeman90}, Neeman shows that the subcategory $\Ac(\EE)\subseteq \K(\EE)$ of acyclic complexes (up to homotopy) is a triangulated subcategory of $\K(\EE)$.  The derived category $\D(\EE)$ is then defined as the Verdier localization $\K(\EE)/\Ac(\EE)$, or equivalently, $\K(\EE)/\langle\Ac(\EE)\rangle_{\textrm{thick}}$ where $\langle\Ac(\EE)\rangle_{\textrm{thick}}$ is the thick closure of $\Ac(\EE)$ in $\K(\EE)$ (see \cite{Keller96, Neeman90}).  When $\EE$ is an abelian category, this construction recovers the usual derived category.

As a one-sided exact category comes equipped with a set of conflations (and hence with acyclic complexes), one can construct the derived category in an analogous way.  This was done in \cite{BazzoniCrivei13}.  Explicitly, Bazzoni and Crivei show that the category of acyclic complexes $\Ac(\EE)$ is a triangulated subcategory of the homotopy category $\K(\EE)$ and define the derived category as the Verdier localization $\K(\EE)/\langle\Ac(\EE)\rangle_{\textrm{thick}}$.

In section \ref{section:DerivedCategoriesOfRightExactCategories}, we delve deeper into the construction of the derived category and establish some basic properties.  We first remark that, if $\EE$ is a deflation-exact category not satisfying axiom \ref{R0*}, then the derived category misses some key desirable properties.  As an example, even for a conflation $X \inflation Y \deflation Z$ in $\EE$, the corresponding chain map
\[\xymatrix{
\cdots \ar[r] & 0 \ar[d] \ar[r] & X \ar@{>->}[r]\ar[d] & Y \ar[r]\ar@{->>}[d] & 0 \ar[d] \ar[r] &\cdots\\ 
\cdots \ar[r] & 0 \ar[r] & 0 \ar[r] & Z \ar[r] & 0 \ar[r] &\cdots
}\]
need not be a quasi-isomorphism (for this map to be a quasi-isomorphism, we additionally need that $Z \to 0$ is a deflation; see remark \ref{remark:StandardContractible} for more information).  If $\EE$ is a deflation-exact category with \ref{R0*}, then this morphism is indeed a quasi-isomorphism (see proposition \ref{proposition:ConflationsYieldTriangles}).  Note that an exact category automatically satisfies \ref{R0*} and hence this situation does not occur there.

We give some basic properties of the derived category of a one-sided exact category.  The following theorem summarizes some of the main results of section \ref{section:DerivedCategoriesOfRightExactCategories} (see \cite{Positselski11} for the case of an exact category).

\begin{theorem}\label{theorem:SummarazationTheorem}
Let $\EE$ be a deflation-exact category.
\begin{enumerate}
	\item The natural embedding $i\colon \EE \to \D(\EE)$ is fully faithful.
	\item For all $X,Y \in \EE$ and $n > 0$, we have $\Hom_{\D(\EE)}(\Sigma^n i(X), i(Y)) = 0$.
\end{enumerate}
If $\EE$ satisfies {\ref{R0*}}, then
\begin{enumerate}[resume]
	\item a conflation $X \inflation Y \deflation Z$ in $\EE$ lifts to a triangle $i(X) \to i(Y) \to i(Z) \to \Sigma i(X)$ in $\D(\EE)$.
	\item if $\EE$ has enough projectives, the natural functor $\Kab(\Proj(\EE)) \to \Db(\EE)$ is a triangle equivalence.
\end{enumerate}
\end{theorem}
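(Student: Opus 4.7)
The plan is to work throughout in the Verdier quotient description $\D(\EE) = \K(\EE)/\overline{\Ac(\EE)}$, so that every morphism in $\D(\EE)$ is represented by a roof $X^\bullet \xleftarrow{s} Z^\bullet \xrightarrow{f} Y^\bullet$ with $s$ a quasi-isomorphism (equivalently, $\cone(s) \in \overline{\Ac(\EE)}$), and to follow the classical strategy of \cite{Keller96} adapted to the one-sided setting of \cite{BazzoniCrivei13}.

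Parts (1) and (2) both reduce to a \emph{truncation principle}: any complex $Z^\bullet$ admitting a quasi-isomorphism $s\colon Z^\bullet \to \Sigma^n i(Y)$ can, after composition with a further quasi-isomorphism, be replaced by a complex concentrated in degrees $\leq n$ whose top term is $Y$ and whose remaining differentials come from an acyclic subcomplex inside $\EE$. For (1), once such a truncation is achieved for a roof ending at $i(Y)$, the right leg becomes an honest chain map from a stalk complex to $i(Y)$, i.e., a morphism $X \to Y$ in $\EE$; injectivity of this representation uses the same principle applied to null-homotopies. For (2), the analogous truncation produces a chain map from a stalk complex concentrated in degree $-n < 0$ into the stalk complex in degree $0$, which is necessarily zero.

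For part (3), I would work in $\K(\EE)$ and identify the mapping cone $\cone(i(X) \to i(Y))$ with the two-term complex $X \to Y$ placed in degrees $-1$ and $0$. Using the deflation $Y \deflation Z$, there is a canonical chain map $\cone(i(X) \to i(Y)) \to i(Z)$; it remains to verify that the extension-by-zero of the conflation $X \inflation Y \deflation Z$ is acyclic in the sense of \cite{BazzoniCrivei13}. This is precisely where \ref{R0*} enters: it guarantees that $Z \to 0$ is a deflation, so the complex genuinely terminates via a conflation on the right and qualifies as acyclic. Without \ref{R0*}, the same formal cone exists but need not lie in $\overline{\Ac(\EE)}$, which is exactly the pathology highlighted in the introduction.

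For part (4), I would build, for any bounded complex $X^\bullet$ over $\EE$, a bounded-above complex $P^\bullet$ of projectives together with a quasi-isomorphism $P^\bullet \to X^\bullet$ by a standard d\'evissage: resolve the rightmost $X^k$, lift to the syzygy-type object via enough projectives, and splice using part (3). Essential surjectivity of $\Kab(\Proj(\EE)) \to \Db(\EE)$ follows, and fully faithfulness comes from the classical argument computing maps between bounded-above projective complexes in $\K$, using part (2) to lift morphisms and homotopies through resolutions. The principal obstacle is the truncation principle underlying (1) and (2): in a one-sided exact category kernels of differentials are not automatically available, so the required truncations must be built from acyclic subcomplexes produced by the axioms rather than by simply taking kernels. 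Once this is in hand, (3) is essentially formal given \ref{R0*}, and (4) then follows the classical template.
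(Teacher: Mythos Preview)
Your outline is essentially correct and tracks the paper's own arguments closely. The ``truncation principle'' you invoke for (1) and (2) is realized in the paper by the device of \emph{congenial quasi-isomorphisms} (proposition~\ref{proposition:TruncatedReplacements}): any quasi-isomorphism $Z^\bullet \xrightarrow{\sim} i(X)$ is replaced, up to homotopy equivalence, by $\Sigma^{-1}\cone(i(X)\xrightarrow{\gamma}E^\bullet)$ with $E^\bullet$ acyclic, and the acyclic structure of $E^\bullet$ supplies the kernels you need to truncate. For (3) the paper proves the stronger version for degreewise conflations of complexes via a totalization argument (lemma~\ref{lemma:TotalizationOfDoubleComplex}), but for stalk complexes your direct cone computation is equivalent. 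For (4), two small deviations: the paper builds the projective resolution Cartan--Eilenberg style (resolve each $C^i$ and totalize) rather than by rightmost-term d\'evissage, though both work; and fully faithfulness does not use part (2) but rather the vanishing $\Hom_{\K(\EE)}(P^\bullet, A^\bullet)=0$ for $P^\bullet$ bounded-above projective and $A^\bullet$ acyclic, obtained directly from the lifting property of projectives (proposition~\ref{proposition:LiftingPropertyProjectives}). Part (2) concerns only stalk complexes and is not strong enough here.
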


We remark that, even when the natural embedding $i\colon \EE \to \D(\EE)$ maps conflations to triangles, the essential image of $i$ need not be extension-closed in $\D(\EE).$  Moreover, when $\EE$ is weakly idempotent complete and satisfies \ref{R3}, the essential image is extension-closed if and only if $\EE$ is two-sided exact (see proposition \ref{proposition:ExactMeansExtensionClosed}).

\subsection{The exact hull}\label{subsection:IntroductionExactHull}

In section \ref{section:ExactHull}, we use the derived category to construct the \emph{exact hull} $\overline{\EE}$ of a deflation-exact category $\EE$ with \ref{R0*} as the extension closure of $i(\EE)$ in $\D(\EE)$.  Based on \cite{Dyer05}, we endow $\overline{\EE}$ with the following exact structure: a sequence $X \to Y \to Z$ is a conflation in $\overline{\EE} \subset \Db(\EE)$ if there is a triangle $i(X) \to i(Y) \to i(Z) \to \Sigma i(X)$ in $\Db(\EE)$.  The following theorem is the main result of section \ref{section:ExactHull}.

\begin{theorem}\label{Theorem:ExactHullIntroduction}
Let $\EE$ be a deflation-exact category satisfying axiom \ref{R0*}. The embedding $j\colon \EE\to \overline{\EE}$ is an exact embedding which is $2$-universal among exact functors to exact categories.  Moreover,
\begin{enumerate}
  \item the embedding $\overline{\EE} \to \Db(\EE)$ lifts to a triangle equivalence $\Db(\overline{\EE}) \simeq \Db(\EE)$, and
	\item the embedding $\EE \to \overline{\EE}$ lifts to a triangle equivalence $\Db({\EE}) \simeq \Db(\overline{\EE})$
\end{enumerate}
\end{theorem}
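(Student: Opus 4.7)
The plan is threefold: first to verify that $j\colon \EE\to \overline{\EE}$ is an exact embedding, second to construct a pair of mutually inverse triangle functors $\Phi\colon \Db(\overline{\EE})\to\Db(\EE)$ and $\Psi\colon \Db(\EE)\to\Db(\overline{\EE})$, and third to deduce the $2$-universal property from the resulting equivalence.

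For the first step, recall that $\overline{\EE}\subseteq \Db(\EE)$ is the extension closure of $i(\EE)$ equipped with the Dyer-type exact structure whose conflations are precisely the triangles of $\Db(\EE)$ with all three vertices in $\overline{\EE}$. This makes $\overline{\EE}$ an exact category. Full faithfulness of $j$ is immediate from Theorem~\ref{theorem:SummarazationTheorem}(1). Given a conflation $X\inflation Y\deflation Z$ in $\EE$, Theorem~\ref{theorem:SummarazationTheorem}(3) produces a triangle $i(X)\to i(Y)\to i(Z)\to \Sigma i(X)$ in $\Db(\EE)$ whose three vertices all lie in $i(\EE)\subseteq \overline{\EE}$, so by definition it is a conflation in $\overline{\EE}$. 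Hence $j$ preserves conflations and is therefore an exact embedding.

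For the second step, the inclusion $\overline{\EE}\hookrightarrow \Db(\EE)$ is an exact functor from an exact category to a triangulated category, taking conflations to triangles, so by the universal property of $\Db$ of an exact category it extends uniquely to a triangle functor $\Phi\colon \Db(\overline{\EE})\to \Db(\EE)$. Dually, the exact composite $\EE\xrightarrow{j}\overline{\EE}\hookrightarrow \Db(\overline{\EE})$ extends, by the universal property of $\Db(\EE)$ for right exact categories satisfying \ref{R0*} developed in Section~\ref{section:DerivedCategoriesOfRightExactCategories}, to a triangle functor $\Psi\colon \Db(\EE)\to \Db(\overline{\EE})$. The composite $\Phi\Psi$ restricts on $\EE$ to the canonical embedding $\EE\to\Db(\EE)$, so uniqueness of the universal extension gives $\Phi\Psi\simeq \id_{\Db(\EE)}$. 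For the opposite composite, $\Psi\Phi$ restricted to $\overline{\EE}$ agrees with the identity on the subcategory $\EE$; since $\Psi\Phi$ is triangulated and $\overline{\EE}$ is generated from $\EE$ by iterated extensions, a dévissage along the length of such an extension yields a natural isomorphism $\Psi\Phi|_{\overline{\EE}}\simeq \id_{\overline{\EE}}$, and a second invocation of the universal property upgrades this to $\Psi\Phi\simeq \id_{\Db(\overline{\EE})}$. Thus $\Phi$ and $\Psi$ are mutually inverse triangle equivalences, giving parts (1) and (2) simultaneously.

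For the third step, given an exact functor $F\colon \EE\to \FF$ into an exact category $\FF$, extend $F$ to $\Db(F)\colon \Db(\EE)\to \Db(\FF)$ via the universal property and define $\overline{F}$ as the restriction of $\Db(F)$ to $\overline{\EE}\subseteq \Db(\EE)$. This restriction factors through $\FF\subseteq \Db(\FF)$ because $\Db(F)$ is triangulated and so sends the extension closure of $\EE$ into the extension closure of $\FF$; the latter coincides with $\FF$ itself by Proposition~\ref{proposition:ExactMeansExtensionClosed}, since $\FF$ is two-sided exact. The identity $\overline{F}\circ j \cong F$ is built into the construction, and uniqueness up to natural isomorphism follows from the equivalence $\Phi$ together with uniqueness of derived extensions. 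The main obstacle in the whole argument is the naturality in the dévissage for $\Psi\Phi|_{\overline{\EE}}\simeq \id_{\overline{\EE}}$: one must extend object-wise isomorphisms coherently to a natural transformation, and it is precisely the uniqueness clause in the universal property of $\Db(\EE)$ that makes this automatic rather than a case-by-case coherence check.
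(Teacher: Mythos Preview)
Your argument has a genuine gap in step two: the ``universal property of $\Db$'' you invoke does not exist in the form you need. The derived category $\Db(\overline{\EE})$ is universal as a Verdier localization of $\Kb(\overline{\EE})$ at quasi-isomorphisms, which lets you factor triangle functors \emph{out of} $\Kb(\overline{\EE})$. It does \emph{not} provide a universal extension of an exact functor $\overline{\EE}\to\TT$ (with $\TT$ triangulated, conflations $\mapsto$ triangles) to a triangle functor $\Db(\overline{\EE})\to\TT$, nor any uniqueness statement for such extensions. The existence of your $\Phi$ is precisely the content of a realization-functor construction (the paper uses Keller--Vossieck, theorem~\ref{theorem:KellerVossieck}), and its full faithfulness requires an additional Ext-factorization condition that must be verified by hand (lemma~\ref{lemma:HigherExtsFactorThroughLowerExts} and the proof of theorem~\ref{theorem:DerivedEquivalenceExactHull}.\ref{enumerate:Equivalence1}). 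Your d\'evissage for $\Psi\Phi|_{\overline{\EE}}\simeq\id$ then fails for exactly the reason you flag: completing object-wise isomorphisms along triangles is not functorial in a bare triangulated category, and there is no uniqueness clause available to rescue it. The same missing uniqueness undermines your argument for the $2$-universal property.

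The paper avoids these issues by separating the three claims. For part~(1) it builds $\Phi$ via Keller--Vossieck and checks the Ext-factorization condition by induction on the filtration $\EE_i$ of $\overline{\EE}$. For part~(2) it works at the level of homotopy categories and shows directly that $\langle\Acb(\overline{\EE})\cap\Kb(\EE)\rangle_{\mathrm{thick}}=\langle\Acb(\EE)\rangle_{\mathrm{thick}}$, so $\Db(\EE)\to\Db(\overline{\EE})$ is fully faithful; essential surjectivity uses that every $Z\in\overline{\EE}$ fits in a conflation $X\inflation Y\deflation Z$ with $X,Y\in\EE$ (corollary~\ref{corollary:CokernelExactHull}). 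For $2$-universality, existence of $\overline{F}$ is as you describe, but uniqueness is obtained not from derived uniqueness but from the fact that $\coker\colon\Adm_{\overline{\EE}}(\EE)\to\overline{\EE}$ is a lax epimorphism (corollary~\ref{corollary:AdmissibleUnderJ}), which forces any two exact extensions of $F$ to be naturally isomorphic.
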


Note that the universal property in theorem \ref{Theorem:ExactHullIntroduction} recovers a result from \cite{Rosenberg11}. The two triangle equivalences indicate that taking the exact hull is homologically and $K$-theoretically a gentle operation.  However, as example \ref{Example:IsbellCategory} illustrates, in going to the exact hull, one might fail to preserve some attractive properties, such as being pre-abelian. 

\subsection{Localizing with respect to percolating subcategories}\label{subsection:IntroductionLocalizing}

In \cite{HenrardvanRoosmalen19}, we introduce a percolating subcategory $\AA$ of a one-sided exact category $\EE$ and construct the quotient $\EE / \AA$ in the 2-category of one-sided exact categories.  One of the main results of this paper (theorem \ref{theorem:MainTheorem} in the text) states that the quotient functor $Q\colon \EE \to \EE / \AA$ induces a Verdier localization functor $\Db(\EE) \to \Db(\EE/\AA)$.  This generalizes \cite[theorem 3.2]{Miyachi91} and \cite[proposition 2.6]{Schlichting04} (our proof follows these references closely).

\begin{theorem}\label{Theorem:MainTheoremIntroduction}
	Let $\EE$ be a deflation-exact category and let $\AA$ be a deflation-percolating subcategory.
	\begin{enumerate}
		\item	The derived quotient functor $\Db(\EE)\to \Db(\EE/\AA)$ is a Verdier localization.
		\item If $\EE$ satisfies axiom \ref{R0*}, then the following is a Verdier localization sequence:
		\[\DAb(\EE)\to \Db(\EE)\to \Db(\EE/\AA).\]
	\end{enumerate}
\end{theorem}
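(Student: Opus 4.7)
The plan is to adapt the Miyachi-Schlichting argument (\cite[theorem 3.2]{Miyachi91}, \cite[proposition 2.6]{Schlichting04}), using as the main input the complex-level equivalence $\Cb(\EE)/\Cb(\AA) \xrightarrow{\simeq} \Cb(\EE/\AA)$ furnished by propositions \ref{proposition:ComplexPercolating} and \ref{proposition:MainResultComplexLevel}. Since $Q\colon \EE \to \EE/\AA$ is exact, it sends acyclic complexes to acyclic complexes, so the induced triangle functor $Q^{\Db}\colon \Db(\EE) \to \Db(\EE/\AA)$ is well-defined and sends $\DAb(\EE)$ to zero.

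For part~(1), I will verify the Verdier localization property by producing, for every morphism in $\Db(\EE/\AA)$, a roof representation coming from $\Db(\EE)$. Essential surjectivity of $Q^{\Db}$ is immediate from the complex-level equivalence: every bounded complex in $\EE/\AA$ is isomorphic in $\Cb(\EE/\AA)$ to the image of some complex in $\Cb(\EE)$. For fullness modulo the kernel, take any roof $Q^{\Db}(X) \xleftarrow{t} W \xrightarrow{g} Q^{\Db}(Y)$ in $\Db(\EE/\AA)$ with $t$ a quasi-isomorphism; using the equivalence, replace $W$ by the image of some $Z \in \Cb(\EE)$ and lift $t$ and $g$ to roofs in $\Cb(\EE)$ with denominators in $S_{\Cb(\AA)}$. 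The image under $Q^{\Db}$ of such a denominator is a quasi-isomorphism in $\Db(\EE/\AA)$ because its cone has components in $\AA$ and so becomes acyclic. Faithfulness modulo the kernel is handled by an analogous argument: a morphism of $\Db(\EE)$ whose image in $\Db(\EE/\AA)$ is homotopic to zero factors, through the fractions description, through an object of $\ker(Q^{\Db})$.

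For part~(2), the remaining task is to identify $\ker(Q^{\Db})$ with $\DAb(\EE)$. The inclusion $\DAb(\EE)\subseteq \ker(Q^{\Db})$ is clear. The reverse inclusion is where axiom~\ref{R0*} and the right weak saturation of $S_\AA$ enter essentially. Under \ref{R0*}, conflations in $\EE$ yield triangles in $\Db(\EE)$ by theorem~\ref{theorem:SummarazationTheorem}, and one may compute cohomology in the exact hull $\overline{\EE}$. If $X^\bullet \in \Db(\EE)$ becomes zero in $\Db(\EE/\AA)$, then a quasi-isomorphic representative is built from morphisms in $S_\AA$ (after suitable truncation); the weak saturation hypothesis then forces the $\overline{\EE}$-cohomology objects of $X^\bullet$ to lie in $\AA$, placing $X^\bullet$ in $\DAb(\EE)$.

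The main obstacle is the fractions-lifting step in part~(1): one must check that lifts of roof representatives of morphisms in $\Db(\EE/\AA)$ can be chosen compatibly, and that denominators lifted from $S_{\Cb(\AA)}$ genuinely become quasi-isomorphisms after applying $Q^{\Db}$, with compositions of roofs respecting the complex-level equivalence. This requires careful bookkeeping with $S_{\Cb(\AA)}$ and acyclic replacements, and is the technical heart of the Miyachi-Schlichting template; once it is established, the Verdier localization criterion is satisfied and the refined statement in part~(2) follows from the weak saturation hypothesis.
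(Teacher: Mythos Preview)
Your approach diverges from the paper's in both parts, and part~(2) has a genuine gap.

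For part~(1), the paper avoids all the roof-lifting bookkeeping you describe. Instead, it argues purely formally via lemma~\ref{lemma:InducedLocalization}: since $\Cb(\EE)\to\Cb(\EE/\AA)$ is a localization (with respect to a right multiplicative system, by corollary~\ref{corollary:QuotientOfComplexes}) and $\Cb(-)\to\Kb(-)$ is a localization (Hurewicz model structure), the induced functor $G\colon\Kb(\EE)\to\Kb(\EE/\AA)$ is a localization by the composition-of-localizations lemma; the same reasoning applied to $\Kb(-)\to\Db(-)$ shows $F\colon\Db(\EE)\to\Db(\EE/\AA)$ is a localization. Being a triangle functor that is a localization, it is automatically a Verdier localization. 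Your direct roof-lifting approach could in principle work, but the ``careful bookkeeping'' you flag as the main obstacle is exactly what the paper sidesteps; you have not actually carried it out, and the faithfulness step (``a morphism\ldots factors, through the fractions description, through an object of $\ker(Q^{\Db})$'') is asserted rather than argued.

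For part~(2), your strategy of computing ``$\overline{\EE}$-cohomology objects'' of $X^\bullet$ does not work in this generality: there is no $t$-structure on $\Db(\EE)$ with heart $\overline{\EE}$, and a general bounded complex in a right exact category need not have cohomology objects lying in $\overline{\EE}$ (or anywhere). The paper instead argues by induction on the width of the support of $X^\bullet$. Given $X^\bullet\in\ker(F)$ acyclic in $\EE/\AA$, the last nonzero differential is a deflation in $\EE/\AA$; using right weak saturation and axiom~\ref{R2}, this deflation is lifted to a deflation in $\EE$ up to weak isomorphisms (this is where weak saturation enters, and it is used repeatedly, not just once). One then truncates and invokes the induction hypothesis, with proposition~\ref{proposition:WeakIsoYieldsACones} transporting membership in $\DAb(\EE)$ along weak isomorphisms. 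A separate argument handles the case where $X^\bullet$ is only a direct summand of an acyclic complex in $\Kb(\EE/\AA)$. Your outline does not engage with either of these mechanisms.
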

Here, $\DAb(\EE)$ is the thick subcategory of $\Db(\EE)$ generated by $\AA$ (when $\EE$ is an abelian category, $\DAb(\EE)$ is usually defined as the full subcategory of $\Db(\EE)$ consisting of those objects with homologies in $\AA$).  In general, there is a natural triangle functor $\Db(\AA) \to \DAb(\EE)$, mapping a complex to itself, but it need not be an equivalence.

\subsection{Application to locally compact abelian groups}

As an application, we consider localizations of the exact category $\LCA$ of locally compact (Hausdorff) abelian groups. The subcategory $\LCA_D$ of discrete groups is a deflation-percolating subcategory and the quotient $\LCA / \LCA_D$ satisfies the conditions of theorem \ref{Theorem:MainTheoremIntroduction}. Moreover, we show in corollary \ref{corollary:VerdierSequencesLocallyCompactModules} that the Verdier localization sequence from theorem \ref{Theorem:MainTheoremIntroduction} is equivalent to the sequence
\[\Db(\LCA_D)\rightarrow \Db(\LCA) \rightarrow \Db(\LCA / \LCA_D).\]
There is a similar sequence concerning the category $\LCA_C$ of compact abelian groups.

It follows from \cite{HenrardvanRoosmalen19} that the quotient $\LCA/\LCA_D$ is a deflation-exact category, but it is not immediately clear whether it is two-sided exact.  To this end, we establish the following criterion (theorem \ref{theorem:ExactLocalization} in the text).

\begin{theorem}\label{Theorem:ExactLocalizationTheoremIntroduction}
	Let $\EE$ be a weakly idempotent complete deflation-exact category and let $\AA\subseteq \EE$ be a deflation-percolating subcategory. If 
	$\Hom_{\Db(\EE)}(i(E),\Sigma^2 i(A))=0$
	for all $E\in \EE$ and $A\in \AA$, then the quotient $\EE/\AA$ is an exact category.
\end{theorem}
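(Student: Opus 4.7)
The plan is to apply Proposition \ref{proposition:ExactMeansExtensionClosed}: since $\EE/\AA$ inherits weak idempotent completeness and axiom \ref{R3} from $\EE$ through the quotient construction of \cite{HenrardVanRoosmalen19} (both being preserved under the right multiplicative system $S_\AA$), the two-sided exactness of $\EE/\AA$ reduces to showing that its essential image in $\Db(\EE/\AA)$ is extension-closed. So I would fix a triangle
\[i(X) \to Y \to i(Z) \to \Sigma i(X)\]
in $\Db(\EE/\AA)$ with $X,Z \in \EE/\AA$, and aim to show that $Y$ is isomorphic to an object of $\EE/\AA$.

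By the Verdier localization $\Db(\EE) \to \Db(\EE/\AA)$ of Theorem \ref{Theorem:MainTheoremIntroduction}(1), the extension class $\eta \in \Hom_{\Db(\EE/\AA)}(i(Z), \Sigma i(X))$ is represented by a roof $i(Z) \xleftarrow{s} W \xrightarrow{f} \Sigma i(X)$ in $\Db(\EE)$ with $\cone(s) \in \DAb(\EE)$. Using the description of the derived multiplicative system in Section \ref{section:PercolatingSubcateoriesAndComplexes}, one may arrange $W \in \EE$ and $s$ to be the image of a deflation in $\EE$ with kernel $K \in \AA$, so that $\cone(s) \simeq \Sigma i(K)$.

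The central step is to lift $\eta$ to a morphism $\tilde\eta \in \Hom_{\Db(\EE)}(i(Z), \Sigma i(X))$. Granting this, the cocone of $\tilde\eta$ produces a triangle $i(X) \to Y' \to i(Z) \to \Sigma i(X)$ in $\Db(\EE)$ that descends to the given triangle, so $Y \simeq Q(Y')$. Extension-closedness of the exact hull $\overline{\EE}$ in $\Db(\EE)$ (Theorem \ref{Theorem:ExactHullIntroduction}) places $Y' \in \overline{\EE}$, and the compatibility of the quotient with the exact hull finally yields $Y \simeq Q(Y') \in \EE/\AA$.

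The hypothesis $\Hom_{\Db(\EE)}(i(E), \Sigma^2 i(A)) = 0$ is used to kill the obstruction to the lift. Applying $\Hom_{\Db(\EE)}(-, \Sigma i(X))$ to the triangle $i(K) \to W \to i(Z) \to \Sigma i(K)$ produces a long exact sequence whose primary obstruction lives in $\Hom_{\Db(\EE)}(i(K), \Sigma i(X))$. By iteratively refining the roof through $S_\AA$ and tracking the associated octahedral compositions, the obstruction is successively shifted into classes lying in $\Hom_{\Db(\EE)}(i(E), \Sigma^2 i(A))$ with $E \in \EE$ and $A \in \AA$, which vanish by hypothesis. The main technical obstacle is precisely this d\'evissage: controlling how the first-order obstruction transforms under successive $S_\AA$-refinements and verifying that it eventually lands in groups ruled out by the hypothesis, which is where the right percolating structure on $\AA$ plays the essential role.
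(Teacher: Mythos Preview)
Your approach differs from the paper's and has genuine gaps.

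First, the claim that $\EE/\AA$ inherits weak idempotent completeness and axiom \ref{R3} from $\EE$ is not established in \cite{HenrardVanRoosmalen19} and is not obvious; you are essentially assuming part of what needs to be proved in order to invoke Proposition \ref{proposition:ExactMeansExtensionClosed}. Second, your reduction to a roof with $\cone(s)\simeq \Sigma i(K)$ for some $K\in\AA$ tacitly uses that the kernel of $\Db(\EE)\to\Db(\EE/\AA)$ equals $\DAb(\EE)$; by Theorem \ref{Theorem:MainTheoremIntroduction}(2) this requires $S_\AA$ to be right weakly saturated, which is not among the hypotheses. Third, and most seriously, the obstruction analysis does not line up with the hypothesis. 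With $K\inflation W\stackrel{s}{\deflation} Z$ and $f\colon i(W)\to\Sigma i(X)$, the obstruction to factoring $f$ through $i(Z)$ is $f\circ i(K\to W)\in\Hom_{\Db(\EE)}(i(K),\Sigma i(X))$, an $\Ext^1$-group with an object of $\AA$ in the \emph{first} variable. The hypothesis kills $\Hom_{\Db(\EE)}(i(E),\Sigma^2 i(A))$, an $\Ext^2$-group with $\AA$ in the \emph{second} variable. Your d\'evissage paragraph asserts that refining through $S_\AA$ transmutes the former into the latter, but no mechanism is given, and I do not see one: further $\AA^{-1}$-refinements of $W$ produce new obstructions still of the form $\Hom(i(K'),\Sigma i(X))$.

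The paper avoids all of this by verifying axiom \ref{L2} for $\EE/\AA$ directly (which, together with \ref{R0}--\ref{R2}, gives exactness by \cite{Keller90}). Given a span $Y\leftarrow X\stackrel{g}{\rightarrowtail} Z$ in $\EE/\AA$, one represents it in $\EE$ as $Y\leftarrow \overline{X}\stackrel{s}{\to} X'\stackrel{i}{\rightarrowtail} Z'$ with $s$ a weak isomorphism and $i$ an inflation. The key step is Lemma \ref{lemma:HereditaryFactorisation}: if $X\stackrel{f}{\deflation} Y\stackrel{g}{\inflation} Z$ and $\Hom_{\Db(\EE)}(\coker g,\Sigma^2\ker f)=0$, then $gf$ refactors as an inflation followed by a deflation. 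This is proved by building a $3\times 3$ diagram in $\Db(\EE)$ via \cite[1.4.6]{Neeman01}, and the $\Ext^2$-vanishing is \emph{exactly} the obstruction to its existence. Iterating this swap along the $\AA^{-1}$-factors of $s$ produces a span in $\EE$ whose pushout exists and descends to the required pushout in $\EE/\AA$. Here the hypothesis is used with $\coker g\in\EE$ and $\ker f\in\AA$, matching its form precisely.
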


In \cite{HoffmannSpitzweck07}, Hoffmann and Spitzweck investigated the homological properties of $\LCA$ and showed that $\Hom_{\Db(\LCA)}(\LCA,\Sigma^2\LCA)=0$.  This allow us to conclude the following statement.

\begin{corollary}
For any inflation- or deflation-percolating subcategory $\AA$ of $\LCA$, the quotient category $\LCA / \AA$ is an exact category.
\end{corollary}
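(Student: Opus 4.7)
The plan is to deduce the corollary as a direct application of Theorem~\ref{Theorem:ExactLocalizationTheoremIntroduction} together with the Hoffmann--Spitzweck vanishing result, handling the left-percolating case by Pontryagin duality.

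First I would verify the standing hypotheses of Theorem~\ref{Theorem:ExactLocalizationTheoremIntroduction} for $\EE = \LCA$: it is well known that $\LCA$ is an exact (in fact quasi-abelian) category, and it is weakly idempotent complete because every split epimorphism admits a kernel in $\LCA$ (closed subgroups of locally compact groups are locally compact). The hypothesis on $\Hom$-vanishing is exactly the content of the Hoffmann--Spitzweck result: for any $E \in \LCA$ and any $A \in \AA \subseteq \LCA$, we have $\Hom_{\Db(\LCA)}(i(E),\Sigma^2 i(A)) = 0$ since the global Ext-dimension of $\LCA$ is at most one. Thus, if $\AA$ is a right percolating subcategory, Theorem~\ref{Theorem:ExactLocalizationTheoremIntroduction} immediately gives that $\LCA/\AA$ is exact.

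For the left percolating case I would invoke Pontryagin duality. The functor $D = \Hom(-,\mathbb{T}) \colon \LCA \to \LCA^{\mathrm{op}}$ is an exact equivalence of exact categories, so under $D$ a left percolating subcategory $\AA \subseteq \LCA$ corresponds to a right percolating subcategory $D(\AA) \subseteq \LCA$ (with the opposite exact structure transported back). Applying the right-sided case of the argument above to $D(\AA) \subseteq \LCA$ yields that $\LCA/D(\AA)$ is exact; dualizing back via $D$ then shows that $\LCA/\AA$ is exact.

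The main obstacle, such as it is, lies in the careful bookkeeping of the duality step: one must ensure that Pontryagin duality correctly exchanges the notions of left percolating and right percolating subcategory, and that forming the quotient commutes with duality up to exact equivalence. This reduces to checking that the defining axioms of a percolating subcategory (as recalled in \S\ref{section:PercolatingSubcateoriesAndComplexes}) are symmetric under reversing arrows, which is immediate from their formulation. No further calculation is needed, as the $\Hom$-vanishing hypothesis of Theorem~\ref{Theorem:ExactLocalizationTheoremIntroduction} is self-dual (Pontryagin duality preserves $\Db(\LCA)$ and swaps $\Sigma$ with $\Sigma^{-1}$ only up to a sign convention that does not affect the vanishing statement).
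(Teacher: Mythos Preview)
Your proposal is correct and follows essentially the same approach as the paper: apply the Hoffmann--Spitzweck vanishing $\Hom_{\Db(\LCA)}(E,\Sigma^2 A)=0$ together with Theorem~\ref{theorem:ExactLocalization} (the text version of Theorem~\ref{Theorem:ExactLocalizationTheoremIntroduction}). The only cosmetic difference is that the paper handles the left percolating case by tacitly invoking the dual of Theorem~\ref{theorem:ExactLocalization} (whose hypothesis is again covered by the same $\Ext^2$-vanishing), whereas you route it through Pontryagin duality; both amount to the same thing.
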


\subsection{Enhancements of the derived category}

Verdier localization sequences such as the one in theorem \ref{Theorem:MainTheoremIntroduction} are abundant in $K$-theory.  In section \ref{section:enhancements}, we consider three enhancements of the bounded derived category of a one-sided exact category $\EE$ (using Frobenius pairs, dg categories, and $\infty$-categories) and formulate theorems \ref{Theorem:ExactHullIntroduction} and \ref{Theorem:MainTheoremIntroduction} in the corresponding enhanced settings.

These enhancements allow us to define (non-connective) $K$-theory, as well as other localizing invariants (in the sense of \cite{BlumbergGepnerTabuada13}), for one-sided exact categories.  Specifically, using the results of this paper, we find the following theorem.

\begin{theorem}\label{theorem:KTheoryIntroduction}
Let $\EE$ be a deflation-exact category.
\begin{enumerate}
	\item The embedding $\EE \to \overline{\EE}$ of $\EE$ into its exact hull, yields an equivalence $\bK(\EE) \to \bK(\overline{\EE}).$
	\item For a deflation-percolating subcategory $\AA \subseteq \EE$ such that the natural map $\Db(\AA) \to \DAb(\EE)$ is an equivalence, there is a homotopy fibration sequence
	\[\mathbb{K}(\AA)\to \mathbb{K}(\EE)\to \mathbb{K}(\EE/\AA).\]
\end{enumerate}
Here, $\bK$ stands for the non-connective $K$-theory spectrum.
\end{theorem}

We will obtain this theorem using the $\infty$-categorical model, following \cite{BlumbergGepnerTabuada13}.

\subsection{The weak idempotent completion}  As is remarked in \cite{Neeman90}, the embedding of an exact category $\EE \to \widehat{\EE}$ into its weak idempotent completion (there called the semi-saturation) lifts to a derived equivalence $\Db(\EE) \simeq \Db(\widehat{\EE})$.  A similar observation holds for one-sided exact categories; this will be examined in appendix \ref{appendix:WeakIdempotentCompletion}.  The situation is slightly more subtle in the one-sided exact setting, as is indicated by the presence of the axiom \ref{R3} in the following statement (combining proposition \ref{proposition:WeakIdempotentCompletionRightExact} and theorem \ref{theorem:WeakidempotentCompleteTriangleEquivalence}).

\begin{theorem}\label{theorem:WeakidempotentCompleteTriangleEquivalenceIntroduction}
Let $\EE$ be a deflation-exact category satisfying axiom {\ref{R0*}}.
\begin{enumerate}
	\item The weak idempotent completion $\widehat{\EE}$ has a canonical deflation-exact structure satisfying axiom {\ref{R3}} such that the embedding $j_{\EE}\colon \EE \to \widehat{\EE}$ is $2$-universal among exact functors to weakly idempotent complete deflation-exact categories satisfying axiom {\ref{R3}}.
	\item The natural embedding $j_\EE\colon \EE \to \widehat{\EE}$ lifts to a triangle equivalence $\Db(\EE) \simeq \Db(\widehat{\EE})$.
\end{enumerate}
\end{theorem}

We note that, even though the bounded derived categories of $\EE$ and its weak idempotent completion $\widehat{\EE}$ are equivalent, a similar statement does not hold for its idempotent completion (see \cite{BalmerSchlichting01}).

\subsection*{Acknowledgments}  We are grateful to Frederik Caenepeel and Freddy Van Oystaeyen for motivating discussions, and to Francesco Genovese and Ivo Dell'Ambrogio for helpful comments on an earlier draft.  We thank Oliver Braunling for suggesting section \ref{section:enhancements} as well as for many helpful comments.  The second author is currently a postdoctoral researcher at FWO (12.M33.16N).
\section{Preliminaries}

Throughout the paper all categories are assumed to be small.

\subsection{One-sided exact categories}

We recall the definition of an inflation-exact and a deflation-exact category from \cite{BazzoniCrivei13}.

\begin{definition}\label{definition:ConflationCategory}
	Let $\CC$ be an additive category.  A \emph{kernel-cokernel pair} in $\CC$ is a pair of composable morphisms $A\xrightarrow{f} B\xrightarrow{g} C$ with $f = \ker g$ and $g = \ker f$.	A \emph{conflation category} $\CC$ is an additive category with a chosen class of kernel-cokernel pairs, called \emph{conflations}, closed under isomorphisms.  Given a conflation $A\xrightarrow{f} B\xrightarrow{g} C$, we refer to the map $f$ as an \emph{inflation} and to the map $g$ as a \emph{deflation}.  Inflations will often be denoted by $\rightarrowtail$ and deflations by $\twoheadrightarrow$.  A map $f\colon X\rightarrow Y$ is called \emph{admissible} or \emph{strict} if it admits a deflation-inflation factorization, i.e.~$f$ factors as $X\twoheadrightarrow Z\rightarrowtail Y$.
		
	Let $\CC$ and $\DD$ be conflation categories.  An additive functor $F\colon \CC\rightarrow \DD$ is called \emph{conflation-exact} (or just \emph{exact}) if it maps conflations in $\CC$ to conflations in $\DD.$
\end{definition}

\begin{remark}
	The dual of a conflation category is a conflation category in a natural way: the duality exchanges inflations and deflations.\\
\end{remark}

\begin{definition}\label{definition:RightExact}
	A conflation category $\CC$ is called \emph{right exact} or \emph{deflation-exact} if it satisfies the following axioms:
	\begin{enumerate}[label=\textbf{R\arabic*},start=0]
		\item\label{R0} The identity morphism $1_0\colon 0\rightarrow 0$ is a deflation.
		\item\label{R1} The composition of two deflations is again a deflation.
		\item\label{R2} The pullback of a deflation along any morphism exists and is again a deflation, i.e. 
		\[\xymatrix{
		X\ar@{.>>}[d]\ar@{.>}[r] & Y \ar@{->>}[d]\\
		Z\ar[r] & W
		}\]
	\end{enumerate}
	
	 Dually, a conflation category $\CC$ is called \emph{left exact} or \emph{inflation-exact} if the opposite category $\CC^{op}$ is deflation-exact.  For completeness, an inflation-exact category is a conflation category such that the class of conflations satisfies the following axioms:
		\begin{enumerate}[label=\textbf{L\arabic*},start=0]
		\item\label{L0} The identity morphism $1_0\colon 0\rightarrow 0$ is an inflation.
		\item\label{L1} The composition of two inflations is again an inflation.
		\item\label{L2} The pushout of an inflation along any morphism exists and is again an inflation, i.e. 
		\[\xymatrix{
		X\ar@{>->}[d]\ar@{->}[r] & Y \ar@{>.>}[d]\\
		Z\ar@{.>}[r] & W
		}\]
	\end{enumerate}	
\end{definition}
	
	\begin{definition}\label{definition:StrongRightExact}
Let $\CC$ be a conflation category. In addition to the properties listed in definition \ref{definition:RightExact}, we will also refer to the following properties:
\begin{enumerate}[align=left]
\myitem{\textbf{R0}$^\ast$}\label{R0*} For any $A\in \Ob(\CC)$, $A\rightarrow 0$ is a deflation. 
\myitem{\textbf{R3}}\label{R3} \hspace{0.175cm}If $i\colon A\rightarrow B$ and $p\colon B\rightarrow C$ are morphisms in $\CC$ such that $p$ has a kernel and $pi$ is a deflation, then $p$ is a deflation.
\myitem{\textbf{L0}$^\ast$}\label{L0*} For any $A\in \Ob(\CC)$, $0\rightarrow A$ is an inflation. 
\myitem{\textbf{L3}}\label{L3} \hspace{0.175cm}If $i\colon A\rightarrow B$ and $p\colon B\rightarrow C$ are morphisms in $\CC$ such that $i$ has a cokernel and $pi$ is an inflation, then $i$ is an inflation.
\end{enumerate}
A deflation-exact category satisfying \ref{R3} is called \emph{strongly deflation-exact}.  Dually, an inflation-exact category satisfying \ref{L3} is called \emph{strongly inflation-exact}.
\end{definition}

\begin{remark}\makeatletter
\hyper@anchor{\@currentHref}%
\makeatother
\label{remark:R0*SplitKernelCokernel}
\begin{enumerate}
  \item The left/right terminology we use, follows \cite{Rosenberg11}, but is opposite to \cite{BazzoniCrivei13, Rump11}.  In order to avoid confusion, we use the terminology of deflation/inflation-exact categories.
  \item Axioms \ref{R3} and \ref{L3} are sometimes referred to as Quillen's \emph{obscure axioms} (see \cite{Buhler10,ThomasonTrobaugh90}). Note that axioms \ref{R0} and \ref{R3} imply axiom \ref{R0*}, and that axioms \ref{L0} and \ref{L3} imply axiom \ref{L0*} (see \cite[lemma~3.4]{BazzoniCrivei13} or \cite[lemma~3.6]{HenrardvanRoosmalen19}).  Indeed, by \ref{R0}, the composition $0 \xrightarrow{i} A \xrightarrow{p} 0$ is a deflation.  By \ref{R3}, we then see that $i\colon A \to 0$ is a deflation.
	\item An \emph{exact category} in the sense of Quillen (see \cite{Quillen73}) is a conflation category $\EE$ which is both left and right exact. It is shown in \cite[appendix~A]{Keller90} that an exact category automatically satisfies the obscure axioms \ref{R3} and \ref{L3}. 
	\item Let $\EE$ be a deflation-exact category.  Axiom \ref{R0*} has the following characterization: split kernel-cokernel pairs are conflations if and only if $\EE$ satisfies axiom \ref{R0*}.  Indeed, if for every $C \in \CC$, the split kernel-cokernel pair $\xymatrix@1{C \ar@{=}[r] & C \ar[r] & 0}$ is a conflation, then $\CC$ satisfies \ref{R0*}.  The converse follows from \cite[proposition~5.6]{BazzoniCrivei13}.
	\item \label{enumerate:R3Plus}	If $\EE$ is a weakly idempotent complete deflation-exact category, one can drop the requirement that $p$ admits a kernel in axiom \ref{R3}.  Thus, $\EE$ satisfies \ref{R3} if and only if for all maps $i\colon A\rightarrow B$ and $p\colon B\rightarrow C$ in $\EE$ such that $pi$ is a deflation, $p$ is a deflation. A dual statement holds for \ref{L3} (see \cite[proposition~7.6]{Buhler10}).
\end{enumerate}
\end{remark}

{We will need the following generalization of \cite[proposition~5.9]{BazzoniCrivei13} (see also \cite[corollary~2.18]{Buhler10}).  It states that, under the assumption of \ref{R3}, retracts of conflations are themselves conflations.}

\begin{proposition}\label{proposition:RetractConflation}
{Let $\EE$ be a deflation-exact category satisfying axiom \ref{R3}.  Consider the commutative diagram}
\[
\xymatrix{A \ar[r] \ar[d] & B \ar[r] \ar[d]&C \ar[d]^{s}\\
X \ar@{>->}[r] \ar[d] & Y \ar@{->>}[r] \ar[d] & Z \ar[d]^{r} \\
A \ar[r] & B \ar[r] &C}
\]
{where the vertical arrows compose to the identity.  If the sequence $X \to Y \to Z$ is a conflation, then so is the sequence $A \to B \to C$.}
\end{proposition}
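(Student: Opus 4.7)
The plan is to establish that $\alpha = \ker\beta$ by diagram chasing, then use a pullback construction together with axiom \ref{R3} to conclude that $\beta$ is a deflation; the assertion then follows from closure of conflations under isomorphism. For bookkeeping I write $f \colon X \rightarrowtail Y$ and $g \colon Y \twoheadrightarrow Z$ for the middle conflation, and $i_A, i_B, i_C$ respectively $r_A, r_B, r_C$ for the top-to-middle respectively middle-to-bottom vertical maps.

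The first step is a routine diagram chase using the retraction identities $r_\bullet i_\bullet = 1$ and the commutativity of the squares. The vanishing $\beta\alpha = 0$ drops out from $i_C \beta\alpha = g f i_A = 0$ together with $i_C$ being monic. For the universal property, given $t \colon T \to B$ with $\beta t = 0$, one factors $i_B t$ through $f = \ker g$ as $fu$ and sets $s := r_A u$; the identity $\alpha r_A = r_B f$ then gives $\alpha s = t$, and uniqueness follows from $f$ being monic and $r_A i_A = 1_A$.

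The crucial step is to upgrade $\beta$ from merely having a kernel to being a deflation. Here I would invoke axiom \ref{R2} to form the pullback
\[
\xymatrix{P \ar[r]^{\pi} \ar[d]_{\phi} & C \ar[d]^{i_C}\\ Y \ar@{->>}[r]_{g} & Z,}
\]
so that $\pi$ is a deflation, and then observe---via $\beta r_B = r_C g$ and $r_C i_C = 1_C$---the factorization $\beta\circ(r_B\phi) = r_C g\phi = r_C i_C \pi = \pi$. Axiom \ref{R3}, applied to this factorization with $\beta$ having the kernel $\alpha$ from the previous step, now forces $\beta$ to be a deflation; hence $A \xrightarrow{\alpha} B \xrightarrow{\beta} C$ is isomorphic (via the identities) to the conflation $\ker\beta \rightarrowtail B \twoheadrightarrow C$ associated to this deflation, and is therefore itself a conflation. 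The only real insight is spotting the pullback $P$ and recognising that the deflation $\pi$ factors through $\beta$, which is exactly the shape needed for \ref{R3}; everything else is formal.
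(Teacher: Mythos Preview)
Your proof is correct and shares the same pullback construction with the paper's argument, but your conclusion is considerably more direct. The paper also forms the pullback $P$ of $i_C$ along $g$, notes that the induced map $g\colon B\to P$ is a coretraction with retraction $g'=r_B\phi$, and records the identity $p=\beta\circ g'$ (your $\pi=\beta\circ(r_B\phi)$). At that point, however, instead of invoking \ref{R3} immediately as you do, the paper takes a detour: it introduces the idempotent $e=gg'$ on $P$, shows that $\begin{psmallmatrix}e & 1-e\end{psmallmatrix}\colon P\oplus P\to P$ is a deflation, factors this through $\begin{psmallmatrix}g & i\end{psmallmatrix}\colon B\oplus X\to P$, applies \ref{R3} twice to get that $\begin{psmallmatrix}\beta & 0\end{psmallmatrix}\colon B\oplus X\to C$ is a deflation, and only then applies \ref{R3} a third time to extract that $\beta$ itself is a deflation. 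Your single application of \ref{R3} to the factorization $\pi=\beta\circ(r_B\phi)$, using the kernel $\alpha$ established in your first step, accomplishes the same end with no loss of rigor; the paper's extra steps appear to be unnecessary.
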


\begin{proof}
As $\EE$ satisfies axiom \ref{R3}, we know $\EE$ satisfies axiom \ref{R0*} as well.  As $A \to B \to C$ is a retract of the kernel-cokernel pair $X \inflation Y \deflation Z,$ we know it is a kernel-cokernel pair itself.  By taking the pullback of $s\colon C \to Z$ along the deflation $Y \deflation Z$, we obtain a commutative diagram
\[
\xymatrix{X \ar@{>->}[r] \ar@{=}[d] & P \ar@{->>}[r] \ar[d] & C \ar[d]^{s}\\
X \ar@{>->}[r] \ar[d] & Y \ar@{->>}[r] \ar[d]&Z \ar[d]^{r}\\
A \ar[r] & B \ar[r]& C}\]
where the top rows is a conflation by axiom \ref{R2}.  As $r \circ s = 1_C$, we see that the composition $P \to B \to C$ is a deflation, so that axiom \ref{R3} implies that $B \to C$ is a deflation.  This finishes the proof.
\end{proof}

\subsection{Localizations and right multiplicative systems}
We recall some material about localization of categories at left or right multiplicative systems.  The material of this section is based on \cite{GabrielZisman67, KashiwaraSchapira06}.

\begin{definition}\label{definition:LocalizationWithRespectToMorphisms}
Let $\CC$ be any category and let $S \subseteq \Mor \CC$ be any subset of morphisms of $\CC$.  The \emph{localization} of $\CC$ with respect to $S$ is a universal functor $Q\colon \CC \to S^{-1} \CC$ such that $Q(s)$ is invertible, for all $s \in S$.  Here, universality means that any functor $F \colon \CC \to \DD$ for which $F(s)$ is invertible for all $s \in S$, factors uniquely through $Q\colon \CC \to S^{-1} \CC$.
\end{definition}

In this paper, we often consider localizations with respect to right multiplicative systems.

\begin{definition}\label{definition:RMS}
Let $\mathcal{C}$ be a category. A set $S\subseteq \Mor(\CC)$ is called a \emph{right multiplicative system} if it has the following properties:
\begin{enumerate}[label=\textbf{RMS\arabic*},start=1]
	\item\label{RMS1} For every object $A\in \Ob(\CC)$, the identity $1_A\colon A\to A$ is contained in $S$. The set $S$ is closed under composition.
	\item\label{RMS2} Every solid diagram
	\[\xymatrix{
		X \ar@{.>}[r]^{g} \ar@{.>}[d]_{t}^{\rotatebox{90}{$\sim$}}& Y\ar[d]_{s}^{\rotatebox{90}{$\sim$}}\\
		Z\ar[r]_{f} & W
	}\] with $s\in S$ can be completed to a commutative square with $t\in S$. 
	\item\label{RMS3} For every $s\in S$ with source $Y$ and every pair of morphisms $f,g\colon X\rightarrow Y$ in $\Mor(\CC)$ such that $s\circ f= s\circ g$ there exists a $t\in S$ with target $X$ such that $f\circ t =g\circ t$.
\end{enumerate}
Often arrows in $S$ will be endowed with $\sim$.

A \emph{left multiplicative system} is defined dually.  We say that $S$ is a \emph{multiplicative system} if it is both a left and a right multiplicative system.
\end{definition}

For localizations with respect to a right multiplicative system, we have the following description of the localization.

\begin{construction}
	Let $\mathcal{C}$ be a category and $S$ a right multiplicative system in $\mathcal{C}$. We define a category $S^{-1}\mathcal{C}$ as follows:
	\begin{enumerate}
		\item We set $\Ob(S^{-1}\mathcal{C})=\Ob(\mathcal{C})$.
		\item Let $f_1\colon X_1\rightarrow Y, s_1\colon X_1\rightarrow X, f_2\colon X_2\rightarrow Y, s_2\colon X_2\rightarrow X$ be morphisms in $\mathcal{C}$ with $s_1,s_2\in S$. We call the pairs $(f_1,s_1), (f_2,s_2) \in (\Mor \CC) \times S$ equivalent (denoted by $(f_1,s_1) \sim (f_2,s_2)$) if there is a commutative diagram
		\[\xymatrix@!{
			& X_1\ar[ld]_{s_1}^{}\ar[rd]^{f_1} & \\
			X &X_3\ar[d]^{v}\ar[u]_{u}\ar[l]_{s_3}^{}\ar[r]_{f_3} & Y\\
			& X_2 \ar[ul]^{s_2}_{}\ar[ur]_{f_2}&
		}\] with $s_3\in S$.
		\item $\Hom_{S^{-1}\mathcal{C}}(X,Y)=\left\{(f,s)\mid f\in \Hom_{\mathcal{C}}(X',Y), s\colon X'\rightarrow X \mbox{ with } s\in S \right\} / \sim$
		\item The composition of $(f\colon X'\rightarrow Y, s\colon X'\rightarrow X)$ and $(g\colon Y'\rightarrow Z, t\colon Y'\rightarrow Y)$ is given by $(g\circ h\colon X''\rightarrow Z,s\circ u\colon X''\rightarrow X)$ where $h$ and $u$ are given by a commutative diagram 
		\[\xymatrix{
		X''\ar[r]^{h}\ar[d]_{u}^{\rotatebox{90}{$\sim$}} & Y' \ar[d]_{t}^{\rotatebox{90}{$\sim$}}\\
		X'\ar[r]^{f} & Y
		}\] as in \ref{RMS2}.
	\end{enumerate}                                                                                                       
\end{construction}

\begin{proposition}\label{Proposition:BasicPropertiesOfLocalization}
Let $\mathcal{C}$ be a category and $S$ a right multiplicative system in $\mathcal{C}$.
	\begin{enumerate}
			\item The assignment $X\mapsto X$ and $(f\colon X\rightarrow Y)\mapsto (f\colon X\rightarrow Y, 1_X\colon X\rightarrow X)$ defines a localization functor $Q\colon \mathcal{C}\rightarrow S^{-1}\mathcal{C}$.  In particular, for any $s\in S$, the map $Q(s)$ is an isomorphism.
			\item The localization functor commutes with finite limits.
			\item If $\mathcal{C}$ is an additive category, then $S^{-1}\mathcal{C}$ is an additive category as well and the localization functor $Q\colon \CC \to S^{-1}\CC$ is additive.
	\end{enumerate}
\end{proposition}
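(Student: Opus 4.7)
The plan is to handle the three parts in order, with (1) being the technical heart and (2)--(3) following from standard manipulation of right fractions.

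For part~(1), the main bookkeeping is to verify that $\sim$ is an equivalence relation and that composition descends to $\sim$-classes. Reflexivity and symmetry are immediate; for transitivity, given two equivalence diagrams between representatives $(f_i,s_i)$, one applies \ref{RMS2} to combine the two middle objects into a common refinement and \ref{RMS3} to force equality of the resulting composite maps. Well-definedness of composition is similar: any two completions of the square in \ref{RMS2} agree after pre-composition with a further map of $S$, by \ref{RMS3}. Once these checks are made, $S^{-1}\CC$ is a category and $Q$ is clearly a functor. For $s\in S$, the class $[(1,s)]\colon Y\to X$ is a two-sided inverse of $Q(s)=[(s,1)]$, so $Q$ inverts $S$. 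For the universal property, given $F\colon \CC\to\DD$ with $F(s)$ invertible for every $s\in S$, set $\bar F[(f,s)]=F(f)\circ F(s)^{-1}$; well-definedness on $\sim$-classes follows from the commutativity of the defining diagram together with invertibility of each $F(s_i)$, and functoriality follows again from \ref{RMS2}. Uniqueness is automatic since in $S^{-1}\CC$ every morphism equals $Q(f)\circ Q(s)^{-1}$.

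For part~(2), it suffices to check that $Q$ preserves terminal objects, binary products, and equalizers. The terminal case is immediate from the description of morphisms. For products, given morphisms $[(f_i,s_i)]\colon Z\to X_i$ in $S^{-1}\CC$ ($i=1,2$), apply \ref{RMS2} to produce a common denominator $s\colon Z'\to Z$ such that both morphisms are represented by $(g_i,s)$; then the pair $\langle g_1,g_2\rangle\colon Z'\to X_1\times X_2$ in $\CC$ yields a lift $[(\langle g_1,g_2\rangle,s)]$ in $S^{-1}\CC$, whose uniqueness is enforced by \ref{RMS3}. The equalizer case is analogous: bring the two parallel morphisms to representatives with a common denominator, transport the equalizer of $\CC$ through $Q$, and verify the universal property using \ref{RMS2}--\ref{RMS3}.

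For part~(3), I define addition on $\Hom_{S^{-1}\CC}(X,Y)$ by applying \ref{RMS2} to place $[(f_1,s_1)]$ and $[(f_2,s_2)]$ over a common denominator $(g_i,s)$ and setting the sum to be $[(g_1+g_2,s)]$. Independence of the choice of common denominator follows from a further refinement via \ref{RMS2} together with the additivity of $\CC$; commutativity, associativity, and existence of a neutral element (the class of the zero morphism) are routine, as is compatibility with composition. Combined with the zero object inherited from $\CC$ and the biproducts supplied by part~(2), this promotes $S^{-1}\CC$ to an additive category. Additivity of $Q$ then follows from $Q(f+g)=[(f+g,1)]=Q(f)+Q(g)$. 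The main obstacle throughout is the repeated use of \ref{RMS2}--\ref{RMS3} to bring representatives into compatible form; once this well-definedness is in place, each statement falls out mechanically.
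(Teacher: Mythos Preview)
The paper does not supply its own proof of this proposition; it is stated as a standard preliminary fact with references to \cite{GabrielZisman67, KashiwaraSchapira06}. Your sketch is correct and follows the classical route taken in those references: verifying that $\sim$ is an equivalence relation and that composition is well-defined via \ref{RMS2}--\ref{RMS3}, then checking the universal property, preservation of finite limits by reducing to a common denominator, and additivity by the same device. There is nothing to compare against beyond noting that your argument matches the standard one the paper is citing.
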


\begin{definition}\label{definition:PropertiesOfMS}
Let $\CC$ be any category and let $S \subseteq \Mor \CC$ be any subset.
\begin{enumerate}
\item We say that $S$ satisfies the \emph{2-out-of-3 property} if, for any two composable morphisms $f,g \in \Mor(\CC)$, the following property holds: if two of $f,g,fg$ are in $S$, then so is the third.
\item Let $Q \colon \CC \to S^{-1}\CC$ be the localization of $\CC$ with respect to $S$.  We say that $S$ is \emph{saturated} if ${S} = \{f \in \Mor (\CC) \mid \mbox{$Q(f)$ is invertible}\}.$ 
\end{enumerate}
\end{definition}

\subsection{Verdier localizations}\label{subsection:VerdierLocalization}

We recall some notions and properties of Verdier localizations.  Our main references are \cite{Krause10, Neeman01, Verdier96}.  Let $\TT$ be a triangulated category and $\SS \subseteq \TT$ a full triangulated subcategory.  We write $\NN(\SS)$ for the set of morphisms $f\colon X \to Y$ in $\TT$ which fit in a triangle $X \to Y \to S \to \Sigma(X)$ with $S \in \SS$.

\begin{proposition}
The set $\NN(\SS)$ is a (left and right) multiplicative system satisfying the 2-out-of-3 property.  Furthermore, $\NN(\SS)$ is saturated if and only if $\SS$ is a thick subcategory of $\TT$.
\end{proposition}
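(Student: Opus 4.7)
The plan is to verify the three multiplicative-system axioms and the 2-out-of-3 property by repeatedly applying two triangulated-category tools: (i) any morphism extends to a triangle, and (ii) the octahedral axiom; throughout, I will exploit that $\SS$ is a triangulated subcategory, so $0 \in \SS$ and $\SS$ is closed under shifts and under taking third objects of triangles whose other two vertices lie in $\SS$. Because the defining condition of $\NN(\SS)$ is symmetric under rotation of triangles, the proof that $\NN(\SS)$ is a right multiplicative system will automatically yield the dual left axioms by the same arguments applied to $\TT^{op}$.

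For \ref{RMS1}, the identity $1_X$ fits in the triangle $X \xrightarrow{1} X \to 0 \to \Sigma X$ with $0 \in \SS$. Closure under composition of $f,g \in \NN(\SS)$ is obtained from the octahedral axiom, which provides a triangle $S_f \to S_{gf} \to S_g \to \Sigma S_f$ relating the three cones; since $S_f, S_g \in \SS$ and $\SS$ is triangulated, $S_{gf} \in \SS$. The very same octahedron simultaneously yields the 2-out-of-3 property: whichever two of the three cones lie in $\SS$, the third must as well. For \ref{RMS2}, given $s\colon Y \to W$ in $\NN(\SS)$ with cone $S \in \SS$ and an arbitrary $f\colon Z \to W$, I would complete the composite $Z \xrightarrow{f} W \to S$ to a triangle $X \to Z \to S \to \Sigma X$; then $t\colon X \to Z$ has cone $S \in \SS$, hence $t \in \NN(\SS)$, and the vanishing of $X \to W \to S$ produces the required lift $g\colon X \to Y$ making the square commute. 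For \ref{RMS3}, if $s \circ f = s \circ g$, then $f-g$ factors through the fiber $\Sigma^{-1} S \to Y$ of $s$ via some $\alpha\colon X \to \Sigma^{-1} S$; completing $\alpha$ to a triangle produces $t\colon W \to X$ whose cone is $\Sigma^{-1} S \in \SS$, so $t \in \NN(\SS)$, and by construction $\alpha \circ t = 0$, hence $f \circ t = g \circ t$.

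For the saturation statement I would invoke the standard structural fact (as in Krause or Neeman) that in the Verdier quotient $Q\colon \TT \to \TT/\SS$, one has $Q(C) \cong 0$ if and only if $C$ is a direct summand of an object of $\SS$; equivalently, the kernel of $Q$ is the thick closure of $\SS$. If $\SS$ is thick, then $Q(f)$ being invertible forces its cone $C_f$ to satisfy $Q(C_f) = 0$, hence $C_f \in \SS$, so $f \in \NN(\SS)$. Conversely, if $\NN(\SS)$ is saturated and $X \oplus Y \in \SS$, then $Q(X) = 0$; applying saturation to the morphism $X \to 0$, which $Q$ sends to an isomorphism, gives $X \to 0 \in \NN(\SS)$, i.e.\ $\Sigma X \in \SS$, whence $X \in \SS$.

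The main obstacle is the saturation equivalence, since it rests on the nontrivial identification of $\ker Q$ with the thick closure of $\SS$; I would cite this rather than reprove it. The multiplicative-system and 2-out-of-3 parts are largely formal once the octahedral diagrams are set up; the only genuine care needed is bookkeeping of which vertex of a rotated triangle plays the role of cone versus fiber, and verifying that the factorisation in \ref{RMS3} can be arranged uniformly in $f$ and $g$ by passing to $f-g$.
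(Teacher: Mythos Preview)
Your proof sketch is correct and follows the standard approach found in the references the paper cites (Krause, Neeman, Verdier). Note, however, that the paper does not actually prove this proposition: it is stated without proof in the preliminaries section as a well-known structural fact about Verdier localizations, with the references \cite{Krause10, Neeman01, Verdier96} given at the start of \S\ref{subsection:VerdierLocalization}. So there is no ``paper's own proof'' to compare against; your argument is essentially the one those references give.
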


We now define the \emph{Verdier localization} $\TT/\SS$ as the localization $\NN(\SS)^{-1} \TT$ together with the localization functor $\TT\to \TT/\SS$. 

\begin{proposition}
Let $\SS$ be a triangulated subcategory of a triangulated category $\TT$.
\begin{enumerate}
  \item The category $\TT / \SS$ has a unique structure of a triangulated category such that the localization functor $Q\colon \TT \to \TT / \SS$ is a triangle functor.
  \item The kernel of $Q\colon \TT \to \TT / \SS$ is the thick closure of $\SS$.
  \item A morphism in $\TT$ becomes zero in $\TT / \SS$ if and only if it factors through an object in $\SS.$
  \item Any triangle functor $F\colon \TT \to \UU$ satisfying $F(\SS) = 0$ factors uniquely through $Q\colon \TT \to \TT / \SS$.
\end{enumerate}
\end{proposition}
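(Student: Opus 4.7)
The strategy is to transport the triangulated structure along $Q$ and then to exploit the roof calculus afforded by $\NN(\SS)$. For part (1), since $\SS$ is closed under $\Sigma$, the multiplicative system $\NN(\SS)$ is compatible with the shift, so $\Sigma$ descends to an autoequivalence of $\TT/\SS$. I would then declare a triangle in $\TT/\SS$ to be distinguished when it is isomorphic to the image under $Q$ of a distinguished triangle of $\TT$. Axioms (TR1)--(TR3) follow formally from the calculus of fractions using \ref{RMS1}--\ref{RMS3} together with the fact that every morphism of $\TT$ embeds in a distinguished triangle. The main obstacle will be the octahedral axiom (TR4): given two composable roofs in $\TT/\SS$, I would use \ref{RMS2} repeatedly to lift them to a composable pair of honest morphisms in $\TT$, apply the octahedral axiom there, and then descend the resulting diagram back through $Q$. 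Uniqueness of the triangulated structure on $\TT/\SS$ is then forced, because once we demand that $Q$ be a triangle functor the shift and the class of distinguished triangles are completely determined by the corresponding data in $\TT$.

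For part (3), the easy direction is that if $f\colon X\to Y$ factors through some $S\in\SS$, then $Q(f)$ factors through $Q(S)=0$, using that $0\to S$ fits in a triangle with third term $S$ and so belongs to $\NN(\SS)$. For the converse, suppose $Q(f)=0$. Representing this morphism as a roof and unwinding the equivalence relation on roofs, I obtain some $s\colon X'\to X$ in $\NN(\SS)$ with $f\circ s=0$ in $\TT$. Completing $s$ to a distinguished triangle $X'\xrightarrow{s}X\xrightarrow{g}C\to\Sigma X'$ with $C\in\SS$, the vanishing $f\circ s=0$ together with the standard homological lemma for triangles produces a factorization $f=h\circ g$ through $C$.

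Part (2) then follows quickly. The thick closure of $\SS$ is contained in $\ker Q$ because $Q$ annihilates $\SS$ and $\ker Q$ is visibly a thick triangulated subcategory of $\TT$. Conversely, if $Q(X)=0$ then $\mathrm{id}_X=0$ in $\TT/\SS$, and by part (3) this identity factors through some $S\in\SS$ in $\TT$, exhibiting $X$ as a direct summand of $S$ and hence as an object of the thick closure. Finally, for part (4), any triangle functor $F\colon\TT\to\UU$ with $F(\SS)=0$ sends every $s\in\NN(\SS)$ to an isomorphism, since applying $F$ to a triangle $X\xrightarrow{s}Y\to S\to \Sigma X$ with $S\in\SS$ yields a triangle in $\UU$ whose third term vanishes. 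The universal property of the ordinary localization $\TT\to\NN(\SS)^{-1}\TT$ provides a unique additive factorization $\overline{F}\colon\TT/\SS\to\UU$, and the definition of distinguished triangles in $\TT/\SS$ ensures that $\overline{F}$ automatically preserves them, so it is a triangle functor.
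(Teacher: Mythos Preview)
The paper does not actually prove this proposition: it is stated in \S\ref{subsection:VerdierLocalization} as a recalled preliminary fact, with the references \cite{Krause10, Neeman01, Verdier96} given at the start of that subsection. Your sketch is correct and follows precisely the standard argument found in those sources, so there is nothing to compare against here.

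One small remark on part (3): your reduction ``$Q(f)=0$ implies there exists $s\in\NN(\SS)$ with $f\circ s=0$'' is valid, but it is worth noting that this uses only the \emph{right} multiplicative system structure (via the equivalence relation on roofs, or equivalently \ref{RMS3}). Since $\NN(\SS)$ is in fact two-sided, one could equally well produce $t$ with $t\circ f=0$ and factor $f$ through the fiber of $t$; either route works.
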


A sequence $\SS\stackrel{i}{\hookrightarrow} \TT\xrightarrow{f} \CC$ is called a \emph{Verdier localization sequence} if $f\circ i=0$, and the unique functor $\TT/\SS \to \CC$ induced by the previous proposition is an equivalence.
\section{Derived categories of deflation-exact categories}\label{section:DerivedCategoriesOfRightExactCategories}

We now come to the derived category of a deflation-exact category $\EE$.  We recall the definition of an acyclic complex and construct the derived category $\D(\EE)$ of $\EE$ as the Verdier quotient $\K(\EE) / \Ac(\EE)$ of the homotopy category by the category of acyclic complexes (see \cite{BazzoniCrivei13}). We discuss the embedding $i\colon \EE\to \D(\EE)$ and show that, if $\EE$ has enough projective objects, then the natural embedding $\Kab(\Proj(\EE))\to \Db(\EE)$ is a triangle equivalence. The main results are summarized in theorem \ref{theorem:SummarazationTheorem}.

\subsection{Definitions}

Let $\AA$ be an additive category.  We denote by $\C(\AA)$ the additive category of cochain complexes in $\AA$ and by $\K(\AA)$ the homotopy category of $\AA$.  It is well known that $\K(\AA)$ has the structure of a triangulated category induced by the strict triangles in $\C(\AA)$, see e.g.~\cite{BeilinsonBernsteinDeligne82,Happel88,KashiwaraSchapira06,Verdier96,Weibel94}. We denote by $\C^+(\AA), \C^-(\AA)$ and $\Cb(\AA)$ the full subcategories of $\C(\AA)$ generated by the left bounded, right bounded and bounded complexes, respectively. Similarly, we write $\K^+(\AA), \K^-(\AA)$ and $\Kb(\AA)$ for the full triangulated subcategories of $\K(\AA)$ which are homotopic to the left bounded, right bounded and bounded complexes, respectively. We write $\Cnm(\AA)$ for the bounded complexes in $\AA$ supported only in degrees $n$ to $m$.

Whenever we write $\C^*(\AA)$ or $\K^*(\AA)$ without explicitly specifying $*$, the statement holds for both the unbounded and bounded setting.

\begin{definition}\makeatletter
\hyper@anchor{\@currentHref}%
\makeatother\label{definition:ConesDoubleComplexesContractibles}
	\begin{enumerate}
		\item Given a chain map $f^{\bullet}\colon X^{\bullet}\rightarrow Y^{\bullet}$ in $\C^*(\AA)$ or $\K^*(\AA)$, the \emph{mapping cone} $\cone(f^{\bullet})$ is the complex given by $\cone(f^{\bullet})^n=X^{n+1}\oplus Y^n$ and the differentials $d^{n}_{f}$ are given by 
		\[\begin{pmatrix}
-d_X^{n+1} & 0\\
f^{n+1} & d_Y^n
\end{pmatrix}.\]
		\item Let $C^{\bullet,\bullet}$ be a double complex. We denote the vertical differentials by $d_v^{n,m}$ and the horizontal differentials by $d_h^{n,m}$. By convention, all squares of the bicomplex $C^{\bullet,\bullet}$ commute. The associated \emph{total complex} $\Tot^{\oplus}C^{\bullet,\bullet}$ is defined by 
\[\left(\Tot^{\oplus}C^{\bullet,\bullet}\right)^n=\bigoplus_{i+j=n}C^{i,j}\]
and the differentials $d_{\Tot}$ are given by the rule 
\[d_{\Tot}=d_h+(-1)^{\text{horizontal degree}}d_{v}.\]
		\item A complex $X^{\bullet}$ is called a \emph{standard contractible} complex if it is of the form:
		$$\cdots \rightarrow 0\rightarrow X\xrightarrow{1_X} X \rightarrow 0 \rightarrow \cdots$$
	\end{enumerate}
\end{definition}

\begin{remark}\label{remark:ConnectionConeAndTotalization}
	There is a strong connection between cones and totalizations. Given a chain map $f^{\bullet}\colon X^{\bullet}\rightarrow Y^{\bullet}$, one finds the mapping cone as the total complex associated to the double complex defined by placing $X^{\bullet}$ in the column of degree $-1$ and $Y^{\bullet}$ in the column of degree $0$ (see for example \cite[exercise~1.2.8]{Weibel94}).
\end{remark}

From now on, $\EE$ denotes a deflation-exact category.

\begin{definition}\label{definition:AcylicComplex}
	Let $\EE$ be a deflation-exact category. A complex $X^{\bullet}\in \C(\EE)$ is called \emph{acylic in degree $n$} if $d_X^{n-1}\colon X^{n-1} \to X^n$ factors as 
	\[\xymatrix{
	X^{n-1}\ar[rr]^{d_X^{n-1}}\ar@{->>}[rd]^{p^{n-1}} && X^n\\
	 & \ker(d_X^{n})\ar@{>->}[ru]^{i^{n-1}} &
	}\] where the deflation $p^{n-1}$ is the cokernel of $d_X^{n-2}$ and the inflation $i^{n-1}$ is the kernel of $d_X^{n}$.
	
	A complex $X^{\bullet}$ is called \emph{acyclic} if it is acylic in each degree.  The full subcategory of $\C(\EE)$ of acyclic complexes is denoted by $\AcC(\EE)$.  We write $\AcK(\EE)$ for the full subcategory of $\K(\EE)$ given by those complexes which are homotopy equivalent to an acyclic complex (thus, $\AcK(\EE)$ is the closure of $\AcC (\EE)$ under isomorphisms in $\K(\EE)$).  We simply write $\Ac(\EE)$ for either $\AcC(\EE)$ or $\AcK(\EE)$ if there is no confusion.  The bounded versions are defined by $\AcC^*(\EE)=\AcC (\EE)\cap \C^*(\EE)$ and $\AcK^*(\EE)=\AcK (\EE)\cap \K^*(\EE)$.
\end{definition}

\begin{remark}
Our terminology follows \cite{Keller90}.  What we call an acyclic complex has been called an exact complex in \cite{Positselski11}; the objects in $\AcK(\EE)$ are called acyclic complexes in \cite{Positselski11}.
\end{remark}

\begin{remark}\makeatletter
\hyper@anchor{\@currentHref}%
\makeatother\label{remark:StandardContractible}
	\begin{enumerate}
		\item If $\EE$ is abelian, then $X^{\bullet}$ is acyclic in degree $n$ if and only if the cohomology group $\Homology^n(X^{\bullet})$ is zero.
		\item If $\EE$ is deflation-exact, an acyclic complex $X^{\bullet}$ with three consecutive non-zero terms and all other terms being zero gives a conflation in $\EE$.  However, a conflation need not necessarily be an acyclic complex.  Indeed, the complex $\cdots \to 0 \to X\to Y\to Z\to 0\to \cdots$ is acyclic if and only if $X\to Y \to Z$ is a conflation and $Z\to 0$ is a deflation.  If $\EE$ satisfies axiom \ref{R0*}, any conflation yields a three-term acyclic complex.
		\item While standard contractible complexes are null-homotopic, they are acyclic if and only if the category $\EE$ satisfies axiom \ref{R0*}.
	\end{enumerate}
\end{remark}

The next proposition (see \cite[proposition~7.2]{BazzoniCrivei13}) will be used in the definition of the derived category of $\EE$.

\begin{proposition}\label{proposition:ConeOfAcyclic}
Let $\EE$ be a deflation-exact category. The mapping cone of a chain map $f^{\bullet}\colon X^{\bullet}\rightarrow Y^{\bullet}$ between acyclic complexes is acyclic. In particular, the category $\Ac_{\K}(\EE)$ is a triangulated subcategory of $\K(\EE)$. Similar statements hold for the bounded versions.
\end{proposition}

\begin{definition}\label{definition:Quasi-isomorphism}
	A chain map $f^{\bullet}\colon A^{\bullet}\rightarrow B^{\bullet}$ is a called a \emph{quasi-isomorphism} if its mapping cone is homotopy equivalent to an acyclic complex.
	
	The chain map $f^{\bullet}\colon A^{\bullet}\rightarrow B^{\bullet}$ is called a \emph{weak quasi-isomorphism} if its mapping cone is a direct summand of an acyclic complex up to homotopy equivalence.
\end{definition}

\begin{definition}\label{definition:DerivedCategory}
	Let $\EE$ be a deflation-exact category. 
	\begin{enumerate}
		\item The \emph{(unbounded) derived category} $\D(\EE)$ is defined as the Verdier localization $\K(\EE)/\Ac_{\K}(\EE)$. 
		\item The \emph{bounded derived category} $\Db(\EE)$ is defined as the full triangulated subcategory of $\D(\EE)$ spanned by bounded complexes up to weak quasi-isomorphism. The left and right bounded derived categories $\D^+(\EE)$ and $\D^-(\EE)$ are defined similarly.
	\end{enumerate}
\end{definition}

\begin{remark}\makeatletter
\hyper@anchor{\@currentHref}%
\makeatother\label{remark:ConditionsDefinitionDerivedCategory}
	\begin{enumerate}
		\item Let $\EE$ be a deflation-exact category.  In the notation of section \ref{subsection:VerdierLocalization}, the set of quasi-isomorphisms is denoted by $\NN(\Ac_{\K}(\EE))$ and the set of weak quasi-isomorphisms is given by $\NN(\langle \Ac_{\K}(\EE) \rangle_{\textrm{thick}})$ where $\langle \Ac_{\K}(\EE) \rangle_{\textrm{thick}}$ is the thick closure of $\Ac_{\K}(\EE)$ in $\K(\EE)$.  A chain map in $\K^*(\EE)$ becomes an isomorphism in $\D^*(\EE)$ if and only if it is a weak quasi-isomorphism.
		\item Alternatively, one could define the bounded derived category $\Db(\EE)$ as the Verdier localization $\Kb(\EE)/\left(\Ac_{\K}(\EE)\cap \Kb(\EE)\right)$. To show that this definition coincides with the previous definition, it suffices to show that if a roof
	\[\xymatrix@R0.5em{
		& Z^{\bullet}\ar[ld]_{\rotatebox{30}{$\simeq$}}\ar[rd] & \\
		X^{\bullet} & & Y^{\bullet}
	}\] represents a map $X^{\bullet}\to Y^{\bullet}$ in $\D(\EE)$ between bounded complexes $X^{\bullet}$ and $Y^{\bullet}$, one can find an equivalent roof consisting of only bounded complexes. We leave it to the reader to verify that this follows from propositions \ref{proposition:AcylicComplexAsExtensionOfTruncations} and \ref{proposition:TruncatedReplacements} below (see also \cite[lemma 11.7]{Keller96}).
	\end{enumerate}
\end{remark}

In proposition \ref{proposition:NecessityOfWeakIdempotentCompleteness} below, we give conditions under which $\Ac^*(\EE)\subseteq \K^*(\EE)$ is closed under isomorphisms.  We start by stating an adaptation of a well-known lemma.

\begin{lemma}\label{lemma:CoretractionInHomotopyCategory}
	Let $\EE$ be a deflation-exact category.
	\begin{enumerate}
		\item Let $K^{\bullet}\to L^{\bullet}$ be a coretraction in $\K^*(\EE)$. The map $K^{\bullet}\to L^{\bullet}\oplus IK^{\bullet}$ where $IK=\cone(1_{K^{\bullet}})$ is a coretraction in $\C^*(\EE)$.
		\item If $\EE$ satisfies axiom \ref{R0*}, the complex $IK^{\bullet}$ is acyclic.
	\end{enumerate}
\end{lemma}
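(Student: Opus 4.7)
The two parts are essentially independent: part~(1) is a formal null-homotopy manipulation that needs no exactness axiom, whereas part~(2) follows by identifying $IK^{\bullet}$ explicitly with a direct sum of standard contractibles and invoking remark~\ref{remark:StandardContractible}(3).

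For part~(1), I would first unpack the hypothesis: a coretraction $f^{\bullet}\colon K^{\bullet}\to L^{\bullet}$ in $\K^{*}(\EE)$ is witnessed by a chain map $g^{\bullet}\colon L^{\bullet}\to K^{\bullet}$ and degree $-1$ maps $h^{n}\colon K^{n}\to K^{n-1}$ satisfying $1_{K}-gf=dh+hd$. Writing $(IK)^{n}=K^{n+1}\oplus K^{n}$ with differential $(a,b)\mapsto(-da,\,a+db)$, I would define
\[\tilde f^{n}\colon K^{n}\longrightarrow L^{n}\oplus K^{n+1}\oplus K^{n},\qquad k\longmapsto (f^{n}(k),\,0,\,k),\]
which is a chain map because $d_{IK}(0,k)=(0,dk)$. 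For a strict left inverse I would set
\[\tilde g^{n}\colon L^{n}\oplus K^{n+1}\oplus K^{n}\longrightarrow K^{n},\qquad (l,a,b)\longmapsto g^{n}(l)+h^{n+1}(a)+(1-gf)^{n}(b).\]
The chain-map property of $\tilde g$ reduces exactly to the homotopy identity $(1-gf)(a)-h(da)=dh(a)$, and then $\tilde g\tilde f(k)=gf(k)+(1-gf)(k)=k$ is immediate, so $\tilde f$ is a strict coretraction in $\C^{*}(\EE)$.

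For part~(2), I would exhibit a chain isomorphism between $IK^{\bullet}$ and $\bigoplus_{n}S_{n}^{\bullet}$, where $S_{n}^{\bullet}$ denotes the standard contractible complex with $K^{n}$ in degrees $n-1$ and $n$ joined by $1_{K^{n}}$. Both complexes have $K^{m+1}\oplus K^{m}$ in degree $m$, and the map
\[\phi^{m}\colon (IK)^{m}\longrightarrow \Bigl(\bigoplus_{n} S_{n}^{\bullet}\Bigr)^{m},\qquad (a,b)\longmapsto (a+db,\,b),\]
with inverse $(a',b)\mapsto(a'-db,\,b)$, intertwines the two differentials; the only identity needed is $d(a+db)=da$, which is just $d^{2}=0$. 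Under axiom~\ref{R0*}, each $S_{n}^{\bullet}$ is acyclic by remark~\ref{remark:StandardContractible}(3); since in each fixed degree only two summands of $\bigoplus_{n}S_{n}^{\bullet}$ are nonzero, acyclicity passes through this (levelwise finite) direct sum and then along $\phi$ to $IK^{\bullet}$.

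The only friction I anticipate is bookkeeping of signs arising from the cone differential, both in the chain-map verification for $\tilde g$ in part~(1) and in the intertwining identity for $\phi$ in part~(2); once the relevant identities are written down, the verifications are mechanical.
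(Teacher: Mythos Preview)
Your proof is correct. The paper itself gives no argument here, merely referring to \cite[section~2.3.a]{Keller90} as a straightforward adaptation; your write-up supplies exactly the standard explicit verification one finds there (or reconstructs independently): the section $\tilde g$ built from $g$, the homotopy $h$, and the defect $1-gf$, together with the identification of $IK^{\bullet}$ with a direct sum of standard contractibles via the shear isomorphism $(a,b)\mapsto(a+db,b)$. There is nothing to correct; the sign bookkeeping you flag is indeed the only content, and you have it right.
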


\begin{proof}
	The proof is a straightforward adaptation of \cite[section~2.3.a]{Keller90}.
\end{proof}

\begin{proposition}\label{proposition:NecessityOfWeakIdempotentCompleteness}
	Let $\EE$ be a deflation-exact category and let $*\in \left\{+,-,b\right\}$.
	\begin{enumerate}
		\item If $\Ac_{\K}^*(\EE)$ is a thick subcategory of $\K^*(\EE)$, then $\EE$ is weakly idempotent complete.
		\item If $\EE$ satisfies \ref{R3} and is weakly idempotent complete, then $\Ac_{\K}^*(\EE)$ is a thick subcategory of $\K^*(\EE)$.
	\end{enumerate}
	Replacing weakly idempotent complete by idempotent complete, the lemma remains valid for unbounded derived categories.
\end{proposition}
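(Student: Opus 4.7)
The plan is to prove the two directions separately.

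For part (1), I would exploit that thickness of $\Ac_\K^*(\EE)$ includes closure under isomorphism and under direct summands in $\K^*(\EE)$. First, the standard contractible complex $\cone(1_X)=(X \xrightarrow{1_X} X)$ is null-homotopic, hence isomorphic in $\K^*(\EE)$ to $0 \in \Ac_\K^*(\EE)$; iso-closure places it in $\Ac_\K^*(\EE)$, and remark \ref{remark:StandardContractible}(3) then yields axiom \ref{R0*}. Next, given a retraction $r\colon X\to Y$ with section $s\colon Y\to X$, I consider the bounded four-term complex
\[
Z^\bullet:\ \cdots \to 0 \to Y \xrightarrow{s} X \xrightarrow{1-sr} X \xrightarrow{r} Y \to 0 \to \cdots
\]
(in degrees $0$ through $3$); the identities $(1-sr)s=0$ and $r(1-sr)=0$ make it a complex, and an explicit homotopy with $h^1=r$, $h^2=1_X$, $h^3=s$ (using only $rs=1_Y$) shows it is null-homotopic. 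Hence $Z^\bullet \in \Ac_\K^*(\EE)$, and the acyclicity condition in degree $2$ requires $\ker(d^2)=\ker(r)$ to exist. For the unbounded case, one replaces $Z^\bullet$ by the $\bZ$-indexed periodic complex $\cdots \xrightarrow{1-e} X \xrightarrow{e} X \xrightarrow{1-e} \cdots$ attached to an arbitrary idempotent $e \in \End_\EE(X)$; this is again null-homotopic, so its forced acyclicity gives both $\ker(e)$ and $\ker(1-e)$, splitting $e$.

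For part (2), axiom \ref{R3} implies \ref{R0*} by remark \ref{remark:R0*SplitKernelCokernel}(1), and $\Ac_\K^*(\EE)$ is already a triangulated subcategory by lemma \ref{lemma:ConeOfAcyclic}. It therefore suffices to verify closure under direct summands in $\K^*(\EE)$, which also entails closure under isomorphism. So let $Y^\bullet$ be a summand of an acyclic complex $X^\bullet$ in $\K^*(\EE)$. By lemma \ref{lemma:CoretractionInHomotopyCategory}, the coretraction $Y^\bullet \to X^\bullet$ in $\K^*$ lifts to a coretraction $Y^\bullet \to W^\bullet := X^\bullet \oplus IY^\bullet$ in $\C^*(\EE)$, where $IY^\bullet=\cone(1_{Y^\bullet})$ is acyclic by \ref{R0*}; hence $W^\bullet$ is acyclic. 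Applying weak idempotent completeness termwise splits each $Y^n \to W^n$ with cokernel $Z^n$, yielding $W^\bullet \cong Y^\bullet \oplus Z^\bullet$ in $\C^*(\EE)$ with diagonal differentials $d_W^n = d_Y^n \oplus d_Z^n$.

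The final step is to extract conflations for $Y^\bullet$ from those of $W^\bullet$. The projection idempotent $e^n = \iota^n \pi^n \colon W^n \to W^n$ onto the $Y$-summand commutes with $d_W^n$, hence restricts to an idempotent on $K_W^n := \ker(d_W^n)$; weak idempotent completeness splits it, giving $K_W^n \cong K_Y^n \oplus K_Z^n$ with $K_Y^n = \ker(d_Y^n)$. The conflation $K_W^{n-1} \inflation W^{n-1} \deflation K_W^n$ arising from the acyclicity of $W^\bullet$ then admits a retract diagram onto the $Y$-summand, and proposition \ref{proposition:RetractConflation} — precisely where axiom \ref{R3} is needed — concludes that $K_Y^{n-1} \inflation Y^{n-1} \deflation K_Y^n$ is a conflation. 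Hence $Y^\bullet$ is acyclic.

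I expect the main obstacle to be this last step: carefully checking that the kernels $K_W^n$ decompose as $K_Y^n \oplus K_Z^n$ via the idempotent-splitting consequence of WIC, and then invoking proposition \ref{proposition:RetractConflation} uniformly at each degree to transport the conflations. The unbounded variant (replacing WIC by idempotent completeness) is handled analogously, using an unbounded periodic complex in place of the bounded $Z^\bullet$ in part (1), and idempotent splittings in place of WIC splittings of retractions in part (2).
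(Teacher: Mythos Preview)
Your argument for part (1) is essentially the paper's: both construct the same null-homotopic four-term complex attached to a retraction and read off the existence of its kernel from forced acyclicity. Your treatment of the unbounded case via the periodic complex is the standard companion.

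For part (2) your overall strategy coincides with the paper's up to and including the reduction to a genuine direct-sum decomposition $W^\bullet = Y^\bullet \oplus Z^\bullet$ in $\C^*(\EE)$ with $W^\bullet$ acyclic and diagonal differential (this uses lemma \ref{lemma:CoretractionInHomotopyCategory} and the fact that the chain-map retraction produced there makes the termwise complement into a subcomplex). The paper then finishes in one line by invoking axiom \ref{R3} together with \cite[proposition~5.9]{BazzoniCrivei13}; you instead try to argue explicitly.

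That explicit argument has a gap. Your key sentence is ``weak idempotent completeness splits [the idempotent $\bar e^n$ on $K_W^n$]'', but weak idempotent completeness does \emph{not} split arbitrary idempotents: it only guarantees kernels of retractions, i.e.\ it splits idempotents that already come with a factorisation $e=sr$ satisfying $rs=1$. The restricted idempotent $\bar e^n$ on $K_W^n=\ker(d_W^n)$ has no such factorisation available --- the natural intermediate object would be $\ker(d_Y^n)$, whose existence is exactly what you are trying to establish. For the same reason you cannot yet set up the retract diagram required by proposition \ref{proposition:RetractConflation}: its outer rows need objects $K_Y^{n-1},K_Y^n$ that you do not have.

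One way to close the gap for $*\in\{b,-\}$ is an induction from the right: at the largest degree $N$ one has $K_W^N=W^N$, which visibly splits; then the diagonal deflation $W^{N-1}\twoheadrightarrow K_W^N$ has each component a deflation by the weakly-idempotent-complete form of \ref{R3} (remark \ref{remark:R0*SplitKernelCokernel}(4)), whence $K_W^{N-1}=\ker(d_Y^{N-1})\oplus\ker(d_Z^{N-1})$, and one continues. For $*=+$ this induction is unavailable and a separate argument is needed; the paper defers all of this to the cited result from \cite{BazzoniCrivei13}.
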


\begin{proof}
The dual of the proof given in \cite[proposition 10.14]{Buhler10} works in this setting.  A one-sided version of \cite[proposition 7.6]{Buhler10} has been shown in \cite[proposition 6.4]{BazzoniCrivei13} and requires axiom \ref{R3} (see remark \ref{remark:R0*SplitKernelCokernel}.\ref{enumerate:R3Plus}).  For the unbounded derived category, the proof of \cite[corollary~10.11]{Buhler10} can be taken.
\end{proof}

\subsection{Truncations of complexes}

Truncations are an important tool for studying complexes in an abelian (\cite{BeilinsonBernsteinDeligne82}) or quasi-abelian (\cite{Schneiders99}) setting. The next definition defines such truncations in an additive setting provided that some differentials have kernels (or cokernels).

\begin{definition}\label{definition:Truncations}
	Let $C^{\bullet}$ be a complex in an additive category $\AA$ such that $d^{n-1}_C\colon C^{n-1} \to C^n$ factors as 
	\[C^{n-1}\xrightarrow{p^{n-1}} \ker(d^n)\xrightarrow{i^{n-1}} C^n\]
	where $i^{n-1}$ is the kernel of $d^n_C$. The \emph{canonical truncation} $\tau^{\leq n}C^{\bullet}$ is a complex together with a morphism $\tau^{\leq n}C^{\bullet} \to C^\bullet$ given by:
	\[\xymatrix{
		\tau^{\leq n} C^{\bullet}\ar[d] &&\cdots\ar[r] & C^{n-3}\ar[r]\ar@{=}[d] & C^{n-2}\ar[r]\ar@{=}[d] & C^{n-1}\ar[r]^{p^{n-1}}\ar@{=}[d] & \ker(d_C^{n})\ar[r]\ar[d]^{i^{n-1}} & 0\ar[r]\ar[d] & \cdots\\
		C^{\bullet} 								&&\cdots\ar[r] & C^{n-3}\ar[r] & C^{n-2}\ar[r] & C^{n-1}\ar[r] & C^{n}\ar[r] & C^{n+1}\ar[r] & \cdots
	}\] and the \emph{canonical truncation} $C^\bullet \to \tau^{\geq n+1}C^{\bullet}$ is similarly defined by:
	\[\xymatrix{
	C^{\bullet}\ar[d] 		&& \cdots\ar[r] & C^{n-3}\ar[r]\ar[d] & C^{n-2}\ar[r]\ar[d] & C^{n-1}\ar[r]\ar[d]^{p^{n-1}} & C^{n}\ar[r]\ar@{=}[d]  & C^{n1}\ar[r]\ar@{=}[d] & \cdots\\
	\tau^{\geq n+1} C^{\bullet} && \cdots\ar[r] & 0\ar[r] 			 & 0\ar[r]& \ker(d_C^{n})\ar[r]^{i^{n-1}}& C^{n}\ar[r] & C^{n+1}\ar[r] &\cdots
	}\]
\end{definition}

\begin{proposition}\label{proposition:AcylicComplexAsExtensionOfTruncations}
Let $\AA$ be an additive category and let $C^{\bullet}\in \C^*(\AA)$. If $d^n_C$ has a kernel, then the following triangle is a distinguished triangle in $\K^*(\AA)$
\[\tau^{\leq n}C^{\bullet}\rightarrow C^{\bullet} \rightarrow \tau^{\geq n+1}C^{\bullet}\rightarrow \Sigma \left( \tau^{\leq n}C^{\bullet}\right)\]
In other words, $C^{\bullet}$ is an extension of the canonical truncation $\tau^{\geq n+1}C^{\bullet}$ by $\tau^{\leq n}C^{\bullet}$ in $\K^*(\AA)$.
\end{proposition}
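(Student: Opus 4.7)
The plan is to identify $\tau^{\geq n+1}C^\bullet$, in $\K^*(\AA)$, with the mapping cone of the canonical chain map $f\colon \tau^{\leq n}C^\bullet \to C^\bullet$ from Definition \ref{definition:Truncations}, in such a way that the structure maps match the candidate triangle. Once this is done, the candidate triangle is isomorphic in $\K^*(\AA)$ to the standard mapping cone triangle of $f$, so it is distinguished by construction of the triangulated structure on $\K^*(\AA)$.

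First, I would write $M^\bullet := \cone(f)$ out level-wise using Definition \ref{definition:ConesDoubleComplexesContractibles}: in degree $k \leq n-2$ it is $C^{k+1} \oplus C^k$, in degree $n-1$ it is $\ker(d_C^n) \oplus C^{n-1}$, and in degree $k \geq n$ it agrees with $C^k$. The only unusual differential is the one in degree $n-1$, which, using $d_C^{n-1} = i^{n-1} p^{n-1}$, sends $(v,x)$ to $i^{n-1}(v+p^{n-1}(x)) \in C^n$.

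Next, I would introduce comparison maps $\pi\colon M^\bullet \to \tau^{\geq n+1}C^\bullet$ and $\iota\colon \tau^{\geq n+1}C^\bullet \to M^\bullet$. Set $\pi^k = 0$ for $k \leq n-2$, $\pi^{n-1}(v,x) = v + p^{n-1}(x)$, and $\pi^k = 1_{C^k}$ for $k \geq n$; dually, $\iota^{n-1}(v) = (v,0)$ and $\iota^k = 1_{C^k}$ for $k \geq n$. Both are chain maps; the only non-obvious check is that $\pi^{n-1}$ annihilates the image of $d_M^{n-2}$, which reduces to $p^{n-1}\circ d_C^{n-2} = 0$, itself a consequence of $d_C^{n-1}d_C^{n-2} = 0$ together with the fact that $i^{n-1}$ is monic. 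Moreover, $\pi$ is compatible with the canonical maps out of $C^\bullet$: the composition $C^\bullet \to M^\bullet \xrightarrow{\pi} \tau^{\geq n+1}C^\bullet$ agrees with the structure map of Definition \ref{definition:Truncations}, so the left square between the candidate triangle and the cone triangle of $f$ commutes on the nose.

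Finally, I would verify that $\pi$ and $\iota$ are mutually inverse in $\K^*(\AA)$. The composition $\pi\iota$ equals $1_{\tau^{\geq n+1}C^\bullet}$ on the nose. For $\iota\pi - 1_{M^\bullet}$, I would exhibit the bounded homotopy $h\colon M^\bullet \to M^{\bullet-1}$ given by $h^k(y,w) = (-w,0)$ for $k \leq n-1$ and $h^k = 0$ for $k \geq n$; a direct computation then yields $\iota\pi - 1_{M^\bullet} = d_M h + h d_M$. Using $\pi$ (or $\iota$) as the vertical isomorphism matches the candidate triangle with the mapping cone triangle of $f$, completing the proof. The only real obstacle is bookkeeping signs and indices when writing down $\pi$, $\iota$ and $h$; no assumption on $\AA$ beyond the existence of $\ker(d_C^n)$ is needed.
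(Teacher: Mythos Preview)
Your proof is correct. The paper takes a closely related but slightly different route: rather than showing that $\cone(\tau^{\leq n}C^\bullet \to C^\bullet)$ is homotopy equivalent to $\tau^{\geq n+1}C^\bullet$, it rotates the candidate triangle and shows that $\cone\bigl(\Sigma^{-1}\tau^{\geq n+1}C^\bullet \to \tau^{\leq n}C^\bullet\bigr)$ is homotopy equivalent to $C^\bullet$, by exhibiting an explicit chain-level isomorphism to $C^\bullet$ direct-sum a standard contractible complex. Both arguments are direct cone computations of the same flavor; your choice has the minor advantage of working with the map $\tau^{\leq n}C^\bullet \to C^\bullet$ that already appears in the statement, so the compatibility of the second map in the triangle with the structure map of Definition~\ref{definition:Truncations} is visible on the nose, whereas the paper's choice yields a genuine isomorphism in $\C^*(\AA)$ (up to a contractible summand) rather than only a homotopy equivalence.
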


\begin{proof}
It suffices to show that $\cone(f^\bullet\colon \tau^{\geq n+1}C^{\bullet}\rightarrow \Sigma ( \tau^{\leq n}C^{\bullet}))\cong \Sigma (C^\bullet)$ in $\K^*(\AA).$  The cone is given by the following diagram:
\[\xymatrix{
		\tau^{\geq n+1}C^{\bullet}\ar[d]^{f^{\bullet}}&\cdots\ar[r]&0\ar[r]\ar[d]&0\ar[rr]\ar[d]&&K\ar[r]^{i^{n-1}}\ar@{=}[d]&C^{n}\ar[r]\ar[d]&C^{n+1}\ar[r]\ar[d]&\cdots\\
		\Sigma \tau^{\leq n}C^{\bullet}\ar[d]^{-1}&\cdots\ar[r]&C^{n-2}\ar[d]^{\begin{psmallmatrix}0\\-1\end{psmallmatrix}}\ar[r]&C^{n-1}\ar[d]^{\begin{psmallmatrix}0\\-1\end{psmallmatrix}}\ar[rr]^{-p^{n-1}}&&K\ar[d]\ar[r]&0\ar[d]\ar[r]&0\ar[d]\ar[r]&\cdots\\
		\cone(f^{\bullet}) &\cdots\ar[r]& C^{n-2}\ar[r]_-{\begin{psmallmatrix}0\\-d^{n-2}_C\end{psmallmatrix}} & K\oplus C^{n-1}\ar[rr]_{\begin{psmallmatrix} -i^{n-1} & 0\\ 1 & -p^{n-1}\end{psmallmatrix}} && C^n\oplus K\ar[r]_{\begin{psmallmatrix}-d^{n}_C & 0\end{psmallmatrix}} & C^{n+1}\ar[r]_{-d^{n+1}_C} & C^{n+2}\ar[r] & \cdots}\]
It is now straightforward to verify that $\cone(f^\bullet) \cong \Sigma (C^\bullet).$
\end{proof}

\begin{remark}
Note that the truncations of proposition \ref{definition:Truncations} are not necessarily defined on the whole of $\K^*(\EE)$.  Moreover, the domain of the truncations is not necessarily closed under homotopy equivalence.
\end{remark}

\subsection{Congenial quasi-isomorphisms and roofs}

Let $Z^\bullet \to X^\bullet$ be a quasi-isomorphism.  In general, even if $X^\bullet$ were acyclic in $n\in \bZ$, then $Z^\bullet$ need not be.  We now consider a class of quasi-isomorphisms that better reflects properties of $X^\bullet.$

\begin{definition}\label{definition:Congenial}
A quasi-isomorphism $\alpha^{\bullet}\colon \zeta^{\bullet} \stackrel{\simeq}{\rightarrow} X^{\bullet}$ is called \emph{congenial} if the following conditions are satisfied:
\begin{enumerate}
	\item\label{enumerate:CongenialAcyclic} if $X^\bullet$ is acyclic in degree $n$, then $\zeta^\bullet$ is acyclic in degree $n$,
	\item\label{enumerate:CongenialDeflations} for each $n \in \bZ,$ the morphism $\alpha^n\colon \zeta^n \deflation X^n$ is a deflation,
	\item\label{enumerate:CongenialSameBound} for each $n \in \bZ,$ if $\forall m \geq n: X^m = 0$, then $\zeta^n = 0$. 
\end{enumerate}
\end{definition}

\begin{remark}
The last condition states that if $X^\bullet$ is supported in degrees at most $n$, then so is a congenial replacement of $X^\bullet.$  In particular, if $\zeta^\bullet \to 0$ is a congenial quasi-isomorphism, then $\zeta^\bullet = 0.$
\end{remark}

\begin{definition}
Let $X^\bullet \stackrel{\simeq}{\leftarrow} Z^\bullet \to Y^\bullet$ be a roof in $\C(\EE)$ with $\alpha\colon Z^\bullet\to X^\bullet$ a quasi-isomorphism.  We say that the roof is a \emph{congenial roof} if $\alpha$ is a congenial morphism, and for each $n \in \bZ, \forall m \leq n: X^m = Y^m=  0$, then $\zeta^n = 0$. 
\end{definition}

\begin{proposition}\label{proposition:HomotopyReplacement}\label{proposition:TruncatedReplacements}\label{proposition:CongenialReplacement}
	Let $\EE$ be a deflation-exact category.
	\begin{enumerate}
		\item For every quasi-isomorphism $\beta\colon Z^{\bullet}\xrightarrow{\simeq} X^{\bullet}$, there exists a quasi-isomorphism $\zeta^{\bullet} \xrightarrow{\simeq} Z^{\bullet}$ such that the composition $\zeta^\bullet \to Z^\bullet \to X^\bullet$ is a congenial quasi-isomorphism.
		\item Any morphism $X^\bullet \to Y^\bullet$ can be represented by a congenial roof $X^\bullet \stackrel{\simeq}{\leftarrow} \zeta^\bullet \to Y^\bullet$.
	\end{enumerate}
\end{proposition}

\begin{proof}
	Without loss of generality, we may assume that $X^\bullet \neq 0$.  Since $\beta$ is a quasi-isomorphism, there exists a homotopy equivalence $\cone(\beta)\xrightarrow{\delta} E^{\bullet}$ where $E^{\bullet}$ is an acyclic complex.  We find a commutative diagram in $\K(\EE)$ whose rows are triangles:
	\[\xymatrix{
		Z^{\bullet}\ar[r]^{\beta} & X^{\bullet}\ar[r]\ar@{=}[d] & \cone(\beta)\ar[r]\ar[d]^{\delta} & \Sigma Z^{\bullet}\\
		\Sigma^{-1}\cone(\gamma)\ar[r] & X^{\bullet}\ar[r]^{\gamma} & E^{\bullet}\ar[r] & \cone(\gamma)
	}\]
	Since $\delta$ is a homotopy equivalence, these triangles are isomorphic in $\K(\EE)$. Hence, there exists a homotopy equivalence $\Sigma^{-1}\cone(\gamma)\to Z^{\bullet}$.  The morphism $\Sigma^{-1}\cone(\gamma)\to X^{\bullet}$ is a quasi-isomorphism and a pointwise (split) deflation.  As in the proof of \cite[lemma 7.2]{BazzoniCrivei13}, if $X^\bullet$ is acyclic in degree $n$, then so is $\zeta^\bullet = \Sigma^{-1}\cone(\gamma).$  Hence, conditions \eqref{enumerate:CongenialAcyclic} and \eqref{enumerate:CongenialDeflations} of definition \ref{definition:Congenial} are satisfied.
	
If $X^\bullet$ is not bounded on the right, then condition \eqref{enumerate:CongenialSameBound} is trivially satisfied.  So, assume that $X^\bullet$ is bounded on the right.  Without loss of generality, assume that $X^0$ is the highest degree where $X^\bullet$ has nonzero entries.  Then $\zeta^0\cong X^0\oplus E^{-1},\zeta^1=E^0$ and $d_{\zeta}^0\colon X^0\oplus E^{-1}\to E^0$ is given by $-\begin{psmallmatrix} \gamma^0 & d^{-1}_E \end{psmallmatrix}$.  We claim that $d_{\zeta}^0$ has a kernel.
	
The map $\gamma^\bullet\colon X^\bullet \to E^\bullet$ induces the commutative diagram
\[\xymatrix@C+6pt{
		\ker(d_E^{-1})\ar@{>->}[r]^-{\iota^{-2}}\ar@{=}[d]&P\ar@{->>}[r]^{\rho^{-1}}\ar[d] & X^0\ar@{.>}[d]_{\phi}\ar[rd]^{\gamma^0} & & \\
		\ker(d_E^{-1})\ar@{>->}[r]^-{i_E^{-2}}&E^{-1}\ar@{->>}[r]^{p_E^{-1}} \ar@/_2pc/[rr]_{d^{-1}_E}& \ker(d_E^0)\ar@{>->}[r]^{i_E^{-1}} & E^0\ar[r]^{d_E^0} & E^1
	}\]
where the map $\phi\colon X^0 \to \ker(d_E^0)$ is induced by $d_E^0\circ \gamma^0=\gamma^1\circ d_X^0=0$ and the right square is a pullback square.  This establishes that $d_{\zeta}^0$ admits a kernel (the kernel of $-d_{\zeta}^0 = \begin{psmallmatrix} \gamma^0 & d^{-1}_E \end{psmallmatrix}\colon X^0\oplus E^{-1}\to E^0$ is given by $P \to X \oplus E^{-1}$).
	
	Hence, we may apply proposition \ref{proposition:AcylicComplexAsExtensionOfTruncations} to find a triangle $\tau^{\leq 0}\zeta^{\bullet}\rightarrow \zeta^{\bullet} \rightarrow \tau^{\geq 1}\zeta^{\bullet}\rightarrow \Sigma \left( \tau^{\leq 0}\zeta^{\bullet}\right)$ in $\K(\EE)$.  It is easily seen that $ \tau^{\geq 1}\zeta^{\bullet} \in \Ac_{\K}(\EE)$ so that $\tau^{\leq 0} \zeta^{\bullet} \to \zeta^{\bullet}$ is a quasi-isomorphism.  It follows that the composition $\tau^{\leq 0} \zeta^{\bullet} \xrightarrow{\simeq} \zeta^{\bullet}\xrightarrow{\simeq} Z^{\bullet}$ is a congenial quasi-isomorphism as required.
	
	For the second statement, let $\zeta^\bullet\to X^\bullet$ be the congenial quasi-isomorphism as constructed above.  In particular, all morphisms $\zeta^l \to X^l$ are split (safe for $l=0$).  Let $l \in \bZ$ be the smallest integer such that not both $X^l$ and $Y^l$ are zero.  As we have assumed that $X^0 \neq 0$, we have that $l \leq 0.$  Assume first that $l<0$.  Setting
		\[C^i = \begin{cases} 
	0 & i < l, \\
\coker d_\zeta^{l-1} (= \coker d_E^{l-1}\oplus X^l) & i=l, \\
	\zeta^i & i > l.
	\end{cases}\]
	we find a commutative diagram
		\[\xymatrix{
	X^\bullet & \zeta^\bullet \ar[r]\ar[l]^{\simeq} \ar[d]^{\rotatebox{90}{$\simeq$}}& Y^\bullet \\
	& C^\bullet \ar[ur] \ar[ul]^{\simeq}}\]
	as required.  When $l = 0$, we obtain a similar diagram where $C^\bullet$ is given by the stalk complex containing $\coker (\zeta^{-1} \to \zeta^0)$ in degree zero.
\end{proof}                    

\begin{remark}\label{remark:CongenialRoofBetweenStalkComplexes}
When $X^\bullet, Y^\bullet$ are stalk complexes, supported in degree 0, then in any congenial roof $X^\bullet \stackrel{\simeq}{\leftarrow} Z^\bullet \to Y^\bullet$, we know that $Z^\bullet$ is a stalk complex as well.  The map $Z^0\to X^0$ is then an isomorphism.
\end{remark}

\subsection{The canonical embedding \texorpdfstring{$i\colon \EE\hookrightarrow \Db(\EE)$}{}}\label{subsection:CanonicalEmbedding}

Let $\EE$ be a deflation-exact category. Having defined the derived category and truncations, we now examine the canonical embedding $i\colon \EE\hookrightarrow \D(\EE)$. The next proposition says that, as in the abelian or exact case, the category $\EE$ is equivalent to the category of stalk complexes in $\Db(\EE)$ in degree zero.

\begin{proposition}\label{proposition:StalkEmbeddingFunctors}
	Let $\EE$ be a deflation-exact category. The canonical functor $i\colon \EE\rightarrow \D^*(\EE)$ mapping objects to stalk complexes in degree zero is fully faithful.
\end{proposition}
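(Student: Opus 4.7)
The plan is to represent morphisms in $\D^*(\EE)$ by roofs and to reduce each roof to a map in $\EE$ using the congenial truncated replacement of Proposition \ref{proposition:TruncatedReplacements}. By the right multiplicative system calculus, any morphism $i(X) \to i(Y)$ in $\D^*(\EE)$ can be written as a roof $i(X) \xleftarrow{s} Z^{\bullet} \xrightarrow{f} i(Y)$ with $s$ a quasi-isomorphism. Since $i(X)$ is supported in degree $0$, Proposition \ref{proposition:TruncatedReplacements} lets us replace $Z^{\bullet}$ by $\tau^{\leq 0}\zeta^{\bullet}$, where $\zeta^{\bullet}$ is congenial with $\zeta^{0} = X \oplus E^{-1}$ and $\zeta^{n} = E^{n-1}$ for $n \neq 0$, and where $E^{\bullet}$ is acyclic. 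Write $P := \ker(d^{0}_{\zeta}\colon X \oplus E^{-1} \to E^{0})$ for the resulting degree-zero term.

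The proof of Proposition \ref{proposition:TruncatedReplacements} identifies $P$ with the pullback of the acyclic conflation $\ker(d^{-1}_E) \inflation E^{-1} \deflation \ker(d^{0}_E)$ along the lift $\phi\colon X \to \ker(d^{0}_E)$ of $\gamma^{0}$, yielding a conflation $\ker(d^{-1}_E) \inflation P \deflation X$ whose deflation is the quasi-isomorphism $s^{0}\colon P \to X$. The acyclicity of $E^{\bullet}$ in degree $-1$ further factors the differential $E^{-2} \to P$ through a deflation $E^{-2} \deflation \ker(d^{-1}_E)$ followed by the inflation $\ker(d^{-1}_E) \inflation P$. For fullness, the chain map $f\colon \tau^{\leq 0}\zeta^{\bullet} \to i(Y)$ reduces to a single morphism $f^{0}\colon P \to Y$ with $f^{0} \circ (E^{-2} \to P) = 0$; the deflation $E^{-2} \deflation \ker(d^{-1}_E)$ being epi then forces $f^{0}|_{\ker(d^{-1}_E)} = 0$. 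Hence $f^{0}$ descends through the cokernel $s^{0}$ to a unique $g\colon X \to Y$ in $\EE$ with $f^{0} = g\circ s^{0}$, and the roof $(f,s)$ becomes equivalent to the trivial roof $(g,1_X)$.

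For faithfulness, assume $f\in \Hom_{\EE}(X,Y)$ with $i(f) = 0$. Then there is a quasi-isomorphism $u\colon W^{\bullet} \to i(X)$ with $i(f)\circ u$ null-homotopic in $\K(\EE)$, and after the same truncation we may take $W^{\bullet} = \tau^{\leq 0}\zeta^{\bullet}$. The chain map $i(f)\circ u$ is concentrated in degree $0$ as $f\circ u^{0}\colon P \to Y$; any null-homotopy uses only the components $h^{0}\colon P \to 0$ and $h^{1}\colon 0 \to Y$, both of which vanish automatically, so $f\circ u^{0} = 0$. Since $u^{0}$ is a deflation, hence an epimorphism, this gives $f = 0$. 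The main obstacle throughout is the identification of $P$ with the pullback from Proposition \ref{proposition:TruncatedReplacements} and the extraction of the conflation $\ker(d^{-1}_E) \inflation P \deflation X$, which is precisely what allows $f^{0}$ to factor through $X$; the rest of the argument is routine chain-level bookkeeping.
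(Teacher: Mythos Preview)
Your proof is correct and follows essentially the same route as the paper: represent morphisms by roofs, pass to a congenial replacement via Proposition~\ref{proposition:TruncatedReplacements}, and use the conflation $\ker(d_E^{-1}) \inflation P \deflation X$ from the pullback description (figure~\ref{figure:CongenialDiagram}) to factor through $X$. The one noteworthy variation is in the faithfulness argument: the paper works with the untruncated congenial complex $\zeta^\bullet$ and extracts the vanishing of $f$ from the explicit homotopy component $t\colon E^0 \to Y$, whereas you truncate to $\tau^{\leq 0}\zeta^\bullet$ first so that the relevant homotopy components are forced to be zero for degree reasons, reducing the conclusion directly to $f \circ \rho^{-1} = 0$ with $\rho^{-1}$ a deflation---a mild but genuine simplification.
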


\begin{proof}
It follows from remark \ref{remark:CongenialRoofBetweenStalkComplexes} that $i\colon \EE\rightarrow \D^*(\EE)$ is full.  To see that $i$ is faithful, consider a map $f\colon X \to Y$ in $\EE$ such that $i(f)\colon i(X) \to i(Y)$ is zero.  This implies that there is a quasi-isomorphism $\alpha\colon Z^\bullet \to i(X)$ such that $\alpha \circ i(f)$ is null-homotopic.  By proposition \ref{proposition:CongenialReplacement}, we may assume that $\alpha$ is a congenial quasi-isomorphism.  As $Z^1 = 0$, we find that $\alpha \circ i(f)$ is null-homotopic if and only if $\alpha \circ i(f) = 0.$  Since $\alpha^0$ is an epimorphism, this implies that $f$ is zero.  Hence, $i\colon \EE\rightarrow \D^*(\EE)$ is faithful.
\end{proof}

Under the above embedding, the category $\EE$ has no negative extensions in $\D^*(\EE)$.

\begin{proposition}\label{proposition:NoNegativeExtensions}
	Let $\EE$ be a deflation-exact category. Let $X,Y\in \Ob(\EE)$, then 
	\begin{align*}
	\Hom_{\D^*(\EE)}(i(X), \Sigma^{-n}i(Y))=0, && \mbox{for all $n>0$.}
	\end{align*}
\end{proposition}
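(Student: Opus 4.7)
The strategy is to reduce a roof representing a morphism $i(X) \to \Sigma^{-n} i(Y)$ to one whose top is a complex supported in non-positive degrees, since $\Sigma^{-n} i(Y)$ is a stalk complex in degree $n > 0$, and then observe that any chain map between such complexes is literally zero.

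More precisely, I would start with a morphism $f \in \Hom_{\D^*(\EE)}(i(X), \Sigma^{-n} i(Y))$ and represent it by a roof
\[ i(X) \xleftarrow{\alpha} Z^{\bullet} \xrightarrow{f'} \Sigma^{-n} i(Y),\]
where $\alpha$ is a quasi-isomorphism. The aim is to replace $Z^{\bullet}$ by a complex concentrated in degrees $\leq 0$.

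To do this, I would invoke proposition \ref{proposition:TruncatedReplacements} with $X^{\bullet} = i(X)$ (which is supported in degree $0$) to obtain a congenial quasi-isomorphism $\zeta^{\bullet} \to i(X)$ together with a homotopy equivalence $\zeta^{\bullet} \to Z^{\bullet}$, and moreover a quasi-isomorphism $\tau^{\leq 0} \zeta^{\bullet} \to \zeta^{\bullet}$. Composing yields a quasi-isomorphism $\beta\colon \tau^{\leq 0} \zeta^{\bullet} \to Z^{\bullet}$, which fits into an equivalent roof
\[ i(X) \xleftarrow{\alpha \circ \beta} \tau^{\leq 0} \zeta^{\bullet} \xrightarrow{f' \circ \beta} \Sigma^{-n} i(Y)\]
representing the same morphism $f$.

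The final step is the degree argument: $\tau^{\leq 0}\zeta^{\bullet}$ has zero components in all positive degrees, while $\Sigma^{-n} i(Y)$ is a stalk complex concentrated in degree $n > 0$. Hence the chain map $f' \circ \beta$ is componentwise zero, so the roof represents the zero morphism and $f = 0$. I do not anticipate a serious obstacle here; the real work was already carried out in proposition \ref{proposition:TruncatedReplacements}, where the existence of $\tau^{\leq 0} \zeta^{\bullet}$ (i.e.\ the fact that $d^0_{\zeta}$ admits a kernel) was established using the acyclicity of $E^{\bullet}$.
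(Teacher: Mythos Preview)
Your proposal is correct and follows essentially the same approach as the paper's proof: represent the morphism by a roof, use proposition \ref{proposition:TruncatedReplacements} to replace the apex by the truncated complex $\tau^{\leq 0}\zeta^{\bullet}$, and then observe that any chain map from a complex supported in non-positive degrees to the stalk complex $\Sigma^{-n}i(Y)$ in degree $n>0$ vanishes. The paper's version is slightly terser, but the content is identical.
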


\begin{proof}
	Let $n>0$ and let $g\in \Hom_{\D^*(\EE)}(i(X), \Sigma^{-n}i(Y))$. The morphism $g$ can be represented by a congenial roof $i(X) \xleftarrow{\alpha^{\bullet}} Z^{\bullet} \xrightarrow{f^{\bullet}} \Sigma^{-n}i(Y)$.  As $Z^\bullet$ and $\Sigma^{-n} i(Y)$ are stalk complexes concentrated in degrees 0 and $n$, respectively, the map $f\colon Z^\bullet \to \Sigma^{-n} i(Y)$ is zero. Hence, $g$ is zero, as required.
\end{proof}

The next lemma will be useful.

\begin{lemma}\label{lemma:TotalizationOfDoubleComplex}
Let $C^{\bullet,\bullet}$ be a double complex such that for every $n\in \mathbb{Z}$ the set $\left\{(i,j)\mid i+j=n, C^{i,j}\neq 0\right\}$ is finite. 
\begin{enumerate}
	\item If all rows or all columns of $C^{\bullet,\bullet}$ are acyclic, then $\Tot^{\oplus} C^{\bullet,\bullet}$ is acyclic.
	\item Let \[D^{n,m}=\begin{cases}C^{n,m} &\mbox{ if } n\leq k,\\
	0 & \mbox{ if } n>k\end{cases}$$ and let $$E^{n,m}=\begin{cases} 0 & \mbox{ if } n\leq k,\\
	C^{n,m} & \mbox{ if } i>k
	\end{cases}\] be na\"{i}ve truncations of $C^{\bullet,\bullet}$. Then
	\[\Tot^{\oplus} C^{\bullet,\bullet} = \cone(\Sigma^{-1}\Tot^{\oplus}D^{\bullet,\bullet}\rightarrow  \Tot^{\oplus}E^{\bullet,\bullet})\]
where $\Tot^{\oplus}D^{\bullet,\bullet}\rightarrow \Sigma \Tot^{\oplus}E^{\bullet,\bullet}$ is the natural map induces by the differential in $C^{\bullet,\bullet}$ (note that there appears an alternating sign in this map).
	
	A similar statement holds for vertical cuts.
\end{enumerate}
\end{lemma}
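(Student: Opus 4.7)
The plan for part (2) is a straightforward block-matrix computation. The naive horizontal truncations decompose each antidiagonal as $(\Tot^{\oplus} C)^n = (\Tot^{\oplus} D)^n \oplus (\Tot^{\oplus} E)^n$. Writing the total differential in this block form, the diagonal entries collect exactly the differentials whose image stays within $D$ or $E$ respectively, yielding $d_{\Tot D}$ and $d_{\Tot E}$; the sole off-diagonal contribution is the horizontal differential $C^{k,\bullet} \to C^{k+1,\bullet}$ mapping the last column of $D$ into the first column of $E$ (the alternating sign $(-1)^k$ from the vertical component stays inside $D$). This off-diagonal piece assembles into a chain map $f\colon \Tot^{\oplus} D \to \Sigma\Tot^{\oplus} E$, and comparing with the cone formula of definition \ref{definition:ConesDoubleComplexesContractibles} identifies $\Tot^{\oplus} C$ with $\cone(\Sigma^{-1}\Tot^{\oplus} D \to \Tot^{\oplus} E)$, where the sign in the cone differential is absorbed by the $\Sigma^{-1}$ shift.

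For part (1), assume without loss of generality that all columns of $C^{\bullet,\bullet}$ are acyclic (the other case is dual). Fix $n\in \bZ$; we aim to show $\Tot^{\oplus} C$ is acyclic in degree $n$. By the antidiagonal-finiteness assumption, only finitely many column indices contribute to $(\Tot^{\oplus} C)^m$ for $m\in \{n-1,n,n+1\}$. Let $C'^{\bullet,\bullet}$ be the bicomplex obtained by setting to zero all columns outside this finite range. Its columns are either columns of $C$ or zero, hence remain acyclic, and $\Tot^{\oplus} C$ and $\Tot^{\oplus} C'$ agree in degrees $n-1$, $n$, $n+1$. It therefore suffices to prove the result when $C^{\bullet,\bullet}$ has finitely many nonzero columns.

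We now induct on the number $N$ of nonzero columns. For $N=1$, the total complex is a shift of a single acyclic column, hence acyclic. For $N\geq 2$, take $k$ equal to the smallest nonzero column index and apply part (2): $\Tot^{\oplus} C = \cone(\Sigma^{-1}\Tot^{\oplus} D \to \Tot^{\oplus} E)$, where $D$ has one nonzero column (the base case) and $E$ has $N-1$ nonzero columns (the induction hypothesis). Both $\Tot^{\oplus} D$ and $\Tot^{\oplus} E$ are then acyclic, and lemma \ref{lemma:ConeOfAcyclic} yields that the cone $\Tot^{\oplus} C$ is acyclic. The main subtlety is the reduction to finitely many nonzero columns: one must truncate in the direction transverse to the acyclicity hypothesis so that the hypothesis survives, and the antidiagonal-finiteness assumption is precisely what allows such a localized truncation to leave the total complex unchanged in the degrees one cares about.
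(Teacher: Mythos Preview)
Your proof is correct and follows essentially the same approach as the paper: part (2) is sign bookkeeping, and part (1) reduces to finitely many columns (the paper uses rows, but this is the dual statement) via antidiagonal finiteness and the locality of acyclicity, then iterates part (2) together with lemma \ref{lemma:ConeOfAcyclic}. One small slip: acyclicity in degree $n$ depends on the terms in degrees $n-2$ through $n+1$ (see definition \ref{definition:AcylicComplex}, where the deflation is required to be the cokernel of $d^{n-2}$), so your truncated bicomplex $C'$ should also agree with $C$ on the columns contributing to degree $n-2$; this is a trivial adjustment that does not affect the argument.
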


\begin{proof}
The second statement is an exercise in sign bookkeeping.  For the first statement, assume that the rows are acyclic.  To show that the complex $\Tot^{\oplus} C^{\bullet,\bullet}$ is acyclic, it suffices to show that it is acyclic in each $n \in \bZ$.  As in definition \ref{definition:AcylicComplex}, this is only dependent on a part of the complex:
\[\left(\Tot^{\oplus} C^{\bullet,\bullet}\right)^{n-2} \to \left(\Tot^{\oplus} C^{\bullet,\bullet}\right)^{n-1} \to \left(\Tot^{\oplus} C^{\bullet,\bullet}\right)^{n} \to \left(\Tot^{\oplus} C^{\bullet,\bullet}\right)^{n+1}.\]
As we have assumed that $\left\{(i,j)\mid i+j=n, C^{i,j}\neq 0\right\}$ is finite, the statement that the complex $\Tot^{\oplus} C^{\bullet,\bullet}$ is acyclic in degree $n$ depends only on finitely many rows of the bicomplex $C^{\bullet, \bullet}$.  So, consider the bicomplex $F^{\bullet, \bullet}$ obtained from $C^{\bullet, \bullet}$ by replacing all other rows by zero.  By construction, the sequence $\Tot^{\oplus} C^{\bullet,\bullet}$ is acyclic in degree $n$ if and only if $\Tot^{\oplus} E^{\bullet,\bullet}$ is acyclic in degree $n$.  As in the second statement, we find that $\Tot^{\oplus} E^{\bullet,\bullet}$ is obtained via consecutive cones.  As the rows are acyclic, this shows that $\Tot^{\oplus} E^{\bullet,\bullet}$ is acyclic (see proposition \ref{proposition:ConeOfAcyclic}).  We conclude that $\Tot^{\oplus} C^{\bullet,\bullet}$ is acyclic as well.
\end{proof}

Given a deflation-exact category $\EE$ one can extend the deflation-exact structure to $\C(\EE)$ degree-wise. The canonical embeddings $i\colon \EE\hookrightarrow \D^*(\EE)$ and $\C^*(\EE)\hookrightarrow \D^*(\EE)$ are compatible with the deflation-exact structure in the following sense.

\begin{proposition}\label{proposition:ConflationsYieldTriangles}
	Let $\EE$ be a deflation-exact category satisfying \ref{R0*}.  A conflation $X^{\bullet}\stackrel{f^{\bullet}}{\inflation} Y^{\bullet}\stackrel{g^{\bullet}}{\deflation} Z^{\bullet}$ in $\C^*(\EE)$ induces a triangle $$\xymatrix{X^{\bullet}\ar[r]^{f^{\bullet}} & Y^{\bullet}\ar[r]^{g^{\bullet}} & Z^{\bullet}\ar[r] & \Sigma X^{\bullet}}$$
 in $\D^*(\EE)$.
\end{proposition}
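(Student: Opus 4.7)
The plan is to produce the desired triangle by comparing the candidate sequence to the strict triangle of $f^\bullet$. Since $\K^*(\EE) \to \D^*(\EE)$ is a triangle functor, the strict triangle
\[X^\bullet \xrightarrow{f^\bullet} Y^\bullet \xrightarrow{\iota^\bullet} \cone(f^\bullet) \xrightarrow{\pi^\bullet_X} \Sigma X^\bullet\]
is already distinguished in $\D^*(\EE)$. I would then define the natural projection $\pi^\bullet\colon \cone(f^\bullet) \to Z^\bullet$ by $(x,y) \mapsto g^n(y)$ on $\cone(f^\bullet)^n = X^{n+1} \oplus Y^n$. A quick check (using that $g^\bullet$ is a chain map and $g \circ f = 0$) shows $\pi^\bullet$ is a chain map, and the diagram
\[\xymatrix{X^\bullet \ar@{=}[d] \ar[r]^{f^\bullet} & Y^\bullet \ar@{=}[d] \ar[r]^-{\iota^\bullet} & \cone(f^\bullet) \ar[d]^{\pi^\bullet} \ar[r]^-{\pi_X^\bullet} & \Sigma X^\bullet \ar@{=}[d] \\ X^\bullet \ar[r]_{f^\bullet} & Y^\bullet \ar[r]_{g^\bullet} & Z^\bullet \ar[r] & \Sigma X^\bullet}\]
commutes on the nose, where the last arrow is obtained from $\pi_X^\bullet$ after inverting $\pi^\bullet$ in $\D^*(\EE)$.

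The crux is thus to prove that $\pi^\bullet$ becomes an isomorphism in $\D^*(\EE)$, i.e.\ that $\cone(\pi^\bullet)$ is acyclic. Observe that $\cone(\pi^\bullet)^n = X^{n+2} \oplus Y^{n+1} \oplus Z^n$, which (up to a degreewise sign isomorphism, handled by remark \ref{remark:ConnectionConeAndTotalization}) is the total complex $\Tot^\oplus(C^{\bullet,\bullet})$ of the bicomplex $C^{\bullet,\bullet}$ whose three non-zero columns at horizontal degree $p$ are
\[X^p \xrightarrow{f^p} Y^p \xrightarrow{g^p} Z^p\]
with the $C^{p,\bullet}$-differentials being those of $X^\bullet, Y^\bullet, Z^\bullet$. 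Each column is a three-term complex $\cdots \to 0 \to X^p \to Y^p \to Z^p \to 0 \to \cdots$ built from a conflation. By remark \ref{remark:StandardContractible}(2), this column is acyclic in the sense of definition \ref{definition:AcylicComplex} precisely because the map $Z^p \to 0$ is a deflation, which holds thanks to axiom \ref{R0*}. Since all columns of $C^{\bullet,\bullet}$ are acyclic and each antidiagonal has finitely many non-zero entries, lemma \ref{lemma:TotalizationOfDoubleComplex}(1) yields that $\Tot^\oplus(C^{\bullet,\bullet})$ is acyclic, hence so is $\cone(\pi^\bullet)$.

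Having shown that $\pi^\bullet$ is a quasi-isomorphism, it becomes an isomorphism in $\D^*(\EE)$. Replacing $\cone(f^\bullet)$ with $Z^\bullet$ in the strict triangle of $f^\bullet$ via this isomorphism then exhibits $X^\bullet \to Y^\bullet \to Z^\bullet \to \Sigma X^\bullet$ as a distinguished triangle in $\D^*(\EE)$, as desired. The only real obstacle is the identification of $\cone(\pi^\bullet)$ with $\Tot^\oplus(C^{\bullet,\bullet})$: this is a matter of sign bookkeeping, which is routine once one fixes conventions, and can be dispatched by the standard $(-1)^q$-rescaling isomorphism in degree $(p,q)$. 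The essential input—the one place where axiom \ref{R0*} is unavoidable—is the acyclicity of the columns, precisely as foreshadowed in remark \ref{remark:StandardContractible}.
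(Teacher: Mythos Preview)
Your proposal is correct and follows essentially the same approach as the paper: you define the comparison map $\pi^\bullet = h^\bullet$ from $\cone(f^\bullet)$ to $Z^\bullet$, identify its cone with the total complex of the bicomplex whose columns are the degreewise conflations, invoke axiom \ref{R0*} via remark \ref{remark:StandardContractible} to see the columns are acyclic, and apply lemma \ref{lemma:TotalizationOfDoubleComplex}. The paper's proof is the same argument, written out with the explicit matrix for the cone differential.
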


\begin{proof}
	Consider the commutative diagram in $\D^*(\EE)$ 
	\[\xymatrix{
	X^{\bullet}\ar[r]^{f^{\bullet}}\ar@{=}[d] & Y^{\bullet}\ar[r]\ar@{=}[d]			&  \cone(f^{\bullet})\ar[d]^{h^{\bullet}}\ar[r]^{w^\bullet} & \Sigma X^{\bullet}\\
	X^{\bullet}\ar[r]^{f^{\bullet}} 					& Y^{\bullet}\ar[r]^{g^{\bullet}} & Z^{\bullet} 	}\]
	where $h^\bullet$ is given by $\begin{psmallmatrix} 0 & g^i\end{psmallmatrix}\colon X^{i+1} \oplus Y^i \to Z^i$.
	
	It suffices to show that $h$ is a quasi-isomorphism and hence invertible in $\D^*(\EE)$, so that the lower sequence fits into the triangle:
	\[\xymatrix@1{
	X^{\bullet}\ar[r]^{f^{\bullet}} & Y^{\bullet}\ar[r]^{g^{\bullet}} & Z^{\bullet} \ar@{.>}[rr]^{w^\bullet \circ (h^\bullet)^{-1}} && {\Sigma X^\bullet.}
	}\]
By definition, the cone of $h^{\bullet}$ is given by 
	\[\cdots\rightarrow X^{n+2}\oplus Y^{n+1}\oplus Z^n \xrightarrow{\begin{pmatrix} d_X^{n+1} & 0&0\\ -f^{n+1} & -d_Y^n & 0\\ 0&g^{n+1} & d_Z^n\end{pmatrix}} X^{n+3}\oplus Y^{n+2}\oplus Z^{n+1} \rightarrow \cdots \] We need to show that this cone is acyclic. Now consider the double complex 
	\[\xymatrix{
		& 0\ar[d] & 0\ar[d] & 0\ar[d] & \\
		\ar[r] & X^n\ar[r]\ar[d]^{f^n} & X^{n+1}\ar[r]\ar[d]^{f^{n+1}} & X^{n+2}\ar[r]\ar[d]^{f^{n+2}} & \\
		\ar[r] & Y^n\ar[r]\ar[d]^{g^n} & Y^{n+1}\ar[r]\ar[d]^{g^{n+1}} & Y^{n+2}\ar[r]\ar[d]^{g^{n+2}} & \\
		\ar[r] & Z^n\ar[r]\ar[d]^{} & Z^{n+1}\ar[r]\ar[d] & Z^{n+2}\ar[r]\ar[d] & \\
		& 0 & 0 & 0 & 
	}\] By definition, the associated total complex is equal to the cone of $h^{\bullet}$. As each column is a conflation and $\EE$ satisfies axiom \ref{R0*}, remark \ref{remark:StandardContractible} yields that the columns are acyclic complexes. By lemma \ref{lemma:TotalizationOfDoubleComplex} the associated total complex is acyclic. This concludes the proof.
\end{proof} 

\subsection{The homotopy category of projectives}

We discuss projective and injective objects in a one-sided exact category.  We show, if $\EE$ has enough projectives, that $\Db(\EE)$ is triangle equivalent to the homotopy category of projective objects (see proposition \ref{proposition:EnoughProjectives}). The following definition is standard.

\begin{definition}
	Let $\EE$ be a deflation-exact category (or conflation category).
	\begin{enumerate}
		\item An object $P\in \EE$ is called \emph{projective} if $\Hom(P,-)\colon \EE \to \Ab$ is an exact functor.  We say that $\EE$ has \emph{enough projectives} if for every object $M$ of $\EE$ there is a deflation $P\twoheadrightarrow M$ where $P$ is projective.
		\item Dually, an object $I$ is called \emph{injective} if $\Hom(-,I)\colon \EE^\circ \to \Ab$ is an exact functor.   We say that $\EE$ has \emph{enough injectives} if for every object $M$ of $\EE$ there is an inflation $M \inflation I$ where $I$ is projective.
	\end{enumerate}
	We write $\Proj \EE$ and $\Inj \EE$ for the full subcategories of projective objects and injective objects, respectively.
\end{definition}

\begin{proposition}\label{proposition:ProjectiveCharacterizations}
Let $\EE$ be a deflation-exact category. The following are equivalent:
\begin{enumerate}
	\item $P$ is projective.
	\item\label{Item:ProjLift} For all deflations $f\colon X\twoheadrightarrow Y$ and any map $g\colon P\rightarrow Y$ there exists a map $h\colon P\rightarrow X$ such that $g=f\circ h$.
	\item\label{Item:ProjSplit} Any deflation $f\colon X\twoheadrightarrow P$ is a retraction, i.e.~there exist a map $g\colon P\rightarrow X$ such that $f\circ g=1_P$.
\end{enumerate}
\end{proposition}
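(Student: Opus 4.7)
The plan is to prove the equivalences in the cycle \((1)\Rightarrow(2)\Rightarrow(3)\Rightarrow(1)\).

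For \((1)\Rightarrow(2)\), I would start with a deflation \(f\colon X\deflation Y\) and complete it to a conflation \(K\inflation X\deflation Y\). Applying the exact functor \(\Hom(P,-)\) yields a short exact sequence of abelian groups, whose surjectivity at \(\Hom(P,Y)\) gives the desired lift of any \(g\colon P\to Y\).

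For \((2)\Rightarrow(3)\), I would simply apply \((2)\) to the identity \(1_P\colon P\to P\) and the given deflation \(f\colon X\deflation P\) to produce a section \(h\colon P\to X\) of \(f\).

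The main content lies in \((3)\Rightarrow(1)\). Since \(\Hom(P,-)\) is representable it automatically preserves kernels; in particular, for a conflation \(K\inflation X\xrightarrow{f}Y\) the induced sequence
\[0\to\Hom(P,K)\to\Hom(P,X)\to\Hom(P,Y)\]
is exact in \(\Ab\), because \(K\) is by definition the categorical kernel of \(f\). Thus the only thing to verify is surjectivity of \(\Hom(P,X)\to\Hom(P,Y)\). Given \(g\colon P\to Y\), I would invoke axiom \ref{R2} to form the pullback square
\[\xymatrix{X'\ar@{->>}[d]_{f'}\ar[r] & X\ar@{->>}[d]^{f}\\ P\ar[r]_{g} & Y}\]
in which \(f'\) is a deflation. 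Hypothesis \((3)\) produces a section \(s\colon P\to X'\) of \(f'\), and composing with \(X'\to X\) yields the sought lift \(h\colon P\to X\) satisfying \(f\circ h=g\).

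The only subtle point worth flagging is the pullback step in \((3)\Rightarrow(1)\): one must use axiom \ref{R2} both to guarantee the existence of the pullback and to ensure that \(f'\) is again a deflation, so that hypothesis \((3)\) applies. Beyond that, the proof is formal manipulation.
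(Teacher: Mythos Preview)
Your proof is correct and takes essentially the same approach as the paper: the key step is the pullback argument using axiom \ref{R2} to pass from the splitting hypothesis to the lifting property, which you package as $(3)\Rightarrow(1)$ while the paper states it as $(3)\Rightarrow(2)$ and leaves the remaining implications as standard.
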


\begin{proof}
	The only non-standard implication is $\eqref{Item:ProjSplit}\implies \eqref{Item:ProjLift}$. Thus, assume that \eqref{Item:ProjSplit} holds.  Let $f\colon X\twoheadrightarrow Y$ be a deflation and $g\colon P\rightarrow Y$ a map. By axiom \ref{R2} there exists a pullback square
	\[\xymatrix{
	Q\ar@{->>}[r]^{a}\ar[d]^b & P\ar[d]^g\\
	X\ar@{->>}[r]^f & Y
	}\] Since $Q\twoheadrightarrow P$ is a deflation, there is a corresponding section $h\colon P\rightarrow Q$.  It is straightforward to show that $f\circ(b\circ h)=g$, establishing \eqref{Item:ProjLift}.
\end{proof}	

\begin{proposition}\label{proposition:InjectiveCharacterizations}
Let $\EE$ be a deflation-exact category. The following are equivalent:
\begin{enumerate}
	\item $P$ is injective.
	\item For all inflations $f\colon X\rightarrowtail Y$ and for any map $g\colon X\rightarrow I$ there exists a map $h\colon Y\rightarrow I$ such that $g=h\circ f$.
\end{enumerate}
\end{proposition}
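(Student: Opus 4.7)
The plan is to mimic the easy direction of Proposition \ref{proposition:ProjectiveCharacterizations}, since the dual does not require us to construct anything (there is no need for a pushout axiom, which would be the dual of the \ref{R2} used in the projective case).

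For the implication $(1)\Rightarrow(2)$, I would start with an inflation $f\colon X\rightarrowtail Y$ and a morphism $g\colon X\to I$. By definition of a conflation category, $f$ fits into a conflation $X\stackrel{f}{\rightarrowtail} Y\stackrel{q}{\twoheadrightarrow} Z$. Applying the exact functor $\Hom(-,I)\colon \EE^{\circ}\to\Ab$ produces a short exact sequence in $\Ab$:
\[
0\to\Hom(Z,I)\xrightarrow{q^{\ast}}\Hom(Y,I)\xrightarrow{f^{\ast}}\Hom(X,I)\to 0.
\]
Surjectivity of $f^{\ast}$ gives the desired $h\colon Y\to I$ with $h\circ f=g$.

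For $(2)\Rightarrow(1)$, I would take an arbitrary conflation $X\stackrel{f}{\rightarrowtail}Y\stackrel{q}{\twoheadrightarrow}Z$ and verify that $\Hom(-,I)$ sends it to a short exact sequence. Injectivity of $q^{\ast}$ is immediate because a deflation is an epimorphism (being the cokernel of $f$). Exactness at $\Hom(Y,I)$ is the universal property of $q=\coker(f)$: any $h\colon Y\to I$ with $h\circ f=0$ factors uniquely as $h=h'\circ q$. Finally, surjectivity of $f^{\ast}$ is exactly hypothesis $(2)$ applied to the inflation $f$.

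No serious obstacle is foreseen; the only subtlety worth flagging is that, unlike the projective characterization, there is no third condition of the form ``every inflation $I\rightarrowtail Y$ is a section,'' because establishing it from $(2)$ would require pushouts of inflations, which are not available in a right exact category. I would therefore not attempt to include such a clause, keeping the statement to the two equivalent conditions above.
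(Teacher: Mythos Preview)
Your proof is correct and is precisely the straightforward argument the paper has in mind; in fact the paper states this proposition without proof. Your closing observation about why there is no third clause of the form ``every inflation $I\rightarrowtail X$ is a section'' is exactly the content of Remark~\ref{remark:NonDualityProjInj}, so you have anticipated the paper's point.
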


\begin{remark}\label{remark:NonDualityProjInj}
	In a deflation-exact category, it is not necessarily true that an object $I$ is injective if all inflations $I\rightarrowtail X$ are coretractions. Indeed, the proof of proposition \ref{proposition:ProjectiveCharacterizations} cannot be dualized as axiom \ref{L2} is not guaranteed.
\end{remark}

\begin{proposition}\label{proposition:LiftingPropertyProjectives}
Let $\EE$ be a deflation-exact category.  Let $P^\bullet \in \Cm(\EE)$ be a complex of which every entry is projective.  For all $X^\bullet \in \C(\EE)$, we have $\Hom_{\K(\EE)}(P^\bullet, X^\bullet) = \Hom_{\D(\EE)}(P^\bullet, X^\bullet)$.
\end{proposition}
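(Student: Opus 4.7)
The plan is to use the congenial replacement (Proposition \ref{proposition:HomotopyReplacement}) together with projectivity to show that every quasi-isomorphism $\alpha^{\bullet}\colon Z^{\bullet}\xrightarrow{\sim} P^{\bullet}$ admits a strict section up to homotopy. The whole statement then follows via the description of morphisms in the right multiplicative system $\NN(\Ac_{\K}(\EE))$.

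The central lemma I would isolate is the following: \emph{any chain map $\gamma^{\bullet}\colon P^{\bullet}\to E^{\bullet}$ from a right-bounded complex of projectives to an acyclic complex is null-homotopic.} I would prove this by downward induction on the degree, constructing $h^n\colon P^n\to E^{n-1}$ such that $d_E^{n-1}h^n + h^{n+1}d_P^n = \gamma^n$. At the top degree, where $P^n = 0$ for $n$ large, set $h^n=0$ and proceed downward. At each step, write $\beta^n = \gamma^n - h^{n+1}d_P^n$; using the inductive relation, one checks that $d_E^n\beta^n = 0$, so $\beta^n$ factors through $K^n := \ker d_E^n$ via the inflation $i^{n-1}\colon K^n \inflation E^n$. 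Because $E^{\bullet}$ is acyclic, $d_E^{n-1}$ decomposes as $E^{n-1}\xdeflation{p^{n-1}} K^n \xinflation{i^{n-1}} E^n$, and projectivity of $P^n$ together with Proposition~\ref{proposition:ProjectiveCharacterizations}\eqref{Item:ProjLift} lets us lift the induced map $P^n \to K^n$ through the deflation $p^{n-1}$, giving the desired $h^n$.

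Now I would prove surjectivity of $\Hom_{\K(\EE)}(P^{\bullet},X^{\bullet})\to \Hom_{\D(\EE)}(P^{\bullet},X^{\bullet})$. A morphism in $\D(\EE)$ is represented by a roof $P^{\bullet}\xleftarrow{\alpha}Z^{\bullet}\xrightarrow{f}X^{\bullet}$ with $\alpha$ a quasi-isomorphism. By Proposition~\ref{proposition:HomotopyReplacement}, after replacing $Z^{\bullet}$ by a homotopy equivalent $\zeta^{\bullet}$, we may assume the roof is congenial: $\zeta^n = P^n\oplus E^{n-1}$ with $E^{\bullet}\in \Ac_{\K}(\EE)$, $\alpha^n$ the canonical projection, and a built-in chain map $\gamma^{\bullet}\colon P^{\bullet}\to E^{\bullet}$. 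Any candidate strict section $s^n = \begin{psmallmatrix}1_{P^n}\\ h^n\end{psmallmatrix}$ is a chain map precisely when $h^{\bullet}$ is a null-homotopy of $-\gamma^{\bullet}$; the lemma above supplies such $h^{\bullet}$. Thus $\alpha\circ s = 1_{P^{\bullet}}$, and the given roof becomes equivalent to the straight map $f\circ s\colon P^{\bullet}\to X^{\bullet}$ in $\K(\EE)$.

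For injectivity, suppose $f\colon P^{\bullet}\to X^{\bullet}$ becomes zero in $\D(\EE)$. By the form of the right multiplicative system, there exists a quasi-isomorphism $\alpha\colon Z^{\bullet}\to P^{\bullet}$ with $f\circ\alpha$ null-homotopic. Replacing $Z^{\bullet}$ by a congenial $\zeta^{\bullet}$ and constructing a section $s$ as above, we get $f = f\circ\alpha\circ s \simeq 0$ in $\K(\EE)$.

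The main obstacle is the inductive construction of the null-homotopy, which is the place where both the right-boundedness of $P^{\bullet}$ (to start the induction) and the interplay between the acyclicity of $E^{\bullet}$ (to guarantee the deflation-inflation factorization of its differentials) and the projectivity of each $P^n$ (to lift through these deflations) are used essentially. Once this lemma is in place, the rest is bookkeeping through the congenial replacement.
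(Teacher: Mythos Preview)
Your proof is correct, and your central lemma---that any chain map from a right-bounded complex of projectives to an acyclic complex is null-homotopic---is exactly the same ingredient the paper invokes (citing \cite[theorem~12.4]{Buhler10}). The difference lies in how this lemma is deployed.

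The paper argues more abstractly: once $\Hom_{\K(\EE)}(P^\bullet,E^\bullet)=0$ for all acyclic $E^\bullet$ (and hence for direct summands thereof), the long exact sequence associated to the triangle $Y^\bullet\to X^\bullet\to\cone(f)\to\Sigma Y^\bullet$ shows that \emph{every} quasi-isomorphism $f\colon Y^\bullet\to X^\bullet$ induces a bijection $\Hom_{\K(\EE)}(P^\bullet,Y^\bullet)\cong\Hom_{\K(\EE)}(P^\bullet,X^\bullet)$. This is the standard ``$P^\bullet$ is $\K$-local'' argument, and the conclusion about $\D(\EE)$ then follows formally from the description of morphisms in a localization. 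Your route, by contrast, restricts attention to quasi-isomorphisms $\alpha\colon Z^\bullet\to P^\bullet$ landing in $P^\bullet$, and uses the congenial replacement (Proposition~\ref{proposition:HomotopyReplacement}) to explicitly build a chain-level section $s$ of $\alpha$. This is more constructive and ties in nicely with the congenial machinery developed elsewhere in the paper, but it is also slightly more work: the paper's argument avoids the congenial replacement entirely and never needs to construct the section $s$. What your approach buys is an explicit strictification of roofs; what the paper's approach buys is brevity and a cleaner separation between the homotopy-theoretic input (vanishing on acyclics) and the formal localization output.
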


\begin{proof}
Without loss of generality, we may assume that $P^i = 0$ for $i > 0$. As in \cite[theorem~12.4]{Buhler10}, the lifting property of projective objects yields that $\Hom_{\K(\EE)}(P^\bullet, X^\bullet) = 0$ whenever $X^\bullet$ is acyclic (and hence also when $X^\bullet$ is a direct summand of an acyclic complex in $\K(\EE)$).

Let $f\colon Y^\bullet \stackrel{\simeq}{\rightarrow} X^\bullet$ be a quasi-isomorphism in $\K(\EE)$.  As then $\Hom_{\K(\EE)}(P^\bullet, \Sigma^n \cone(f)) = 0$ for all $n \in \bZ$, we find that $f$ induces a bijection $\Hom_{\K(\EE)}(P^\bullet, Y^\bullet) \cong \Hom_{\K(\EE)}(P^\bullet, X^\bullet)$. From the description of the maps in $\D(\EE)$, we find that the natural map $\Hom_{\K(\EE)}(P^\bullet, X^\bullet)\to \Hom_{\D(\EE)}(P^\bullet, X^\bullet)$ is a bijection as well.
\end{proof}

\begin{definition}
	We denote by $\Kab(\EE)$ the full subcategory of $\K(\EE)$ consisting of those complexes $C^\bullet$ bounded in the positive direction (i.e.~$C^i = 0$, for $i \gg 0$) and which are quasi-isomorphic to a bounded complex.
\end{definition}

\begin{proposition}\label{proposition:EnoughProjectives}
Let $\EE$ be a deflation-exact category satisfying axiom \ref{R0*}.  If $\EE$ has enough projectives, then the natural functor $\Kab(\Proj \EE) \to \Db(\EE)$ is a triangle equivalence.  A similar statement holds if $\EE$ has enough injectives.
\end{proposition}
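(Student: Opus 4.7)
The plan is to verify that the natural functor $F\colon \Kab(\Proj(\EE)) \to \Db(\EE)$ is a triangle equivalence by establishing full faithfulness and essential surjectivity. Full faithfulness is essentially immediate from proposition \ref{proposition:LiftingPropertyProjectives}: for any $P^\bullet \in \Kab(\Proj(\EE))$ (which is bounded above with each entry projective) and any $Q^\bullet \in \Kab(\Proj(\EE))$, one has $\Hom_{\K(\EE)}(P^\bullet,Q^\bullet) = \Hom_{\D(\EE)}(P^\bullet,Q^\bullet)$. The main work is essential surjectivity.

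For essential surjectivity, I would prove by induction on the length of a bounded complex $X^\bullet \in \Db(\EE)$ (say, supported in degrees $[a,b]$) that there exists a quasi-isomorphism $P^\bullet \to X^\bullet$ with $P^\bullet$ bounded above and consisting of projectives. The base case $a=b$ treats a stalk complex $X \in \EE$: since $\EE$ has enough projectives, I would choose a deflation $P^0 \deflation X$, form its kernel $K^0 \inflation P^0$ (which exists as the inflation side of the underlying conflation), then pick a deflation $P^{-1}\deflation K^0$ and iterate. The resulting complex $P^\bullet$ supported in degrees $\leq 0$ yields a map $P^\bullet \to X[0]$ whose cone is acyclic in the sense of definition \ref{definition:AcylicComplex}: acyclicity in the projective positions is built into the construction, while acyclicity at the position of $X$ reduces (compare remark \ref{remark:StandardContractible}) to $X \to 0$ being a deflation, which is axiom \ref{R0*}.

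For the inductive step with $a<b$, I would exploit the degreewise split short exact sequence $0 \to Y^\bullet \to X^\bullet \to X^b[-b] \to 0$, where $Y^\bullet$ is the naive truncation of $X^\bullet$ in degrees strictly less than $b$, yielding a distinguished triangle $Y^\bullet \to X^\bullet \to X^b[-b] \to \Sigma Y^\bullet$ in $\Db(\EE)$. By induction, choose bounded-above projective resolutions $q\colon Q^\bullet \xrightarrow{\sim} Y^\bullet$ and $p\colon P^\bullet \xrightarrow{\sim} X^b[-b]$. Proposition \ref{proposition:LiftingPropertyProjectives} guarantees that the connecting morphism $X^b[-b] \to \Sigma Y^\bullet$ lifts uniquely to a chain map $\phi\colon P^\bullet \to \Sigma Q^\bullet$ in $\K(\EE)$. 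Setting $R^\bullet := \Sigma^{-1}\cone(\phi)$ produces a bounded-above complex of projectives fitting in a triangle $Q^\bullet \to R^\bullet \to P^\bullet \to \Sigma Q^\bullet$ in $\K(\EE)$. Axiom TR3 in $\Db(\EE)$ extends $q$ and $p$ to a morphism of triangles, and since the latter are isomorphisms in $\Db(\EE)$, the $5$-lemma forces the induced map $R^\bullet \to X^\bullet$ to be a quasi-isomorphism, completing the induction. The statement for enough injectives is dual, obtained by passing to $\EE^{op}$.

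The main obstacle I expect is not the overall scheme -- which closely parallels the exact case, compare \cite[theorem~12.7]{Buhler10} -- but the careful bookkeeping specific to the one-sided setting: one must invoke axiom \ref{R0*} exactly where an explicitly acyclic complex is required (most critically at the bottom of the stalk resolution), and one must keep in mind that $\Ac_{\K}(\EE)$ need not be replete, so that lifting quasi-isomorphisms and identifying maps with chain maps must be done through proposition \ref{proposition:LiftingPropertyProjectives} rather than through naive kernel or cokernel arguments.
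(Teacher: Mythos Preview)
Your argument is correct and yields the result, but it takes a genuinely different route from the paper's proof. The paper handles essential surjectivity in one step via a Cartan--Eilenberg style construction: for each $C^i$ choose a projective resolution $P^{i,\bullet}$, lift the differentials of $C^\bullet$ to maps of resolutions using projectivity, assemble the resulting double complex, and observe (via lemma~\ref{lemma:TotalizationOfDoubleComplex} together with remark~\ref{remark:StandardContractible}, which is where axiom~\ref{R0*} enters) that the cone of $\Tot^\oplus(P^{\bullet,\bullet}) \to C^\bullet$ is the total complex of a bicomplex with acyclic columns, hence acyclic. Your inductive approach via naive truncation and triangles is equally standard and arguably more in the spirit of triangulated-category reasoning; it avoids any sign bookkeeping for totalizations, at the cost of an extra layer of induction and an appeal to proposition~\ref{proposition:LiftingPropertyProjectives} in the inductive step.

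One small caveat worth tightening: invoking TR3 and the $5$-lemma in $\Db(\EE)$ only shows that $R^\bullet \to X^\bullet$ is an isomorphism in $\Db(\EE)$, i.e.\ a \emph{weak} quasi-isomorphism in the sense of definition~\ref{definition:Quasi-isomorphism}, whereas your stated induction hypothesis is a quasi-isomorphism with acyclic cone. To keep the induction consistent, either replace TR3 by the $3\times 3$-lemma (e.g.\ \cite[proposition~1.4.6]{Neeman01}) applied in $\K(\EE)$ to the commuting square built from $\phi$, $p$, and $\Sigma q$ (commutativity in $\K(\EE)$ is guaranteed by proposition~\ref{proposition:LiftingPropertyProjectives}): this produces a map $R^\bullet \to X^\bullet$ whose cone sits in a triangle with $\cone(p)$ and $\cone(q)$, and lemma~\ref{lemma:ConeOfAcyclic} then gives an acyclic cone. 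Alternatively, simply observe that $R^\bullet \cong X^\bullet$ in $\D(\EE)$ already suffices for essential surjectivity, since $\Kab(\Proj(\EE))$ is precisely the preimage of $\Db(\EE)$ under the fully faithful functor $\Km(\Proj(\EE)) \to \D(\EE)$ supplied by proposition~\ref{proposition:LiftingPropertyProjectives}.
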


\begin{proof}
	Since each object of $\Kab(\Proj \EE)$ is quasi-isomorphic to a bounded complex, there is an obvious triangle functor $F\colon \Kab(\Proj \EE)\rightarrow \Db(\EE)$. It is shown in proposition \ref{proposition:LiftingPropertyProjectives} that this functor $\Kab(\Proj \EE)\rightarrow \Db(\EE)$ is fully faithful.
	
	We only need to show that $F$ is essentially surjective.  To show this, let $C^{\bullet}\in \Ob(\Db(\EE))$ be any object. Since $\EE$ has enough projectives, each $C^i$ has a projective resolution $P^{i,\bullet}$ (see for example \cite[theorem~12.7]{Buhler10}). Using projectivity one easily obtains the following commutative diagram
	\[\xymatrix{
		  & \ar@{.>}[d] & \ar@{.>}[d]  &  & \ar@{.>}[d]  & \ar@{.>}[d]  & \ar@{.>}[d]  &\\
		\ar[r] & 0\ar[r]\ar[d]& P^{m,-1} \ar@{.>}[r]\ar[d] & \cdots\ar@{.>}[r] & P^{n-1,-1} \ar@{.>}[r]\ar[d] & P^{n,-1} \ar@{.>}[r]\ar[d] & 0\ar[r]\ar[d] &\\
		\ar[r] & 0\ar[r]\ar[d] & P^{m,0} \ar@{.>}[r]\ar@{->>}[d] & \cdots\ar@{.>}[r] & P^{n-1,0} \ar@{.>}[r]\ar@{->>}[d] & P^{n,0} \ar@{.>}[r]\ar@{->>}[d] & 0\ar[r]\ar[d] &\\
		\ar[r] & 0\ar[r] & C^m \ar[r] & \cdots\ar[r] & C^{n-1} \ar[r] & C^n \ar[r] & 0\ar[r] &\\
	}\] By lemma \ref{lemma:TotalizationOfDoubleComplex} and remark \ref{remark:StandardContractible}, the totalization of this double complex is acyclic. Denote by $P^{\bullet,\bullet}$ the projective part of this double complex. It follows that the cone of the natural map $\Tot^{\oplus}(P^{\bullet,\bullet})\rightarrow C^{\bullet}$ is acyclic. Hence, $C^{\bullet}$ is quasi-isomorphic to $\Tot^{\oplus}(P^{\bullet,\bullet})\in \Ob(\Kab(\Proj \EE))$. Hence, the functor $F\colon \Kab(\Proj \EE)\rightarrow \Db(\EE)$ is essentially surjective. This establishes that $F$ is a triangle equivalence, as required.
\end{proof}

\begin{proposition}
Let $\EE$ be a deflation-exact category satisfying axiom \ref{R0*}.  If $\EE$ has enough projectives, then the natural functor $\Km(\Proj \EE) \to \Dm(\EE)$ is a triangle equivalence.  A similar statement holds if $\EE$ has enough injectives.
\end{proposition}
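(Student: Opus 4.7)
The plan is to mirror the proof of proposition \ref{proposition:EnoughProjectives} almost verbatim, replacing bounded complexes by right-bounded ones throughout. Full faithfulness of the natural triangle functor $F\colon \Km(\Proj) \to \Dm(\EE)$ is immediate from proposition \ref{proposition:LiftingPropertyProjectives}: every $P^\bullet \in \Km(\Proj)$ has $P^i = 0$ for $i \gg 0$, so the natural map $\Hom_{\K(\EE)}(P^\bullet, X^\bullet) \to \Hom_{\D(\EE)}(P^\bullet, X^\bullet)$ is a bijection for every $X^\bullet$.

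For essential surjectivity, take $C^\bullet \in \Dm(\EE)$. After an isomorphism and a shift we may assume $C^i = 0$ for $i > 0$. Using iteratively that $\EE$ has enough projectives, choose for each $i \leq 0$ a projective resolution $P^{i,\bullet}$ of $C^i$ and lift the horizontal differentials of $C^\bullet$ via the projective lifting property to chain maps $P^{i,\bullet} \to P^{i+1,\bullet}$. This yields a double complex $P^{\bullet,\bullet}$ supported in $\{(i,j) : i \leq 0,\ j \leq 0\}$ together with a compatible augmentation $P^{\bullet,0} \deflation C^\bullet$. As in the bounded case, it then suffices to show that $\Tot^{\oplus}(P^{\bullet,\bullet}) \to C^\bullet$ is a quasi-isomorphism, since $\Tot^{\oplus}(P^{\bullet,\bullet}) \in \Cm(\Proj)$ provides the desired preimage in $\Km(\Proj)$.

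The only point requiring care beyond the bounded case, and the only place I expect any obstacle, is verifying the finiteness hypothesis of lemma \ref{lemma:TotalizationOfDoubleComplex}. For each $n \in \bZ$, the antidiagonal $\{(i,j) : i + j = n,\ i \leq 0,\ j \leq 0\}$ is finite because both indices are bounded above; explicitly, it is empty if $n > 0$ and has exactly $|n| + 1$ elements otherwise. Lemma \ref{lemma:TotalizationOfDoubleComplex} therefore applies to the bicomplex obtained by appending $C^\bullet$ as an additional row, whose columns are acyclic by remark \ref{remark:StandardContractible} together with axiom \ref{R0*}; hence the cone of $\Tot^{\oplus}(P^{\bullet,\bullet}) \to C^\bullet$ is acyclic, as required. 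The injective version follows by passing to the opposite category.
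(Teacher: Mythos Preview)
Your proof is correct. The paper states this proposition without proof, evidently regarding it as a routine variant of proposition \ref{proposition:EnoughProjectives}; your argument supplies exactly that variant, and your observation that both indices of the augmented bicomplex are bounded above is precisely what makes the antidiagonals finite so that lemma \ref{lemma:TotalizationOfDoubleComplex} still applies.
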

\section{Percolating subcategories and complexes}\label{section:PercolatingSubcateoriesAndComplexes}

In \cite{HenrardvanRoosmalen19}, we considered the quotient $\EE / \AA$ of a deflation-exact category $\EE$ with respect to a deflation-percolating subcategory $\AA \subseteq \EE$. We showed that the quotient $\EE/\AA$ can be obtained by localizing $\EE$ with respect to a right multiplicative system $S_{\AA}$.  The quotient $\EE / \AA$ has a deflation-exact structure whose conflations are given (up to isomorphism) by the images of the conflations in $\EE$ under the quotient functor $Q\colon \EE \to \EE / \AA$.

The category $\Cb(\EE)$ of bounded cochain complexes in $\EE$ can be endowed with a deflation-exact structure by extending the structure of $\EE$ degree-wise. In this section we show that $\Cb(\AA)$ is a deflation-percolating subcategory of $\Cb(\EE)$, and that the natural functor $\Cb(\EE) /\Cb(\AA) \to \Cb(\EE/ \AA)$ is an exact equivalence.

\subsection{Localization at percolating subcategories}

We recall the basic notions and results on localizations of deflation-exact categories at deflation-percolating subcategories. We start with the definition of a percolating subcategory.

\begin{definition}\label{definition:GeneralPercolatingSubcategory}
	Let $\EE$ be a conflation category. A non-empty full subcategory $\AA$ of $\EE$ satisfying the following four axioms is called a \emph{deflation-percolating subcategory} of $\EE$.
	\begin{enumerate}[label=\textbf{P\arabic*},start=1]
		\item\label{P1} $\AA$ is a Serre subcategory, that is:
		\[\mbox{ If } A'\rightarrowtail A \twoheadrightarrow A'' \mbox{ is a conflation in $\EE$, then } A\in \Ob(\AA) \mbox{ if and only if } A',A''\in \Ob(\AA).\]
		\item\label{P2} For all morphisms $C\rightarrow A$ with $C \in \Ob(\EE)$ and $A\in \Ob(\AA)$, there exists a commutative diagram
		\[\xymatrix{
		A'\ar[rd] & \\
		C \ar@{->>}[u]\ar[r]& A\\
				}\] with $A'\in \Ob(\AA)$, and where $C \twoheadrightarrow A'$ is a deflation.
		\item\label{P3} For any composition $\xymatrix{X\ar@{>->}[r]^i & Y\ar[r]^t & T}$ which factors through $\AA$, there exists a commutative diagram 
		\[\xymatrix{
			X\ar@{>->}[r]^i\ar@{->>}[d]^f & Y\ar@{->>}[d]^{f'}\ar@/^/[rdd]^t &\\
			A\ar@{>->}[r]^{i'}\ar@/_/[rrd] & P\ar@{.>}[rd] &\\
			&& T
		}\] such that the square $XYAP$ is a pushout square.	
		\item\label{P4} For all maps $X\stackrel{f}{\rightarrow} Y$ that factor through $\AA$ and for all inflations $A\stackrel{i}{\inflation} X$ such that $f\circ i=0$, the induced map $\coker(i)\to Y$ factors through $\AA$.
	\end{enumerate}
	 An \emph{inflation-percolating subcategory} of a conflation category is defined dually.  Following the terminology from \cite{Schlichting04}, a non-empty full subcategory $\AA$ of a deflation-exact category $\EE$ satisfying axioms \ref{P1} and \ref{P2} (respectively axioms \ref{P1} and \ref{P2}$^{op}$ is called \emph{right filtering} (respectively \emph{left filtering}). If $\AA$ is a right filtering subcategory of $\CC$ such that the map $A'\rightarrow A$ in axiom \ref{P2} can be chosen as a monomorphism, we will call $\AA$ a \emph{strongly right filtering subcategory}. A deflation-percolating subcategory which is also strongly right filtering will be abbreviated to a \emph{strongly deflation-percolating subcategory}.
\end{definition}

The next definition constructs a right multiplicative system $S_{\AA}$. The terminology is based on \cite{Cardenas98,Schlichting04}.

\begin{definition}\label{definition:WeakIsomorphisms}
	Let $\EE$ be a conflation category and let $\AA$ be a non-empty full subcategory of $\EE$. 	
	\begin{enumerate}
		\item	An inflation $f\colon X\rightarrowtail Y$ in $\CC$ is called an \emph{$\AA^{-1}$-inflation} if its cokernel lies in $\AA$.
		\item A deflation $f\colon X\twoheadrightarrow Y$ in $\CC$ is called a \emph{$\AA^{-1}$-deflation} if its kernel lies in $\AA$.
		\item A morphism $f\colon X\rightarrow Y$ is called a \emph{weak $\AA^{-1}$-isomorphism}, or simply a \emph{weak isomorphism} whenever $\AA$ is implied, if it is a finite composition of $\AA^{-1}$-inflations and $\AA^{-1}$-deflations.
	\end{enumerate}
	The set of weak isomorphisms is denoted by $S_{\AA}$.
	
	Given a weak isomorphism $f$, the \emph{composition length} of $f$ is smallest number $n$ such that $f$ can be written as a composition of $n$ $\AA^{-1}$-inflations and $\AA^{-1}$-deflations.
\end{definition}

\begin{definition}\label{definition:Localization}
	Let $\EE$ be a deflation-exact category and let $\AA$ be a full deflation-exact subcategory. The quotient of $\EE$ by $\AA$ is a deflation-exact category $\EE/\AA$ together with an exact functor $Q\colon \EE\rightarrow\EE/\AA$, called the \emph{quotient functor}, such that, for any deflation-exact category $\DD$ and any exact functor $F\colon \EE\rightarrow \DD$ with $F(\AA)\cong 0$, there exists a unique exact functor $G\colon \EE/\AA\rightarrow \DD$ such that $F=G\circ Q$. 
\end{definition}

The next proposition states that $S_{\AA}$ is a right multiplicative system with a convenient strengthening axiom \ref{RMS2}.

\begin{proposition}\label{proposition:WeakIsomorphismsRMS}
	Let $\EE$ be a deflation-exact category and let $\AA$ be a deflation-percolating subcategory. The set $S_{\AA}$ of weak isomorphism is a right multiplicative system.
	
	If $\AA$ is a strongly deflation-percolating subcategory, the square in axiom \ref{RMS2} can be chosen as a pullback square.
\end{proposition}

The following theorem is the main theorem of \cite{HenrardvanRoosmalen19}.

\begin{theorem}\label{theorem:MainTheoremPartI}
	Let $\EE$ be a deflation-exact category and let $\AA$ be a deflation-percolating subcategory.  The category $\EE[S_{\AA}^{-1}]$ is a deflation-exact category where the conflation structure is induced by the localization functor $Q\colon \EE\rightarrow \EE[S_{\AA}^{-1}]$.  The category $\EE[S_{\AA}^{-1}]$ satisfies the universal property of the quotient $\EE/\AA$.
\end{theorem}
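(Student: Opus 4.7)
The plan is to declare a kernel--cokernel pair in $S_{\AA}^{-1}\EE$ to be a conflation precisely when it is isomorphic to $Q(X \inflation Y \deflation Z)$ for some conflation $X \inflation Y \deflation Z$ in $\EE$, and to verify axioms \ref{R0}, \ref{R1}, \ref{R2} using roof calculus together with the strengthened form of \ref{RMS2} from Proposition \ref{proposition:WeakIsomorphismsRMS}. A preliminary task is to check that $Q$ actually sends conflations to kernel--cokernel pairs: the kernel half follows from the fact that $Q$ commutes with finite limits (Proposition \ref{Proposition:BasicPropertiesOfLocalization}), while the cokernel half needs a direct roof argument, using Propositions \ref{proposition:CompositionOfAAInverseDeflationAndInflations} and \ref{proposition:CompositionOfAAInflationWithInflation} to replace a general morphism annihilating $Q(f)$ by one coming from $\EE$ and factoring uniquely through $g$. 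Closure under isomorphism is built into the definition.

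Axiom \ref{R0} is immediate since $Q$ preserves the zero object. For \ref{R1}, given two composable deflations in $S_{\AA}^{-1}\EE$, I would align roof representatives using the enhanced \ref{RMS2} (which allows $\AA^{-1}$-denominators to be chosen as $\AA^{-1}$-denominators after completion) to rewrite the composition as a single roof whose numerator is a composition of genuine deflations in $\EE$; this composition is a deflation by \ref{R1} in $\EE$, and hence yields a conflation representative. For \ref{R2}, the main obstacle, I would represent the given deflation by a roof $Z \xleftarrow{\sim} Z' \deflation Y$ and the arbitrary morphism $X \to Y$ similarly; the strengthened \ref{RMS2} (taking the form of an actual pullback square when $\AA$ is strongly percolating) reduces the problem to constructing a pullback in $\EE$ along a genuine deflation, which exists and is again a deflation by \ref{R2} in $\EE$. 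The percolating axioms \ref{P2}, \ref{P3}, and \ref{P4} are precisely what is needed to realign the roof data so that the pullback square admits a representative conflation in $\EE$.

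For the universal property, let $F\colon \EE \to \DD$ be exact with $F(\AA) \cong 0$. I claim $F$ inverts every weak isomorphism. Indeed, if $p\colon X \deflation Y$ is an $\AA^{-1}$-deflation with kernel $A \in \AA$, exactness of $F$ makes $F(A) \inflation F(X) \deflation F(Y)$ a conflation in $\DD$; since $F(A) \cong 0$, the map $F(p)$ is a deflation with zero kernel, hence an isomorphism. The case of $\AA^{-1}$-inflations is symmetric, so $F$ inverts all of $S_{\AA}$. The universal property of localization (Proposition \ref{Proposition:BasicPropertiesOfLocalization}) produces a unique additive functor $G\colon S_{\AA}^{-1}\EE \to \DD$ with $G \circ Q = F$, and by the very definition of conflations in $S_{\AA}^{-1}\EE$, the functor $G$ is exact. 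The bulk of the effort lies in axiom \ref{R2} and in verifying that $Q$ of a conflation is a kernel--cokernel pair; everything else becomes formal once these roof-level computations are in place.
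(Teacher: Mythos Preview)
The paper does not prove this theorem: it is stated as ``the main theorem of \cite{HenrardVanRoosmalen19}'' and quoted without argument. So there is no proof here to compare against; the result is imported from the companion paper on localizations of one-sided exact categories.

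Your outline is broadly in the spirit of what one expects that proof to contain, and the universal property part is correct as written. A word of caution on the conflation-structure part: your treatment of axiom \ref{R2} leans on the parenthetical that the \ref{RMS2}-square can be chosen as a pullback ``when $\AA$ is strongly percolating,'' but the theorem is stated for an arbitrary right percolating subcategory, so that strengthening is not available in general. The reduction to a pullback in $\EE$ along a genuine deflation still goes through, but it requires more care with the roof bookkeeping (using only the type-preserving form of \ref{RMS2} and Propositions \ref{proposition:CompositionOfAAInverseDeflationAndInflations}--\ref{proposition:CompositionOfAAInflationWithInflation}) rather than an appeal to pullback squares. Similarly, your sketch for \ref{R1} hides a nontrivial alignment step: after composing two roofs you must still argue that the resulting numerator is isomorphic in $S_\AA^{-1}\EE$ to the image of a single deflation from $\EE$, and this is where axioms \ref{P3} and \ref{P4} do real work. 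None of this is wrong, but the phrase ``everything else becomes formal'' undersells the amount of roof calculus actually required.
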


The next proposition gives a useful characterization of the kernel of the localization functor $Q$.

\begin{proposition}\label{proposition:ZeroMaps}
Let $\AA$ be a deflation-percolating subcategory of a deflation-exact category $\EE$. For any $f\colon X\rightarrow Y$ a map in $\EE$ with $Q(f)=0$, there exists an $\AA^{-1}$-inflation $s$ such that $f\circ s=0$.
\end{proposition}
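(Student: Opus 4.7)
The strategy is to combine the calculus of right fractions with an induction on the length of a weak isomorphism. The construction of $S_\AA^{-1}\EE$ given in Proposition \ref{Proposition:BasicPropertiesOfLocalization} shows that $Q(f) = 0$ is equivalent to the existence of some $t \in S_\AA$ with target $X$ satisfying $f \circ t = 0$. By Proposition \ref{proposition:CompositionOfAAInverseDeflationAndInflations}, such a $t$ may be written as an alternating composition $t = t_n \circ \cdots \circ t_1$ of $\AA^{-1}$-inflations and $\AA^{-1}$-deflations, and the plan is to induct on $n$.

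Two easy reductions shorten $t$ by one factor. First, if the rightmost factor $t_1$ is an $\AA^{-1}$-deflation, then $t_1$ is epic, so $f \circ t = 0$ implies $f \circ (t_n \circ \cdots \circ t_2) = 0$, giving a weak isomorphism of length $n-1$. Second, if the leftmost factor $t_n$ is an $\AA^{-1}$-inflation, set $g = f \circ t_n$ and apply the inductive hypothesis to $g \circ (t_{n-1} \circ \cdots \circ t_1) = 0$; this yields an $\AA^{-1}$-inflation $\sigma'$ with $g \circ \sigma' = 0$, and $t_n \circ \sigma'$ is then the required $\AA^{-1}$-inflation into $X$ by Proposition \ref{proposition:CompositionOfAAInverseDeflationAndInflations}.

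After both reductions, the essential remaining case is $t = d \circ i$, where $i\colon X' \inflation W$ is an $\AA^{-1}$-inflation with cokernel $C \in \AA$ and $d\colon W \deflation X$ is an $\AA^{-1}$-deflation with kernel $A \in \AA$. For this case, I would use the percolating axioms to convert $d \circ i$ into a single $\AA^{-1}$-inflation. The composition $A \inflation W \deflation C$ is a morphism between objects of $\AA$, so axiom \ref{P2} factors it as $A \deflation C' \to C$ with $C' \in \AA$; axiom \ref{P3} applied to the inflation $A \inflation W$ and the deflation $A \deflation C'$ then produces a pushout yielding a conflation $C' \inflation P \deflation X$ and a deflation $r\colon W \deflation P$ whose kernel lies in $A$ and whose composition with $P \deflation X$ recovers $d$.

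The main obstacle is to convert this pushout data into the desired $\AA^{-1}$-inflation: one must show that $d \circ i$ admits an admissible factorization $X' \deflation Z \inflation X$ with the inflation $Z \inflation X$ having cokernel in $\AA$. Given such a factorization, the inflation is the required $s$, since $f \circ d \circ i = 0$ combined with $X' \deflation Z$ being epic forces $f \circ s = 0$. Establishing this factorization amounts to a careful diagram chase mixing the pushout from \ref{P3} with the original conflations $A \inflation W \deflation X$ and $X' \inflation W \deflation C$, and it is precisely here that the strength of the percolating axioms (especially \ref{P3} and potentially \ref{P4}) is needed to ensure that the image and cokernel constructions stay within $\EE$ and $\AA$.
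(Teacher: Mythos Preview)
The paper does not give a proof here; this proposition is recalled from the companion paper \cite{HenrardVanRoosmalen19}. Your overall strategy---pass to a weak isomorphism $t$ with $f\circ t=0$ and induct on its alternating length---is the natural one, and the $n=2$ case is indeed the crux.

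There is, however, a gap in the induction as written. Your two reductions treat only ``$t_1$ is a deflation'' and ``$t_n$ is an inflation''. When $n\geq 4$ is even with $t_1$ an inflation and $t_n$ a deflation (e.g.\ $t_4\circ t_3\circ t_2\circ t_1$ with the alternating pattern infl--defl--infl--defl), neither reduction applies and you are not at $n=2$. The fix is to apply the absorption trick of your second reduction \emph{regardless} of the nature of $t_n$: set $g=f\circ t_n$ and induct on $t_{n-1}\circ\cdots\circ t_1$ to obtain an $\AA^{-1}$-inflation $\sigma'$ with $g\circ\sigma'=0$. If $t_n$ is an inflation, $t_n\circ\sigma'$ is the required $\AA^{-1}$-inflation by Proposition~\ref{proposition:CompositionOfAAInverseDeflationAndInflations}; if $t_n$ is a deflation, then $t_n\circ\sigma'$ has the form $d\circ i$ and you are reduced to $n=2$.

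For the $n=2$ case, your sketch via \ref{P2} and \ref{P3} points in the right direction but is left incomplete, and in a right (not two-sided) exact category one cannot expect the swap $d\circ i=i'\circ d'$ to come for free. A cleaner way to finish is to invoke Proposition~\ref{proposition:P4-NineLemmaTechnique} with row $A\inflation W\deflation X$ and column $X'\inflation W\deflation C$ (here $C\in\AA$). The resulting $3\times 3$ diagram produces an $\AA^{-1}$-inflation $s\colon Z'\inflation X$, a deflation $V\deflation Z'$, and an $\AA^{-1}$-inflation $V\inflation W$ factoring through $X'$. Since $f\circ d\circ i=0$ and $C=\coker(i)$, the map $f\circ d$ factors through $W\deflation C$; hence $f\circ d$ vanishes on $V\inflation W$. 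Commutativity of the diagram gives $f\circ s\circ(V\deflation Z')=f\circ d\circ(V\inflation W)=0$, and as $V\deflation Z'$ is epic, $f\circ s=0$.
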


\subsection{\texorpdfstring{Percolating subcategories in $\Cb(\EE)$}{Percolating subcategories in Cb(E)}}\label{sebsection:LiftingPercoaltingSubcategories}

In this subsection, we start with a deflation-percolating subcategory $\AA$ of a deflation-exact category $\EE$.  In proposition \ref{proposition:ComplexPercolating}, we will show that $\Cb(\AA)$ is a deflation-percolating subcategory of the deflation-exact category $\Cb(\EE)$.

For this, it will be convenient to introduce the category $\Seqb(\EE)$ of bounded sequences in $\EE$.  Let $\boldZ$ be the poset category of $\bZ, \leq$.  An object in $\Seq(\EE) \coloneqq \Fun(\boldZ, \EE)$ is determined by a sequence of objects $\{X^i\}_{i \in \bZ}$ together with a sequence of morphisms $\{\delta_X^i\colon X^i \to X^{i+1}\}.$  We consider the full subcategory $\Seqb(\EE)$ of $\Seq(\EE)$ given by those objects $X^\bullet$ for which $X^i = 0$ whenever $|i| \gg 0.$

\begin{remark}\label{remark:BoundedSequences}
If $\EE$ is deflation-exact, so is $\Seq(\EE) = \Fun(\boldZ, \EE)$; a sequence $X^\bullet \to Y^\bullet \to Z^\bullet$ is a conflation if and only if each sequence $X^i \inflation Y^i \deflation Z^i$ is a conflation.  As $\Seqb(\EE)$ and $\Cb(\EE)$ are extension-closed subcategories of $\Seq(\EE)$, they are deflation-exact as well.
\end{remark}

The following lemma is a strengthening of axiom \ref{P2}: we recover axiom \ref{P2} when $Y^\bullet \in \Seqb(\AA).$

\begin{lemma}\label{lemma:SumTrick}
	Let $\EE$ be a deflation-exact category and let $\AA$ be a full subcategory satisfying axiom \ref{P2}. Let $f^{\bullet}\colon X^{\bullet}\to Y^{\bullet}$ be a morphism in $\Seqb(\EE)$. If each $f^n$ factors through an object of $\AA$, then there exists a factorization $\xymatrix{X^{\bullet}\ar@{->>}[r]^{\gamma^{\bullet}} & B^{\bullet}\ar[r]^{\delta^{\bullet}} & Y^{\bullet}}$ of $f^{\bullet}$ with $B^{\bullet}\in \Seqb(\AA)$.
\end{lemma}

\begin{proof}
	Assume that for each $k\in \mathbb{Z}$, $f^k$ factors as $\xymatrix{X^k\ar[r]^{g^k} & A^k\ar[r]^{h^k} & Y^k}$ with $A^k\in \Ob(\AA)$. By axiom \ref{P2}, we may assume that each $g^k$ is a deflation.  As $X^{\bullet}$ and $Y^{\bullet}$ are (right) bounded, we can fix an $n\in \mathbb{Z}$ such that $X^{n+k}=Y^{n+k}=A^{n+k}=0$ for all $k>0$.
	
	We now inductively define a sequence $B^{\bullet}\in \Seqb(\AA)$ and a deflation $\gamma^{\bullet}\colon X^{\bullet}\to B^{\bullet}$ as follows.  Set $B^n \coloneqq A^n$ and $B^{n+k} \coloneqq 0$ for all $k>0$ and let $\gamma^n=g^n$. Assume that $B^{n-k+1}$ and $\gamma^{n-k+1}$ are already defined for some $k\geq 0$. By axiom \ref{P2}, the morphism $\begin{psmallmatrix}g^{n-k}\\ \gamma^{n-k+1} d_X^{n-k}\end{psmallmatrix}\colon X^{n-k}\to A^{n-k}\oplus B^{n-k+1}$ factors as \[\xymatrix{X^{n-k}\ar@{->>}[r]^{\gamma^{n-k}} & B^{n-k}\ar[r]^-{\begin{psmallmatrix}\alpha^{n-k}\\ \beta^{n-k}\end{psmallmatrix}} & A^{n-k}\oplus B^{n-k+1}}.\]
Set $\delta^{m}=h^m\alpha^m$, then $\delta^{\bullet}\colon B^{\bullet}\to Y^{\bullet}$ is a map in $\Seqb(\EE)$ and $f^{\bullet}=\delta^{\bullet}\circ \gamma^{\bullet}$, as required. \qedhere
\end{proof}

\begin{proposition}\label{proposition:ComplexPercolating}
	Let $\EE$ be a deflation-exact category and let $\AA \subseteq \EE$ be a deflation-percolating subcategory. 
	\begin{enumerate}
		\item The category $\Cb(\EE)$ has a natural deflation-exact structure defined degree-wise.
		\item The subcategory $\Cb(\AA)$ is a deflation-percolating subcategory of $\Cb(\EE)$.
	\end{enumerate}
\end{proposition}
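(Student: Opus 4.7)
My plan is to dispose of part (1) by a routine degreewise check, and to reduce part (2) to the percolating axioms of $\AA \subseteq \EE$ together with the lifting technique of Lemma \ref{lemma:RightToLeftReplacementTechnique}.

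For part (1), I declare a sequence in $\Cb(\EE)$ to be a conflation precisely when it is a conflation in each degree. Axioms \ref{R0} and \ref{R1} then follow immediately. For \ref{R2}, given a deflation $Y^\bullet\deflation Z^\bullet$ in $\Cb(\EE)$ and a chain map $W^\bullet\to Z^\bullet$, I form the degreewise pullback $P^n = Y^n\times_{Z^n} W^n$ using \ref{R2} in $\EE$; the universal property supplies unique differentials making $P^\bullet$ into a complex, and the projection $P^\bullet\to W^\bullet$ is degreewise a deflation, hence a deflation in $\Cb(\EE)$.

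For part (2), axiom \ref{P1} is immediate degreewise from \ref{P1} for $\AA\subseteq\EE$, and axiom \ref{P3} is handled by constructing the pushout degreewise via \ref{P3} in $\EE$. The main work lies in axiom \ref{P2}. Given a chain map $f^\bullet\colon C^\bullet\to A^\bullet$ with $A^\bullet\in\Cb(\AA)$, I will first apply \ref{P2} in $\EE$ to each $f^n$ to obtain a deflation $p^n\colon C^n\deflation\tilde{A}^n$ with $\tilde{A}^n\in\AA$ and a factorization $f^n = g^n\circ p^n$. Setting $K^n = \ker(p^n)$ gives conflations $K^n\inflation C^n\deflation\tilde{A}^n$. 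The objects $\tilde{A}^n$ have no reason to assemble into a complex, so to rigidify them I invoke Lemma \ref{lemma:RightToLeftReplacementTechnique}, producing a subcomplex $L^\bullet\inflation C^\bullet$ with each $L^n\inflation K^n$ an $\AA^{-1}$-inflation. Proposition \ref{proposition:CompositionOfAAInflationWithInflation} ensures that each composition $L^n\inflation C^n$ is an inflation; the degreewise cokernels $A'^n$ assemble into a complex $A'^\bullet$ with $C^\bullet\deflation A'^\bullet$ a deflation. A standard third-isomorphism argument for a composition of inflations shows that each $A'^n$ is an extension of $K^n/L^n\in\AA$ by $\tilde{A}^n\in\AA$, and hence lies in $\AA$ by \ref{P1}. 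Finally, the maps $g^n$ descend to a chain map $A'^\bullet\to A^\bullet$; compatibility with differentials is verified after precomposition with the deflation $C^\bullet\deflation A'^\bullet$, where it reduces to the chain-map identity for $f^\bullet$.

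Axiom \ref{P4} I would establish by the same strategy: apply \ref{P4} in $\EE$ degreewise, then use Lemma \ref{lemma:RightToLeftReplacementTechnique} to turn the resulting pointwise data into complexes $A'^\bullet, B'^\bullet\in\Cb(\AA)$ completing the required diagram. The main obstacle throughout is coherence, since the percolation axioms in $\EE$ produce degreewise factorizations with no a priori compatibility with differentials; Lemma \ref{lemma:RightToLeftReplacementTechnique} is precisely the tool for converting such pointwise data into chain maps while remaining in $\AA$.
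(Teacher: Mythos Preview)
Your treatment of part (1) and of axioms \ref{P1}, \ref{P2}, \ref{P3} matches the paper's proof almost exactly; in fact you supply more detail for \ref{P2} (the third-isomorphism argument and the chain-map check for $A'^\bullet\to A^\bullet$) than the paper does, which simply says ``the result follows by applying lemma \ref{lemma:RightToLeftReplacementTechnique}.''

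The gap is in your handling of \ref{P4}. Your proposal---apply \ref{P4} degreewise, then invoke lemma \ref{lemma:RightToLeftReplacementTechnique}---is too optimistic. Lemma \ref{lemma:RightToLeftReplacementTechnique} produces a single subcomplex $L^\bullet\inflation X^\bullet$ refining a given family of $\AA^{-1}$-inflations; it does not by itself produce the \emph{pair} of complexes $A'^\bullet, B'^\bullet\in\Cb(\AA)$ with the required inflation $A'^\bullet\inflation B'^\bullet$ and the commuting square over $A^\bullet\inflation X^\bullet$. Concretely, if you use the lemma to build a quotient complex $\tilde{B}'^\bullet=X^\bullet/L^\bullet$ in $\Cb(\AA)$, you still need the composite $A^\bullet\inflation X^\bullet\deflation \tilde{B}'^\bullet$ to factor as a deflation to some $A'^\bullet\in\Cb(\AA)$ followed by an \emph{inflation} into $\tilde{B}'^\bullet$, and nothing in your outline guarantees this. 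The paper does not attempt this route: instead it uses the $3\times 3$-type strengthening of \ref{P4} given in proposition \ref{proposition:P4-NineLemmaTechnique}, alternating it with axiom \ref{RMS2} in a right-to-left induction. At each step one first uses \ref{RMS2} to lift the differential through the previously constructed $\AA^{-1}$-inflation, and then applies proposition \ref{proposition:P4-NineLemmaTechnique} to manufacture the next pair $(C^{n-1},D^{n-1})$ together with all the required maps. This simultaneous construction of both complexes is the point you are missing; lemma \ref{lemma:RightToLeftReplacementTechnique} alone does not carry that load.
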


\begin{proof}
	The first statement is straightforward to check (see also remark \ref{remark:BoundedSequences}). To show the second statement we need to verify axioms \ref{P1}, \ref{P2}, \ref{P3}, and \ref{P4}.
	Axiom \ref{P1} is automatic. Axioms \ref{P2} and \ref{P4} follow immediately from lemma \ref{lemma:SumTrick} (note that if $X^\bullet \in \Cb(\EE)$, then $B \in \Cb(\EE)$).  It remains to show axiom \ref{P3}. To that end, let $\xymatrix{X^{\bullet}\ar@{>->}[r]^{i^{\bullet}} & Y^{\bullet}\ar@{->>}[r]^{p^{\bullet}} & Z^{\bullet}}$ be a conflation in $\Cb(\EE)$ and $t^{\bullet}\colon Y^{\bullet}\to T^{\bullet}$ a map such that $t^{\bullet}\circ i^{\bullet}$ factors through $\Cb(\AA)$.  We will construct complexes $A^\bullet$ and $P^\bullet$ as in the statement of axiom \ref{P3} inductively.  As $Y^\bullet$ is bounded, we may, without loss of generality, assume that $Y^k = 0$ for $k \geq 1$.  We start by setting $A^{k} = P^{k} = 0$ for $k \geq 1$.  For $k \leq 0$, we can apply axiom \ref{P3} in $\EE$ inductively (for decreasing values of $k$) to obtain diagrams:
		\[\xymatrix{
		X^{k}\ar@{>->}[r]^{i^{k}}\ar@{->>}[d]^{\alpha^{k}} & Y^{k}\ar@{->>}[r]^{p^{k}}\ar@/^2pc/[rr]^{\begin{psmallmatrix} t^{k} \\ \beta^{k+1}\circ d_Y^k\end{psmallmatrix}}\ar@{->>}[d]^{\beta^{k}} & Z^n\ar@{=}[d] & {T^{k} \oplus P^{k+1}} \ar@{=}[d]\\
		A^{k}\ar@{>->}[r]^{\iota^k} & P^k\ar@{->>}[r]^{\rho^k}\ar@/_2pc/[rr]^{s^k} & Z^k & {T^k \oplus P^{k+1}}
	}\]
	Note that the composition $X^{k} \to Y^{k} \to Y^{k+1} \to P^{k+1}$ is equal to $X^{k} \to X^{k+1} \to Y^{k+1} \to P^{k+1}$ and hence factors through $A^{k+1} \in \AA$.  The differentials $d_P^i\colon P^i \to P^{i+1}$ are induced by the maps $s^i\colon P^i \to T^i \oplus P^{i+1}.$  Note that the above diagram shows that $\beta^{i+1}\circ d_Y^i = d_P^{i} \circ \beta^i$.  As $d^{i+1}_Y\circ d_Y^i = 0$ and every $\beta^i\colon Y^i \to P^i$ is a deflation, we see that the sequence $P^\bullet$ is a complex as well.  Similarly, one sees that $A^\bullet$ is a complex.  Finally, it follows from \cite[proposition 3.7]{HenrardvanRoosmalen19} that the left square in
		\[\xymatrix{
		X^\bullet\ar@{>->}[r]^{i^{\bullet}}\ar@{->>}[d]^{\alpha^{\bullet}} & Y^{\bullet}\ar@{->>}[r]^{p^{\bullet}}\ar@{->>}[d]^{\beta^{\bullet}} & Z^n\ar@{=}[d] \\
		A^{\bullet}\ar@{>->}[r]^{\iota^\bullet} & P^\bullet\ar@{->>}[r]^{\rho^\bullet} & Z^\bullet
	}\]
is both a pushout and a pullback square.
\end{proof}

\subsection{The equivalence \texorpdfstring{$\Cb(\EE/\AA)\simeq\Cb(\EE)/\Cb(\AA)$}{localizations of complexes}}

	We write $Q\colon \EE\rightarrow \EE/\AA$ for the quotient functor.  The functor $Q$ induces a natural exact functor $\widehat{Q}\colon \Cb(\EE)\rightarrow \Cb(\EE/\AA)$ such that $\widehat{Q}(\Cb(\AA))=0$.  By the universal property of the quotient $\Cb(\EE)/\Cb(\AA)$, there exists an induced functor $\Psi\colon \Cb(\EE)/\Cb(\AA)\rightarrow \Cb(\EE/\AA)$.  We will show that $\Psi$ is an equivalence of deflation-exact categories.

We start with some useful results when lifting complexes from $\Cb(\EE / \AA)$ to complexes in $\Cb(\EE).$  The first lemma uses the category of bounded sequences we considered in \S\ref{sebsection:LiftingPercoaltingSubcategories}.

\begin{lemma}\label{lemma:BoundedSequencesAndWeakIsomorphismsA}
Let $\AA \subseteq \EE$ be a deflation-percolating subcategory of a deflation-exact category $\EE$ and let $X\bullet \in \Seqb(\EE).$
\begin{enumerate}
	\item\label{enumerate:BoundedSequencesAndWeakIsomorphismsA1} Assume that there is, for every $i \in \bZ$, a given weak isomorphism $s^i\colon Y^i \stackrel{\sim}{\rightarrow} X^i.$  There is a $Z^\bullet \in \Seqb(\EE)$ and morphisms $t^i\colon Z^i \to Y^i$ such that $s^\bullet \circ t^\bullet\colon Z^\bullet \to X^\bullet$ is a morphism in $\Seqb(\EE).$
	\item\label{enumerate:BoundedSequencesAndWeakIsomorphismsA2} If $Q(X^\bullet) \in \Cb(\EE / \AA) \subseteq \Seqb(\EE / \AA),$ then there is a morphism $s^\bullet\colon Y^\bullet \stackrel{\sim}{\rightarrow} X^\bullet$ in $\Seqb(\EE)$ where each $s^i\colon Y^i \stackrel{\sim}{\inflation} X^i$ is a weak isomorphism and with $Y^\bullet \in \Cb(\EE) \subseteq \Seqb(\EE).$
\end{enumerate}
\end{lemma}

\begin{proof}
\begin{enumerate}
	\item As $Z^\bullet$ is bounded, we know that $Y^i \in \AA$ for $|i| \gg 0$.  Hence, $0 \stackrel{\sim}{\inflation} Y^i$ is a weak isomorphism, and we may assume that $Y^i = 0$ for $|i| \gg 0.$  The required property now follows easily from axiom \ref{RMS2}, applied consecutively from high indices to low indices (thus, from right to left).
	\item We apply lemma \ref{lemma:SumTrick} to the morphism $\epsilon^\bullet\colon X^\bullet \to \Sigma^2 X^\bullet$ given by $\epsilon^i = \delta^{i+1} \circ \delta^i\colon X^i \to X^{i+2}$ to obtain a factorization $\xymatrix{X^{\bullet}\ar@{->>}[r]^{\gamma^{\bullet}} & B^{\bullet}\ar[r] & \Sigma^2 X^{\bullet}}$ of $\epsilon^{\bullet}.$  We can take $Y^\bullet = \ker \gamma^\bullet.$  Indeed, as $s^{i+2} \circ \delta_Y^{i+1} \circ \delta_Y^i = \delta_X^{i+1} \circ \delta_X^i \circ s^i = 0$ and $s^{i+2}\colon Y^{i+2} \stackrel{\sim}{\inflation} X^{i+2}$ is a monomorphism, we have that $\delta_Y^{i+1} \circ \delta_Y^i = 0,$ as required. \qedhere
\end{enumerate}
\end{proof}

\begin{proposition}\label{proposition:BoundedSequencesAndWeakIsomorphismsB}
Let $\AA \subseteq \EE$ be a deflation-percolating subcategory of a deflation-exact category $\EE$.
\begin{enumerate}
	\item\label{enumerate:BoundedSequencesAndWeakIsomorphismsB1} Let $X^\bullet, Y^\bullet \in \Cb(\EE).$  Let $f^i\colon Y^i \to X^i$ be morphisms for all $i \in \bZ$ such that $Q(f^\bullet)\colon Q(Y^\bullet) \to Q(X^\bullet)$ is a morphism in $\Cb(\EE / \AA)$.  There is a morphism $s^\bullet\colon Z^\bullet \stackrel{\sim}{\rightarrow} Y^\bullet$ in $\Cb(\EE)$ such that every $s^i\colon Y^i \to X^i$ is a weak isomorphism and the composition $f^\bullet \circ s^\bullet\colon Z^\bullet \to X^\bullet$ is a morphism in $\Cb(\EE).$
	\item\label{enumerate:BoundedSequencesAndWeakIsomorphismsB2} Let $X^\bullet \in \Cb(\EE).$  Assume that there is, for every $i \in \bZ$, a given weak isomorphism $s^i\colon Y^i \stackrel{\sim}{\rightarrow} X^i.$  There is a $Z^\bullet \in \Cb(\EE)$ and morphisms $t^i\colon Z^i \to Y^i$ such that $s^\bullet \circ t^\bullet\colon Z^\bullet \to X^\bullet$ is a morphism in $\Cb(\EE).$
\end{enumerate}
\end{proposition}

\begin{proof}
\begin{enumerate}
	\item We consider the map $c^\bullet\colon Y^\bullet \to \Sigma X^\bullet$ in $\Cb(\EE)$ given by $c^i = f^i \circ \delta_X^i - \delta_Y^{i+1} \circ f^{i+1}\colon X^i \to Y^{i+1}.$  As each $c^i$ factors through $\AA$, lemma \ref{lemma:SumTrick} gives a factorization $\xymatrix{Y^{\bullet}\ar@{->>}[r]^{\gamma^{\bullet}} & B^{\bullet}\ar[r]^{\delta^{\bullet}} & \Sigma X^{\bullet}}$ of $c^{\bullet}.$  We can take $Z^\bullet = \ker \gamma^\bullet.$
	\item This follows from lemma \ref{lemma:BoundedSequencesAndWeakIsomorphismsA}. \qedhere
\end{enumerate}
\end{proof}

\begin{proposition}\label{proposition:MainResultComplexLevel}
	Let $\EE$ be a deflation-exact category and let $\AA$ be a deflation-percolating subcategory. The natural functor $\Cb(\EE)\to \Cb(\EE/\AA)$ factors through an equivalence $\Psi\colon \Cb(\EE)/\Cb(\AA)\xrightarrow{\sim} \Cb(\EE/\AA)$.
\end{proposition}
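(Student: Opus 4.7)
The plan is to define $\Psi$ via the universal property of the quotient and then establish that it is an equivalence by verifying essential surjectivity, faithfulness, and fullness separately, leveraging the three parts of lemma \ref{lemma:EquivalenceLemma}.

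Applying $Q\colon \EE \to \EE/\AA$ in each degree yields an exact functor $\Cb(\EE) \to \Cb(\EE/\AA)$ that sends every complex in $\Cb(\AA)$ to the zero complex. By theorem \ref{theorem:MainTheoremPartI}, realized as in corollary \ref{corollary:QuotientOfComplexes}, this functor factors uniquely through an exact functor $\Psi\colon \Cb(\EE)/\Cb(\AA) \to \Cb(\EE/\AA)$. Essential surjectivity of $\Psi$ is immediate from lemma \ref{lemma:EquivalenceLemma}(\ref{lemma:GettingAComplexReplacment}): every $X^\bullet \in \Cb(\EE/\AA)$ is represented by some $Z^\bullet \in \Cb(\EE)$, so $\Psi(Z^\bullet) \cong X^\bullet$.

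For faithfulness, suppose a roof $X^\bullet \xleftarrow{s^\bullet} Z^\bullet \xrightarrow{g^\bullet} Y^\bullet$ in $\Cb(\EE)/\Cb(\AA)$ is sent to zero by $\Psi$. Then $Q(g^i) = 0$ in $\EE/\AA$ for every $i$, so proposition \ref{proposition:ZeroMaps} yields $\AA^{-1}$-inflations $u^i\colon U^i \inflation Z^i$ with $g^i \circ u^i = 0$ in $\EE$. Lemma \ref{lemma:RightToLeftReplacementTechnique}, applied to $Z^\bullet$ together with the conflations $U^i \inflation Z^i \deflation Z^i/U^i$, assembles these into a chain map $u^\bullet\colon \tilde U^\bullet \to Z^\bullet$ that is degreewise an $\AA^{-1}$-inflation. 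A chain map with this property is globally an $\AA^{-1}$-inflation in $\Cb(\EE)$ (its cokernel lies in $\Cb(\AA)$), hence belongs to $S_{\Cb(\AA)}$. Since $g^\bullet \circ u^\bullet = 0$ in $\Cb(\EE)$, the original roof is equivalent to zero, as required.

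Fullness is the main obstacle. Given a chain map $f^\bullet\colon \Psi(X^\bullet) \to \Psi(Y^\bullet)$ in $\Cb(\EE/\AA)$, represent each $f^i$ by a roof $X^i \xleftarrow{s^i} W^i \xrightarrow{g^i} Y^i$ in $\EE$ with $s^i \in S_\AA$. Lemma \ref{lemma:EquivalenceLemma}(\ref{lemma:PointwiseWeakIsomorphicToComplex}) lets us arrange, after replacing $W^\bullet$ and precomposing each $g^i$ accordingly, that $W^\bullet$ is a cochain complex and $s^\bullet\colon W^\bullet \to X^\bullet$ is a chain map. The differences $g^{i+1} d_W^i - d_Y^i g^i$ vanish in $\EE/\AA$ but need not vanish in $\EE$; another appeal to proposition \ref{proposition:ZeroMaps} produces $\AA^{-1}$-inflations killing these differences, and a further application of lemma \ref{lemma:RightToLeftReplacementTechnique} yields a chain-level $\AA^{-1}$-inflation $v^\bullet\colon \tilde V^\bullet \inflation W^\bullet$ so that $g^\bullet v^\bullet\colon \tilde V^\bullet \to Y^\bullet$ is a genuine chain map. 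The roof $X^\bullet \xleftarrow{s^\bullet v^\bullet} \tilde V^\bullet \xrightarrow{g^\bullet v^\bullet} Y^\bullet$ in $\Cb(\EE)/\Cb(\AA)$ is then the desired lift, provided $s^\bullet v^\bullet$ actually lies in $S_{\Cb(\AA)}$. To secure this, we exploit proposition \ref{proposition:CompositionOfAAInverseDeflationAndInflations} to decompose each $s^i$ as an alternating composition of $\AA^{-1}$-inflations and $\AA^{-1}$-deflations, and invoke the strengthened form of axiom \ref{RMS2} granted by proposition \ref{proposition:WeakIsomorphismsRMS} so that when lemma (\ref{lemma:PointwiseWeakIsomorphicToComplex}) is applied stepwise to each layer, the resulting intermediate chain maps remain degreewise of a single type and therefore assemble into $\AA^{-1}$-inflations or $\AA^{-1}$-deflations in $\Cb(\EE)$. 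The principal difficulty of the whole argument is exactly this alignment: converting pointwise roof data into chain-level data while keeping control of the ``denominator'' as a chain-level weak isomorphism.
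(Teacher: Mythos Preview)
Your approach mirrors the paper's: factor through the universal property, then check essential surjectivity, faithfulness, and fullness via lemma \ref{lemma:EquivalenceLemma} and lemma \ref{lemma:RightToLeftReplacementTechnique}. Your faithfulness argument (using proposition \ref{proposition:ZeroMaps}) is a minor rephrasing of the paper's (which invokes axiom \ref{P2} to factor each $\phi^i$ through an object of $\AA$), and the two are interchangeable.

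For fullness you are actually more careful than the paper. The paper applies lemma \ref{lemma:EquivalenceLemma}(\ref{lemma:PointwiseWeakIsomorphicToComplex}) once and then asserts that ``the induced map $Z^\bullet \to Y^\bullet$ descends to $\phi$'', passing in silence over two genuine issues: that $Z^\bullet \to Y^\bullet$ need not be a chain map in $\Cb(\EE)$, and that the denominator $Z^\bullet \to X^\bullet$, while degreewise in $S_\AA$, need not a priori lie in $S_{\Cb(\AA)}$. You flag both and supply a correct fix for the first (a second pass with proposition \ref{proposition:ZeroMaps} followed by lemma \ref{lemma:RightToLeftReplacementTechnique}).

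Your resolution of the second issue is correct in spirit but underspecified. The cleanest way to make it precise: pad the decompositions of the $s^i$ with identities so that they share a common type-pattern, then process one layer at a time from the top. For a layer of $\AA^{-1}$-inflations, apply lemma \ref{lemma:RightToLeftReplacementTechnique} directly (rather than lemma \ref{lemma:EquivalenceLemma}(\ref{lemma:PointwiseWeakIsomorphicToComplex}), whose statement does not control the type of the output); for a layer of $\AA^{-1}$-deflations, use the analogous right-to-left construction via pullbacks along axiom \ref{R2}. Each pass then produces a single chain-level $\Cb(\AA)^{-1}$-inflation or $\Cb(\AA)^{-1}$-deflation, and the composite is manifestly in $S_{\Cb(\AA)}$. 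With this clarification your argument is complete --- and, arguably, fills a gap left open in the paper's own proof.
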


\begin{proof}	
	We first show that $\Psi$ is essentially surjective. Let $X^{\bullet}\in \Cb(\EE/\AA)$. The complex $X^{\bullet}$ has the following form:
	\[\xymatrix{
	  &     &\ar[dl]_{\rotatebox{45}{$\sim$}}\ar[dr] &         &\ar[dl]_{\rotatebox{45}{$\sim$}}\ar[dr] &         &\ar[dl]_{\rotatebox{45}{$\sim$}} &  \ar[dr]&         &\\
	0\ar[r] & X^n & & X^{n+1} & & X^{n+2} \ar@{.}[rrr]&  &  & X^{n+m}\ar[r] & 0
	}\]
	Repeatedly applying axiom \ref{RMS2}, we obtain a commutative diagram:
	\[\xymatrix@!0@C=4em@R=3em{
	&&\ar@{.}[dr]&&&&&&\\
	&\ar@{.}[dr]&& Y^{n+m-3}\ar[dl]_{\rotatebox{45}{$\sim$}}\ar[dr] && && &\\
	\ar@{.}[dr]&& \ar[dl]_{\rotatebox{45}{$\sim$}}\ar[dr] && Y^{n+m-2}\ar[dr]\ar[dl]_{\rotatebox{45}{$\sim$}}&& &\\
	&\ar[dl]_{\rotatebox{45}{$\sim$}}\ar[dr]&&\ar[dl]_{\rotatebox{45}{$\sim$}}\ar[dr] && Y^{n+m-1}\ar[dr]\ar[dl]_{\rotatebox{45}{$\sim$}}&\\
	X^{n+m-3} & & X^{n+m-2} && X^{n+m-1} && X^{n+m}\ar[r] & 0\\	
	}\]
	Note that $Y^\bullet$ need not be a cochain complex in $\Cb(\EE)$, but that $Q(Y^\bullet) \cong Q(X^\bullet) \in \Cb(\EE / \AA).$  It now follows from lemma \ref{lemma:BoundedSequencesAndWeakIsomorphismsA}.\ref{enumerate:BoundedSequencesAndWeakIsomorphismsA2} that $Q\colon \Cb(\EE) \to \Cb(\EE / \AA)$ is essentially surjective.
	
	To show that $\Psi$ is full, let $\phi^{\bullet}\colon Q(X^{\bullet})\rightarrow Q(Y^{\bullet})$ be an arbitrary map in $\Cb(\EE/\AA)$.  We may represent $\phi^{\bullet}=(\phi^i,s^i)_{i\in \mathbb{Z}}$ in $\EE$ by the following diagram:
	\[\xymatrix{
		\dots \ar[r]& X^{n-1}\ar[r] & X^n\ar[r] & X^{n+1}\ar[r] & \dots \\
		 & L^{n-1}\ar[u]^{\rotatebox{90}{$\sim$}}_{s^{n-1}}\ar[d]_{\phi^{n-1}} & L^n\ar[u]^{\rotatebox{90}{$\sim$}}_{s^{n}}\ar[d]_{\phi^{n}} & L^{n+1}\ar[u]^{\rotatebox{90}{$\sim$}}_{s^{n+1}}\ar[d]_{\phi^{n+1}} & \\
		\dots\ar[r] & Y^{n-1}\ar[r] & Y^n\ar[r] & Y^{n+1}\ar[r] &\dots 
	}\]
	Applying proposition \ref{proposition:BoundedSequencesAndWeakIsomorphismsB}.\ref{enumerate:BoundedSequencesAndWeakIsomorphismsB2} on the morphisms $s^i\colon L^i \to X^i$, we may assume that $s\colon L^\bullet \to X^\bullet$ is a morphism of complexes.  Using proposition \ref{proposition:BoundedSequencesAndWeakIsomorphismsB}.\ref{enumerate:BoundedSequencesAndWeakIsomorphismsB1}, we may furthermore assume that $\phi^\bullet\colon L^\bullet \to Y^\bullet$ is a morphism of cochain complexes.  This shows that $\Psi$ is full.
	
	It remains to show that $\Psi$ is faithful. To that end, let $\phi^{\bullet}\colon X^{\bullet}\rightarrow Y^{\bullet}$ be a map in $\Cb(\EE)$ such that $\Psi(\phi^{\bullet})=0$.  It follows that each $\phi^i$ factors through an object of $\AA$. By lemma \ref{lemma:SumTrick}, there exists a complex in $\Cb(\AA)$ such that $\phi^{\bullet}$ factors through it. It follows that $\phi^{\bullet}$ is zero in $\Cb(\EE)/\Cb(\AA)$. This concludes the proof.
\end{proof}

\begin{remark}
	Note that the results in this section can be extended to right bounded complexes, i.e.~$\Cm(\EE/\AA)\simeq \Cm(\EE)/\Cm(\AA)$. For inflation-exact categories, this dualizes to left bounded complexes.
\end{remark}
\section{The Verdier localization sequence}\label{section:MainResult}

Given a deflation-exact category $\EE$ and a deflation-percolating subcategory $\AA\subseteq \EE$, we study the derived category $\Db(\EE/\AA)$. We show that, under mild assumptions, the sequence 
\[\DAb(\EE)\to \Db(\EE)\to \Db(\EE/\AA)\]
is a Verdier localization sequence.

\subsection{Some results on localizations}

The following proposition is an adaptation of \cite[proposition~I.3.4]{Hartshorne66} (see also \cite[proposition~I.1.3.(iv)]{GabrielZisman67}).

\begin{proposition}\label{proposition:2Universal}
Let $L\colon \CC \to \CC[S^{-1}]$ be a localization.  For each category $\DD$, there is a fully faithful functor 
\[- \circ L\colon \Fun(\CC[S^{-1}], \DD) \to \Fun(\CC, \DD)\]
whose image consists of those functors $F\colon \CC \to \DD$ for which $F(s)$ is invertible (for all $s \in S$).
\end{proposition}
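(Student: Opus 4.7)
The plan is to leverage the 1-categorical universal property of the localization (Definition \ref{definition:LocalizationWithRespectToMorphisms}) twice: once to pin down the essential image, and once---applied to an auxiliary arrow category---to upgrade it to the 2-categorical statement on natural transformations.

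Identifying the image is immediate from the universal property: a functor $F\colon \CC \to \DD$ equals $\widetilde{F}\circ L$ for a (necessarily unique) $\widetilde{F}\colon S^{-1}\CC \to \DD$ if and only if $F$ inverts every $s \in S$. This already shows that $-\circ L$ is injective on objects of the functor category with the claimed essential image.

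For faithfulness on natural transformations I rely on the fact that in the standard model of $S^{-1}\CC$ recalled above, $L$ is the identity on objects (and in general is essentially surjective). If $\alpha,\beta\colon G_1\Rightarrow G_2$ satisfy $\alpha\ast L=\beta\ast L$, then $\alpha_X=\beta_X$ for every $X$ in the image of $L$, and hence for every object of $S^{-1}\CC$.

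The main step, and the only real obstacle, is fullness. Given a natural transformation $\eta\colon G_1\circ L\Rightarrow G_2\circ L$, the idea is to package $\eta$ into a single functor $T\colon \CC\to \DD^{\rightarrow}$ valued in the arrow category of $\DD$, sending $X$ to $\eta_X\colon G_1L(X)\to G_2L(X)$ and sending a morphism $f\colon X\to Y$ to the commutative square expressing naturality of $\eta$ at $f$. For each $s\in S$, the square $T(s)$ has legs $G_1(L(s))$ and $G_2(L(s))$, both of which are isomorphisms in $\DD$ because $L(s)$ is invertible in $S^{-1}\CC$ and $G_i$ preserves isomorphisms; consequently $T(s)$ is an isomorphism in $\DD^{\rightarrow}$. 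The universal property then yields a unique factorization $\widetilde{T}\colon S^{-1}\CC\to \DD^{\rightarrow}$ with $\widetilde{T}\circ L=T$. Composing $\widetilde{T}$ with the domain and codomain projections $\DD^{\rightarrow}\to \DD$ recovers $G_1$ and $G_2$ on the nose---again by the uniqueness clause of the universal property, since both compositions already agree with $G_1$ and $G_2$ after precomposition with $L$---so $\widetilde{T}$ is exactly a natural transformation $\widetilde{\eta}\colon G_1\Rightarrow G_2$ with $\widetilde{\eta}\ast L=\eta$. The clean identification of $\widetilde{T}$ with a natural transformation is the subtle point, and it is precisely what the uniqueness in the universal property supplies.
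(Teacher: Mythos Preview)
Your proof is correct and follows essentially the same approach as the paper: both arguments establish fullness by encoding the natural transformation as a functor into the arrow category $\DD^{\rightarrow}$, observing that this functor inverts $S$, and then invoking the universal property of the localization. Your write-up is in fact more explicit than the paper's about why the induced functor $\widetilde{T}\colon S^{-1}\CC \to \DD^{\rightarrow}$ really is a natural transformation from $G_1$ to $G_2$ (via the domain and codomain projections and the uniqueness clause), which the paper leaves implicit.
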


\begin{proof}
It is clear that the functor $- \circ L\colon \Fun(\CC[S^{-1}], \DD) \to \Fun(\CC, \DD)$ is faithful.  Moreover, it follows from the universal property that the image is as described.  We only need to show that it is full.

Let $F,G\colon \CC \to \DD$ be functors in the image of $-\circ L$ (thus, $F = F' \circ L$ and $G = G' \circ L$ for some $F,G \colon \CC[S^{-1}] \to \DD$) and $\eta\colon F \Rightarrow G$ a natural transformation.  This describes a functor $N\colon \CC \to \DD^\to$ (where $\DD^\to$ is the arrow category of $\DD$) by $C \mapsto \left[F(C) \stackrel{\eta_C}{\rightarrow} G(C)\right]$.  As $F$ and $G$ map every element of $S$ to an invertible element, so does $N$.  This implies that $N$ factors as $\CC \stackrel{L}{\rightarrow} \CC[S^{-1}] \stackrel{M}{\rightarrow} \DD^{\to}$.  The functor $M$ then gives the required natural transformation $F' \Rightarrow G'$.
\end{proof}

The following lemma is well known.  We provide a proof for the benefit of the reader.

\begin{lemma}\label{lemma:InducedLocalization}\label{lemma:CompositionOfLocalizations}
Let $\CC$ be any category.
\begin{enumerate}
  \item Let $S,T\subset \Mor(\mathcal{C})$. Let $N\colon \CC \to \CC[S^{-1}]$ and $M\colon \CC \to \CC[T^{-1}]$ be the corresponding localizations.  If $F\colon \mathcal{C}[S^{-1}] \rightarrow \mathcal{C}[T^{-1}]$ is a functor such that 
	\[\xymatrix{
		\mathcal{C}\ar[d]^N\ar[r]^{M} & \mathcal{C}[T^{-1}]\\
		\mathcal{C}[S^{-1}]\ar[ru]_F & 
	}\] commutes, then $F$ is a localization itself.
	\item Let $S\subset \Mor(\mathcal{C})$ be a right multiplicative system and $U \subseteq \Mor(\CC[S^{-1}])$ any set of morphisms.
	Let $F\colon \mathcal{C}\rightarrow \mathcal{C}[S^{-1}]$ and $G\colon \mathcal{C}[S^{-1}] \rightarrow (\mathcal{C}[S^{-1}])[U^{-1}]$ be the localizations functors, then the composition $G\circ F$ is a localization as well.
	\item Let $F\colon \CC \to \CC[S^{-1}]$ be a localization.  Let $\Phi\colon \DD \to \CC$ be any functor.  Let $T = \{ f \in \Mor(\DD) \mid \mbox{$\Phi \circ F(f)$ is invertible} \}.$  If $\Phi$ is an equivalence, so is the natural functor $G\colon \DD[T^{-1}] \to \CC[T^{-1}].$ 
\end{enumerate}
\end{lemma}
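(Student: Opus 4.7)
The plan is to prove all three parts by combining the universal property of localization with Proposition \ref{proposition:2Universal}, which says that pre-composition by a localization functor is fully faithful on functor categories. For each part, the idea is to identify the set of morphisms at which the candidate functor is a localization, and then reduce its universal property (both existence and uniqueness of factorizations) to those of the given localizations.

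For (1), I would claim that $F$ is the localization of $S^{-1}\CC$ at $N(T) = \{N(t) : t \in T\}$. Given any $H\colon S^{-1}\CC \to \EE$ inverting $N(T)$, the composite $H \circ N$ inverts $T$ and hence factors uniquely as $\widetilde H \circ M = \widetilde H \circ F \circ N$ by the universal property of $M$. Proposition \ref{proposition:2Universal} applied to $N$ then upgrades this equality of composites to the equality $H = \widetilde H \circ F$, and uniqueness of $\widetilde H$ is inherited from the factorization through $M$.

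For (2), I would set $V = \{f \in \Mor(\CC) : (G \circ F)(f) \text{ is invertible}\}$ and show $G \circ F$ is the localization of $\CC$ at $V$. Given $H\colon \CC \to \EE$ inverting $V$, the inclusion $S \subseteq V$ lets me factor $H = \widetilde H \circ F$ uniquely through $F$. To then factor $\widetilde H$ through $G$, I must verify $\widetilde H$ inverts $U$. Using that $S$ is a right multiplicative system, every $u \in U \subseteq \Mor(S^{-1}\CC)$ is represented by a roof $X \xleftarrow{s} X' \xrightarrow{f} Y$ with $s \in S$, hence $u = F(f) \circ F(s)^{-1}$. Applying $G$ shows $GF(f)$ is invertible, so $f \in V$; thus $\widetilde H(u) = H(f) \cdot H(s)^{-1}$ is invertible, and the universal property of $G$ completes the factorization.

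For (3), my plan is to construct a quasi-inverse $H$ to $G$. Let $\Phi'$ be a quasi-inverse of $\Phi$, with natural isomorphisms $\Phi'\Phi \cong \mathrm{id}_\DD$ and $\Phi\Phi' \cong \mathrm{id}_\CC$. The key observation is that $Q_T \circ \Phi'\colon \CC \to T^{-1}\DD$ (with $Q_T$ the localization functor at $T$) inverts $S$: for $s \in S$ we have $\Phi\Phi'(s) \cong s$, whence $F\Phi\Phi'(s) \cong F(s)$ is invertible, so $\Phi'(s) \in T$. By the universal property of $F$, there is a unique $H\colon S^{-1}\CC \to T^{-1}\DD$ with $H \circ F = Q_T \circ \Phi'$. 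One then has $GH \circ F = G Q_T \Phi' = F \Phi \Phi' \cong F$ and $HG \circ Q_T = HF\Phi = Q_T \Phi' \Phi \cong Q_T$; applying Proposition \ref{proposition:2Universal} to $F$ and to $Q_T$ transports these to natural isomorphisms $GH \cong \mathrm{id}_{S^{-1}\CC}$ and $HG \cong \mathrm{id}_{T^{-1}\DD}$. The main subtle point will be to observe that the fully faithful bijection on natural transformations given by Proposition \ref{proposition:2Universal} transports invertibility — this follows by applying the bijection to the inverse natural transformation as well.
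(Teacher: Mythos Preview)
Your proposal is correct and follows essentially the same approach as the paper: part (2) is virtually identical, part (1) differs only in that you localize at the smaller set $N(T)$ rather than its saturation $U=\{u:F(u)\text{ invertible}\}$ (both work by the same argument), and your part (3) spells out explicitly what the paper summarizes as ``follows easily from Proposition~\ref{proposition:2Universal}.''
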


\begin{proof}
\begin{enumerate}
\item Define $U = \{u \in \Mor(S^{-1}\mathcal{C}) \mid \mbox{$F(u)$ invertible}\}.$  We will verify that $F\colon S^{-1}\mathcal{C}\rightarrow T^{-1}\mathcal{C}$ is a localization with respect to $U$ by showing it satisfies the corresponding universal property.
	
	Therefore, let $G\colon \mathcal{C}[S^{-1}]\rightarrow \mathcal{D}$ be a functor such that $G(u)$ is invertible for all $u\in U$. As $M$ is a localization with respect to $T$, we know that $M(t)=F (N(t))$ is invertible, for all $t \in T$. By definition, we have that $N(t)\in U$ and hence $G\circ N(t)$ is invertible as well.  By the universal property of $\mathcal{C}[T^{-1}]$, there exists a unique functor $L\colon \mathcal{C}[T^{-1}] \rightarrow D$ such that $L\circ M=G\circ N$.  As then $L\circ F\circ N=G\circ N$, the universality of $N$ implies that $L \circ F = G$.  This finishes the proof.
	
\item 	Define $W = \{w \in \Mor(\CC) \mid \mbox{$GF(u)$ invertible} \}$. We will show that $G\circ F$ satisfies the universal property of the localization with respect to $W$. Let $H\colon \mathcal{C}\rightarrow \mathcal{D}$ be a functor such that $H(w)$ is invertible for any $w\in W$. Obviously $S\subseteq W$, so that the universal property of $\mathcal{C}[S^{-1}]$ implies that there exists a unique functor $K\colon S^{-1}\mathcal{C}$ such that $H=K\circ F$.
	
	We now show that $K$ maps every morphism in $U$ to an invertible morphism in $\DD$. Let $u\in U$.  Since $S$ is a right multiplicative system, we can write $u=fs^{-1}$ with $s\in S$ and $f$ a map in $\mathcal{C}$. Since $G(u)$ and $G(s)$ are invertible, $G(f) = G(u)G(s)$ is invertible as well. Using that $F(f)=f$, we infer that $G\circ F(f)$ is invertible and hence $f\in W$.  Thus, $H(f)$ is invertible, and it follows that $K(u)$ is invertible.  By the universal property of the localization $G\colon \mathcal{C}[S^{-1}]\rightarrow U^{-1}(\mathcal{C}[S^{-1}])$, there exists a unique functor $L\colon (\mathcal{C}[S^{-1}])[U^{-1}] \to \DD$ such that $K=L\circ G$. Putting everything together we find that $H=L\circ (G\circ F)$. This establishes the required properties.
	\item This follows easily from proposition \ref{proposition:2Universal}. \qedhere
	\end{enumerate}
\end{proof}

\subsection{Proof of main theorem}

The following proposition was proven in \cite[proposition 1.3]{Rickard89} (see also \cite[criterion 1.3]{Neeman90}).

\begin{proposition}\label{proposition:CriterionRickard}
Let $\CC$ be a triangulated category and let $\TT \subseteq \CC$ be a full subcategory.  The following are equivalent:
\begin{enumerate}
	\item $\TT$ is a thick subcategory of $\CC$,
	\item for any morphism $f\colon X \to Y$ in $\CC$ with $\cone(f) \in \TT$, if $f$ factors through $\TT$ then $X,Y \in \TT.$
\end{enumerate}
\end{proposition}

In the statement of the main theorem we use the following definition.

\begin{definition}
	Let $\AA$ be a deflation-percolating subcategory of a deflation-exact category $\EE$ satisfying axiom \ref{R0*}.  The triangulated subcategory of $\Db(\EE)$ generated by the image of $\AA$ under the canonical embedding is denoted by $\DAb(\EE)$.
\end{definition}

\begin{proposition}\label{proposition:WeakIsoYieldsACones} 
	Let $\EE$ be a deflation-exact category satisfying axiom \ref{R0*} and let $\AA\subseteq \EE$ be a deflation-percolating subcategory.  Let $f^\bullet\colon X^\bullet \to Y^\bullet$ be a morphism in $\Cb(\EE).$
	\begin{enumerate}
		\item If $f^\bullet$ is a weak isomorphism (i.e.~$f^{\bullet}\in S_{\Cb(\AA)}$), then $\cone(f^{\bullet}) \in \DAb(\EE)$.
		\item If $f^\bullet$ is a weak isomorphism, then $X^{\bullet}\in \DAb(\EE)$ if and only if $Y^{\bullet}\in \DAb(\EE)$.
		\item If $Q(f^\bullet)$ is an isomorphism in $\Cb(\EE/ \AA)$, then $X^\bullet \in \langle \DAb(\EE) \rangle_{\textrm{thick}}$ if and only if $Y^\bullet \in \langle \DAb(\EE) \rangle_{\textrm{thick}}.$
	\end{enumerate}
\end{proposition}

\begin{proof}	
	\begin{enumerate}
		\item In any triangulated category, the cone of a composition $g \circ f$ is an extension of $\cone(g)$ by $\cone(f)$ (this is an immediate corollary of \cite[proposition~1.4.6]{Neeman01}). Since $f^{\bullet}$ factors as a composition of $\Cb(\AA)^{-1}$-deflations and $\Cb(\AA)^{-1}$-inflations, it suffices to show that the cones of $\Cb(\AA)^{-1}$-deflations (resp. $\Cb(\AA)^{-1}$-inflations) belong to $\Db_{\AA}(\EE)$. The latter follows immediately from proposition \ref{proposition:ConflationsYieldTriangles} and the definition of $\Cb(\AA)^{-1}$-deflations (resp. $\Cb(\AA)^{-1}$-inflations).
		\item This statement follows from the previous statement.
		\item Assume first that $X^\bullet \in \langle \DAb(\EE) \rangle_{\textrm{thick}}.$  Using that $\Cb(\EE) / \Cb(\AA) \simeq \Cb(\EE / \AA)$, we know that $f^\bullet$ descends to an isomorphism in $\Cb(\EE) / \Cb(\AA).$  As $S_{\Cb(\AA)}$ is a right multiplicative system, there is a morphism $g^\bullet\colon Z^\bullet \to X^\bullet$ such that $f^\bullet \circ g^\bullet \in S_{\Cb(\AA)}.$  Hence, $\cone(g^\bullet \circ f^\bullet) \in \DAb(\EE).$  It now follows from proposition \ref{proposition:CriterionRickard} that $Y^\bullet \in \langle \DAb(\EE) \rangle_{\textrm{thick}}.$
		
		Assume now that $Y^\bullet \in \langle \DAb(\EE) \rangle_{\textrm{thick}}.$  As before, there is a morphism $g^\bullet\colon Z^\bullet \to X^\bullet$ such that $f^\bullet \circ g^\bullet \in S_{\Cb(\AA)}.$  Note that $Q(g^\bullet)$ is an isomorphism. Moreover, as both $Y^\bullet, \cone(f^\bullet \circ g^\bullet) \in \langle \DAb(\EE) \rangle_{\textrm{thick}}$, we find that $Z^\bullet \in \langle \DAb(\EE) \rangle_{\textrm{thick}}.$  We have reduced this case to the previous case.  \qedhere
	\end{enumerate}
\end{proof}

We now come to the the main theorem; our proof closely resembles the proofs given in \cite[theorem 3.2]{Miyachi91} and \cite[proposition 2.6]{Schlichting04}.

\begin{theorem}\label{theorem:MainTheorem}
	Let $\EE$ be a deflation-exact category and let $\AA$ be a deflation-percolating subcategory.
	\begin{enumerate}
		\item The derived quotient functor $\Db(\EE)\to \Db(\EE/\AA)$ is a Verdier localization.
		\item If additionally $\EE$ satisfies axiom \ref{R0*}, then the sequence 
		\[\DAb(\EE)\to \Db(\EE)\to \Db(\EE/\AA)\]
		is a Verdier localization sequence.
	\end{enumerate}
\end{theorem}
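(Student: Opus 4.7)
The plan is to reduce part (1) to the complex-level equivalence $\Cb(\EE)/\Cb(\AA) \simeq \Cb(\EE/\AA)$ from Proposition \ref{proposition:MainResultComplexLevel}, and then for part (2) to identify the kernel of the resulting Verdier localization with $\DAb(\EE)$ using right weak saturation of $S_\AA$.

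For part (1), I would write the canonical functor $\Cb(\EE) \to \Db(\EE/\AA)$ in two ways.  The upper route $\Cb(\EE) \to \Cb(\EE)/\Cb(\AA) \xrightarrow{\simeq} \Cb(\EE/\AA) \to \Kb(\EE/\AA) \to \Db(\EE/\AA)$ is a composition of localizations (Proposition \ref{proposition:MainResultComplexLevel} provides the middle equivalence), so by Lemma \ref{lemma:CompositionOfLocalizations}(2) the composite is itself a localization.  The lower route $\Cb(\EE) \to \Kb(\EE) \to \Db(\EE) \to \Db(\EE/\AA)$ starts with the localization $\Cb(\EE) \to \Db(\EE)$, so Lemma \ref{lemma:CompositionOfLocalizations}(1) forces the remaining arrow $\Db(Q)\colon \Db(\EE) \to \Db(\EE/\AA)$ to be a localization as well.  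Since $\Db(Q)$ is a triangle functor, the class of inverted morphisms is of the form $\NN(\ker \Db(Q))$, so this localization is a Verdier localization.

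For part (2), I need to check that $\ker \Db(Q)$ agrees (up to thick closure) with $\DAb(\EE)$.  One inclusion is automatic: $Q(\AA) = 0$ and $\ker \Db(Q)$ is a thick triangulated subcategory, so it contains the thick closure of $\DAb(\EE)$.  For the reverse inclusion, I would take $X^\bullet \in \ker \Db(Q)$ and express the isomorphism $\Db(Q)(X^\bullet) \cong 0$ as a zigzag of chain maps in $\Cb(\EE/\AA)$.  Using Lemma \ref{lemma:EquivalenceLemma} (and Proposition \ref{proposition:TruncatedReplacements} to stay within bounded complexes), I can lift this zigzag back through the equivalence of Proposition \ref{proposition:MainResultComplexLevel} to a zigzag in $\Cb(\EE)$ assembled from morphisms in $S_{\Cb(\AA)}$ (weak isomorphisms) and genuine quasi-isomorphisms.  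Proposition \ref{proposition:WeakIsoYieldsACones} then controls the cones of weak isomorphisms---they lie in $\DAb(\EE)$, thanks to axiom \ref{R0*} via Proposition \ref{proposition:ConflationsYieldTriangles}---while cones of quasi-isomorphisms are already zero in $\Db(\EE)$.  Iteratively applying the octahedral axiom along the lifted zigzag traps $X^\bullet$ inside the thick closure of $\DAb(\EE)$.

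The delicate step, and the place where right weak saturation is essential, is bridging the two a priori different notions of ``becoming invertible'': a chain map $f^\bullet$ is inverted by $\Db(Q)$ when $Q(f^\bullet)$ is a quasi-isomorphism in $\Cb(\EE/\AA)$, while the morphisms built into the machinery of Proposition \ref{proposition:MainResultComplexLevel} are weak isomorphisms in $\Cb(\EE)$ (compositions of $\Cb(\AA)^{-1}$-in\-fla\-tions and $\Cb(\AA)^{-1}$-de\-fla\-tions).  Right weak saturation of $S_\AA$ is precisely what lets one replace ``$Q(f)$ is an isomorphism'' by ``$f \circ s \in S_\AA$ for some $s \in S_\AA$'', promoting the former condition into a usable complex-level statement.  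Beyond this bookkeeping of roofs, the argument closely parallels \cite[theorem~3.2]{Miyachi91} and \cite[proposition~2.6]{Schlichting04}.
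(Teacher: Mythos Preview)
Your argument for part~(1) is correct and is exactly the paper's: factor $\Cb(\EE)\to\Db(\EE/\AA)$ as a composite of localizations (via corollary~\ref{corollary:QuotientOfComplexes} and proposition~\ref{proposition:MainResultComplexLevel}), then invoke lemma~\ref{lemma:CompositionOfLocalizations} to conclude that the induced triangle functor $\Db(\EE)\to\Db(\EE/\AA)$ is itself a (Verdier) localization.

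For part~(2), your identification of the ingredients is right---proposition~\ref{proposition:WeakIsoYieldsACones}, axiom~\ref{R0*}, and right weak saturation all enter exactly where you say---but the ``lift a zigzag and apply the octahedral axiom'' plan has a genuine gap. The isomorphism $\Db(Q)(X^\bullet)\cong 0$ is not represented by a zigzag of chain maps from $Q(X^\bullet)$ to $0$ in $\Cb(\EE/\AA)$; it simply says that $Q(X^\bullet)$ lies in the thick closure of $\Acb(\EE/\AA)$ in $\Kb(\EE/\AA)$. There is nothing to lift through proposition~\ref{proposition:MainResultComplexLevel} in the way you describe, because the content of ``$Q(X^\bullet)$ is acyclic'' is a degreewise factorization of its differentials through kernel objects \emph{in $\EE/\AA$}, and lemma~\ref{lemma:EquivalenceLemma} does not lift acyclicity.

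What the paper actually does for the reverse inclusion is an induction on the width of the support of $X^\bullet\in\Cb(\EE)$. In the acyclic case, the top differential $d^{n-1}_X$ becomes a deflation in $\EE/\AA$; one uses weak saturation to realize this via a diagram in $\EE$, pulls back along the genuine deflation (axiom~\ref{R2}), and applies lemma~\ref{lemma:EquivalenceLemma}.\eqref{lemma:PointwiseWeakIsomorphicToComplex} to produce a complex $W^\bullet$ weakly isomorphic to $X^\bullet$ whose top differential is a deflation \emph{in $\EE$}. Truncating off this last conflation (proposition~\ref{proposition:AcylicComplexAsExtensionOfTruncations}) reduces the width, and proposition~\ref{proposition:WeakIsoYieldsACones} propagates membership in $\DAb(\EE)$ along each step. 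The direct-summand case is then handled separately, again using weak saturation to replace an isomorphism in $\Cb(\EE)/\Cb(\AA)$ by a genuine weak isomorphism. This width induction is the missing structural idea in your proposal; the octahedral bookkeeping you mention only kicks in once such a concrete zigzag has been \emph{constructed}.
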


\begin{proof}
	\begin{enumerate}
		\item Consider the natural diagram
	\[\xymatrix{
		\Cb(\EE)\ar[r]\ar[d] & \Cb(\EE)/\Cb(\AA) \simeq \Cb(\EE/\AA) \ar[d]\\
		\Kb(\EE)\ar@{.>}[r]^G & \Kb(\EE/\AA) 
	}\]  where $G$ is a triangle functor.  Recall from proposition \ref{proposition:ComplexPercolating} that the functor $\Cb(\EE) \to \Cb(\EE)/\Cb(\AA)$ is a localization with respect to a right multiplicative set and that, for an additive category $\CC$, the quotient $\Cb(\CC) \to \Kb(\CC)$ is a localization functor (with respect to the Hurewicz model structure, see \cite{GolasinskiGromadzki82,BarthelMayRiehl14}).  It follows from lemma \ref{lemma:InducedLocalization} that the triangle functor $G$ is a Verdier localization.

Similarly, it follows from the diagram
	\[\xymatrix{
		\Kb(\EE)\ar[r]^G\ar[d] & \Kb(\EE/\AA)\ar[d]\\
		\Db(\EE)\ar@{.>}[r]^F & \Db(\EE/\AA) 
	}\] that $F$ is a Verdier localization.
	
	\item We need to show that $\ker(F) = \langle \DAb(\EE) \rangle_{\textrm{thick}}$.	It is clear that $\DAb(\EE) \subseteq \ker(F)$.  As $\ker(F)$ is a thick subcategory, it follows that $\langle \DAb(\EE) \rangle_{\textrm{thick}} \subseteq \ker(F)$. It remains to show the other inclusion.
	
	Let $X^{\bullet}\in \ker(F)$. By the above diagram we find that $G(X^{\bullet})\in \langle\Acb(\EE/\AA)\rangle_{\textrm{thick}}$. It follows from lemma \ref{lemma:CoretractionInHomotopyCategory} that $X^{\bullet}$ is a direct summand in $\Kb(\EE/\AA)$ of an acyclic complex in $\Cb(\EE/\AA)$.  We split the proof into two parts.  Assume first that $X^{\bullet} \in \AcbC(\EE/\AA)$.  We proceed by induction on the width $n$ of the support of $X^{\bullet}\in \Cb(\EE)$.
	
	If $X^{\bullet}$ is a stalk complex with $X$ in degree $k$, we find that $X\cong 0$ in $\EE/\AA$ as $X^{\bullet}\in \AcbC(\EE/\AA)$. It follows from proposition \ref{proposition:ZeroMaps} that $X\in \AA$. Thus $X^{\bullet}\in \DAb(\EE)$.
	
	Assume that $X^{\bullet}$ has support of width $n\geq 2$.  Without loss of generality assume that $X^{\bullet}\in \Conen(\EE)$.  We claim that there is a complex $Z^\bullet\in \langle \DAb(\EE) \rangle_{\textrm{thick}}$ together with a zig-zag of morphisms $X^\bullet \stackrel{\alpha^\bullet}{\leftarrow} W^\bullet \stackrel{\beta^\bullet}{\rightarrow} Y^\bullet \stackrel{\gamma^\bullet}{\leftarrow} Z^\bullet$ where $\alpha^\bullet$ is a weak isomorphism, $\beta^\bullet$ becomes an isomorphism under $Q$, and $\gamma^\bullet$ is a quasi-isomorphism.  It then follows from proposition \ref{proposition:WeakIsoYieldsACones} that $X^\bullet \in \langle \DAb(\EE) \rangle_{\textrm{thick}}$ as well.
	
	We now prove the above claim.  Since $X^{\bullet}\in \AcbC(\EE/\AA)$, the differential $d_X^{n-1}$ is a deflation in $\EE/\AA$. By construction, there is a diagram in $\EE$:
	\[\xymatrix{
	X^{n-1}\ar[r] & X^n\\
	\ar[u]^{\rotatebox{90}{$\sim$}}_{s^{n-1}}U^{n-1}\ar[d]^{f^{n-1}} &\ar[u]^{\rotatebox{90}{$\sim$}}_{s^n} U^n\ar[d]^{f^n} \\
	Y^{n-1}\ar@{->>}[r] & Y^n
	}\]
	corresponding to a commutative diagram in $\EE / \AA$.  Applying proposition \ref{proposition:BoundedSequencesAndWeakIsomorphismsB}.\ref{enumerate:BoundedSequencesAndWeakIsomorphismsB2} to the complex $X^\bullet$ and the morphisms $s^{n-1}\colon U^{n-1} \stackrel{\sim}{\rightarrow} X^{n-1}$, $s^n\colon U^{n} \stackrel{\sim}{\rightarrow} X^{n}$, and identities in the other positions, we obtain a morphism $t^\bullet\colon V^\bullet \stackrel{\sim}{\rightarrow} X^\bullet$ in $\Conen(\EE)$ such that $t^{n}$ and $t^{n-1}$ factor through $s^{n}$ and $s^{n-1}$, respectively.  The induced square
	\[\xymatrix{
	V^{n-1} \ar[r] \ar[d] & V^n \ar[d] \\
	Y^{n-1} \ar@{->>}[r] & Y^n}\]
need not commute, but it becomes a commutative square under application of $Q$.  Hence, there is a morphism $K \stackrel{\sim}{\rightarrow} V^{n-1}$ such that the composition $K \to V^{n-1} \to V^{n} \to Y^{n}$ is equal to $K \to V^{n-1} \to Y^{n-1} \deflation Y^{n}.$  Applying proposition \ref{proposition:BoundedSequencesAndWeakIsomorphismsB}.\ref{enumerate:BoundedSequencesAndWeakIsomorphismsB2} to the complex $V^\bullet$ and the morphism $K \stackrel{\sim}{\rightarrow} V^{n-1}$ (and the identity morphisms in the other positions), we obtain a morphism $W^\bullet \stackrel{\sim}{\rightarrow} V^\bullet$ such that the diagram
	\[\xymatrix{
	W^{n-1} \ar[r] \ar[d] & W^n \ar[d] \\
	Y^{n-1} \ar@{->>}[r] & Y^n}\]
commutes.  We extend the morphism $Y^{n-1} \deflation Y^n$ to a complex $Y^\bullet$ by setting $Y^i = W^i$ for $i \neq n, n-1$ with the obvious differentials.  There is then a morphism $W^\bullet \to Y^\bullet$ (this map is the identity outside of the positions $n$ and $n-1$) which becomes an isomorphism under $Q$.  We now set $Z^\bullet \coloneqq \tau^{\leq n-1} Y^\bullet.$

We obtain the aforementioned zig-zag $X^\bullet {\leftarrow} W^\bullet {\rightarrow} Y^\bullet {\leftarrow} Z^\bullet.$  Note that $Z^\bullet \in \Conenmone(\EE)$ and that $Z^\bullet$ becomes an acyclic complex in $\EE / \AA$ (it is the truncation of $Y^\bullet$, which is isomorphic to $W^\bullet$ and to $X^\bullet$ in $\EE / \AA$, which is acyclic).  The induction hypothesis shows that $Z^\bullet\in \langle \DAb(\EE) \rangle_{\textrm{thick}}$.  We conclude that $X^\bullet\in \langle \DAb(\EE) \rangle_{\textrm{thick}}$, as required.
	
	We now consider the more general case where $X^\bullet \in \Cb(\EE)$ is a direct summand of an acyclic complex $V \in \Cb(\EE / \AA).$  By proposition \ref{proposition:MainResultComplexLevel}, $\Cb(\EE)/\Cb(\AA)\simeq \Cb(\EE/\AA)$ and thus $X^{\bullet}$ is a direct summand in $\Cb(\EE)/\Cb(\AA)$ of $V^{\bullet}\in \Cb(\EE)$.  Let $V^\bullet \stackrel{\sim}{\leftarrow} W^\bullet \rightarrow X^\bullet$ and $X^\bullet \stackrel{\sim}{\leftarrow} Y^\bullet \rightarrow V^\bullet$ be roofs in $\Cb(\EE)$ representing the retraction and a corresponding section.  The following roof represents the identity on $X^{\bullet}$ in $\Cb(\EE)/\Cb(\AA)$:
	\[\xymatrix@C=0.5em@R=0.3em{
	&&Z^{\bullet}\ar[ld]_{\rotatebox{45}{$\sim$}}\ar[rd] & &\\
	& Y^{\bullet}\ar[ld]_{\rotatebox{45}{$\sim$}}\ar[rd] &&W^{\bullet}\ar[ld]_{\rotatebox{45}{$\sim$}}\ar[rd]&\\
	X^{\bullet} &&V^{\bullet} &&X^{\bullet}
	}\]
	and hence we may assume that the left leg and the right leg compose to the same morphism $f\colon Z \stackrel{\sim}{\rightarrow} X$.  We have shown that $V^\bullet \in \langle \DAb(\EE) \rangle_{\textrm{thick}}$ and, hence, $f$ factors through $W^\bullet \in \langle \DAb(\EE) \rangle_{\textrm{thick}}$.  By proposition \ref{proposition:WeakIsoYieldsACones}, we know that $\cone(f) \in \DAb(\EE).$  It now follows from proposition \ref{proposition:CriterionRickard} that $Z^\bullet, X^\bullet \in \langle\DAb(\EE)\rangle_{\textrm{thick}},$ as required. \qedhere
	\end{enumerate}
\end{proof}

\subsection{Application to locally compact modules}\label{subsection:ApplicationLocallyCompact}

Let $\LCA$ be the category of locally compact (Hausdorff) abelian groups. Let $R$ be a unital ring endowed with the discrete topology. Denote by $R-\LC$ the category of locally compact left $R$-modules.  We furthermore write $R-\LC_D$ and $R-\LC_C$ for the full subcategories of discrete $R$-modules and compact $R$-modules, respectively. Note that $\mathbb{Z}-\LC$ is simply the category $\LCA$. We use similar notations for locally compact right $R$-modules. We recall the following proposition.

\begin{proposition}\makeatletter
\hyper@anchor{\@currentHref}%
\makeatother\label{proposition:LocallyCompactPercolating}
	\begin{enumerate}
		\item The categories $R-\LC$ and $\LC-R$ are quasi-abelian categories.
		\item The standard Pontryagin duality can be extended to a duality
		\[\mathbb{D}\colon R-\LC\rightarrow \LC-R \mbox{ and } \mathbb{D}'\colon \LC-R\rightarrow R-\LC\]
		which interchanges the discrete and compact modules.
		\item The category $R-\LC_D$ is a deflation-percolating subcategory of $R-\LC$.
		\item The category $R-\LC_C$ is an inflation-percolating subcategory of $R-\LC$.
	\end{enumerate}
\end{proposition}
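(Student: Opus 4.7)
The plan is to prove (1)--(4) in sequence, reducing to classical facts about $\LCA$ (and \cite{HoffmannSpitzweck07} for the module setting) and then obtaining (4) from (3) by Pontryagin duality. The forgetful functors $R-\LC \to \LCA$ and $\LC-R \to \LCA$ reflect kernels, cokernels and strict morphisms, so the structural properties of $\LCA$ lift immediately to the module settings.

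For (1), I would recall that $\LCA$ has kernels (topological kernels with subspace topology) and cokernels (quotients by the closure of the image), that strict epis are open surjections, and that strict monos are closed embeddings.  Stability of strict epis under pullback and strict monos under pushout is classical in $\LCA$; since the $R$-action passes through kernels, closures and quotients, $R-\LC$ and $\LC-R$ inherit the quasi-abelian structure.  For (2), I would define $\bD(M) = \Hom_{\mathrm{cts}}(M, \bT)$ with the compact-open topology, and endow it with the right $R$-action $(\chi \cdot r)(m) = \chi(rm)$; continuity of $\chi \cdot r$ follows from continuity of left multiplication by $r$ on $M$, while continuity of the $R$-action uses that $R$ is discrete.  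Classical Pontryagin duality gives $\bD' \bD \cong \mathrm{id}$, and the standard fact that the dual of a discrete group is compact (and vice versa) gives the required interchange.

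For (3), I would verify axioms \ref{P1}--\ref{P4} for $R-\LC_D \subseteq R-\LC$.  The central observation is the \emph{open-kernel principle}: any continuous morphism $f\colon E \to D$ into a discrete module has open kernel $K = f^{-1}(0)$, whence $E/K$ is discrete and $f$ factors as a deflation $E \twoheadrightarrow E/K$ followed by a monic inflation $E/K \rightarrowtail D$.  This establishes \ref{P2} in the strong ``abelian percolating'' form (the map $A' \to A$ is an inflation), so \ref{P4} is automatic.  Axiom \ref{P1} follows from the companion principle that an extension of two discrete modules is discrete: if $A \rightarrowtail E \twoheadrightarrow B$ is a conflation with $A,B$ discrete, then $A$ is open in $E$ and discrete, hence $\{0\} \subseteq A \subseteq E$ is open, so $E$ is discrete.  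For \ref{P3}, the pushout of an inflation $E \rightarrowtail F$ along a deflation $E \twoheadrightarrow E/K$ (with $K$ open) is $F / i(K)$, which by quasi-abelianness is a conflation of the required shape.  For saturation of $S_\AA$, I would characterise weak isomorphisms concretely as those continuous $R$-module maps with both discrete kernel and discrete cokernel, and then use proposition~\ref{proposition:ZeroMaps} together with the open-kernel principle to show that any morphism inverted by $Q$ has this property.

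Part (4) is then obtained from (3) by transport of structure along the exact anti-equivalence $\bD$ of (2), which exchanges right/left percolating subcategories and discrete with compact modules. The main obstacle, in my view, is the saturation claim: the four percolating axioms reduce cleanly to the open-kernel principle, but saturation requires both the concrete characterisation of weak isomorphisms and a careful argument that any morphism which becomes invertible in $R-\LC/R-\LC_D$ already has discrete kernel and cokernel in $R-\LC$; the remainder of the proposition is essentially a transfer of classical Pontryagin theory to the $R$-module setting.
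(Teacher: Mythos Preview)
The paper's own proof of this proposition consists entirely of citations: parts (1) and (2) are referred to \cite{Levin73}, and parts (3) and (4) to \cite[proposition~6.23]{HenrardVanRoosmalen19}. Your proposal, by contrast, sketches a self-contained argument. For the percolating axioms your outline is sound: the open-kernel principle (preimages of $\{0\}$ in a discrete target are open) does yield \ref{P2} in its strong abelian form, and in a quasi-abelian category \ref{P3} is then automatic, so \ref{P4} follows. The duality argument for (4) is also fine. In this sense your approach is more informative than the paper's, which simply defers to the literature.

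There is, however, a genuine gap in your treatment of saturation. Your proposed characterisation of weak isomorphisms as ``continuous $R$-module maps with discrete kernel and discrete cokernel'' is false as stated: the inclusion $\mathbb{Z}\hookrightarrow\mathbb{Z}_p$ has zero kernel and zero cokernel (since $\mathbb{Z}$ is dense in $\mathbb{Z}_p$), yet it is not a weak isomorphism --- indeed $Q(\mathbb{Z})=0$ while $Q(\mathbb{Z}_p)\neq 0$. The correct characterisation requires \emph{strictness}: weak isomorphisms are exactly the strict morphisms with discrete kernel and cokernel (this follows from the observation that an $\AA^{-1}$-inflation has open image, so compositions remain strict). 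With this correction the forward direction of saturation still needs an argument that any $f$ inverted by $Q$ is strict, which is not immediate from proposition~\ref{proposition:ZeroMaps}; you would need something closer to the pullback argument used in the paper's proposition~\ref{Proposition:WeakSaturation}, together with an analysis of the bimorphism $\mathrm{coim}(f)\to\mathrm{im}(f)$. You correctly identified saturation as the main obstacle, but the specific route you propose does not go through without this repair.
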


\begin{proof}
The first two statements are well-known (see for example \cite{Levin73}).  For the last two statements, we refer to \cite[proposition~8.28]{HenrardvanRoosmalen19}.
\end{proof}

\begin{remark}
	\begin{enumerate}
		\item The exact structure on $R-\LC$ is described as follows: closed injections are inflations and open surjections are deflations.
		\item As noted in \cite[example~4]{Braunling20}, the category $R-\LC_C$ is in general not left s-filtering, nor is $R-\LC_C$ a right filtering subcategory of $R-\LC$.  It follows that one cannot use the localization theories of \cite{Schlichting04,Cardenas98} to describe the quotient $R-\LC/R-\LC_C$. The above proposition shows that this quotient satisfies the conditions of theorems \ref{theorem:MainTheoremPartI} and \ref{theorem:MainTheorem}.
	\end{enumerate}
\end{remark}

The following proposition is \cite[theorem~12.1.b]{Keller96}.

\begin{proposition}\label{proposition:DesiredVerdierLocalizationSequence}
	Let $\EE$ be an exact category and let $\AA\subseteq \EE$ be a fully exact subcategory. Suppose that either of the following conditions hold:
	\begin{enumerate}
		\myitem{\textbf{C2}}\label{C2} For each conflation $A\rightarrowtail E \twoheadrightarrow E'$ of $\EE$ with $A\in \AA$, there is a commutative diagram
		\[\xymatrix{
			A\ar@{>->}[r]\ar@{=}[d] & E\ar@{->>}[r]\ar[d] & E'\ar[d]\\
			A\ar@{>->}[r] & A'\ar@{->>}[r] & A''
		}\] whose second row is a conflation in $\AA$.
		\myitem{$\textbf{C2}^{\text{op}}$}\label{C2op} For each conflation $E'\rightarrowtail E \twoheadrightarrow A$ of $\EE$ with $A\in \AA$, there is a commutative diagram 
		\[\xymatrix{
			E'\ar@{>->}[r] & E\ar@{->>}[r] & A\ar@{=}[d]\\
			A''\ar@{>->}[r] \ar[u]& A'\ar@{->>}[r]\ar[u] & A
		}\] whose second row is a conflation in $\AA$.
	\end{enumerate}
	Then the canonical functor $\Db(\AA)\to \Db(\EE)$ is fully faithful. In particular, $\DAb(\EE)= \Db(\AA)$.
\end{proposition}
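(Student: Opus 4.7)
The plan is to reduce to condition \ref{C2op}, since \ref{C2} is handled by passing to opposite categories ($\EE^{op}$ is again exact and $\AA^{op} \subseteq \EE^{op}$ inherits \ref{C2op} from the original \ref{C2}). The entire proof then rests on the following \emph{right-approximation lemma}: for every bounded complex $E^\bullet \in \Cb(\EE)$ admitting a quasi-isomorphism $f^\bullet\colon E^\bullet \xrightarrow{\sim} A^\bullet$ with $A^\bullet \in \Cb(\AA)$, there exist $C^\bullet \in \Cb(\AA)$ and a quasi-isomorphism $g^\bullet\colon C^\bullet \xrightarrow{\sim} E^\bullet$ such that $f^\bullet \circ g^\bullet$ is itself a quasi-isomorphism in $\Kb(\AA)$.

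To prove the lemma I would induct on the width of the common support, assuming without loss of generality that $E^\bullet$ and $A^\bullet$ are supported in $[0,n]$. Acyclicity of $\cone(f^\bullet)$ in its top degree forces the map $\bigl(f^n, d_A^{n-1}\bigr)\colon E^n \oplus A^{n-1} \deflation A^n$ to be a deflation onto an object of $\AA$. Applying \ref{C2op} to this deflation produces an $\AA$-conflation covering it, and pulling the resulting data back along $E^{n-1} \to E^n \oplus A^{n-1}$ yields a replacement of $(E^{n-1},E^n)$ by a pair in $\AA$ together with a compatible quasi-isomorphism to $(E^{n-1},E^n)$. The naive truncation below, equipped with the new data, still admits a quasi-isomorphism to a complex in $\Cb(\AA)$, so the induction hypothesis produces the remaining terms of $C^\bullet$; a $3\times 3$-type diagram chase in the spirit of proposition \ref{proposition:P4-NineLemmaTechnique} glues everything together and guarantees that the composition $C^\bullet \to E^\bullet \to A^\bullet$ lies in $\Kb(\AA)$.

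Given the approximation lemma, full faithfulness of the canonical functor $F\colon \Db(\AA) \to \Db(\EE)$ follows from the calculus of fractions. For fullness, any morphism in $\Db(\EE)$ between complexes in $\Cb(\AA)$ is represented by a roof $A^\bullet \xleftarrow{\sim} E^\bullet \to B^\bullet$; the lemma refines this to a roof $A^\bullet \xleftarrow{\sim} C^\bullet \to B^\bullet$ living entirely in $\Cb(\AA)$, which represents a preimage morphism in $\Db(\AA)$. For faithfulness, a morphism of $\Db(\AA)$ that becomes zero in $\Db(\EE)$ admits a quasi-isomorphism $s\colon E^\bullet \xrightarrow{\sim} A^\bullet$ in $\Cb(\EE)$ whose postcomposition with it is null-homotopic in $\Kb(\EE)$; the lemma replaces $s$ by a quasi-isomorphism in $\Cb(\AA)$, and the same null-homotopy then witnesses vanishing in $\Db(\AA)$. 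The identification $\DAb(\EE) = \Db(\AA)$ is now immediate, since the essential image of the fully faithful $F$ is a triangulated subcategory of $\Db(\EE)$ containing every stalk $i(A)$ with $A \in \AA$, hence contains $\DAb(\EE)$, and the reverse inclusion is tautological.

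The main obstacle is the induction step of the approximation lemma: one has to perform a careful diagram chase that simultaneously consumes the acyclicity of $\cone(f^\bullet)$ at the top degree, applies \ref{C2op}, and preserves both the cochain compatibility and the quasi-isomorphism property for the truncation below. The delicate point is that \ref{C2op} only controls the top degree one step at a time, so bookkeeping the compatibility of the pullbacks across consecutive degrees is where the bulk of the technical work lies.
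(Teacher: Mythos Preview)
The paper does not prove this proposition; it simply records it as \cite[theorem~12.1.b]{Keller96}. Your sketch is essentially Keller's argument: reduce to \ref{C2op} by duality, prove a right-approximation lemma by induction on the width of the support, and conclude via the calculus of right fractions. So your proposal is in line with the source the paper defers to.

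Two remarks on the sketch itself. First, the appeal to proposition~\ref{proposition:P4-NineLemmaTechnique} is out of place: that result is specific to percolating subcategories and plays no role here; the gluing you need is a direct manipulation of cones and pullbacks in the exact category $\EE$. Second, the phrase ``pulling the resulting data back along $E^{n-1} \to E^n \oplus A^{n-1}$ yields a replacement of $(E^{n-1},E^n)$ by a pair in $\AA$'' hides the real content of the step. What \ref{C2op} gives you is a map $A' \to E^n \oplus A^{n-1}$ over $A^n$; projecting to $E^n$ produces one new term in $\AA$, but $E^{n-1}$ is not yet touched, and the induction hypothesis must be applied to a shortened complex whose target in $\Cb(\AA)$ is assembled from $A'$ and the truncation of $A^\bullet$. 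You should also verify explicitly that the final composition $C^\bullet \to A^\bullet$ has cone acyclic \emph{in $\AA$}, not merely in $\EE$: a complex in $\Cb(\AA)$ that is acyclic in $\EE$ need not be acyclic in $\AA$, and this is precisely where the induction and \ref{C2op} must do work.
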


\begin{remark}
	Conditions \ref{C2}/\ref{C2op} are closely related to right/left s-filtering subcategories (see \cite[proposition~A.2]{BraunlingGroechenigWolfson16}).
\end{remark}

\begin{corollary}\label{corollary:VerdierSequencesLocallyCompactModules}
	The following are Verdier localization sequences:
	\[\Db(R-\LC_D)\rightarrow \Db(R-\LC)\rightarrow \Db(R-\LC/R-\LC_D)\]
	and 
	\[\Db(R-\LC_C)\rightarrow \Db(R-\LC)\rightarrow \Db(R-\LC/R-\LC_C).\]
\end{corollary}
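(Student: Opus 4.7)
The plan is to deduce the first sequence from theorem \ref{theorem:MainTheorem} combined with proposition \ref{proposition:DesiredVerdierLocalizationSequence}, and then to obtain the second sequence by Pontryagin duality.

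First, I would apply the second part of theorem \ref{theorem:MainTheorem} to $\EE = R-\LC$ and $\AA = R-\LC_D$. By proposition \ref{proposition:LocallyCompactPercolating}, the category $R-\LC$ is quasi-abelian (in particular exact, hence satisfies \ref{R0*}), the subcategory $R-\LC_D$ is right percolating, and the right multiplicative system $S_{\AA}$ is saturated, hence right weakly saturated. This immediately yields a Verdier localization sequence
\[\DAb(R-\LC) \to \Db(R-\LC) \to \Db(R-\LC/R-\LC_D).\]

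Next I would identify $\DAb(R-\LC)$ with $\Db(R-\LC_D)$ by invoking proposition \ref{proposition:DesiredVerdierLocalizationSequence}, for which I must verify condition \ref{C2op}. Given a conflation $E' \rightarrowtail E \twoheadrightarrow A$ in $R-\LC$ with $A \in R-\LC_D$, I would pick a free discrete $R$-module $A' = R^{(I)}$ together with a surjection $\pi\colon A' \twoheadrightarrow A$. Since $E \twoheadrightarrow A$ is surjective on underlying sets, $\pi$ lifts to an $R$-module map $A' \to E$. Setting $A'' = \ker(\pi)$, the Serre property of $R-\LC_D$ forces $A'' \in R-\LC_D$, and the composition $A'' \hookrightarrow A' \to E$ factors through $E'$ because its further composition to $A$ vanishes. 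This furnishes the required commutative diagram whose second row is the conflation $A'' \rightarrowtail A' \twoheadrightarrow A$ in $R-\LC_D$, so \ref{C2op} holds and the first Verdier localization sequence follows.

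For the second sequence I would invoke Pontryagin duality. The anti-equivalence $\mathbb{D}\colon R-\LC \to \LC-R$ from proposition \ref{proposition:LocallyCompactPercolating} interchanges discrete and compact subcategories, and it induces a triangle anti-equivalence on bounded derived categories that sends Verdier localization sequences to Verdier localization sequences. Applying the first sequence to $\LC-R$ with its discrete subcategory $\LC-R_D$ and then dualizing via $\mathbb{D}'$ produces the desired sequence for $R-\LC$ and $R-\LC_C$. Alternatively, one can repeat the argument on the left-exact side, using that $R-\LC_C$ is a left percolating subcategory with saturated left multiplicative system and verifying the dual condition \ref{C2}. The main conceptual step is the verification of \ref{C2op}; this is manageable only because $R-\LC_D$ contains all free discrete $R$-modules and deflations in $R-\LC$ are surjective on underlying sets, so surjections onto $A$ from free discrete modules lift through $E$ by lifting a generating set.
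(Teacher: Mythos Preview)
Your proposal is correct and follows essentially the same route as the paper: apply theorem \ref{theorem:MainTheorem} using proposition \ref{proposition:LocallyCompactPercolating}, then identify $\DAb(R-\LC)$ with $\Db(R-\LC_D)$ via proposition \ref{proposition:DesiredVerdierLocalizationSequence} by verifying \ref{C2op} through a free discrete cover $R^{(I)} \twoheadrightarrow A$, and finally deduce the compact case by Pontryagin duality. The only cosmetic difference is that the paper phrases the lifting step as ``$R$ is projective in $R-\LC$'' rather than your hands-on lifting of generators, but this is the same argument (and your version implicitly uses that $R^{(I)}$ is discrete, so any $R$-linear map out of it is automatically continuous).
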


\begin{proof}
By proposition \ref{proposition:LocallyCompactPercolating}, $R-\LC_D \subseteq R-\LC$ satisfies the conditions of theorem \ref{theorem:MainTheorem}. By proposition \ref{proposition:DesiredVerdierLocalizationSequence}, it suffices to check whether condition \ref{C2op} holds. Note that for each $D\in R-\LC_D$, there is a deflation $R^{\oplus I} \stackrel{p}{\deflation} D$ for some index set $I$ (here, $R^{\oplus I}$ has the discrete topology). As $R$ is projective in $R-\LC$, we can use the lifting property to obtain condition \ref{C2op}.

By Pontryagin duality, we conclude that $\Db(R-\LC_C)\rightarrow \Db(R-\LC)\rightarrow \Db(R-\LC/R-\LC_C)$ is a Verdier localization as well.
\end{proof}

Let $R-\LC_{D, \textrm{noeth}}$ be the full subcategory of $R-\LC_{D}$ consisting of those objects whose underlying modules are noetherian.  It is shown in \cite{HenrardvanRoosmalen19} that $R-\LC_{D, \textrm{noeth}}$ is a deflation-percolating subcategory of $R-\LC$.  Similar to the above results, we find the following proposition.

\begin{proposition}\label{proposition:FGD}
There is a Verdier localization sequence:
	\[\Db(R-\LC_{D, \textrm{noeth}})\rightarrow \Db(R-\LC)\rightarrow \Db(R-\LC/R-\LC_{D, \textrm{noeth}})\]
\end{proposition}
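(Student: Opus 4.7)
The proof is to adapt the argument of Corollary \ref{corollary:VerdierSequencesLocallyCompactModules} to the smaller subcategory $\AA := R-\LC_{D,\textrm{noeth}}$. Setting $\EE := R-\LC$, I would begin by invoking the cited result from \cite{HenrardVanRoosmalen19} that $\AA$ is an abelian right percolating subcategory of $\EE$, and in particular strongly right percolating. Since $\EE$ is exact it satisfies axiom \ref{R0*}, and Proposition \ref{Proposition:WeakSaturation} ensures that the multiplicative system $S_{\AA}$ is right weakly saturated. Theorem \ref{theorem:MainTheorem}(2) therefore produces the Verdier localization sequence
\[\DAb(\EE) \to \Db(\EE) \to \Db(\EE/\AA).\]

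It remains to identify $\DAb(\EE)$ with $\Db(\AA)$, and by Proposition \ref{proposition:DesiredVerdierLocalizationSequence} it suffices to verify condition \ref{C2op}. The plan is: given a conflation $E' \inflation E \deflation D$ in $\EE$ with $D \in \AA$, choose generators $d_1,\dots,d_n$ of the finitely generated discrete $R$-module $D$ to produce a surjection of discrete modules $\pi\colon R^n \deflation D$. Using that $R$ is projective in $\EE$ (as noted in the proof of Corollary \ref{corollary:VerdierSequencesLocallyCompactModules}), hence so is $R^n$, one lifts $\pi$ through the deflation $E \deflation D$ to obtain a map $\phi\colon R^n \to E$. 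Setting $A' := R^n$ and $A'' := \ker \pi$, the sequence $A'' \inflation A' \deflation D$ is a conflation in $\AA$, and the composition $A'' \inflation A' \xrightarrow{\phi} E$ factors uniquely through $E' = \ker(E \deflation D)$, supplying the required commutative diagram.

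The principal point that needs justification is that $R^n \in \AA$, i.e.\ that $R^n$ is noetherian as a left $R$-module; this is the natural standing hypothesis compatible with the percolating structure established in \cite{HenrardVanRoosmalen19} and mirrors the way $R^{\oplus I} \in R-\LC_D$ was used in the proof of Corollary \ref{corollary:VerdierSequencesLocallyCompactModules}. Once this is in place, $A'' = \ker \pi$ lies in $\AA$ as a submodule of a noetherian module, so the verification of \ref{C2op} proceeds exactly as in the unrestricted discrete case. I expect no further obstacle; the argument is essentially the one for $R-\LC_D$ with the projective lift constrained to be finitely generated.
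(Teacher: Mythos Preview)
Your proposal is correct and is exactly the argument the paper has in mind: the paper offers no proof beyond the sentence ``Similar to the above results, we find the following proposition,'' so the intended reasoning is precisely the adaptation of Corollary~\ref{corollary:VerdierSequencesLocallyCompactModules} that you carry out. You are also right to isolate the one nontrivial point, namely that $R^n \in \AA$ requires $R$ to be left noetherian; this hypothesis is not stated in the paper but is indeed needed for the verification of \ref{C2op} (and is already implicit in the claim from \cite{HenrardVanRoosmalen19} that $R{-}\LC_{D,\textrm{noeth}}$ behaves well as a percolating subcategory with the expected properties).
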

\section{\texorpdfstring{Localizations of $\LCA$ are exact}{Localizations of LCA are exact}}

In general, the localization $\EE[S_\AA^{-1}]$ where $\EE$ is an exact category and $\AA$ is a deflation-percolating subcategory need not be exact.  In this section we provide a sufficient condition (theorem \ref{theorem:ExactLocalization}) such that the localization is (two-sided) exact.  As an application, we find that the quotients of $\LCA$ by left or right percolating subcategories (such as $\LCA/\LCA_D$ and $\LCA/\LCA_C$) are exact.

\subsection{Extension-closed subcategories of derived categories}  Our aim is to prove proposition \ref{proposition:ExactMeansExtensionClosed} below, which says that a deflation-exact category satisfying axiom \ref{R3} is exact if and only if it corresponds to an extension-closed subcategory of its derived category.

\begin{lemma}\label{lemma:RetractOfAcyclic}
Let $\EE$ be a deflation-exact category satisfying axiom \ref{R3} and let $C^\bullet \in \C^{[0,2]}(\EE)$ be a complex.  If $C^\bullet \in \langle \Ac(\EE) \rangle_{\textrm{thick}}$, then $C^\bullet$ is a conflation.
\end{lemma}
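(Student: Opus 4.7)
The plan is to reduce to a strict summand situation and then apply proposition \ref{proposition:RetractConflation}, exhibiting $C^0 \to C^1 \to C^2$ as a retract of a conflation arising from an acyclic complex.

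First, since $\Ac(\EE)$ is already triangulated in $\K(\EE)$ by lemma \ref{lemma:ConeOfAcyclic}, its thick closure consists of homotopy direct summands of acyclic complexes. So there is a coretraction $C^\bullet \to A^\bullet$ in $\K(\EE)$ with $A^\bullet$ acyclic. Applying lemma \ref{lemma:CoretractionInHomotopyCategory}, and noting that \ref{R3} implies \ref{R0*} by remark \ref{remark:R0*SplitKernelCokernel} so that the mapping cone $IC^\bullet$ of the identity of $C^\bullet$ is acyclic, we upgrade this to a coretraction in $\C(\EE)$ of $C^\bullet$ into $\tilde A^\bullet := A^\bullet \oplus IC^\bullet$, still acyclic. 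Write $s^\bullet\colon C^\bullet \to \tilde A^\bullet$ and $r^\bullet\colon \tilde A^\bullet \to C^\bullet$ for these strict chain maps with $r^\bullet \circ s^\bullet = 1_{C^\bullet}$.

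Next, since $\tilde A^\bullet$ is acyclic, each differential factors as $\tilde A^{n-1} \stackrel{p_n}{\twoheadrightarrow} Z^n \stackrel{\iota_n}{\rightarrowtail} \tilde A^n$ with $Z^n = \ker(d_{\tilde A}^n)$, and in particular $Z^1 \rightarrowtail \tilde A^1 \twoheadrightarrow Z^2$ is a conflation in $\EE$. This is the target conflation for proposition \ref{proposition:RetractConflation}. The vertical section data is built columnwise: the middle column uses $s^1$; the right column uses $\tilde s^2 \colon C^2 \to Z^2$ obtained by factoring $s^2$ through $\ker(d_{\tilde A}^2) = Z^2$, which is possible since $d_{\tilde A}^2 \circ s^2 = s^3 \circ d_C^2 = 0$; the left column uses $\sigma := p_1 \circ s^0 \colon C^0 \to Z^1$. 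For the retraction data, one takes $r^1$, $\psi := r^2 \circ \iota_2 \colon Z^2 \to C^2$, and $\rho \colon Z^1 \to C^0$ obtained by factoring $r^0$ through $p_1$. The existence of $\rho$ is the subtle point: from $r^0 \circ d_{\tilde A}^{-1} = d_C^{-1} \circ r^{-1} = 0$ (as $C^{-1} = 0$), $r^0$ annihilates the image of $d_{\tilde A}^{-1}$, hence factors through $p_1 = \coker(d_{\tilde A}^{-1})$, giving $\rho$ with $\rho \circ p_1 = r^0$.

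The remaining checks are routine. The vertical compositions are identities: $\rho \sigma = r^0 s^0 = 1_{C^0}$, $r^1 s^1 = 1_{C^1}$, and $\psi \tilde s^2 = r^2 s^2 = 1_{C^2}$. Square commutativity follows from the chain map relations for $s^\bullet$ and $r^\bullet$, using that the inflations $\iota_n$ are monic and the deflations $p_n$ are epic; for instance $\iota_1 \sigma = \iota_1 p_1 s^0 = d_{\tilde A}^0 s^0 = s^1 d_C^0$, and cancelling $p_1$ on the right gives $d_C^0 \rho = r^1 \iota_1$. Proposition \ref{proposition:RetractConflation} then forces $C^0 \to C^1 \to C^2$ to be a conflation. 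The main obstacle is the construction of $\rho$, which critically uses the boundary vanishing $C^{-1} = 0$ coming from the support hypothesis $C^\bullet \in \C^{[0,2]}(\EE)$; symmetrically, the construction of $\tilde s^2$ uses $C^3 = 0$.
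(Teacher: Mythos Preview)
Your proof is correct and follows essentially the same approach as the paper: pass to a strict direct summand in $\C(\EE)$ via lemma \ref{lemma:CoretractionInHomotopyCategory}, then exhibit $C^0 \to C^1 \to C^2$ as a retract of the conflation $Z^1 \rightarrowtail \tilde A^1 \twoheadrightarrow Z^2$ and invoke proposition \ref{proposition:RetractConflation}. The paper phrases the middle step as ``the coretraction $C^\bullet \to A^\bullet$ induces a coretraction $C^\bullet \to \tau^{\geq 2}(\tau^{\leq 2} A^\bullet)$'' and leaves the verification to the reader; your explicit construction of $\sigma$, $\tilde s^2$, $\rho$, $\psi$ (using $C^{-1}=C^3=0$) is exactly that verification.
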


\begin{proof}
It follows from lemma \ref{lemma:CoretractionInHomotopyCategory} that $C^\bullet$ is a direct summand of an acyclic complex $A^\bullet$ in $\C(\EE)$.  It is straightforward to verify that the coretraction $C^\bullet \to A^\bullet$ induces a coretraction $C^\bullet \to \tau^{\geq 2} (\tau^{\leq 2} A^\bullet).$  (Note that the complex $\tau^{\geq 2} (\tau^{\leq 2} A^\bullet)$ is given by the conflation $\ker d^1_A \inflation A^1 \deflation \ker d^2_A$).  It now follows from proposition \ref{proposition:RetractConflation} that $C^\bullet$ is a conflation.
\end{proof}

\begin{proposition}\label{proposition:MeaningR3}
Let $\EE$ be a deflation-exact category and let $i \colon \EE \to \Db (\EE)$ the canonical embedding. The following are equivalent:
\begin{enumerate}
	\item\label{item:R3} The category $\EE$ satisfies axiom \ref{R3}.
	\item\label{item:ConflationsAndTriangles} A sequence $X \stackrel{f}{\to} Y \stackrel{g}{\to} Z$ in $\EE$ is a conflation if and only if there is a triangle $i(X) \stackrel{i(f)}{\to} i(Y) \stackrel{i(g)}{\to} i(Z) \to \Sigma i(X)$ in $\Db(\EE)$.
\end{enumerate}
\end{proposition}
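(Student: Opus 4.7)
The plan is to prove the two implications separately; both rely on lemma \ref{lemma:RetractOfAcyclic} and on the vanishing $\Hom_{\Db(\EE)}(\Sigma i(X), i(Y)) = 0$ from proposition \ref{proposition:NoNegativeExtensions}.

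For $(\ref{item:R3})\Rightarrow(\ref{item:ConflationsAndTriangles})$: since \ref{R3} implies \ref{R0*} (remark \ref{remark:R0*SplitKernelCokernel}), the ``conflation $\Rightarrow$ triangle'' half is proposition \ref{proposition:ConflationsYieldTriangles}. For the converse, suppose $i(X)\xrightarrow{i(f)}i(Y)\xrightarrow{i(g)}i(Z)\to \Sigma i(X)$ is a triangle. Full faithfulness of $i$ (proposition \ref{proposition:StalkEmbeddingFunctors}) gives $gf=0$, so we have a chain map $\psi\colon \cone(i(f))\to i(Z)$ given by $g$ in degree zero. The triangle also supplies, via TR3 and the triangulated $5$-lemma, an isomorphism $\phi\colon \cone(i(f))\to i(Z)$ in $\Db(\EE)$ with $\phi\circ\iota = i(g) = \psi\circ\iota$, where $\iota\colon i(Y)\to \cone(i(f))$ is the canonical map. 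Hence $(\psi-\phi)\circ\iota = 0$, forcing $\psi-\phi$ to factor through the connecting map $\cone(i(f))\to \Sigma i(X)$; such a factorisation lives in $\Hom_{\Db(\EE)}(\Sigma i(X), i(Z)) = 0$. Thus $\psi=\phi$ is an isomorphism in $\Db(\EE)$, its cone (a shift of the complex $[X\to Y\to Z]$) lies in $\langle \Ac(\EE)\rangle_{\textrm{thick}}$, and lemma \ref{lemma:RetractOfAcyclic} then delivers the conflation.

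For $(\ref{item:ConflationsAndTriangles})\Rightarrow(\ref{item:R3})$: applying (\ref{item:ConflationsAndTriangles}) to the always-distinguished triangle $i(X)\xrightarrow{=}i(X)\to 0\to \Sigma i(X)$ yields \ref{R0*}. Now suppose $m\colon A\to B$ and $p\colon B\to C$ are morphisms with $k\colon K\to B$ the kernel of $p$, and suppose $pm$ is a deflation with kernel $l\colon L\to A$. I would form the pullback $P = A\times_C B$: the projection $\pi_A$ has the section $a\mapsto(a,m(a))$ and kernel $K$, so by \ref{R0*} the split sequence $K\to P\to A$ is a conflation; the projection $\pi_B$ is a deflation (by \ref{R2}) with kernel $L$. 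Write $N = \cone(i(k))$ with canonical inclusion $\iota_N\colon i(B)\to N$. The composition $i(K)\to i(P)\xrightarrow{\pi_B}i(B)$ equals $i(k)$; applying the octahedral axiom and rotating yields a distinguished triangle $i(L)\xrightarrow{i(l)} i(A)\xrightarrow{\iota_N\circ i(m)} N \to \Sigma i(L)$. Comparing this with the conflation triangle $i(L)\xrightarrow{i(l)} i(A)\xrightarrow{i(pm)} i(C)\to \Sigma i(L)$ via TR3 and the $5$-lemma provides an isomorphism $\phi\colon i(C)\to N$ with $\phi\circ i(pm) = \iota_N\circ i(m)$.

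Since $pk = 0$, there is a natural chain map $\zeta\colon N = [K\to B]\to i(C)$ given by $p$ in degree zero, satisfying $\zeta\circ \iota_N = i(p)$. Then
\[\zeta\circ\phi\circ i(pm) \;=\; \zeta\circ \iota_N\circ i(m) \;=\; i(p)\circ i(m) \;=\; i(pm),\]
so $(\zeta\phi - 1_{i(C)})\circ i(pm) = 0$. Using the conflation triangle, this difference factors through $\Hom_{\Db(\EE)}(\Sigma i(L), i(C)) = 0$ (proposition \ref{proposition:NoNegativeExtensions}), so $\zeta\phi = 1_{i(C)}$ and $\zeta$ is an isomorphism in $\Db(\EE)$. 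Transporting the cone triangle $i(K)\xrightarrow{i(k)}i(B)\xrightarrow{\iota_N} N\to \Sigma i(K)$ along $\zeta$ produces a distinguished triangle $i(K)\xrightarrow{i(k)} i(B)\xrightarrow{i(p)} i(C)\to \Sigma i(K)$, and (\ref{item:ConflationsAndTriangles}) then gives that $K\xrightarrow{k}B\xrightarrow{p}C$ is a conflation; in particular $p$ is a deflation.

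The main obstacle is producing the triangle $i(K)\to i(B)\xrightarrow{i(p)} i(C)\to \Sigma i(K)$ in the second implication with the middle map being \emph{specifically} $i(p)$. The pullback $P$ supplies exactly the bridge needed to apply the octahedral axiom, and the identification of the induced second map with $i(p)$ rests on comparing the octahedral isomorphism $\phi$ with the natural chain map $\zeta$ by exploiting the vanishing of negative self-Ext's between objects of $\EE$.
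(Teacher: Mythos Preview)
Your proof is correct and follows essentially the same strategy as the paper's: both directions use the pullback $P$ and an octahedral/$3\times 3$-diagram argument, together with lemma \ref{lemma:RetractOfAcyclic}. The main difference is that you are considerably more explicit about identifying the maps in the resulting triangles. The paper, for instance, in $(\ref{item:R3})\Rightarrow(\ref{item:ConflationsAndTriangles})$ simply asserts that the natural chain map $\cone(j(f))\to j(Z)$ ``becomes an isomorphism in $\Db(\EE)$'', and in $(\ref{item:ConflationsAndTriangles})\Rightarrow(\ref{item:R3})$ invokes \cite[lemma~1.4.4]{Neeman01} to obtain the $3\times 3$-diagram and reads off the conflation $\ker\beta\rightarrowtail Y\stackrel{\beta}{\twoheadrightarrow}Z$ without further comment on why the map $Y\to Z$ is specifically $\beta$. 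You supply these missing identifications by repeatedly invoking proposition \ref{proposition:NoNegativeExtensions}: any two fill-ins differ by a map factoring through $\Hom_{\Db(\EE)}(\Sigma i(-),i(-))=0$, hence coincide. This is the right way to make the argument watertight, and your explicit derivation of \ref{R0*} from $(\ref{item:ConflationsAndTriangles})$ at the outset is a nice touch (the paper leaves this implicit).
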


\begin{proof}
We first show that (\ref{item:R3}) implies (\ref{item:ConflationsAndTriangles}), so assume that $\EE$ satisfies axiom \ref{R3}.  Proposition \ref{proposition:ConflationsYieldTriangles} shows that conflations in $\EE$ yield triangles in $\Db(\EE)$.  Consider now a sequence $X \stackrel{f}{\to} Y \stackrel{g}{\to} Z$ in $\EE$ such that $i(X) \stackrel{i(f)}{\to} i(Y) \stackrel{i(g)}{\to} i(Z) \to \Sigma i(X)$ is a triangle in $\Db(\EE)$. By proposition \ref{proposition:StalkEmbeddingFunctors}, the functor $i\colon \EE\to \Db(\EE)$ is fully faithful and hence $i(g\circ f)=0$ yields $g\circ f=0$. Writing $j\colon \EE \to \Kb(\EE)$ for the canonical embedding, there is a morphism $h\colon \cone(j(f)) \to j(Z)$ as in the following diagram:
\[\xymatrix{j(X) \ar[r]^{j(f)} & j(Y) \ar[r] \ar[d]_{j(g)}& {\cone(j(f))} \ar[r] \ar@{..>}[dl]^{h} & \Sigma j(X) \\ & j(Z) }\]
which becomes an isomorphism in $\Db(\EE)$.  {This implies that $\cone(h) \in \langle \Acb(\EE) \rangle_{\textrm{thick}}$.}
As $\cone(h)$ is given by the complex
\[\xymatrix@1{ \cdots \ar[r] &0 \ar[r] & \ar[r] X \ar[r]^{f} & Y \ar[r]^{g} & Z \ar[r] & 0 \ar[r] & \cdots}\]
with $Z$ in degree 0, it follows from lemma \ref{lemma:RetractOfAcyclic} that $X \stackrel{f}{\to} Y \stackrel{g}{\to} Z$ is a conflation in $\EE$. This shows (\ref{item:R3})$\Rightarrow$(\ref{item:ConflationsAndTriangles}).

Conversely, assume (\ref{item:ConflationsAndTriangles}). To establish that \ref{R3} holds, we start with two morphisms $X \stackrel{\alpha}{\rightarrow} Y \stackrel{\beta}{\rightarrow} Z$ such that $\beta \alpha$ is a deflation.  We show that if $\beta$ has a kernel, then $\beta$ is a deflation.

We consider the following diagram where the right square is a pullback (see \cite[proposition 5.4]{BazzoniCrivei13} or \cite[proposition~3.7]{HenrardvanRoosmalen19}):
\[
\xymatrix{ 
\ker \gamma \ar@{>->}[r] \ar@{=}[d] & P \ar@{->>}[r]^{\gamma} \ar[d]_{\delta} & Y \ar[d]^{\beta} \\
\ker \beta \alpha \ar@{>->}[r] & X \ar@{->>}[r]^{\beta \alpha} & Z}
\]
It follows from the pullback property that $\delta\colon P \to X$ is a retraction {and that $\ker \delta = \ker \beta$.}  Suppressing the functor $i$, we may use \cite[lemma 1.4.4]{Neeman01} to obtain a diagram in $\Db(\EE)$
\[
\xymatrix{
0\ar[d] \ar[r] & \ker \delta \ar@{=}[r] \ar[d] & \ker \beta \ar[d] \ar[r] & 0 \ar[d]\\ 
\ker \gamma \ar[r] \ar@{=}[d] & P \ar[r]^{\gamma} \ar[d]_{\delta} & Y \ar[d]^{\beta} \ar[r] & \Sigma \ker \gamma \ar@{=}[d]\\
\ker \beta \alpha \ar[r] \ar[d] & X \ar[r]^{\beta \alpha} \ar[d] & Z \ar[r] \ar[d] & \Sigma \ker \beta \alpha \ar[d] \\
0 \ar[r] & \Sigma \ker \delta \ar@{=}[r] & \Sigma \ker \beta \ar[r] & 0}
\]
where the rows and columns are triangles.  From this, we infer that there is a conflation $\ker \beta \rightarrowtail Y \stackrel{\beta}{\twoheadrightarrow} Z$, as required. \qedhere
\end{proof}

The first part of the following proposition was also shown in \cite[\S A.8]{Positselski11}.

\begin{proposition}\label{proposition:ExactMeansExtensionClosed}
Let $\EE$ be a deflation-exact category satisfying axiom \ref{R0*} and let $i\colon \EE\to \Db(\EE)$ be the canonical embedding.
\begin{enumerate}
	\item If $\EE$ is exact, then $i(\EE)$ is extension closed in $\Db(\EE)$.
	\item If $i(\EE)$ is extension closed in $\Db(\EE)$, then $i(\EE)$ can be endowed with a natural exact structure such that the inclusion $\EE\hookrightarrow i(\EE)$ is exact.
	
	{If, in addition, $\EE$ satisfies axiom \ref{R3}, then the deflation-exact category $\EE$ is exact.}
\end{enumerate}
\end{proposition}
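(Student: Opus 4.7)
The plan for part (1) is to establish the Yoneda-style identification $\Hom_{\Db(\EE)}(i(Z), \Sigma i(X)) \cong \Ext^1_\EE(Z, X)$, where the right-hand side denotes equivalence classes of conflations $X \inflation Y \deflation Z$. This identification is classical for exact categories; one can either cite \cite[\S A.8]{Positselski11} or reproduce it directly using proposition \ref{proposition:TruncatedReplacements} to represent any morphism $\gamma\colon i(Z) \to \Sigma i(X)$ by a roof $i(Z) \xleftarrow{\sim} W^\bullet \xrightarrow{f} \Sigma i(X)$ with $W^\bullet$ supported in degrees $-1, 0$, from which one extracts a conflation $X \inflation Y \deflation Z$ in $\EE$. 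Given this, for any triangle $i(X) \to E \to i(Z) \xrightarrow{\gamma} \Sigma i(X)$ in $\Db(\EE)$, the conflation corresponding to $\gamma$ yields, via proposition \ref{proposition:ConflationsYieldTriangles}, a triangle $i(X) \to i(Y) \to i(Z) \xrightarrow{\gamma} \Sigma i(X)$ with the same connecting morphism; the triangulated 5-lemma then forces $E \cong i(Y) \in i(\EE)$, establishing extension-closure.

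For the first part of (2), suppose $i(\EE)$ is extension-closed in $\Db(\EE)$. Following the construction of Dyer (see \cite{Dyer05} and \S\ref{section:ExactHull}), declare a sequence $X \to Y \to Z$ in $i(\EE)$ to be a conflation precisely when it fits into a distinguished triangle $X \to Y \to Z \to \Sigma X$ in $\Db(\EE)$. Extension-closure of $i(\EE)$ ensures that this class of conflations endows $i(\EE)$ with a (two-sided) exact structure. That the embedding $\EE \hookrightarrow i(\EE)$ is exact is then immediate from proposition \ref{proposition:ConflationsYieldTriangles}, which guarantees that conflations in $\EE$ map to triangles in $\Db(\EE)$.

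For the second part of (2), assume additionally that $\EE$ satisfies \ref{R3}. By proposition \ref{proposition:MeaningR3}, a sequence $X \to Y \to Z$ is a conflation in $\EE$ if and only if $i(X) \to i(Y) \to i(Z)$ fits into a triangle in $\Db(\EE)$. Since $i$ is fully faithful (proposition \ref{proposition:StalkEmbeddingFunctors}), this identifies the class of conflations of $\EE$ with the class of Dyer conflations of $i(\EE)$ under the equivalence $\EE \simeq i(\EE)$. The Dyer structure being two-sided exact, the given conflation structure on $\EE$ must be exact as well.

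The main obstacle is establishing the Yoneda-Ext identification used in part (1); this is the only step that genuinely uses the two-sided exactness of $\EE$ and requires careful manipulation with congenial truncations and roofs, although the work can be bypassed by appealing to \cite{Positselski11}. Both halves of (2) are, by contrast, essentially formal consequences of Dyer's construction combined with proposition \ref{proposition:MeaningR3}.
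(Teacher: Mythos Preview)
Your argument is correct and, for part (2), identical to the paper's: both invoke Dyer's construction (together with proposition \ref{proposition:NoNegativeExtensions}) to equip $i(\EE)$ with an exact structure, proposition \ref{proposition:ConflationsYieldTriangles} for exactness of the inclusion, and proposition \ref{proposition:MeaningR3} for the final claim under \ref{R3}.

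For part (1) the paper takes a slightly more direct route than your Yoneda-Ext-plus-5-lemma packaging. Rather than first establishing the identification $\Hom_{\Db(\EE)}(i(C),\Sigma i(A))\cong \Ext^1_\EE(C,A)$ and then invoking the 5-lemma, it rotates the given triangle to the map $\Sigma^{-1}i(C)\to i(A)$, represents it by a congenial roof via proposition \ref{proposition:TruncatedReplacements}, and directly exhibits $B^\bullet$ as quasi-isomorphic to a stalk $i(Q)$, where $Q$ is produced by a pushout in $\EE$ (using \cite[proposition~2.12]{Buhler10}; this is precisely where two-sided exactness enters). One small inaccuracy in your sketch: the truncation from proposition \ref{proposition:TruncatedReplacements} yields a complex supported in degrees $\leq 0$, not a two-term complex in degrees $-1,0$; the extraction of a conflation still requires the pushout step, just as in the paper. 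Both approaches rest on the same construction at that point; yours has the advantage of isolating the reusable Yoneda-Ext statement, while the paper's is marginally shorter since it never needs to verify that the constructed conflation recovers the original connecting morphism.
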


\begin{proof}
\begin{enumerate}
	\item Assume that $\EE$ is exact, we will show that $i(\EE)$ is extension closed in $\Db(\EE)$.  Consider a triangle $i(A)\rightarrow B^{\bullet}\rightarrow i(C)\stackrel{f}{\rightarrow} \Sigma i(A)$ in $\Db(\EE)$ with $A,C\in \EE$.  Following proposition \ref{proposition:CongenialReplacement}, we may choose a congenial roof $i(C) \stackrel{\simeq}{\leftarrow}Z^{\bullet}\stackrel{g}{\rightarrow} \Sigma i(A)$ representing $f \in \Ext^1(i(C), i(A)).$  Here, $Z^\bullet$ is a two-term complex $Z^{-1} \inflation Z^0$.
	
	The cone of $g\colon Z^\bullet \to A^\bullet$ is given by the two-term complex $Z^{-1} \stackrel{h}{\inflation} Z^0 \oplus A$ with $h=\begin{psmallmatrix} {g^0} \\ {d^{-1}_Z} \end{psmallmatrix}$.  Hence, in $\Db(\EE)$, we find that $\cone(g) \cong \cone(f)$ is a stalk complex containing $\coker (h)$ in degree $-1$.  As $B^\bullet \cong \Sigma^{-1}\cone(f)$, we see that $B^\bullet$ is indeed a stalk complex concentrated in degree 0.

	\item Assume that $i(\EE)$ is extension closed in $\Db(\EE)$. By proposition \ref{proposition:NoNegativeExtensions}, $\Ext_{\Db(\EE)}^{n}(i(\EE), i(\EE)) = 0$, for all $n < 0$.  Following \cite{Dyer05}, the triangles in $\Db(\EE)$ induce an exact structure on $i(\EE)$. By axiom \ref{R0*} and proposition \ref{proposition:ConflationsYieldTriangles}, each conflation in $\EE$ gives rise to a triangle in $i(\EE)$. It follows that the inclusion $\EE\hookrightarrow i(\EE)$ is exact.

If $\EE$ satisfies axiom \ref{R3}, proposition \ref{proposition:MeaningR3} yields that $\EE$ has the same conflation structure as $i(\EE)$. In particular, $\EE$ is an exact category.\qedhere
\end{enumerate}
\end{proof}

\subsection{Obtaining exact localizations} Having established a criterion for a strongly deflation-exact category to be two-sided exact (proposition \ref{proposition:ExactMeansExtensionClosed}), we can now proceed to proving theorem \ref{theorem:ExactLocalization}. 

\begin{lemma}\label{lemma:HereditaryFactorisation}
Let $\EE$ be a exact category.  Consider the morphisms $X \stackrel{f}{\twoheadrightarrow} Y \stackrel{g}{\rightarrowtail} Z$.  If \[\Hom_{\D^b(\EE)}(\coker g,\Sigma^2 \ker f) = 0,\] then $gf$ admits an inflation-deflation factorization $X  \stackrel{g'}{\rightarrowtail} W \stackrel{f'}{\twoheadrightarrow} Z$.  Moreover, $\ker f \cong \ker f'$ and $\coker g \cong \coker g'$.
\end{lemma}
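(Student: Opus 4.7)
The plan is to translate the hypothesis $\Hom_{\Db(\EE)}(C, \Sigma^2 K) = 0$ (with $K = \ker f$ and $C = \coker g$) into the existence of an intermediate object $W$ via two applications of the octahedral axiom. The conflations $K \inflation X \deflation Y$ and $Y \inflation Z \deflation C$ yield triangles in $\Db(\EE)$ by proposition \ref{proposition:ConflationsYieldTriangles}, with connecting morphism $\alpha\colon Y \to \Sigma K$ arising from the first conflation. First I apply the octahedral axiom to $X \xrightarrow{f} Y \xrightarrow{g} Z$, producing the triangle
\[\Sigma K \to \cone(gf) \to C \to \Sigma^2 K\]
whose connecting morphism vanishes by hypothesis; hence it splits, giving an isomorphism $\cone(gf) \cong \Sigma K \oplus C$ under which the canonical morphism $Z \to \cone(gf)$ has components $(\gamma, p)$, where $p\colon Z \twoheadrightarrow C$ is the original deflation and $\gamma\colon Z \to \Sigma K$ is a morphism satisfying $\gamma \circ g = \alpha$ by the octahedral compatibility $Y \to \Sigma K \to \cone(gf) = Y \xrightarrow{g} Z \to \cone(gf)$.

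Next I embed $\gamma$ in a triangle $K \to W \xrightarrow{f'} Z \xrightarrow{\gamma} \Sigma K$ and apply axiom TR3 to the commutative square $\gamma \circ g = \alpha$ to produce a morphism of triangles
\[\xymatrix{K \ar[r] \ar@{=}[d] & X \ar[r]^{f} \ar[d]^{g'} & Y \ar[r]^{\alpha} \ar[d]^{g} & \Sigma K \ar@{=}[d]\\
K \ar[r] & W \ar[r]^{f'} & Z \ar[r]^{\gamma} & \Sigma K}\]
in which the middle square records $f' \circ g' = g \circ f$, giving the morphism $g'$ and the desired factorization at the level of morphisms. To identify the cone of $g'$, I apply the octahedral axiom a second time to $X \xrightarrow{g'} W \xrightarrow{f'} Z$, obtaining a triangle $\cone(g') \to \cone(gf) \to \Sigma K \to \Sigma \cone(g')$. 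Since $\gamma$ is the first component of the splitting of $\cone(gf)$, the middle map $\cone(gf) \to \Sigma K$ can be arranged to be the first projection, which identifies $\cone(g')$ with the fiber of this projection, namely $C$.

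Finally, I invoke proposition \ref{proposition:ExactMeansExtensionClosed} (using that $\EE$ is exact) to conclude that $W$ lies in $i(\EE)$, and then proposition \ref{proposition:MeaningR3} (since exact categories satisfy axiom \ref{R3}) to realize the triangles $K \to W \to Z \to \Sigma K$ and $X \to W \to C \to \Sigma X$ as conflations $K \inflation W \deflation Z$ and $X \inflation W \deflation C$ in $\EE$. This produces the factorization $gf = f' g'$ with $g'$ an inflation and $f'$ a deflation, the identifications $\ker f' \cong K = \ker f$ and $\coker g' \cong C = \coker g$ being immediate from construction. The principal technical obstacle will be controlling the non-uniqueness of octahedral morphisms so that the map $\cone(gf) \to \Sigma K$ appearing in the second octahedron coincides with the first projection of the splitting from the first octahedron; this is forced by the octahedral compatibility requirement that the composition $Z \to \cone(gf) \to \Sigma K$ equal $\gamma$, together with our having chosen $\gamma$ precisely as this projection.
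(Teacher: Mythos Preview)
Your strategy coincides with the paper's: build $W$ in $\Db(\EE)$ by triangulated-category manipulations, then invoke proposition~\ref{proposition:ExactMeansExtensionClosed} to place $W$ in $i(\EE)$ and proposition~\ref{proposition:MeaningR3} to turn the two triangles into conflations. The only difference is packaging: the paper obtains the whole $3\times 3$ diagram in one stroke by citing \cite[proposition~1.4.6]{Neeman01}, whereas you reconstruct that diagram by hand via a splitting and two octahedra.

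There is, however, a genuine gap in your final step. You claim that the map $\pi\colon \cone(gf)\cong\Sigma K\oplus C \to \Sigma K$ produced by the second octahedron must be the first projection, and you justify this by the single compatibility $\pi\circ(\gamma,p)=\gamma$. That constraint does not pin $\pi$ down: writing $\pi=(\mu,\nu)$ it only says $\mu\gamma+\nu p=\gamma$, and since $p$ need not be an epimorphism in $\Db(\EE)$ one cannot conclude $\nu=0$, nor is $\mu=1$ forced. Different choices of $\pi$ yield different fibers, so as written you have not identified $\cone(g')$ with $C$. The repair is easy but has to be stated: rather than arguing that the octahedron \emph{forces} $\pi=(1,0)$, you should \emph{exhibit} an octahedron on $X\xrightarrow{g'}W\xrightarrow{f'}Z$ in which the map $\cone(f'g')\to\cone(f')$ is $(1,0)$ and the third triangle is the split one $C\to \Sigma K\oplus C\to\Sigma K\to\Sigma C$, checking the remaining compatibilities directly. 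Alternatively, and more economically, just cite \cite[proposition~1.4.6]{Neeman01} as the paper does; that result is precisely the statement that, under the vanishing hypothesis $\Hom(C,\Sigma^2 K)=0$, the desired $3\times 3$ diagram exists.
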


\begin{proof}
We consider the triangles $\ker f \to X \to Y \to \Sigma \ker f$ and $\Sigma^{-1} \coker g \to Y \to Z \to \coker g$ in $\Db(\EE)$. As $\Hom_{\D^b(\EE)}(\coker g,\Sigma^2 \ker f) = 0$, we may use \cite[proposition 1.4.6]{Neeman01} to obtain the commutative diagram:
\[\xymatrix{
0 \ar[r] & {\coker g} \ar@{=}[r] & {\coker g} \ar[r] & 0\\
{\ker f} \ar[r] \ar[u] & W \ar[r]^{f'} \ar[u] & Z \ar[r] \ar[u] & {\Sigma \ker f} \ar[u]  \\
{\ker f} \ar[r] \ar@{=}[u]& X \ar[r] \ar[u]^{g'} & Y \ar[r] \ar[u] & \Sigma \ker f \ar@{=}[u]\\
0 \ar[r] \ar[u] & {\Sigma^{-1} \coker g} \ar[u] \ar@{=}[r] & {\Sigma^{-1} \coker g} \ar[u] \ar[r] & 0 \ar[u]
}\]
Proposition \ref{proposition:ExactMeansExtensionClosed} shows that $W \in i(\EE) \subset \D^b (\EE)$ and proposition \ref{proposition:MeaningR3} (recall that an exact category satisfies axiom \ref{R3}) yields that $g'\colon X \rightarrowtail W$ is an inflation and $f'\colon W \twoheadrightarrow Z$ is a deflation. It is obvious that $\ker f \cong \ker f'$ and $\coker g \cong \coker g'$.
\end{proof}

\begin{theorem}\label{theorem:ExactLocalization}
Let $\EE$ be a exact category and let $\AA \subseteq \EE$ be a deflation-percolating subcategory.  If $$\Hom_{\D^b(\EE)}(E,\Sigma^2 A) = 0,$$ for all $E \in \EE$ and $A \in \AA$, then the deflation-exact structure on $\EE / \AA$ is an exact structure.
\end{theorem}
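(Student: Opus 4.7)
The plan is to apply proposition \ref{proposition:ExactMeansExtensionClosed}(2) to $\EE/\AA$, which reduces the theorem to verifying (a) that $\EE/\AA$ satisfies axiom \ref{R3}, and (b) that the canonical image of $\EE/\AA$ in $\Db(\EE/\AA)$ is extension-closed. For (a), given admissible morphisms $A \to B \to C$ in $\EE/\AA$ with the composition a deflation and the second map admitting a kernel, I would lift through the right multiplicative system $S_{\AA}$ to zigzags in $\EE$ and transfer axiom \ref{R3} from the exact category $\EE$ (which satisfies \ref{R3} automatically by remark \ref{remark:R0*SplitKernelCokernel}(2)); this should reduce to a diagram chase using the structure of $S_{\AA}$.

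The heart of the argument is (b). Given a triangle $Q(A) \to M \to Q(C) \stackrel{\alpha}{\to} \Sigma Q(A)$ in $\Db(\EE/\AA)$ with $A, C \in \EE$, I plan to produce $D \in \EE$ with $M \cong Q(D)$. Since $Q\colon \Db(\EE)\to\Db(\EE/\AA)$ is a Verdier localization by theorem \ref{theorem:MainTheorem}, $\alpha$ is represented by a roof $C \stackrel{s}{\leftarrow} W^\bullet \stackrel{\psi}{\to} \Sigma A$ in $\Db(\EE)$ with $\cone(s)$ in the kernel of $Q$. Using proposition \ref{proposition:TruncatedReplacements} I can assume $W^\bullet$ is supported in degrees $\leq 0$. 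The crucial step is to further reduce $W^\bullet$ to a stalk complex $W \in \EE$; once achieved, the triangle $A \to \cone(\psi)[-1] \to W \to \Sigma A$ lives entirely in $\Db(\EE)$ with outer vertices in the exact subcategory $\EE$, so proposition \ref{proposition:ExactMeansExtensionClosed}(1) yields $\cone(\psi)[-1] \in \EE$; applying $Q$ gives $M \cong Q(\cone(\psi)[-1]) \in Q(\EE)$.

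The hypothesis enters in the reduction of $W^\bullet$ to a stalk complex. Invoking the description of morphisms inverted by $Q$ as finite compositions of $\AA^{-1}$-inflations and $\AA^{-1}$-deflations (definition \ref{definition:WeakIsomorphisms} and proposition \ref{proposition:CompositionOfAAInverseDeflationAndInflations}), the consecutive differentials of $W^\bullet$ can be expressed as deflation-inflation composites whose kernels and cokernels lie in $\AA$. Lemma \ref{lemma:HereditaryFactorisation} applied to each such composite---with the Ext$^2$-vanishing hypothesis supplying exactly the required input, since both kernel and cokernel belong to $\AA$---rewrites the composite as an inflation followed by a deflation. The resulting consecutive inflations (respectively deflations) then compose via \ref{L1} (resp.\ \ref{R1}), shortening $W^\bullet$. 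Iterating collapses $W^\bullet$ to a single term in $\EE$, possibly after adjusting $A$ and $C$ by weak isomorphisms (which leaves the triangle class in $\Db(\EE/\AA)$ unchanged). The main obstacle will be making this collapse rigorous: at each iteration one must verify that the objects playing the roles of $\ker f$ and $\coker g$ in lemma \ref{lemma:HereditaryFactorisation} genuinely lie in $\AA$, and that the resulting stalk complex still represents the original roof class in $\Db(\EE/\AA)$.
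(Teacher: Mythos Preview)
Your approach differs from the paper's and contains a genuine gap at the key reduction step.

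The paper does not go through proposition \ref{proposition:ExactMeansExtensionClosed}. Instead it verifies axiom \ref{L2} for $\EE/\AA$ directly: given a span $Y \leftarrow X \rightarrowtail Z$ in $\EE/\AA$, it lifts the inflation to an inflation $i$ in $\EE$ and represents the other leg by a roof with weak isomorphism $s$. Now $s$ \emph{is}, by definition \ref{definition:WeakIsomorphisms}, a finite composition of $\AA^{-1}$-inflations and $\AA^{-1}$-deflations. Each time $s$ ends in an $\AA^{-1}$-deflation followed by the inflation $i$, lemma \ref{lemma:HereditaryFactorisation} swaps the order (the kernel is in $\AA$, so the Ext$^2$-vanishing applies); each time $s$ ends in an $\AA^{-1}$-inflation, proposition \ref{proposition:CompositionOfAAInflationWithInflation} absorbs it into $i$. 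Iterating reduces to a span in $\EE$, where the pushout exists because $\EE$ is exact, and $Q$ carries it to the required pushout in $\EE/\AA$.

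Your plan, by contrast, applies lemma \ref{lemma:HereditaryFactorisation} to ``the consecutive differentials of $W^\bullet$'', claiming these are deflation--inflation composites with kernel and cokernel in $\AA$. This is where the argument breaks. The description of weak isomorphisms as compositions of $\AA^{-1}$-inflations and $\AA^{-1}$-deflations concerns morphisms in $\EE$ inverted by $Q\colon \EE \to \EE/\AA$; it says nothing about morphisms in $\Db(\EE)$ inverted by the derived functor $\Db(\EE)\to\Db(\EE/\AA)$, and it says even less about the individual differentials of a complex $W^\bullet$ that happens to participate in such a roof. An arbitrary $W^\bullet$ with $\cone(W^\bullet \to C)$ in $\ker Q$ has no reason to have admissible differentials at all, let alone with kernel and cokernel in $\AA$. (Note also that theorem \ref{theorem:MainTheorem}(2), identifying $\ker Q$ with $\DAb(\EE)$, requires extra hypotheses not assumed here.) So the engine that is supposed to collapse $W^\bullet$ to a stalk complex never starts.

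A secondary concern: your verification of \ref{R3} for $\EE/\AA$ is a bare sketch that does not invoke the Ext$^2$-hypothesis, yet without that hypothesis $\EE/\AA$ need not be exact and hence need not satisfy \ref{R3}. The paper sidesteps this entirely by checking \ref{L2} instead, where the hypothesis enters naturally through lemma \ref{lemma:HereditaryFactorisation} applied to genuine weak isomorphisms.
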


\begin{proof}
By \cite{Keller90}, a conflation category satisfying axioms \ref{R0}, \ref{R1}, \ref{R2} and \ref{L2} is an exact category. As the first three conditions are guaranteed by Theorem \ref{theorem:MainTheoremPartI}, we only need to show that axiom \ref{L2} holds.

Thus, consider a span $Y \stackrel{f}{\leftarrow} X \stackrel{g}{\rightarrowtail} Z$ in $\EE / \AA$ where $g$ is an inflation, thus, $g$ is isomorphic to a morphism $Q(i)$ where $i$ is an inflation in $\EE$.  This gives the following diagram in $\EE/\AA$
\[\xymatrix{
{X'} \ar[r]^{\alpha} \ar@{>->}[d]_{Q(i)} & {X} \ar[r]^{f} \ar[d]^{g} & Y \\
{Z'} \ar[r]^{\beta} & {Z} & }\]
where $\alpha, \beta$ are isomorphisms. We now only consider the upper and the left part of this diagram.  Writing $f\alpha$ as a roof $hs^{-1}$, we obtain a diagram
\[\xymatrix{
{X'} \ar@{>->}[d]_{i} & \ar[l]_{s}^{\sim} {\overline{X}} \ar[r]^{h} & Y \\
{Z'} & }\]
in $\EE$.  Since $s$ is a weak isomorphism, it is a finite composition of $\AA^{-1}$-inflations and $\AA^{-1}$-deflations. If $s$ ends on an $\AA^{-1}$-inflation, axiom \ref{L1} allows us to absorb this $\AA^{-1}$-inflation into the inflation $i$. Hence we may assume that $s$ ends on a deflation, i.e.~$s$ factors as $\xymatrix{\overline{X} \ar[r]^{\sim}_{s_2} & X''\ar@{->>}[r]^{\sim}_{s_1} & X'}$. By lemma \ref{lemma:HereditaryFactorisation}, the composition $i\circ s_1$ factors as $\xymatrix{X''\ar@{>->}[r]_j & W \ar@{->>}[r]^{\sim}_{s_1'} & Z' }$. Iterating this argument on the length of $s$, we obtain a span $\xymatrix{\overline{Z}&\overline{X}\ar[r]^h\ar@{>->}[l]_{k} &Y }$ in $\EE$ which is isomorphic to the original span in $\EE/\AA$.

Since $\EE$ is exact, the pushout $P$ of this span exists in $\EE$ and yields an inflation $Y\rightarrowtail P$. Since the localization functor $Q$ is conflation-exact, it commutes with cokernels of inflations. Since the pushout $P$ can be obtained as a cokernel of an inflation in $\EE$, the pushout square $\overline{X}YP\overline{Z}$ descends to a pushout square in $\EE/\AA$. Clearly the map $Y\rightarrowtail P$ descends to an inflation as well. This completes the proof. 
\end{proof}

We now apply the previous theorem to the category of locally compact abelian groups $\LCA$.

\begin{corollary}\label{corollary:LocalizationAtCompactGroups}
For any left or right percolating subcategory $\AA$ of $\LCA$, the quotient $\LCA / \AA$ is exact.
\end{corollary}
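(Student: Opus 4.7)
The plan is to deduce this directly from Theorem \ref{theorem:ExactLocalization} using the vanishing result of Hoffmann and Spitzweck that has already been quoted in the introduction, together with Pontryagin duality to handle the left percolating case. Recall that $\LCA$ is quasi-abelian, hence exact, so the hypotheses of Theorem \ref{theorem:ExactLocalization} apply.

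First I would treat the case of a right percolating subcategory $\AA \subseteq \LCA$. Hoffmann and Spitzweck showed that $\Hom_{\Db(\LCA)}(X, \Sigma^2 Y) = 0$ for all $X, Y \in \LCA$. In particular, $\Hom_{\Db(\LCA)}(E, \Sigma^2 A) = 0$ for every $E \in \LCA$ and every $A \in \AA$, which is precisely the hypothesis of Theorem \ref{theorem:ExactLocalization}. Therefore $\LCA/\AA$ is an exact category.

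For the left percolating case, I would pass to the opposite side via Pontryagin duality. By Proposition \ref{proposition:LocallyCompactPercolating}, the duality $\mathbb{D}\colon \LCA \to \LCA$ (using the self-duality of $\bZ$) is a contravariant equivalence, and it interchanges inflations with deflations. Consequently, if $\AA$ is a left percolating subcategory of $\LCA$, then $\mathbb{D}(\AA)$ satisfies axioms \ref{P1}--\ref{P4} on the right-hand side, i.e.\ $\mathbb{D}(\AA)$ is a right percolating subcategory of $\LCA$. The contravariant equivalence $\mathbb{D}$ descends to a contravariant equivalence $\LCA/\AA \simeq (\LCA/\mathbb{D}(\AA))^{op}$ (this is formal from the universal property of the quotient, cf.\ Definition \ref{definition:Localization}). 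By the previous paragraph, $\LCA/\mathbb{D}(\AA)$ is exact, and since the class of exact categories is closed under taking opposites, $\LCA/\AA$ is exact as well.

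There is no real obstacle here: the two nontrivial ingredients (Theorem \ref{theorem:ExactLocalization} and the Hoffmann--Spitzweck vanishing) do all the work, and Pontryagin duality simply interchanges left and right. The only bookkeeping worth spelling out is the verification that $\mathbb{D}$ sends left percolating subcategories to right percolating ones, which reduces to checking each of \ref{P1}--\ref{P4} against its dual, and this is immediate from the fact that $\mathbb{D}$ is an exact contravariant equivalence.
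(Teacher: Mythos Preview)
Your proposal is correct and follows essentially the same approach as the paper: apply the Hoffmann--Spitzweck vanishing $\Hom_{\Db(\LCA)}(E,\Sigma^2 A)=0$ together with Theorem \ref{theorem:ExactLocalization}. The paper's proof is extremely terse and does not spell out the left percolating case at all; your use of Pontryagin duality to reduce the left case to the right one is a perfectly good (and more explicit) way to fill that in, equivalent to invoking the formal dual of Theorem \ref{theorem:ExactLocalization}.
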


\begin{proof}
It is shown in \cite[proposition~4.13, proposition~4.15]{HoffmannSpitzweck07} that $\Hom_{\D^b(\EE)}(E,\Sigma^2 A) = 0,$ for all $E,A \in \EE$.  Theorem \ref{theorem:ExactLocalization} yields that $\LCA / \AA$ is an exact category.
\end{proof}
\section{The exact hull}\label{section:ExactHull}

In this section we construct the {exact hull} $\overline{\EE}$ of $\EE$. Explicitly, $\overline{\EE}$ is the extension closure of $i(\EE)\subseteq \Db(\EE)$. If $\EE$ satisfies axiom \ref{R0*}, the inclusion map $j\colon \EE\hookrightarrow \overline{\EE}$ is exact and $2$-universal among exact functors to exact categories (see proposition \ref{proposition:ExactHullUniversalAppendix}). In theorem \ref{theorem:DerivedEquivalenceExactHull} we show that the embedding $\overline{\EE} \subseteq \Db(\EE)$ lifts to a triangle equivalence $\Db(\overline{\EE})\stackrel{\sim}{\rightarrow}\Db(\EE)$.

\subsection{Construction of the exact hull}

\begin{definition}\label{definition:ExactHull}
	Let $\EE$ be a deflation-exact category and let $i\colon \EE\hookrightarrow \Db(\EE)$ be the canonical embedding. We denote the extension closure of $i(\EE)$ in $\Db(\EE)$ by $\overline{\EE}$. The category $\overline{\EE}$ is called the \emph{exact hull} of $\EE$. The composition $\EE\hookrightarrow i(\EE)\hookrightarrow \overline{\EE}$ is denoted by $j$.
\end{definition}

We extend proposition \ref{proposition:NoNegativeExtensions} to the extension closure $\overline{\EE}$.

\begin{proposition}\label{proposition:NoNegativeExtensionsOfClosure}
	For all $X,Y\in \Ob(\overline{\EE})$ we have that $\Hom_{\Db(\EE)}(X,\Sigma^{-n}Y)=0$ for all $n>0$.
\end{proposition}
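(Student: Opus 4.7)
The plan is to prove this by a double induction on the ``extension depth'' of $X$ and $Y$ in $\overline{\EE}$, using the base case provided by proposition \ref{proposition:NoNegativeExtensions} (which handles $X, Y \in i(\EE)$).

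More concretely, I would define a filtration $i(\EE) = \overline{\EE}_0 \subseteq \overline{\EE}_1 \subseteq \overline{\EE}_2 \subseteq \cdots$ of $\overline{\EE}$, where $\overline{\EE}_{k+1}$ consists of those $Y \in \Db(\EE)$ that fit in a triangle $X' \to Y \to X'' \to \Sigma X'$ with $X', X'' \in \overline{\EE}_k$. Since $\overline{\EE}$ is the extension closure of $i(\EE)$, we have $\overline{\EE} = \bigcup_{k \geq 0} \overline{\EE}_k$. The claim then reduces to showing that for every $n > 0$, every $X \in \overline{\EE}_a$ and every $Y \in \overline{\EE}_b$, the group $\Hom_{\Db(\EE)}(X, \Sigma^{-n} Y)$ vanishes; I would prove this by induction on $a + b$.

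For the base case $a = b = 0$, the statement is exactly proposition \ref{proposition:NoNegativeExtensions}. For the inductive step, assume $a + b \geq 1$. If $b \geq 1$, choose a triangle $Y' \to Y \to Y'' \to \Sigma Y'$ with $Y', Y'' \in \overline{\EE}_{b-1}$ and apply the cohomological functor $\Hom_{\Db(\EE)}(X, -)$; the resulting long exact sequence contains
\[\Hom_{\Db(\EE)}(X, \Sigma^{-n} Y') \to \Hom_{\Db(\EE)}(X, \Sigma^{-n} Y) \to \Hom_{\Db(\EE)}(X, \Sigma^{-n} Y''),\]
whose outer terms vanish by the induction hypothesis applied to $(a, b-1)$. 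If $b = 0$ but $a \geq 1$, choose a triangle $X' \to X \to X'' \to \Sigma X'$ exhibiting $X \in \overline{\EE}_a$ and apply the contravariant functor $\Hom_{\Db(\EE)}(-, \Sigma^{-n} Y)$; the analogous three-term exact sequence has vanishing outer terms by the induction hypothesis applied to $(a-1, 0)$. Either way, $\Hom_{\Db(\EE)}(X, \Sigma^{-n} Y) = 0$.

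I do not anticipate a substantial obstacle here: the argument is a routine dévissage along the triangles defining the extension closure, and the only ingredient beyond the long exact sequences associated to a triangle is the base case, which is already in hand. The one subtlety to be careful about is ensuring that every object of $\overline{\EE}$ is indeed captured by some $\overline{\EE}_k$; this is immediate from the definition of the extension closure as the smallest strictly full subcategory containing $i(\EE)$ and closed under extensions, since that subcategory can be realised as the nested union of the $\overline{\EE}_k$.
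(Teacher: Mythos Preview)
Your proposal is correct and follows essentially the same approach as the paper: define the filtration $\overline{\EE}_k$ of the extension closure, invoke proposition \ref{proposition:NoNegativeExtensions} for the base case, and propagate the vanishing through the filtration via the long exact $\Hom$-sequences associated to the defining triangles. The only cosmetic difference is that the paper organises the induction on a single index $i$ (showing $\Hom(\EE_i,\Sigma^{-n}\EE_i)=0$ in two sub-steps, first fixing the source in $\EE_{i-1}$ and then in $\EE_i$), whereas you induct on $a+b$; the content is identical.
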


\begin{proof}
	Let $\EE_0=i(\EE)$. We inductively define $\EE_i$ (for $i\geq 0$) as the set of all objects $E\in \Ob(\Db(\EE))$ occurring in a triangle
	\[X \rightarrow E \rightarrow Y \rightarrow \Sigma X\]
	where $X,Y\in \EE_{i-1}$. We now claim that $\Hom_{\Db(\EE)}(\EE_i, \Sigma^{-1}\EE_i)=0$. We proceed by induction on $i$. The case where $i=0$ is shown in proposition \ref{proposition:NoNegativeExtensions}.
	
	Consider now $i>0$. Let $E_{i-1}\in \EE_{i-1}$. Applying the functor $\Hom(E_{i-1},-)$ to the triangle $X \rightarrow E \rightarrow Y \rightarrow \Sigma X$ as above (thus, with $X,Y\in \EE_{i-1}$), yields the long exact sequence:
	\[\cdots\rightarrow\Hom_{\Db(\EE)}(E_{i-1},\Sigma^{-n}X)\rightarrow \Hom_{\Db(\EE)}(E_{i-1},\Sigma^{-n}E)\rightarrow \Hom_{\Db(\EE)}(E_{i-1},\Sigma^{-n}Y)\rightarrow\cdots\]
	By the induction hypothesis, we have that $\Hom_{\Db(\EE)}(\EE_{i-1},\Sigma^{-n}\EE_{i-1})=0$ and hence $\Hom_{\Db(\EE)}(\EE_{i-1},\Sigma^{-n}E)=0$. This shows that $\Hom_{\Db(\EE)}(\EE_{i-1},\Sigma^{-n}\EE_i)=0$.\\
	On the other hand, applying the functor $\Hom(-,\Sigma^{-n}E_i)$ with $E_i\in \EE_i$ to the same triangle yields the long exact sequence
	\[\cdots\rightarrow \Hom_{\Db(\EE)}(Y, \Sigma^{-n}E_i)\rightarrow  \Hom_{\Db(\EE)}(E, \Sigma^{-n}E_i)\rightarrow  \Hom_{\Db(\EE)}(X, \Sigma^{-n}E_i)\rightarrow\cdots\]
	Using that $\Hom_{\Db(\EE)}(\EE_{i-1},\Sigma^{-n}\EE_i)=0$ and $X,Y\in \EE_{i-1}$, we find that $\Hom_{\Db(\EE)}(E,\Sigma^{-n}E_i)=0$. Hence $\Hom_{\Db(\EE)}(\EE_i,\Sigma^{-n}\EE_i)=0$. This shows the claim.
	
	The required statement follow from $\overline{\EE}=\bigcup_{i\geq 0}\EE_i$.
\end{proof}

\begin{corollary}\label{corollary:ExactHullDefinition}
	Let $\EE$ be a deflation-exact category. The category $\overline{\EE}\subseteq \Db(\EE)$ has the structure of an exact category, the conflations are given by triangles in $\Db(\EE)$ with three consecutive terms in $\overline{\EE}$.
	
	If $\EE$ satisfies axiom \ref{R0*}, the embedding $\EE \to \overline{\EE}$ is exact.
\end{corollary}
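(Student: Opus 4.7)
The plan is to invoke Dyer's construction from \cite{Dyer05}, which endows any extension-closed subcategory $\mathcal{S}$ of a triangulated category $\mathcal{T}$ with a natural exact structure whose conflations are precisely the triangles $X \to Y \to Z \to \Sigma X$ of $\mathcal{T}$ with $X, Y, Z \in \mathcal{S}$, provided that $\Hom_{\mathcal{T}}(X, \Sigma^{-1} Y) = 0$ for all $X, Y \in \mathcal{S}$. The non-negative self-extension hypothesis for $\overline{\EE}$ is exactly the statement of proposition \ref{proposition:NoNegativeExtensionsOfClosure} (applied with $n=1$). Therefore the distinguished triangles of $\Db(\EE)$ with all three vertices in $\overline{\EE}$ define an exact structure on $\overline{\EE}$, proving the first assertion.

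For the second assertion, assume $\EE$ satisfies axiom \ref{R0*}. Let $X \inflation Y \deflation Z$ be a conflation in $\EE$. By proposition \ref{proposition:ConflationsYieldTriangles}, this sequence lifts to a distinguished triangle
\[
i(X) \to i(Y) \to i(Z) \to \Sigma i(X)
\]
in $\Db(\EE)$. Since $i(X), i(Y), i(Z) \in i(\EE) \subseteq \overline{\EE}$, this triangle is, by definition of the exact structure on $\overline{\EE}$, a conflation in $\overline{\EE}$. Hence the functor $j\colon \EE \to \overline{\EE}$ sends conflations to conflations, i.e.\ $j$ is exact.

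The only step that requires any care is the verification that Dyer's hypothesis applies to $\overline{\EE}$; this is precisely the content of proposition \ref{proposition:NoNegativeExtensionsOfClosure}, which was proved by induction on the layers of the extension closure. The rest of the corollary is then a direct application of \cite{Dyer05} together with proposition \ref{proposition:ConflationsYieldTriangles}, so no further obstacle arises.
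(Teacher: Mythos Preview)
Your proof is correct and follows essentially the same approach as the paper: invoke proposition \ref{proposition:NoNegativeExtensionsOfClosure} to verify the hypothesis of \cite{Dyer05} for the first assertion, and proposition \ref{proposition:ConflationsYieldTriangles} for the second. The paper's proof is slightly terser but structurally identical.
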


\begin{proof}
	Proposition \ref{proposition:NoNegativeExtensionsOfClosure} shows that $\overline{\EE}$ is an extension closed subcategory of $\Db(\EE)$ having no negative self-extensions. Following \cite{Dyer05}, $\overline{\EE}$ is an exact category.
	
	If $\EE$ satisfies axiom \ref{R0*}, proposition \ref{proposition:ConflationsYieldTriangles} implies that the embedding $\EE\hookrightarrow \overline{\EE}$ is exact.
\end{proof}

\subsection{\texorpdfstring{Universality of the embedding $\EE \to \overline{\EE}$}{Universality of the embedding in the exact hull}} We now show that the embedding of a deflation-exact category into its exact hull $\EE \to \overline{\EE}$ is 2-universal (see proposition \ref{proposition:ExactHullUniversalAppendix}).  We start by showing that every object in $\overline{\EE}$ can be obtained as the cokernel of a monomorphism in $\EE \subseteq \overline{\EE}$ (see corollary \ref{corollary:CokernelExactHull}).

\begin{proposition}\label{proposition:TwoTermReplacement}
	Let $\EE$ be a deflation-exact category and let $\overline{\EE}$ be its exact hull. Any complex $W^{\bullet}\in \overline{\EE}$ can be represented by a two-term complex $Y^{-1}\hookrightarrow Y^{0}$ in $\Db(\EE)$.
\end{proposition}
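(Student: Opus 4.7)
The plan is to argue by induction on the extension length $n$ of $W^{\bullet}$ in $\overline{\EE}$, using the filtration $\EE_0 = i(\EE) \subseteq \EE_1 \subseteq \cdots$ with $\overline{\EE} = \bigcup_n \EE_n$ introduced in the proof of Proposition~\ref{proposition:NoNegativeExtensionsOfClosure}, so that $\EE_n$ consists of objects fitting in a triangle $X \to E \to Y \to \Sigma X$ in $\Db(\EE)$ with $X, Y \in \EE_{n-1}$. The base case $n = 0$ is immediate: if $W^{\bullet} \in i(\EE)$, it is a stalk complex represented by $[0 \hookrightarrow W]$.

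For the inductive step, rotate the defining triangle of $W^{\bullet}$ into $\Sigma^{-1}Y \to X \to W^{\bullet} \to Y$ and use the inductive hypothesis to fix two-term representatives $X \simeq [X^{-1} \stackrel{d_X}{\hookrightarrow} X^0]$ and $Y \simeq [Y^{-1} \stackrel{d_Y}{\hookrightarrow} Y^0]$ in degrees $[-1,0]$, with monomorphic differentials in $\EE$. Next, represent the connecting morphism $\Sigma^{-1}g \colon \Sigma^{-1}Y \to X$ in $\Db(\EE)$ by a roof $\Sigma^{-1}Y \xleftarrow{\alpha} Z \xrightarrow{h} X$ with $\alpha$ a quasi-isomorphism, and apply Proposition~\ref{proposition:TruncatedReplacements} to replace $\alpha$ by a congenial truncated replacement $\tau^{\leq 1}\zeta^{\bullet}$ with $\zeta^n = (\Sigma^{-1}Y)^n \oplus E^{n-1}$ and $E$ acyclic. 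Computing $W^{\bullet}$ as $\cone(\tilde h)$ for the induced chain map $\tilde h \colon \tau^{\leq 1}\zeta \to X$, and then identifying and quotienting out an acyclic subcomplex built out of the $E$-summands in degrees $<-1$, one should arrive at a two-term complex
\[
\bigl[\, Y^{-1} \oplus X^{-1} \xrightarrow{\begin{psmallmatrix} d_Y & 0 \\ \phi & d_X \end{psmallmatrix}} Y^0 \oplus X^0 \,\bigr]
\]
in degrees $[-1,0]$, where $\phi \colon Y^{-1} \to X^0$ is extracted from $\tilde h$. A one-line verification then shows that the matrix differential is a monomorphism in $\EE$: any $(y,x)$ in the kernel satisfies $d_Y(y) = 0$, which forces $y = 0$ by the monicity of $d_Y$, and then $d_X(x) = 0$ forces $x = 0$ by the monicity of $d_X$.

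The main obstacle is precisely this acyclic-quotient step, i.e., showing that the subcomplex of $\cone(\tilde h)$ supported on the $E$-summands of $\tau^{\leq 1}\zeta$ in degrees $<-1$ is genuinely acyclic, so that Proposition~\ref{proposition:AcylicComplexAsExtensionOfTruncations} (or a similar truncation argument) produces the desired quasi-isomorphism to a two-term complex. The key ingredients should be the acyclicity of $E$ combined with the chain-map compatibility conditions on $\tilde h$—in particular the identity $\tilde h^{-1} \circ d_E^{-3} = 0$ forces $\tilde h^{-1}|_{E^{-2}}$ to factor through the admissible image of $d_E^{-2}$, which is the mechanism by which the $E$-contributions can be absorbed. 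Carrying out the required factorisations and verifying admissibility in the right exact category $\EE$ is the delicate technical heart of the proof.
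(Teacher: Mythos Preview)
Your overall strategy---induction on the extension filtration, congenial replacement via Proposition~\ref{proposition:TruncatedReplacements}, then quotienting out an acyclic piece---is the same as the paper's. However, there are two substantive issues.

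First, you use the filtration from Proposition~\ref{proposition:NoNegativeExtensionsOfClosure}, in which \emph{both} factors $X$ and $Y$ lie in $\EE_{i-1}$. The paper instead uses the filtration where $\EE_i$ consists of extensions of an object of $\EE_{i-1}$ by an object of $\EE_0 = i(\EE)$, i.e.~one factor is always a stalk complex. This is not a cosmetic choice: having the target of the connecting map be $\Sigma i(A)$ (a stalk in degree $-1$) is what makes the explicit cone computation manageable. With your filtration, the target $X$ is itself a two-term complex, and the resulting cone of the congenially replaced map is considerably more tangled.

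Second, and more seriously, your predicted two-term representative
\[
\bigl[\, Y^{-1} \oplus X^{-1} \xrightarrow{\begin{psmallmatrix} d_Y & 0 \\ \phi & d_X \end{psmallmatrix}} Y^0 \oplus X^0 \,\bigr]
\]
is generally wrong. This would be the answer if the connecting map $\Sigma^{-1}Y \to X$ were represented by an honest chain map in $\Kb(\EE)$, but in $\Db(\EE)$ it is only a roof, and the acyclic complex $E^\bullet$ appearing in the congenial replacement does \emph{not} disappear after quotienting. In the paper's computation, the final two-term complex is $\ker(d_E^{-1}) \oplus X^{-1} \hookrightarrow A \oplus P$, where $P$ is a pullback involving $E$; the $E$-data genuinely contributes to the terms of the representative. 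Your ``acyclic-quotient step'' therefore cannot produce the complex you claim, and the sketch of how to absorb the $E$-contributions (via the factoring of $\tilde h^{-1}|_{E^{-2}}$) does not match what actually has to happen: one must construct an explicit chain map $\alpha\colon \tau^{\leq 0}E^\bullet \to C^\bullet$ into the cone and take $\cone(\alpha)$, rather than simply projecting away the $E$-summands. I recommend switching to the paper's filtration, where the computation becomes explicit and the acyclic map $\alpha$ is visibly the identity in low degrees.
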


\begin{proof}
	Since $\overline{\EE}$ is the extension closure of $\EE$ in $\Db(\EE)$, we have $\overline{\EE}=\bigcup_{i\geq 0}\EE_i$ where $\EE_0=\EE$ and where each $\EE_i$ ($i \geq 1$) is given by objects $W^{\bullet}\in \Db(\EE)$ occurring in a triangle $A\to W^{\bullet}\to X^{\bullet}\to \Sigma A$ with $A\in \EE$ and $X\in \EE_{i-1}$. Let $W^{\bullet}\in \EE=\bigcup_{i\geq 0} \EE_i$, then $W^{\bullet}\in \EE_i$ for some $i\geq 0$. We proceed by induction on $i\geq 0$. If $i=0$, there is nothing to prove, so assume that $i\geq 1$.

By the induction hypothesis, $X$ can be chosen to be a two-term complex $X^{-1} \to X^0.$  The map $X^{\bullet}\to \Sigma i(A)$ in $\Db(\EE)$ can be represented by a roof $X^{\bullet} \xleftarrow{\simeq} Z^{\bullet}\to \Sigma i(A)$ in $\Kb(\EE)$ with $Z^\bullet \stackrel{\simeq}{\rightarrow} X^\bullet$ a congenial morphism and where $Z^i = 0$ whenever $i\not=0, -1$ (see proposition \ref{proposition:CongenialReplacement}).

We have that $W^\bullet \cong \cone(Z^\bullet \to i(A))$ so that $W^\bullet$ is represented a two-term complex $Z^{-1} \stackrel{d_W^{-1}}{\rightarrow} Z^0 \oplus A$.  We need to show that $d_W^{-1}$ is a monomorphism.  For this, consider any morphism $f\colon T \to Z^{-1}$ in $\EE$ such that $d^{-1}_W \circ f = 0$.  This gives a morphism $f^\bullet\colon \Sigma i(T) \to W^\bullet$.  As $W^\bullet, \Sigma i(T) \in \overline{\EE}$, we know that $f^\bullet = 0$ and hence $f = 0$.  This shows that $d_W^{-1}$ is a monomorphism.
\end{proof}

\begin{corollary}\label{corollary:CokernelExactHull}
For every $Z\in \overline{\EE}$, there is a conflation $X \inflation Y \deflation Z$ in $\overline{\EE}$ with $X,Y \in \EE$.
\end{corollary}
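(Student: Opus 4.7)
The plan is to simply package proposition \ref{proposition:TwoTermReplacement} with the definition of the exact structure on $\overline{\EE}$ from corollary \ref{corollary:ExactHullDefinition}.

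First, I would apply proposition \ref{proposition:TwoTermReplacement} to the given object $Z \in \overline{\EE}$, obtaining that $Z$ is represented in $\Db(\EE)$ by a two-term complex $W^\bullet = [Y^{-1} \to Y^0]$ with $Y^{-1}, Y^0 \in \EE$ (supported in degrees $-1$ and $0$ respectively).

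Second, I would recognise $W^\bullet$ as the mapping cone of the morphism $\varphi \colon i(Y^{-1}) \to i(Y^0)$ between stalk complexes concentrated in degree $0$. Indeed, unwinding definition \ref{definition:ConesDoubleComplexesContractibles} gives $\cone(\varphi)^n = i(Y^{-1})^{n+1} \oplus i(Y^0)^n$, which vanishes outside degrees $-1$ and $0$, and whose only nontrivial differential $\cone(\varphi)^{-1} \to \cone(\varphi)^0$ is exactly the chosen map $Y^{-1} \to Y^0$. This produces the distinguished triangle
\[ i(Y^{-1}) \longrightarrow i(Y^0) \longrightarrow Z \longrightarrow \Sigma i(Y^{-1}) \]
in $\Db(\EE)$.

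Finally, since all three of $i(Y^{-1})$, $i(Y^0)$, $Z$ belong to $\overline{\EE}$, corollary \ref{corollary:ExactHullDefinition} (which defines conflations in $\overline{\EE}$ as precisely such triangles) converts this into a conflation $i(Y^{-1}) \inflation i(Y^0) \deflation Z$ in $\overline{\EE}$. Setting $X := i(Y^{-1})$ and $Y := i(Y^0)$, both of which lie in $i(\EE) \subseteq \overline{\EE}$ (and may be identified with objects of $\EE$ via the fully faithful embedding of proposition \ref{proposition:StalkEmbeddingFunctors}), yields the desired conflation. There is no substantive obstacle: the heavy lifting was done in proposition \ref{proposition:TwoTermReplacement}, and the corollary is essentially a translation between the two equivalent presentations of an object in $\overline{\EE}$ --- as a two-term complex versus as an extension of two objects of $\EE$ in $\Db(\EE)$.
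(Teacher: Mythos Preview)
Your proposal is correct and follows essentially the same approach as the paper: apply proposition \ref{proposition:TwoTermReplacement} to represent $Z$ by a two-term complex, recognise that complex as a mapping cone to obtain a triangle $i(X) \to i(Y) \to Z \to \Sigma i(X)$ in $\Db(\EE)$, and conclude via the definition of conflations in $\overline{\EE}$. Your write-up is in fact slightly more detailed than the paper's (you explicitly unwind the cone and cite corollary \ref{corollary:ExactHullDefinition}), but the argument is the same.
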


\begin{proof}
By proposition \ref{proposition:TwoTermReplacement}, $Z \in \overline{\EE} \subseteq \Db(\EE)$ can be represented by a two term complex $X\xrightarrow{f} Y$.  Note that $Z \cong \cone(i(f))$.  Hence, there is a triangle $i(X) \xrightarrow{f} i(Y) \to Z \to \Sigma i(X)$ in $\Db(\EE)$.  This shows that $i(X) \inflation i(Y) \deflation Z$ is a conflation in $\overline{\EE}$.
\end{proof}

In the following, we write $\Adm(\FF)$ for the category of admissible arrows in $\FF$ and $\Adm_{\overline{\EE}}(\EE)$ for the category of arrows in $\EE$ which become admissible under $j\colon \EE \to \overline{\EE}$.  Recall from \cite{AdamekElBashirSobralVelebil01} that a functor $F\colon \CC \to \DD$ is called a \emph{lax epimorphism} if the induced functor $-\circ F\colon \Fun(\DD, \Set) \to \Fun(\CC, \Set)$ is fully faithful.

\begin{corollary}\label{corollary:AdmissibleUnderJ}
The functor $\coker\colon \Adm_{\overline{\EE}}(\EE) \to \overline{\EE}$ is a lax epimorphism.
\end{corollary}

\begin{proof}
Let $f\colon X\to Y$ be a map in $\overline{\EE}$ such that $X,Y$ are in the essential image of $\coker\colon \Adm_{\overline{\EE}}(\EE) \to \overline{\EE}$. By axiom \ref{R2} we obtain the following solid commutative diagram in $\overline{\EE}$:
	\[\xymatrix{
		P\ar@{.>>}[r]&P'\ar[d]\ar@{->>}[r]&X\ar[d]_f\\
		A\ar@{>->}[r]^{g}&B\ar@{->>}[r]&Y	
	}\] Here, we used corollary \ref{corollary:CokernelExactHull} to obtain the lower conflation with $A,B\in \EE$. Using corollary \ref{corollary:CokernelExactHull} once more, we find an object $P\in \EE$ and a deflation $P\deflation P'$. Applying corollary \ref{corollary:CokernelExactHull} to the kernel of $P\deflation X$, we obtain a map $Q\xrightarrow{h} P$ such that $h\in \Adm_{\overline{\EE}}(\EE)$ and $\coker(h)\cong X$. The induced map $h\to g$ in $\Adm_{\EE}(\EE)$ maps to $f$ under the functor $\coker$. This shows the third characterization of a lax epimorphism in \cite[theorem~1.1]{AdamekElBashirSobralVelebil01}.
\end{proof}

\begin{proposition}\label{proposition:ExactHullUniversalAppendix}
	Let $\EE$ be a deflation-exact category satisfying axiom \ref{R0*}.  The embedding $j\colon \EE\hookrightarrow \overline{\EE}$ is $2$-universal among exact functors to exact categories.
\end{proposition}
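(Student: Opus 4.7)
The plan is to extend any exact functor $F\colon \EE\to \FF$ (with $\FF$ exact) to $\overline{\EE}$ via the bounded derived category. Applying $F$ degreewise to cochain complexes produces a triangle functor $\Db(F)\colon \Db(\EE)\to \Db(\FF)$, well-defined because $F$ preserves conflations and hence acyclic complexes. By construction $\Db(F)\circ i_{\EE} = i_{\FF}\circ F$ on stalk complexes. I would then show $\Db(F)$ carries $\overline{\EE}\subseteq \Db(\EE)$ into $i_{\FF}(\FF)\subseteq \Db(\FF)$: by proposition \ref{proposition:ExactMeansExtensionClosed}(1), $i_{\FF}(\FF)$ is extension-closed in $\Db(\FF)$ because $\FF$ is exact, and writing $\overline{\EE}=\bigcup_{n\geq 0}\EE_n$ as in the proof of proposition \ref{proposition:NoNegativeExtensionsOfClosure}, an induction on $n$ (using that $\Db(F)$ is a triangle functor) gives the inclusion. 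Composing with the equivalence $\FF\simeq i_{\FF}(\FF)$ defines $\overline{F}\colon \overline{\EE}\to \FF$, with $\overline{F}\circ j\cong F$ by construction.

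To see $\overline{F}$ is exact, I would use corollary \ref{corollary:ExactHullDefinition}: a conflation in $\overline{\EE}$ corresponds to a triangle in $\Db(\EE)$ with three consecutive vertices in $\overline{\EE}$. Applying $\Db(F)$ yields a triangle in $\Db(\FF)$ with all vertices in $i_{\FF}(\FF)$. Since every exact category satisfies axiom \ref{R3}, proposition \ref{proposition:MeaningR3} applied to $\FF$ identifies such triangles with conflations in $\FF$; hence $\overline{F}$ sends conflations to conflations.

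For the $2$-universal property, essential surjectivity of the restriction functor from exact functors $\overline{\EE}\to \FF$ to exact functors $\EE\to \FF$ is the above construction, while fully faithfulness follows from corollary \ref{corollary:AdmissibleUnderJ}: the cokernel functor $\coker\colon \Adm_{\overline{\EE}}(\EE)\to \overline{\EE}$ is a lax epimorphism in $\mathcal{CAT}$, so precomposition with $\coker$ gives a fully faithful functor $\Fun(\overline{\EE},\FF)\to \Fun(\Adm_{\overline{\EE}}(\EE),\FF)$. Any exact extension $G$ of $F$ must satisfy $G(\coker f)\cong \coker_{\FF}(F(f))$ for $f\in \Adm_{\overline{\EE}}(\EE)$ (because $G$ is exact and restricts to $F$), so the composite $G\circ \coker$ is determined up to canonical natural isomorphism by $F$ alone; the lax epi property then pins down $G$, and the same reasoning transports natural transformations $F_1\Rightarrow F_2$ to unique natural transformations $\overline{F_1}\Rightarrow \overline{F_2}$. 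I expect the subtlest conceptual point to be ensuring that an admissible morphism in $\overline{\EE}$ with source and target in $\EE$ is sent by $F$ to an admissible morphism in $\FF$; this is resolved transparently by passing through $\Db(\FF)$ and invoking proposition \ref{proposition:ExactMeansExtensionClosed}(1), which is precisely why the derived-category detour is the right approach.
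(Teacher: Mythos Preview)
Your proposal is correct and follows essentially the same approach as the paper's proof: construct the extension via the triangle functor $\Db(\EE)\to\Db(\FF)$, invoke proposition~\ref{proposition:ExactMeansExtensionClosed} to land in $i_{\FF}(\FF)$, and use the lax-epimorphism property of $\coker\colon \Adm_{\overline{\EE}}(\EE)\to\overline{\EE}$ from corollary~\ref{corollary:AdmissibleUnderJ} for uniqueness. Your explicit treatment of exactness of $\overline{F}$ via proposition~\ref{proposition:MeaningR3} and your remark on admissibility being preserved are useful clarifications the paper leaves implicit, but the overall strategy is the same.
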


\begin{proof}
	Let $F\colon \EE\rightarrow \FF$ be an exact functor to an exact category $\FF$.  By the universal property of Verdier localizations, there exists a triangle functor $\widetilde{F}\colon \Db(\EE)\to \Db(\FF)$ such that the following diagram commutes:
	\[\xymatrix{
		\EE\ar[d]^{F}\ar@{^{(}->}[r] & \Db(\EE)\ar[d]^{\widetilde{F}}\\
		\FF\ar@{^{(}->}[r] & \Db(\FF)
	}\]
	By proposition \ref{proposition:ExactMeansExtensionClosed}, $\FF$ is an extension-closed subcategory of $\D^b(\FF)$. It follows that the restriction of $\widetilde{F}$ to $\overline{\EE}$ maps $\overline{\EE}$ into the essential image of $\FF \subseteq \Db(\FF)$. Let $\overline{F}$ be the restriction of $\widetilde{F}$ to $\overline{\EE}$. Clearly $\overline{F}\colon \overline{\EE}\to \FF\subseteq \Db(\FF)$ is exact and $\overline{F}\circ j=F$.
	
	Now assume that $G,H\colon \overline{\EE}\to \FF$ are two exact functors such that $G\circ j = H\circ j=F$.  Using the notation of corollary \ref{corollary:AdmissibleUnderJ}, one finds that both $G$ and $H$ give an essentially commutative diagram:
	\[\xymatrix{
	{ \Adm_{\overline{\EE}}(\EE)} \ar[rr]^{\coker} \ar[d]_{F} && {\overline{\EE}} \ar@<-1ex>[d]_{G} \ar@<1ex>[d]^{H} \\
	{ \Adm(\FF)} \ar[rr]^{\coker} && {\FF}}	\]
	Hence, $G \circ \coker \cong H \circ \coker$.  As $\coker\colon \Adm_{\overline{\EE}}(\EE) \to \overline{\EE}$ is a {lax epimorphism}, we find that $G$ and $H$ are naturally equivalent.
\end{proof}

\begin{remark}
	In \cite[proposition~I.7.5]{Rosenberg11}, a construction for the exact hull of a deflation-exact category is given using the embedding of $\EE$ into its category of sheaves.  As both hulls satisfy the same $2$-universal property, they are equivalent.
\end{remark}

\subsection{Derived equivalence of the exact hull}

{We now complete the proof of theorem \ref{Theorem:ExactHullIntroduction} from the introduction.  We will use the following definition from \cite{Keller06}.}

\begin{definition}\label{definition:AlgebraicTriangulatedCategory}
	Let $\mathcal{T}$ be a $k$-linear triangulated category (with $k$ a commutative ring). We say that $\mathcal{T}$ is \emph{algebraic} if it is triangle-equivalent to $\underline{\CC}$ for some $k$-linear Frobenius category $\CC$ (here, $\underline{\CC}$ denotes the stable category of $\CC$).
\end{definition}

The following proposition is well known (see \cite[section~3.6]{Keller06}).

\begin{proposition}\label{proposition:VerdierLocalizationPreservesAlgebraicTriangulation}
Full triangulated subcategories and Verdier localizations of algebraic categories are algebraic.
\end{proposition}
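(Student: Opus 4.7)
The proof separates into two parts, treated by quite different techniques.

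For full triangulated subcategories, the plan is to work directly with the Frobenius model. Let $\TT \simeq \underline{\CC}$ with $\CC$ a Frobenius category, and let $\SS \subseteq \TT$ be a full triangulated subcategory. Define $\CC' \subseteq \CC$ to be the full subcategory of objects $X$ whose image in $\underline{\CC}$ lies in $\SS$. Since $\SS$ contains $0$, $\CC'$ contains every projective-injective object of $\CC$; since $\SS$ is closed under extensions, $\CC'$ is a fully exact subcategory of $\CC$. To see that $\CC'$ is itself Frobenius, one observes that for any $X \in \CC'$ and any $\CC$-conflation $\Omega X \rightarrowtail P \twoheadrightarrow X$ with $P$ projective-injective, the syzygy $\Omega X$ represents $\Sigma^{-1}X$ in $\underline{\CC}$, hence lies in $\SS$ and so in $\CC'$; a dual argument handles cosyzygies. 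Thus $\CC'$ has enough projective-injectives, the projective-injectives of $\CC'$ and $\CC$ coincide, and the inclusion induces a triangle equivalence $\underline{\CC'} \simeq \SS$.

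For Verdier localizations the Frobenius picture is awkward, since one would need to enlarge the class of projective-injectives to contain $\CC'$ while still retaining a Frobenius structure. The cleaner route is to invoke the dg-enhancement description: a triangulated category is algebraic if and only if it is triangle equivalent to $H^0(\AA)$ for some pretriangulated dg-category $\AA$ (see \cite{Keller06}). Given a thick subcategory $\SS \subseteq \TT \simeq H^0(\AA)$, take $\BB \subseteq \AA$ to be the full dg-subcategory whose objects map into $\SS$; as in the first part this is pretriangulated. The Drinfeld dg-quotient $\AA/\BB$ is again pretriangulated, and its $H^0$ is naturally equivalent to the Verdier quotient $H^0(\AA)/H^0(\BB) \simeq \TT/\SS$. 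Hence $\TT/\SS$ is algebraic.

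The main obstacle is organizational rather than substantive: both claims are folklore within the framework of \cite{Keller06}, but one must pick the right model for each part. Attempting a purely Frobenius-theoretic proof of the localization statement runs into the well-known difficulty of exhibiting a Frobenius category realizing a Verdier quotient directly; passing to dg-enhancements, where Verdier quotients correspond to Drinfeld dg-quotients of pretriangulated dg-categories, sidesteps this entirely.
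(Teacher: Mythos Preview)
The paper does not supply its own proof of this proposition; it merely refers to \cite[section~3.6]{Keller06}, where the statement is asserted (without detailed argument) as an easy consequence of the dg-categorical characterisation of algebraic triangulated categories. Your sketch is precisely the argument underlying that reference and is correct.

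One minor caveat in the Frobenius part: the assertion that the projective-injectives of $\CC'$ and of $\CC$ coincide need not hold literally, since a projective object of $\CC'$ is a priori only a direct summand of a projective-injective of $\CC$. This does not affect the conclusion, however: a morphism in $\CC'$ factors through a projective-injective of $\CC'$ if and only if it factors through one of $\CC$ (the latter being contained in $\CC'$), so the induced functor $\underline{\CC'} \to \SS \subseteq \underline{\CC}$ is still a triangle equivalence. Your decision to pass to dg-enhancements and the Drinfeld quotient for the localization statement is the standard route and is exactly in the spirit of \cite{Keller06}.
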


\begin{corollary}\label{corollary:HomotopyAndTriangulatedCatAreAlgebraic}
Let $\EE$ be a deflation-exact category. The homotopy category $\K(\EE)$ and the derived category $\D(\EE)$ are algebraic triangulated categories.
\end{corollary}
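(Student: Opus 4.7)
The plan is to deduce both claims from the standard fact that the homotopy category of any additive category is algebraic, using the underlying additive structure of $\EE$, and then to apply proposition \ref{proposition:VerdierLocalizationPreservesAlgebraicTriangulation} to pass from $\K(\EE)$ to $\D(\EE)$.

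For $\K(\EE)$, I would first equip the additive category $\C(\EE)$ with the \emph{degreewise split} exact structure: a sequence $X^\bullet \to Y^\bullet \to Z^\bullet$ is declared to be a conflation exactly when, in each degree $n$, the sequence $X^n \to Y^n \to Z^n$ is a split short exact sequence in $\EE$ (regarded as an additive category, ignoring the given right exact structure on $\EE$). A routine check confirms that this gives an exact structure on $\C(\EE)$; note that the conflations here are unrelated to the right exact structure on $\EE$, so we do not need any of the axioms \ref{R1}-\ref{R3}. Using standard mapping cylinder and mapping cone arguments (see, e.g., \cite{Keller96}), one verifies that an object $X^\bullet$ is projective in this exact structure if and only if it is injective if and only if it is contractible; consequently $\C(\EE)$ with this exact structure is a Frobenius category. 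The nullhomotopic maps are precisely the maps that factor through a contractible complex, so the associated stable category is exactly $\K(\EE)$. This exhibits $\K(\EE)$ as an algebraic triangulated category.

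For $\D(\EE)$, recall from definition \ref{definition:DerivedCategory} that $\D(\EE)$ is the Verdier localization $\K(\EE)/\Ac_\K(\EE)$ of $\K(\EE)$ by the (triangulated, by lemma \ref{lemma:ConeOfAcyclic}) subcategory of acyclic complexes. Since $\K(\EE)$ is algebraic, proposition \ref{proposition:VerdierLocalizationPreservesAlgebraicTriangulation} immediately gives that $\D(\EE)$ is algebraic as well. The same argument applies to the bounded variants $\K^*(\EE)$ and $\D^*(\EE)$, since these are full triangulated subcategories and Verdier quotients of $\K(\EE)$.

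The only potential subtlety is verifying that the degreewise split exact structure on $\C(\EE)$ really is Frobenius; but this is purely additive in nature and well documented in the literature, so no obstacle arises from the one-sided exact hypothesis on $\EE$. In particular, no use is made of any of the axioms \ref{R0}-\ref{R3} or \ref{R0*}, which is consistent with the full generality of the statement.
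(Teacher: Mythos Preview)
Your proposal is correct and follows essentially the same approach as the paper: endow $\C(\EE)$ with the degreewise split exact structure, observe that this is a Frobenius category whose stable category is $\K(\EE)$, and then invoke proposition \ref{proposition:VerdierLocalizationPreservesAlgebraicTriangulation} to conclude that the Verdier quotient $\D(\EE)$ is algebraic. Your additional remarks about the bounded variants and the irrelevance of the right exact axioms are accurate elaborations, but the core argument matches the paper's proof exactly.
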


\begin{proof}
	We endow the category of complexes $\C(\EE)$ with the structure of a Frobenius exact category by taking as set of conflations these kernel-cokernel pairs which are degreewise split.  The projective-injective objects are then the null-homotopic complexes and the corresponding stable category is the homotopy category $\K(\EE)$.  It follows from proposition \ref{proposition:VerdierLocalizationPreservesAlgebraicTriangulation} that the derived category $\D(\EE)$ is algebraic as well.
\end{proof}

The following theorem is the last theorem of \cite{KellerVossieck87}.

\begin{theorem}\label{theorem:KellerVossieck}
Let $\AA$ be a Frobenius category, $\BB$ an additive category and $F\colon \BB\rightarrow \underline{\AA}$ an additive functor such that 
\begin{align*}
\Hom_{\underline{\AA}}(\Sigma^nFA,FB)&=0, & \mbox{for all $A,B\in \BB$, and for all $n>0$.}
\end{align*}
\begin{enumerate}
	\item The functor $F$ extends to a triangulated functor $\tilde{F}\colon\Kb(\BB)\rightarrow \underline{\AA}$. Moreover, $\tilde{F}$ is fully faithful if and only if $\Hom_{\underline{\AA}}(FA, \Sigma^nFB)=0$ for all $A,B\in \BB$ and for all $n>0$.
	\item If the category $\BB$ is exact and the image of every short exact sequence extends to a triangle, then $\tilde{F}$ decomposes as $\Kb(\BB)\xrightarrow{\can}\Db(\BB)\xrightarrow{\widehat{F}}\underline{\AA}$. Moreover, $\widehat{F}$ is fully faithful if and only if $F$ is fully faithful and for each $A,A'\in \BB$, $n>0$  and $f\in \Hom_{\underline{\AA}}(FA',\Sigma^nFA)$, every short exact sequence 
	\[0\rightarrow A\xrightarrow{j} B \rightarrow C\rightarrow 0\] in $\BB$ satisfies $(\Sigma^nFj)\circ f=0$.
\end{enumerate}
\end{theorem}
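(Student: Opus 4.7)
The plan is to exploit that $\underline{\AA}$, being the stable category of a Frobenius category, admits a canonical DG enhancement $\AA_{\mathrm{dg}}$ with $H^0(\AA_{\mathrm{dg}}) = \underline{\AA}$; this provides the coherent framework in which cones can be manipulated functorially. To construct $\tilde F$, I would lift each $B \in \BB$ to an object $\tilde F(B) \in \AA_{\mathrm{dg}}$ whose class in $H^0$ is $F(B)$, and lift each arrow $\beta \in \Hom_{\BB}(B, B')$ to a degree-zero cycle $\tilde F\beta$ projecting to $F\beta$. The defect $\tilde F(\beta_1)\tilde F(\beta_2) - \tilde F(\beta_1\beta_2)$ is then a coboundary in $\Hom^\bullet_{\AA_{\mathrm{dg}}}$, whose positive-degree cohomology vanishes by the hypothesis $\Hom_{\underline{\AA}}(\Sigma^n FA, FB) = 0$ for $n > 0$; a standard obstruction-theoretic induction rectifies this data to a genuine DG functor $\BB \to \AA_{\mathrm{dg}}$. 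Extending componentwise and totalizing produces a DG functor from (a DG enrichment of) $\Cb(\BB)$ to $\AA_{\mathrm{dg}}$, and passing to $H^0$ yields the desired triangle functor $\tilde F\colon \Kb(\BB) \to \underline{\AA}$.

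For the fully faithful criterion in part (1), both $\Hom_{\Kb(\BB)}(X^\bullet, Y^\bullet)$ and $\Hom_{\underline{\AA}}(\tilde F X^\bullet, \tilde F Y^\bullet)$ are cohomological in each variable, and the truncation triangles of Proposition \ref{proposition:AcylicComplexAsExtensionOfTruncations} express any bounded complex as an iterated extension of its stalks. A five-lemma induction on the widths of the supports reduces full faithfulness of $\tilde F$ to pairs of stalk complexes $\Sigma^n A$ and $\Sigma^m B$, where $\Hom_{\Kb(\BB)}$ vanishes unless $n = m$. Matching the two sides then gives precisely the asserted vanishing $\Hom_{\underline{\AA}}(FA, \Sigma^n FB) = 0$ for $n > 0$ (together with the built-in full faithfulness of $F$ on stalks).

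For part (2), assuming that short exact sequences in $\BB$ are sent to triangles, any bounded acyclic complex is built from short exact sequences via iterated distinguished triangles; it is therefore sent to a cone of triangles whose third edges are zero in $\underline{\AA}$, so $\tilde F$ annihilates $\Acb(\BB)$ and factors as $\Kb(\BB) \xrightarrow{\can} \Db(\BB) \xrightarrow{\widehat F} \underline{\AA}$. For full faithfulness of $\widehat F$, I would represent a morphism in $\Db(\BB)$ by a roof $X^\bullet \xleftarrow{s} Z^\bullet \to Y^\bullet$ with $\cone(s) \in \Acb(\BB)$; the obstruction to lifting an arbitrary morphism of $\underline{\AA}$ back through such a roof is controlled by elements of $\Hom_{\underline{\AA}}(FA', \Sigma^n FA)$ precomposed with $\Sigma^n Fj$ for some inflation $j\colon A \to B$ in a short exact sequence of $\BB$, and the condition $(\Sigma^n Fj)\circ f = 0$ stated in the theorem is precisely what eliminates this obstruction and upgrades the stalk-level full faithfulness of $F$ to bijectivity on $\Hom$-sets in $\Db(\BB)$.

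The principal obstacle is the construction step for $\tilde F$: cones in a triangulated category are only defined up to non-canonical isomorphism, so $\tilde F$ cannot be built inside $\underline{\AA}$ directly. The algebraicity of $\underline{\AA}$ is invoked precisely to access an enhancement where a coherent functorial construction is possible, and the positive-degree $\Hom$-vanishing in the hypothesis is exactly what kills the successive higher obstruction classes appearing in the rectification.
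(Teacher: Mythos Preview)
The paper does not prove this theorem: it is quoted verbatim as ``the last theorem of \cite{KellerVossieck87}'' and used as a black box in the proof of corollary~\ref{corollary:FullyFatihfulnessRealisationFunctor}. There is therefore no proof in the paper to compare your proposal against.

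That said, your sketch is a plausible modern route, and it is worth noting how it differs from the original Keller--Vossieck argument. Their 1987 construction is hands-on: working directly in the Frobenius category $\AA$, one builds $\tilde F(X^\bullet)$ by iterated mapping cones along the stupid filtration of $X^\bullet$, using projective-injective objects to make the cones well defined up to canonical isomorphism, and the vanishing $\Hom_{\underline{\AA}}(\Sigma^n FA, FB)=0$ is what guarantees that the successive choices glue. Your approach replaces this with a dg enhancement $\AA_{\mathrm{dg}}$ and an obstruction-theoretic rectification of $F$ to a dg (or $A_\infty$) functor $\BB \to \AA_{\mathrm{dg}}$, after which totalization is formal. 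Both approaches hinge on the same vanishing hypothesis, interpreted either as ``the next cone is unique up to unique isomorphism'' or as ``the higher obstruction classes in $H^{<0}$ of the Hom-complexes vanish''.

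One caution: your phrase ``a standard obstruction-theoretic induction rectifies this data to a genuine DG functor'' hides real work. What one obtains from the vanishing is an $A_\infty$-functor from $\BB$ (concentrated in degree zero) to $\AA_{\mathrm{dg}}$, which suffices to induce a triangle functor on homotopy categories; a strict dg functor is neither needed nor, in general, available without a further replacement of the target. Likewise, your account of the full-faithfulness criterion in part~(2) is more of a heuristic than an argument: one really has to analyze the map
\[
\Hom_{\Db(\BB)}(X^\bullet, Y^\bullet) \longrightarrow \Hom_{\underline{\AA}}(\widehat F X^\bullet, \widehat F Y^\bullet)
\]
via the d\'evissage on truncations and identify the cokernel at each stage with the quotient of $\Hom_{\underline{\AA}}(FA', \Sigma^n FA)$ by the image of the maps $(\Sigma^n Fj)\circ(-)$ over all inflations $j$. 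Your proposal gestures at this but does not carry it out.
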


The next proposition is similar to \cite[proposition~4.5]{StanleyvanRoosmalen16}. The adaptation to this setting is left to the reader.

\begin{lemma}\label{lemma:HigherExtsFactorThroughLowerExts}
Let $\EE$ be a deflation-exact category and let $\overline{\EE}$ be the extension closure of $i(\EE)$ in $\Db(\EE)$. The following are equivalent:
\begin{enumerate}
	\item For all $f\in \Hom_{\D^b(\EE)}(A,\Sigma^n B)$ with $A,B\in \overline{\EE}$ and $n\geq 2$, there exists an inflation $g\colon B\rightarrowtail C$ in $\overline{\EE}$ such that the composition $\xymatrix{A\ar[r]^f & \Sigma^nB \ar@{>->}[r]^{\Sigma^n g} &\Sigma^nC}$ in $\Db(\EE)$ is zero.
	\item For all $f\in \Hom_{\D^b(\EE)}(A,\Sigma^n B)$  with $A,B\in \overline{\EE}$ and $n\geq 2$, there exists a deflation $h\colon C\twoheadrightarrow A$ such that $\xymatrix{C\ar@{->>}[r]^h & A\ar[r]^f & \Sigma^n B}$ in $\Db(\EE)$ is zero.
	\item For all $f\in \Hom_{\D^b(\EE)}(A,\Sigma^n B)$ with $A,B\in \overline{\EE}$ and $n\geq 2$, $f$ factors in $\Db(\EE)$ as follows
	\[\xymatrix{
	A\ar@/^2.0pc/[rrrrr]^f \ar[r] & \Sigma A_1\ar[r] & \Sigma^2 A_2\ar[r] & \dots \ar[r]& \Sigma^{n-1} A_{n-1}\ar[r] & \Sigma^n B
	}\] for some $A_1, \dots ,A_{n-1}\in \overline{\EE}$.
\end{enumerate}
\end{lemma}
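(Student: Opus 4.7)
\textit{Strategy.} I would establish $(1) \Leftrightarrow (3)$ by induction on $n \geq 2$, with $(2) \Leftrightarrow (3)$ following by a symmetric argument that replaces each inflation and its cokernel in $\overline{\EE}$ by a deflation and its kernel. The two structural facts I rely on are extension closure of $\overline{\EE}$ in $\Db(\EE)$ (by definition~\ref{definition:ExactHull}) and corollary~\ref{corollary:ExactHullDefinition}, which identifies the conflations of $\overline{\EE}$ with the triangles of $\Db(\EE)$ all of whose three terms already lie in $\overline{\EE}$.

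For $(1) \Rightarrow (3)$, I proceed by induction on $n$. Given $f \colon A \to \Sigma^n B$, apply (1) to obtain an inflation $g \colon B \inflation C$ in $\overline{\EE}$ with $(\Sigma^n g) \circ f = 0$, and let $B \inflation C \deflation D$ be the corresponding conflation in $\overline{\EE}$, so that $D \in \overline{\EE}$ and there is a triangle $B \to C \to D \xrightarrow{\partial} \Sigma B$ in $\Db(\EE)$. Applying $\Hom_{\Db(\EE)}(A, -)$ yields the long exact sequence
\[\cdots \to \Hom(A, \Sigma^{n-1} D) \xrightarrow{(\Sigma^{n-1}\partial)_*} \Hom(A, \Sigma^n B) \xrightarrow{(\Sigma^n g)_*} \Hom(A, \Sigma^n C) \to \cdots,\]
and the vanishing $(\Sigma^n g) \circ f = 0$ produces a lift $f' \colon A \to \Sigma^{n-1} D$ with $f = (\Sigma^{n-1}\partial) \circ f'$. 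For $n = 2$ this is the desired factorization, taking $A_1 := D$; for $n \geq 3$, the induction hypothesis applied to $f'$ (legitimate because $D \in \overline{\EE}$ and the target shift has dropped by one) produces intermediate objects in $\overline{\EE}$ at shifts $1, 2, \dots, n-2$, which concatenate with the connecting morphism to yield the desired Yoneda-type factorization of $f$ through objects of $\overline{\EE}$.

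For $(3) \Rightarrow (1)$, the last map $\Sigma^{n-1} A_{n-1} \to \Sigma^n B$ of the factorization is the $(n-1)$-fold shift of a morphism $\eta \colon A_{n-1} \to \Sigma B$. Complete $\eta$ to a triangle $B \xrightarrow{g} C \to A_{n-1} \xrightarrow{\eta} \Sigma B$ in $\Db(\EE)$; extension closure forces $C \in \overline{\EE}$, and then corollary~\ref{corollary:ExactHullDefinition} makes $g$ an inflation in $\overline{\EE}$. Because consecutive arrows in a triangle compose to zero, $(\Sigma^n g) \circ (\Sigma^{n-1} \eta) = 0$, and therefore $(\Sigma^n g) \circ f = 0$. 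The only genuine obstacle in the whole argument is to check, at each inductive step, that the intermediate cokernel (respectively kernel, in the deflation-side argument) really lies in $\overline{\EE}$ so that (1) (respectively (2)) may be reinvoked; this is automatic from extension closure but must be tracked carefully, and it also explains why the natural reading of (3) places the $i$-th intermediate object at shift $\Sigma^i A_i$, so that each induction step cleanly lowers the shift by one.
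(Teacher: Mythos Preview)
Your argument is correct and is precisely the standard adaptation the paper has in mind: the paper does not spell out a proof here but refers the reader to \cite[proposition~4.5]{StanleyvanRoosmalen16} and states that the adaptation is left to the reader. Your inductive splitting via the connecting morphism of the triangle coming from the inflation (or deflation) is exactly that argument, and your care in noting that the intermediate cokernel (or kernel) lies in $\overline{\EE}$ by extension closure is the only point that needs checking in this setting.
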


\begin{corollary}\label{corollary:FullyFatihfulnessRealisationFunctor}
Let $\EE$ be a deflation-exact category and $\overline{\EE}$ its exact closure. The inclusion functor $\overline{\EE}\hookrightarrow \Db(\EE)$ factors through a functor
\[\real\colon \Db(\overline{\EE})\rightarrow \Db(\EE)\]
which is an equivalence if and only if one of the equivalent properties of lemma \ref{lemma:HigherExtsFactorThroughLowerExts} holds.
\end{corollary}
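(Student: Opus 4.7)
The plan is to apply Keller--Vossieck's theorem \ref{theorem:KellerVossieck} to the inclusion $F\colon \overline{\EE} \hookrightarrow \Db(\EE)$. By corollary \ref{corollary:HomotopyAndTriangulatedCatAreAlgebraic}, $\Db(\EE)$ is algebraic, so we may write $\Db(\EE) \simeq \underline{\AA}$ for some Frobenius category $\AA$. The vanishing hypothesis $\Hom_{\Db(\EE)}(\Sigma^n FA, FB) = 0$ for all $A,B\in\overline{\EE}$ and $n > 0$ is precisely proposition \ref{proposition:NoNegativeExtensionsOfClosure}. By the first part of Keller--Vossieck, $F$ extends to a triangle functor $\tilde{F}\colon \Kb(\overline{\EE}) \to \Db(\EE)$.

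The exactness of $\overline{\EE}$ (corollary \ref{corollary:ExactHullDefinition}), combined with the fact that each conflation in $\overline{\EE}$ is by construction a triangle in $\Db(\EE)$, supplies the hypotheses of the second part of Keller--Vossieck, and thus yields the desired factorization
\[\Kb(\overline{\EE}) \xrightarrow{\can} \Db(\overline{\EE}) \xrightarrow{\real} \Db(\EE)\]
through a triangle functor $\real$. For essential surjectivity, the essential image of $\real$ is a triangulated subcategory of $\Db(\EE)$ containing $\EE$ as stalk complexes; since $\EE$ generates $\Db(\EE)$ as a triangulated category (every bounded complex is an iterated extension of stalks), the essential image of $\real$ is all of $\Db(\EE)$.

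Full faithfulness is analyzed via the Keller--Vossieck criterion: since $F$ is tautologically fully faithful, $\real$ is fully faithful if and only if for every $A,A' \in \overline{\EE}$, $n > 0$, and $f \in \Hom_{\Db(\EE)}(A', \Sigma^n A)$, there exists a conflation $A \stackrel{j}{\rightarrowtail} X \twoheadrightarrow C$ in $\overline{\EE}$ with $(\Sigma^n j) \circ f = 0$. For $n = 1$ this condition is automatic: extension-closedness of $\overline{\EE}$ in $\Db(\EE)$ produces a conflation $A \rightarrowtail X \twoheadrightarrow A'$ in $\overline{\EE}$ whose associated triangle $A \to X \to A' \xrightarrow{f} \Sigma A$ has the required connecting morphism, and then $(\Sigma j) \circ f = 0$ by the triangle axioms. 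For $n \geq 2$, the condition is precisely (1) of lemma \ref{lemma:HigherExtsFactorThroughLowerExts}. Hence $\real$ is an equivalence if and only if one of the equivalent conditions of that lemma holds.

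The main obstacle is the careful application of Keller--Vossieck: one must verify that the relevant vanishing condition it imposes for $n \geq 1$ is genuinely equivalent (modulo the automatic $n=1$ case handled by extension-closedness) to the existence statement of lemma \ref{lemma:HigherExtsFactorThroughLowerExts}. Once this translation is secured, the remaining ingredients (algebraicity of $\Db(\EE)$, exactness and extension-closedness of $\overline{\EE}$, and triangle-generation of $\Db(\EE)$ by $\EE$) combine cleanly to yield the result.
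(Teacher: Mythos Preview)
Your proof is correct and follows essentially the same approach as the paper's: apply Keller--Vossieck (theorem \ref{theorem:KellerVossieck}) to the inclusion $\overline{\EE}\hookrightarrow \Db(\EE)$, using algebraicity of $\Db(\EE)$ and the vanishing from proposition \ref{proposition:NoNegativeExtensionsOfClosure}, then translate the full-faithfulness criterion into the conditions of lemma \ref{lemma:HigherExtsFactorThroughLowerExts}. You are in fact more explicit than the paper in two places---you spell out why the $n=1$ case of the effaceability condition is automatic (via extension-closedness of $\overline{\EE}$) and why $\real$ is essentially surjective---both of which the paper leaves implicit.
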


\begin{proof}
	By proposition \ref{proposition:VerdierLocalizationPreservesAlgebraicTriangulation} and corollary \ref{corollary:HomotopyAndTriangulatedCatAreAlgebraic}, the category $\Db(\EE)$ is algebraic, i.e.~$\Db(\EE)$ is triangle equivalent to $\underline{\CC}$ for some Frobenius category $\CC$. Hence we may apply theorem \ref{theorem:KellerVossieck} to the canonical embedding $F\colon\overline{\EE}\rightarrow \Db(\EE)$. The functor $\real\coloneqq \widehat{F}$ is fully faithful if and only if the conditions of lemma \ref{lemma:HigherExtsFactorThroughLowerExts} hold. As $\EE\subseteq \overline{\EE}$, it follows that $\real$ is essentially surjective.
\end{proof}

\begin{theorem}\label{theorem:DerivedEquivalenceExactHull}
Let $\EE$ be a deflation-exact category satisfying axiom {\ref{R0*}} and let $j\colon \EE \to \overline{\EE}$ be its embedding into its exact hull.
\begin{enumerate}
  \item\label{enumerate:Equivalence1} The embedding $\overline{\EE} \to \Db(\EE)$ lifts to a triangle equivalence $\Db(\overline{\EE}) \simeq \Db(\EE)$, and
	\item\label{enumerate:Equivalence2} The embedding $j\colon \EE \to \overline{\EE}$ lifts to a triangle equivalence $\Db({\EE}) \simeq \Db(\overline{\EE})$
\end{enumerate}
\end{theorem}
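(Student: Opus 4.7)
The plan is to establish Part~(\ref{enumerate:Equivalence1}) first and then to deduce Part~(\ref{enumerate:Equivalence2}) formally.

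For Part~(\ref{enumerate:Equivalence1}), the realisation functor $\real\colon \Db(\overline{\EE}) \to \Db(\EE)$ is defined unconditionally by Corollary~\ref{corollary:FullyFatihfulnessRealisationFunctor}, and it is an equivalence if and only if one of the equivalent conditions of Lemma~\ref{lemma:HigherExtsFactorThroughLowerExts} is satisfied.  I would verify condition~(3): every morphism $f \in \Hom_{\Db(\EE)}(A, \Sigma^n B)$ with $A, B \in \overline{\EE}$ and $n \geq 2$ admits a factorisation through shifts of objects of $\overline{\EE}$.

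The strategy is to represent $f$ by a roof $A \xleftarrow{\sim} Z^\bullet \xrightarrow{w} \Sigma^n B$ in $\Kb(\EE)$ and to exploit a filtration on $Z^\bullet$.  Using Proposition~\ref{proposition:TwoTermReplacement} applied to both $A$ and $B$, together with the congenial replacement of Proposition~\ref{proposition:TruncatedReplacements}, we may arrange that $Z^\bullet \in \Cb(\EE)$ is supported in the convex hull $[-n{-}1, 0]$ of the supports of $A$ and $\Sigma^n B$.  The brutal (naive) truncations of $Z^\bullet$ then assemble into a chain of triangles in $\Kb(\EE)$ whose successive cofibres are shifts of stalk complexes at the entries $Z^k \in \EE \subseteq \overline{\EE}$.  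Composing the connecting morphisms of these triangles with $s$ and $w$ produces the required decomposition of $f$ as a composition of degree-one morphisms through objects of $\overline{\EE}$.

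The principal technical obstacle is to control the length of the resulting factorisation, which must match the extension degree $n$ as demanded by condition~(3).  I would handle this by observing that the endpoint pieces of the filtration, corresponding to the degree ranges of $A$ and $\Sigma^n B$, collapse after the quasi-isomorphism $s \colon Z^\bullet \to A$ and the chain map $w \colon Z^\bullet \to \Sigma^n B$ respectively, leaving exactly $n-1$ intermediate pieces $A_1, \ldots, A_{n-1} \in \overline{\EE}$; the congeniality structure of Proposition~\ref{proposition:TruncatedReplacements} ensures that this collapse is well-behaved and that the brutal truncations are well-defined in $\Kb(\EE)$.

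For Part~(\ref{enumerate:Equivalence2}), the embedding $j \colon \EE \to \overline{\EE}$ is exact by Corollary~\ref{corollary:ExactHullDefinition}, and therefore induces a triangle functor $\Db(j) \colon \Db(\EE) \to \Db(\overline{\EE})$.  The composition $\real \circ \Db(j)$ restricts to the identity on the stalk complexes $i(\EE) \subseteq \Db(\EE)$; since $i(\EE)$ triangle-generates $\Db(\EE)$ and both $\real$ and $\Db(j)$ are triangle functors, this extends to a natural isomorphism $\real \circ \Db(j) \cong \mathrm{id}_{\Db(\EE)}$.  Combined with Part~(\ref{enumerate:Equivalence1}), this identifies $\Db(j)$ as a quasi-inverse of $\real$, and hence as the required triangle equivalence $\Db(\EE) \simeq \Db(\overline{\EE})$.
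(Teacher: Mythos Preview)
Your approach to Part~(\ref{enumerate:Equivalence1}) has a genuine gap. You claim that the roof $A \xleftarrow{\sim} Z^\bullet \to \Sigma^n B$ can be arranged so that $Z^\bullet$ is supported in $[-n-1,0]$, but Proposition~\ref{proposition:TruncatedReplacements} only bounds $Z^\bullet$ from above (at degree $0$, once $A$ is written as a two-term complex). Bounding $Z^\bullet$ from below would require a dual truncation $\tau^{\geq -n-1}$, which in a right exact category is not available: the requisite cokernels of differentials need not exist. So $Z^\bullet$ may extend far below degree $-n-1$, and the filtration will have uncontrolled length.

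Even granting the support bound, the brutal-truncation filtration does not yield a factorisation of $f$ through objects of $\overline{\EE}$ of the required shape. The brutal truncations $\sigma^{\leq m} Z^\bullet$ are themselves multi-term complexes, not shifts of objects of $\overline{\EE}$; and if you instead intend the stalks $Z^k$ as the intermediate $A_k$, the connecting morphisms of successive triangles in the brutal filtration are, up to shift, the differentials $d^k$, so any twofold composite vanishes rather than recovering $f$. The ``collapse at the endpoints'' you invoke is exactly the substantive step: one must exhibit, for instance, a deflation $C \twoheadrightarrow A$ in $\overline{\EE}$ annihilating $f$, and then iterate. The paper does this by an explicit pushout in the exact category $\overline{\EE}$ (using acyclicity of the congenial replacement's $E^\bullet$), first for $A,B \in \EE_0$, and then climbs the filtration $\overline{\EE} = \bigcup_i \EE_i$ by a double induction on the levels of $A$ and $B$. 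Your outline omits this inductive core.

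For Part~(\ref{enumerate:Equivalence2}), your deduction via $\real \circ \Db(j) \cong \mathrm{id}$ is plausible but incomplete: agreement of two triangle functors on a generating class does not formally extend to a natural isomorphism without further argument. The paper instead shows directly that $\langle \Acb(\overline{\EE}) \cap \Kb(\EE)\rangle_{\mathrm{thick}} = \langle \Acb(\EE)\rangle_{\mathrm{thick}}$ in $\Kb(\EE)$, from which full faithfulness of $\Db(j)$ follows, with essential surjectivity supplied by Corollary~\ref{corollary:CokernelExactHull}.
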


\begin{proof}
For the first statement, let $f\colon X^\bullet \to \Sigma^n Y^\bullet$ be a morphism in $\Db(\EE)$ with $X^\bullet,Y^\bullet \in \overline{\EE}$ and $n \geq 2.$  By corollary \ref{corollary:CokernelExactHull}, we may assume that $X^\bullet$ and $Y^\bullet$ are 2-term complexes (there are conflations $X^{-1} \inflation X^0 \to X^\bullet$ and $Y^{-1} \inflation Y^0 \deflation Y^\bullet$ in $\overline{\EE}$ with $X^{-1}, Y^{-1}, X^0, Y^0 \in \EE$) and that $f$ is represented by a roof $X^\bullet \stackrel{\simeq}{\leftarrow}Z^\bullet \to \Sigma^n Y^\bullet$ where $Z^\bullet \stackrel{\simeq}{\rightarrow} X^\bullet$ is a congenial quasi-isomorphism and $Z^\bullet\in \C^{[-n-1,0]}.$  The map $Z^\bullet \to \Sigma^n Y$ is represented by a diagram
\[\xymatrix{
\cdots \ar[r] &0 \ar[r]\ar[d] & Z^{-n-1}\ar[d] \ar@{>->}[r]^{d_Z^{-n-1}} & Z^{-n} \ar[d]\ar[r] & Z^{-n+1} \ar[r]\ar[d] & \cdots \ar[r] & Z^{-1} \ar[r]\ar[d]& Z^0 \ar[r]\ar[d] & 0 \ar[r]\ar[d] & \cdots \\
\cdots \ar[r] &0 \ar[r] & Y^{-1} \ar@{>->}[r] & Y^0 \ar[r] & 0 \ar[r] & \cdots \ar[r] & 0 \ar[r]& 0 \ar[r] & 0 \ar[r] & \cdots }\]
As $X^\bullet$ is acyclic in degree $-n+1$ (the map $X^{-1} \to X^0$ is a monomorphism), so is $Z^\bullet$.  Hence, $d_Z^{-n}\colon Z^{-n} \to Z^{-n+1}$ is admissible.  Writing $C$ for $\coker d_Z^{-n-1}$, the left part of the above diagram induces solid span in $\overline{\EE}:$
\[\xymatrix{ C \ar@{>->}[r]\ar[d] & {Z^{-n+1}} \ar@{..>}[d]\\
{Y^\bullet} \ar@{>..>}[r] & P} \]
which can be completed to a pushout square as above (as $\overline{\EE}$ is exact, see corollary \ref{corollary:ExactHullDefinition}).  Here, $P \cong \coker (C \to Z^n \oplus Y^\bullet).$  Put differently, $P \cong \cone([Z^{-n-1}\to Z^{-n} \to Z^{n-1}] \to [Y^{-1}\to Y^0 \to 0]).$  Hence, the composition $Z^\bullet \to \Sigma^n Y \to \Sigma^n P = 0$.  As $Y\inflation P$ is an inflation in $\overline{\EE}$, the conditions of theorem \ref{theorem:KellerVossieck} are satisfied.

For the second statement, it follows from proposition \ref{proposition:ConflationsYieldTriangles} that there is a commutative diagram of functors:
\[ \xymatrix{
\Acb(\EE) \ar[r] \ar[d] & \Kb(\EE) \ar[r] \ar[d]^{j} & \Db(\EE) \ar[d]^{j} \\
\Acb(\overline{\EE}) \ar[r] & \Kb(\overline{\EE}) \ar[r] & \Db(\overline{\EE})
}\]
and, hence, a triangle functor $\Db(\EE) \to \Db(\overline{\EE})$.  It suffices to show that $\langle \Acb(\overline{\EE}) \cap \Kb(\EE) \rangle_{\textrm{thick}} = \langle \Acb(\EE) \rangle_{\textrm{thick}}.$  Indeed, it then follows from \cite[proposition 7.2.1]{KashiwaraSchapira06} that the functor $\Db(\EE) \to \Db(\overline{\EE})$ is fully faithful.  That it is essentially surjective follows from proposition \ref{corollary:CokernelExactHull}.

The inclusion $\langle \Acb(\overline{\EE}) \cap \Kb(\EE) \rangle_{\textrm{thick}} \supseteq \langle \Acb(\EE) \rangle_{\textrm{thick}}$ is clear.  To show the other inclusion, consider an acyclic complex $X^\bullet \in \Acb(\overline{\EE})$ with entries in $\EE$.  Without loss of generality, assume that $X^\bullet \in \AcC^{[0,n]}(\EE)$.  We write $K_i = \ker(d^i\colon X^i \to X^{i+1})\in \overline{\EE}$; these kernels exist since $X^\bullet$ is acyclic in $\overline{\EE}$.  Note that $K^l = 0$ for $l \geq n+1.$  There are conflations $(X^0 \inflation X^1 \deflation K_1), (K_1 \inflation X^1 \deflation K_2), \ldots, (K_{n-2} \inflation X^{n-1} \deflation X^n)$ in $\overline{\EE}$ and each of these corresponds to a triangle in $\Db(\EE)$.

From the triangle $X^0 \to X^1 \to K_1 \to \Sigma X^0$, we find that $K_1$ is isomorphic to the complex $\cdots \to 0 \to X^0 \to X^1 \to 0 \to \cdots$ (where $X^1$ is in degree 0).  By iteration, we find that $K_i$ is isomorphic to the complex $\cdots \to X^0 \to X^1 \to \cdots \to X^i \to 0 \to \cdots$ (where $X^i$ is in degree 0) in $\Db(\EE)$.  Hence, up to suspension, $X^\bullet \cong K_{n+1}.$  It follows that the complex $X^\bullet$ itself is zero in $\Db(\EE)$.  This implies that $X^\bullet \in \langle \Acb(\EE) \rangle_{\textrm{thick}}$, as required.
\end{proof}

The above theorem allows us to obtain theorem \ref{theorem:IntroductionQuotientsForExact} about quotient for exact categories from theorem \ref{Theorem:MainTheoremIntroduction}.  We work in the 2-category of (essentially small) exact categories.  Let $\AA\subseteq \EE$ be an inflation- or deflation-percolating subcategory of an exact category $\AA$.  The localization $\EE[S_\AA^{-1}]$ is inflation- or deflation-exact respectively, but not necessarily two-sided exact.  As in \cite{HenrardvanRoosmalen19}, we can embed $\EE[S_\AA^{-1}]$ into its exact hull; the composition $\EE \to \EE[S_\AA^{-1}] \to \overline{\EE[S_\AA^{-1}]}$ satisfies the 2-universal property of the quotient.  We obtain the following corollary.

\begin{corollary}\label{Corollary:MainTheoremExact}
Let $\EE$ be an exact category and let $\AA$ be an inflation- or deflation-percolating subcategory of $\EE$.  The sequence 
		\[\DAb(\EE)\to \Db(\EE)\to \Db(\overline{\EE/\AA})\]
is a Verdier localization sequence. Here, $Q\colon \EE\to \overline{\EE / \AA} \eqqcolon \EE \dq \AA$ is the quotient in the 2-category of (essentially small) exact categories. 
	\end{corollary}
	
	The following proposition states that projective objects in $\EE$ stay projective in the exact hull $\overline{\EE}$.
	
\begin{proposition}
The embedding $j\colon \EE \to \overline{\EE}$ restricts to a fully faithful functor $\Proj(\EE) \to \Proj(\overline{\EE})$.
\end{proposition}

\begin{proof}
We first show that $j\colon \EE \to \overline{\EE}$ restricts to a functor $\Proj(\EE) \to \Proj(\overline{\EE}).$  Let $P \in \EE$ be any projective object, this means that $\Hom_{\Db(\EE)}(P,\Sigma (E)) = 0$, for all $E \in \EE.$  As $j\colon \Db(\EE) \to \Db(\overline{\EE})$ is an equivalence, we find $\Hom_{\Db(\overline{\EE})}(jP,\Sigma (jE)) = 0.$  As $\overline{\EE}$ is the extension-closure of $j\EE$ in $\Db(\overline{\EE})$, we find that $\Hom_{\Db(\overline{\EE})}(jP,\Sigma \overline{\EE}) = 0.$  Hence, $jP \in \Proj(\overline{\EE}).$
\end{proof}

\begin{example}\label{Example:IsbellCategory}
	Let $\II$ be the Isbell category, that is, the full additive subcategory of $\Ab$ generated by the abelian groups containing no element of order $p^2$ for some fixed prime $p$.  The Isbell category is a deflation quasi-abelian category which does not satisfy axioms \ref{L1}, \ref{L2} and \ref{L3} (see \cite[section~2]{Kelly69} and \cite[example~4.7]{BazzoniCrivei13}).  We also consider the full subcategory $\II^{\text{fg}}$ of $\II$ consisting of the finitely generated groups of $\II$.
	
  As in the proof of proposition \ref{proposition:TwoTermReplacement}, let $\II^{\text{fg}}_0 = \II^{\text{fg}}$ and let $\II^{\text{fg}}_i$ (for $i \geq 1$) be given by objects $G\in \Db(\II^{\text{fg}})$ occurring in a triangle $F\to G\to H\to \Sigma (F)$ with $F\in \II^{\text{fg}}$ and $H\in \II^{\text{fg}}_{i-1}$.	 The category $\II^{\text{fg}}$ has enough projectives, with $\Proj(\II^{\text{fg}}) = \add(\bZ),$ and hence, the natural map $\Db(\II^{\text{fg}}) \to \Db(\modd \bZ)$ is an equivalence.  Identifying these derived categories, gives the following description of $\II^{\text{fg}}_n:$ it is the full subcategory of $\modd \bZ$ consisting of those finitely generated abelian groups containing no element of order $p^{n+2}.$  We find $\overline{\II^{\text{fg}}} = \bigcup_{n \in \bN} \II^{\text{fg}}_n = \modd \bZ.$
	
	For the exact hull of $\II$, we find that $\Db(\II) \cong \Db(\Mod \bZ)$ and we find the following description of $\II_n:$ it is the full subcategory of $\Mod \bZ$ consisting of those abelian groups containing no element of order $p^{n+2}.$  In this case, we find that the natural embedding $\overline{\II} \to \Mod \bZ$ is not an equivalence.
	
	In fact, the exact hull $\overline{\II}$ is not pre-abelian.  Indeed, consider the maps $f_i\colon \bZ \to \bZ\colon z \mapsto p^i z_i$, where $i \in \bN$ (so $\coker(f_i) \cong \bZ / p^i\bZ$); the direct sum $\oplus_i f_i\colon {\oplus_{i} \bZ} \to {\oplus_{i} \bZ}$ has no cokernel in $\overline{\II}.$
\end{example}
\section{Enhancements of the derived category and \texorpdfstring{$K$}{K}-theory}\label{section:enhancements}\label{section:KTheory}

As mentioned in the introduction, one of the possible applications of theorems \ref{theorem:MainTheorem} and \ref{theorem:DerivedEquivalenceExactHull} is to non-connective $K$-theory.  However, it is known that the derived category alone of an exact category is not sufficient to recover the $K$-theory spectra (see \cite{Schlichting02}).  In this section, we formulate the aforementioned theorems in three models of the bounded derived category of a one-sided exact category.  We first look at a Frobenius model, following \cite{Schlichting06} closely.  Subsequently, we provide a dg enhancement (in the sense of \cite{Keller06}) and an $\infty$-categorical enhancement (in the sense of \cite{LurieHA, LurieHTT}).  These latter approaches follow \cite{ChenChen19} and \cite[appendix A]{AntieauGepnerHeller19} closely.

\subsection{Frobenius models}

In \cite{Schlichting06}, an enhancement of the derived category of an exact category is given in terms of Frobenius pairs.  As the construction of the derived category of a one-sided exact category is similar to that of an exact category, the same construction can be used to define the $K$-theory spectrum for one-sided exact categories.

Recall the following definitions from \cite{Schlichting06}.

\begin{definition}\label{definition:FrobeniusPair}
	\begin{enumerate}
		\item A \emph{map of Frobenius categories} $F\colon \CC\to \DD$ is an exact functor preserving projective-injective objects.
		\item A \emph{Frobenius pair} $(\CC,\CC_0)$ is a fully faithful embedding $\CC_0\hookrightarrow \CC$ of Frobenius categories (in particular, projective-injective objects of $\CC_0$ are mapped to projective-injective objects of $\CC$).
		\item A map $(\CC,\CC_0)\rightarrow (\DD,\DD_0)$ of Frobenius pairs is a map $\CC\rightarrow \DD$ of Frobenius categories such that $\CC_0$ is mapped into $\DD_0$.
	\end{enumerate}
\end{definition}

\begin{definition}
	Given a Frobenius pair $(\CC,\CC_0)$, the induced map $\underline{\CC_0}\rightarrow \underline{\CC}$ of small triangulated categories is fully faithful. The \emph{derived category} $\D(\CC,\CC_0)$ of the Frobenius pair is defined as the Verdier localization $\underline{\CC}/\underline{\CC_0}$.
\end{definition}

\begin{definition}
	An \emph{exact sequence of Frobenius pairs} is a composable pair of maps of Frobenius pairs
	\[(\CC',\CC_0')\xrightarrow{f} (\CC,\CC_0)\xrightarrow{g}(\CC'',\CC_0'')\]
	together with a natural transformation $\eta$ from $g\circ f$ to the zero functor $0\colon \CC'\rightarrow \CC''$ such that for each $C'\in \CC'$ we have that $\eta_{C'}\colon g(f(C'))\rightarrow 0$ is a weak equivalence in the induced Waldhausen structure of $(\CC'',\CC_0'')$, and the functor $\D(\CC',\CC_0')\rightarrow \D(\CC,\CC_0)$ is fully faithful and the induced functor from the Verdier quotient $\D(\CC,\CC_0)/\D(\CC',\CC_0')$ to $\D(\CC'',\CC_0'')$ is cofinal, meaning that the functor is fully faithful and any object in the target is a direct summand of an image of an object in the domain.
	
	A map $(\CC,\CC_0)\rightarrow (\DD,\DD_0)$ of Frobenius pairs is a \emph{derived equivalence} if the induced map $\D(\CC,\CC_0)\rightarrow \D(\DD,\DD_0)$ is a triangle equivalence.
\end{definition}

For a deflation-exact category $\EE$ satisfying axiom \ref{R0*}, consider the Frobenius pair $(\Cb(\EE),\Acb(\EE))$ where the conflations of $\Cb(\EE)$ are the degreewise split kernel-cokernel pairs. One readily verifies that $(\Cb(\EE),\Acb(\EE))$ is indeed a Frobenius pair and $\D(\Cb(\EE),\Acb(\EE))\simeq \Db(\EE)$ as triangulated categories.

The following proposition follows directly from theorem \ref{theorem:DerivedEquivalenceExactHull}.
	
	\begin{proposition}
	The map $(\Cb(\EE),\Acb(\EE)) \to (\Cb(\overline{\EE}),\Acb(\overline{\EE}))$ is a derived equivalence.
	\end{proposition}
	
	The following is a corollary of theorem \ref{theorem:MainTheoremPartI}.
	
\begin{proposition}
Let $\AA$ be a deflation-percolating subcategory of $\EE$.  If the natural map $\Db(\AA) \to \DAb(\EE)$ is an equivalence, then the sequence  
	\[(\Cb(\AA),\Acb(\AA)) \rightarrow (\Cb(\EE),\Acb(\EE)) \rightarrow (\Cb(\EE/\AA),\Acb(\EE/\AA))\]
	is an exact sequence of Frobenius pairs.
\end{proposition}
	
	\begin{proof}
			Let $\CAb(\EE)$ be the full subcategory of $\Cb(\EE)$ consisting of all $X^{\bullet}\in \Cb(\EE)$ such that $Q(X^{\bullet})$ is quasi-isomorphic to the zero complex in $\D(\Cb(\EE/\AA),\Acb(\EE/\AA))$. Note that $(\CAb(\EE),\Acb(\EE))$ is a Frobenius pair and $\D(\CAb(\EE),\Acb(\EE))$ is triangle equivalent to the kernel of $\Db(\EE)\rightarrow \Db(\EE/\AA)$. By theorem \ref{theorem:MainTheorem}, there is a triangle equivalence $\D(\CAb(\EE),\Acb(\EE))\simeq \DAb(\EE)$.
	By assumption, the natural map $(\CAb(\EE),\Acb(\EE)) \to (\Cb(\AA),\Acb(\AA))$ is an equivalence.  The statement now follows directly from theorem \ref{theorem:MainTheoremPartI}. 
	\end{proof}

\subsection{A dg enhancement}\label{subsection:dgEnhancement} Let $\CC$ be a dg category, i.e.~a category enriched over the monoidal category of chain complexes.  We write $Z^0(\CC)$ for the \emph{underlying category} (thus, $\Ob Z^0 (\CC) = \Ob \CC$ and the morphisms are the closed degree 0 morphisms), and we write $H^0(\CC)$ for the \emph{homotopy category} (thus, $\Ob H^0 (\CC) = \Ob \CC$ and the morphisms are given by the $0^{\rm{th}}$ homologies of the Hom-complexes).

Let $\Cdg(\EE)$ be the dg category of chain complexes, that is the objects are given by the cochain complexes in $\EE$, and for each $X^\bullet,Y^\bullet \in \Ob \Cdg(\EE)$, we give a complex $\hom(X^\bullet, Y^\bullet)$ by
\[\hom^n(X^\bullet, Y^\bullet) = \prod_{i \in \bZ}\Hom_\EE(X^i, Y^{n+i}).\]
The differential is given by the graded commutator, thus for any $f^\bullet \in \hom^n(X^\bullet, Y^\bullet)$, we set $d(f^\bullet) = d_Y \circ f^\bullet - (-1)^n f^\bullet \circ d_X \in \hom^{n+1}(X^\bullet, Y^\bullet).$

We write $\Cbdg(\EE)$ for the full subcategory of $\Cdg(\EE)$ consisting of those complexes $X^\bullet$ which are quasi-isomorphic to a bounded complex (meaning that $X^\bullet$ is isomorphic to a bounded complex in the derived category $\D(\EE)$), and we write $\Acbdg(\EE)$ for the full dg subcategory of $\Cbdg(\EE)$ consisting of those objects which are homotopic to an acyclic complex, thus $X^\bullet \in \Acbdg(\EE)$ if and only if $X^\bullet \in \AcK(\EE).$   It follows from proposition \ref{proposition:ConeOfAcyclic} that $\Acbdg(\EE)$ is closed under suspensions and cones in $\Cbdg(\EE)$.

\begin{definition}\label{definition:BoundedDG}
The \emph{bounded derived dg category} of $\EE$ is the quotient
\[\Dbdg(\EE) \coloneqq \Cbdg(\EE) / \Acbdg(\EE).\]
\end{definition}

The quotient in the above definition is a quotient of dg categories, as in \cite{Drinfeld04} (based on \cite{Keller99}).  The following proposition is a straightforward application of \cite[main theorem 1.6.2.(i)]{Drinfeld04}.
\begin{proposition}
Let $\EE$ be a deflation-exact category satisfying axiom \ref{R0*}.
\begin{enumerate}
	\item The natural functor $\Db(\EE)\to H^0(\Dbdg(\EE))$ is a triangle equivalence.
	\item The natural quasi-functor $\Dbdg(\EE) \to \Dbdg(\overline{\EE})$ induces the equivalence $\Db(\EE) \to \Db(\overline{\EE})$.
\end{enumerate}
\end{proposition}

\begin{corollary}
Let $\AA \subseteq \EE$ be a deflation-exact subcategory of a deflation-exact category $\EE$.  If the natural map $\Db(\AA) \to \DAb(\EE)$ is a triangle equivalence, then the natural quasi-functor $\Dbdg(\EE)/\Dbdg(\AA) \to \Dbdg(\EE / \AA)$ is a quasi-equivalence.
\end{corollary}

\begin{proof}
This follows from \cite[main theorem 1.6.2.(i)]{Drinfeld04} and theorem \ref{theorem:MainTheorem}.
\end{proof}

\subsection{\texorpdfstring{An $\infty$-categorical enhancement}{An infinity-categorical enhancement}}  We now lift the results of the previous section to the setting of $\infty$-categories.  Following \cite{LurieHA, LurieHTT}, we write $\Ho(\DD)$ for the homotopy category of an $\infty$-category $\DD$.  If $\DD$ is a stable category (in the sense of \cite[definition 1.1.1.9]{LurieHA}), then the homotopy category $\Ho(\DD)$ is naturally endowed with the structure of a triangulated category (see \cite[theorem 1.1.2.14]{LurieHA}).  We write $\StabInfCat$ for the $\infty$-category consisting of (small) stable categories and exact maps.

Let $\CC$ be a dg category.  We write $\Ndg(\CC)$ for the dg nerve in the sense of \cite[construction 1.3.1.6]{LurieHA}.  It is shown in \cite[proposition 1.3.2.10]{LurieHA} that $\Ndg(\Cdg(\EE))$ is a stable $\infty$-category, and it is observed in \cite[remark 1.3.1.11]{LurieHA} that $\Ho(\Ndg(\Cdg(\EE))) \simeq H^0(\Cdg(\EE))$.

We write $\Cbinf(\EE) \coloneqq \Ndg(\Cbdg(\EE))$ and $\Acbinf(\EE) \coloneqq \Ndg(\Acbdg(\EE)).$  It follows from \cite[lemma 1.1.3.3]{LurieHA} that $\Cbinf(\EE)$ and $\Acbinf(\EE)$ are stable $\infty$-categories, and it follows from from \cite[proposition 5.10]{BlumbergGepnerTabuada13} that the map $\Acbinf(\EE) \to \Cbinf(\EE)$ is fully faithful.

\begin{definition}\label{definition:BoundedDerivedInfty}
The \emph{bounded derived $\infty$-category} of $\EE$ is the quotient
\[\Dbinf(\EE) \coloneqq \Cbinf(\EE) / \Acbinf(\EE).\]
\end{definition}

The quotient in definition \ref{definition:BoundedDerivedInfty} is the one from \cite[theorem I.3.3]{NikolausScholze18}, therein called the Dwyer-Kan localization. It is shown in \cite[proposition I.3.5]{NikolausScholze18} that this agrees with the quotient given in \cite{BlumbergGepnerTabuada13}.

\begin{proposition}\label{proposition:InftyHull}
Let $\EE$ be a deflation-exact category satisfying axiom \ref{R0*}.
\begin{enumerate}
	\item The natural functor $\Db(\EE)\to \Ho(\Dbinf(\EE))$ is a triangle equivalence.
	\item The natural functor $\Dbinf(\EE) \to \Dbinf(\overline{\EE})$ is an equivalence.
\end{enumerate}
\end{proposition}

\begin{proof}
The first statement follows from \cite[theorem I.3.3]{NikolausScholze18} or \cite[proposition 5.14]{BlumbergGepnerTabuada13} (with $\kappa = \omega$).  For the second statement, it follows from \cite[corollary 5.11]{BlumbergGepnerTabuada13} that it suffices to show that $\Ho(\Dbinf(\EE)) \to \Ho(\Dbinf(\overline{\EE}))$ is a triangle equivalence.  This has been done in theorem \ref{theorem:DerivedEquivalenceExactHull}.
\end{proof}

\begin{corollary}\label{corollary:MainTheoremForInftyCategories}
Let $\AA \subseteq \EE$ be a deflation-exact subcategory of a deflation-exact category $\EE$ satisfying axiom \ref{R0*}.  If the natural functor $\Db(\AA) \to \DAb(\EE)$ is a triangle equivalence, then the natural map $\Dbinf(\EE)/\Dbinf(\AA) \to \Dbinf(\EE / \AA)$ is an equivalence.
\end{corollary}

We now recall the following notions from \cite{BlumbergGepnerTabuada13}.  Here, we write $\Idem(\DD)$ for the idempotent completion of an $\infty$-category $\DD$ (see \cite[\S5.1.4]{LurieHTT}).

\begin{definition}
\begin{enumerate}
	\item A sequence $\AA \to \BB \to \CC$ in $\StabInfCat$ is called \emph{exact} if the map $\AA \to \BB$ is fully faithfull, the composition $\AA \to \CC$ is trivial, and the induced exact functor $\Idem(\BB / \AA) \to \Idem(\CC)$ is an equivalence.
	\item A functor $K\colon \StabInfCat \to \DD$ with values in a stable presentable $\infty$-category $\DD$ is called a \emph{localizing invariant} if it preserves  filtered colimits, and sends exact sequences to fiber sequences.
\end{enumerate}
\end{definition}

\begin{remark}
The definitions presented here are taken from \cite[definitions 2.2 and 2.3]{KasprowskiWinges19} where it is shown to be equivalent to the corresponding definitions in \cite{BlumbergGepnerTabuada13}.
\end{remark}

For a one-sided exact category $\EE$ and a localizing invariant $K\colon \StabInfCat \to \DD$, we write $K(\EE)$ for $K(\Dbinf(\EE)).$

\begin{corollary}\label{corollary:LocalizingInvariants}
Let $K\colon \StabInfCat \to \DD$ be a localizing invariant with values in a stable $\infty$-category $\DD.$  For any deflation-exact category $\EE$, we have the following.
\begin{enumerate}
	\item There is an equivalence $K(\EE) \to K(\overline{\EE}).$
	\item Under the conditions of corollary \ref{corollary:MainTheoremForInftyCategories}, there is a canonical fiber sequence 
\[K(\AA) \to K(\EE) \to K(\EE / \AA).\]
\end{enumerate}
\end{corollary}

\begin{proof}
The first statement follows from proposition \ref{proposition:InftyHull}.  For the second statement, note that corollary \ref{corollary:MainTheoremForInftyCategories} implies that the sequence $\Dbinf(\AA) \to \Dbinf(\EE) \to \Dbinf(\EE / \AA)$ is exact.
\end{proof}

\begin{remark}
As non-connective $K$-theory is an example of a localizing invariant (see \cite{BlumbergGepnerTabuada13}), we obtain theorem \ref{theorem:KTheoryIntroduction} from corollary \ref{corollary:LocalizingInvariants}.
\end{remark}

\begin{remark}
A similar result to corollary \ref{corollary:LocalizingInvariants} can be found in the setting of dg categories using the results in \S\ref{subsection:dgEnhancement} (following the definitions in \cite{CisinskiTabuada11}).
\end{remark}
\appendix
\section{The weak idempotent completion}\label{appendix:WeakIdempotentCompletion}

In this appendix we recall the constructions of the idempotent completion $\widecheck{\AA}$ and the weak idempotent completion $\widehat{\AA}$ of an additive category $\AA$.  For a deflation-exact category $\EE$, both the idempotent completion $\widecheck{\EE}$ and weak idempotent completion $\widehat{\EE}$ can be endowed with a natural deflation-exact structure extending the original structure.  Finally, we show that $\Db(\EE)\simeq \Db(\widehat{\EE})$ as triangulated categories.

\subsection{Idempotent completions}

An additive category $\AA$ is called \emph{idempotent complete} if every idempotent in $\AA$ has a kernel. We recall the construction of the idempotent completion $\widecheck{\AA}$ of an additive category $\AA$.

\begin{construction}\label{construction:IdempotentCompletion}
	Let $\AA$ be an additive category. We denote by $\widecheck{\AA}$ the category whose objects are pairs $(A,p)$ where $A\in \Ob(\AA)$ and $p\colon A\to A$ is an idempotent in $\AA$. The morphisms are given by $$\Hom_{\widecheck{\AA}}((A,p),(B,q))=q\circ \Hom_{\AA}(A,B)\circ p.$$
The functor $i_{\AA}\colon\AA\to\widecheck{\AA}$ given by mapping $A\in \AA$ to $(A,1)$ is an embedding of additive categories.  The category $\widecheck{\AA}$ is called the \emph{idempotent completion} of $\AA$.
\end{construction}

\begin{remark}\label{remark:KeyPropertyIdempotentCompletion}
For each idempotent $p\colon A\to A$, one has $(A,1)\cong (A,p)\oplus (A,1-p)$ in $\widecheck{\AA}$.
\end{remark}

The following proposition characterizes the idempotent completion in terms of a $2$-universal property.

\begin{proposition}
	The embedding $\AA\hookrightarrow \widecheck{\AA}$ is $2$-universal among additive functors to idempotent complete additive categories:
	\begin{enumerate}
		\item	For any additive functor $F\colon \AA\to \II$ where $\II$ is an idempotent complete additive category, there exists a functor $\widecheck{F}\colon \widecheck{\AA}\to \II$ and a natural isomorphism $F\rightarrow \widecheck{F}i_{\AA}$.
		\item For any additive functor $G\colon \widecheck{\AA}\to \II$ and any natural transformation $\gamma\colon F\Rightarrow Gi_{\AA}$ there is a unique natural transformation $\beta\colon \widecheck{F}\Rightarrow G$ such that $\gamma=\beta*\alpha$.
	\end{enumerate}
	Equivalently, the functor $i_{\AA}$ induces an equivalence of functor categories
	\[\Hom(\widecheck{\AA},\II)\to\Hom(\AA,\II)\]
	for each idempotent complete category $\II$.
\end{proposition}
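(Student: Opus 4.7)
The plan is to construct $\widecheck{F}$ directly from the idempotent completeness of $\II$, then verify the two parts of the $2$-universal property.

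For the construction of $\widecheck{F}$: since $\II$ is idempotent complete, for every idempotent endomorphism $e\colon X\to X$ in $\II$ fix a splitting: an object $X_e$ together with morphisms $r_e\colon X\to X_e$ and $s_e\colon X_e\to X$ such that $r_e s_e=1_{X_e}$ and $s_e r_e=e$. Normalise these choices so that, whenever $e=1_X$, we take $X_e=X$ and $r_e=s_e=1_X$. Then define
\[
\widecheck{F}(A,p):=F(A)_{F(p)},\qquad \widecheck{F}(f):=r_{F(q)}\circ F(f)\circ s_{F(p)}
\]
for any morphism $f\colon (A,p)\to (B,q)$ in $\widecheck{\AA}$, which by construction satisfies $f=qfp$. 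Functoriality of $\widecheck{F}$ reduces to the identity $r_{F(p)}s_{F(p)}=1$ combined with $f=qfp$, and additivity is immediate. By the normalisation, $\widecheck{F}\circ i_{\AA}=F$ on the nose, so the natural isomorphism $\alpha$ required in (1) may be taken to be the identity.

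For (2): let $G\colon \widecheck{\AA}\to \II$ be additive and $\gamma\colon F\Rightarrow G\circ i_{\AA}$ a natural transformation. The key observation is that, for each idempotent $p\colon A\to A$, the morphism $p$ defines both an arrow $\iota_p\colon (A,p)\to (A,1)$ and an arrow $\pi_p\colon (A,1)\to (A,p)$ in $\widecheck{\AA}$, with $\pi_p\circ\iota_p=1_{(A,p)}$ and $\iota_p\circ\pi_p=p$ viewed as an idempotent endomorphism of $i_{\AA}(A)=(A,1)$ (cf.\ remark \ref{remark:KeyPropertyIdempotentCompletion}). Naturality of a candidate $\beta$ applied to $\iota_p$ and $\pi_p$ forces
\[
\beta_{(A,p)}=G(\pi_p)\circ \beta_{(A,1)}\circ \widecheck{F}(\iota_p)=G(\pi_p)\circ \gamma_A\circ \widecheck{F}(\iota_p),
\]
so $\beta$ is uniquely determined. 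One then checks that this formula does define a natural transformation: for $f\colon (A,p)\to (B,q)$ the required square breaks up, via $f=qfp$ and the retraction--section identities, into the naturality square for $\gamma$ applied to the underlying morphism $f\colon A\to B$ of $\AA$.

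The main obstacle is purely bookkeeping: the consistent management of the chosen splittings $(r_e,s_e)$ and the careful verification that $\widecheck{F}$ is functorial and that the formula for $\beta_{(A,p)}$ is natural in $(A,p)$. Both reduce to formal manipulations with the identities $r_e s_e=1$, $s_e r_e=e$ and $f=qfp$. An essentially equivalent and slightly cleaner route is to prove directly the equivalent formulation stated at the end of the proposition, namely that the restriction functor
\[
-\circ i_{\AA}\colon \Hom(\widecheck{\AA},\II)\longrightarrow \Hom(\AA,\II)
\]
is fully faithful (yielding (2)) and essentially surjective (yielding (1)); the arguments above amount to these two checks.
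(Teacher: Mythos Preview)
Your proof is correct and follows the standard direct argument for the universal property of the idempotent completion: choose splittings in the target, transport them along $F$, and recover uniqueness of $\beta$ from the retraction--section pair $(\pi_p,\iota_p)$ attached to each idempotent. The only point worth tightening slightly is the functoriality check for $\widecheck{F}$: for composable $f\colon(A,p)\to(B,q)$ and $g\colon(B,q)\to(C,r)$ one should explicitly note that $g=gq$ in $\widecheck{\AA}$, so that $r_{F(r)}F(g)s_{F(q)}r_{F(q)}F(f)s_{F(p)}=r_{F(r)}F(g)F(q)F(f)s_{F(p)}=r_{F(r)}F(gf)s_{F(p)}$, but you have essentially indicated this.

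As for comparison: the paper states this proposition without proof, treating it as a classical fact about idempotent completions (in the spirit of Karoubi envelopes). There is therefore no alternative argument in the paper to compare against; your write-up supplies exactly the kind of elementary verification one would expect here.
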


\begin{proposition}\label{proposition:IdempotentCompletionHasR3}
	Let $\EE$ be a deflation-exact category satisfying axiom \ref{R0*} and let $\widecheck{\EE}$ be its idempotent completion. Declare a sequence in $\widecheck{\EE}$ to be a conflation if it is a direct summand in $(\widecheck{\EE})^{\to\to}$ of a conflation in $(\EE)^{\to\to}$. The conflations in $\widecheck{\EE}$ induce a deflation-exact structure on $\widecheck{\EE}$ satisfying axiom \ref{R3}. Moreover, the embedding $\EE\hookrightarrow \widecheck{\EE}$ is an exact embedding which is $2$-universal among exact functors to idempotent complete deflation-exact categories satisfying axiom \ref{R3}.
\end{proposition}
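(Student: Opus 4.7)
The plan is to verify axioms \ref{R0}, \ref{R1}, \ref{R2}, \ref{R3} for the proposed conflation structure on $\widecheck{\EE}$, show that $\EE \hookrightarrow \widecheck{\EE}$ is exact, and establish the $2$-universal property. Before checking axioms, I note that the proposed class of conflations is well-defined: direct summands of kernel-cokernel pairs in the arrow category are themselves kernel-cokernel pairs, and closure under isomorphism is built in. Moreover, \ref{R0*} in $\EE$ guarantees that split kernel-cokernel pairs in $\EE$ are conflations, so the same holds in $\widecheck{\EE}$ (in particular $\widecheck{\EE}$ will satisfy \ref{R0*}). This is the key flexibility: every object $(A,p)\in \widecheck{\EE}$ sits as a direct summand of $(A,1)\in \EE$, and I can freely add split sequences to lift data from $\widecheck{\EE}$ to $\EE$.

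Axioms \ref{R0}, \ref{R1}, \ref{R2} then follow by the standard Bühler-type argument (cf.~\cite[Proposition~6.13]{Buhler10}). Axiom \ref{R0} is immediate. For \ref{R1}, given two composable deflations $f,g$ in $\widecheck{\EE}$, one represents each as a direct summand of a deflation in $\EE$, forms the direct sum of these deflations (which remains a deflation in $\EE$ by \ref{R1} applied in $\EE$), composes, and extracts the desired summand in $\widecheck{\EE}$. For \ref{R2}, a deflation $f\colon X \twoheadrightarrow Y$ and morphism $Z \to Y$ in $\widecheck{\EE}$ can be completed (by adding complementary summands) to a deflation and a morphism in $\EE$; the pullback exists there by \ref{R2} in $\EE$, and splitting off the extra summand yields the pullback in $\widecheck{\EE}$.

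The main obstacle is \ref{R3}. Since $\widecheck{\EE}$ is (weakly) idempotent complete, by Remark~\ref{remark:R0*SplitKernelCokernel}(4), I may drop the kernel hypothesis. So I must show: if $i\colon A\to B$ and $p\colon B\to C$ satisfy that $pi$ is a deflation in $\widecheck{\EE}$, then $p$ is a deflation. The plan is to mimic the argument of Proposition~\ref{proposition:RetractConflation}: form $\begin{psmallmatrix} 1-e & e \end{psmallmatrix}\colon B\oplus B \to B$ for a suitable idempotent $e$ constructed from a splitting of $pi$, then factor $p$ through a composition of deflations by the previous axioms. Because $\widecheck{\EE}$ is idempotent complete, all idempotents split, which is exactly what lets the Bühler trick go through in this one-sided setting. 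The key input is that the axioms already verified (\ref{R0}, \ref{R0*}, \ref{R1}, \ref{R2}) together with idempotent completeness are sufficient to replay that argument verbatim.

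Finally, exactness of $\EE \hookrightarrow \widecheck{\EE}$ is immediate from the definition of conflations in $\widecheck{\EE}$. For the $2$-universal property, let $F\colon \EE \to \FF$ be exact with $\FF$ idempotent complete satisfying \ref{R3}. The $2$-universal property of the additive idempotent completion yields an additive $\widecheck{F}\colon \widecheck{\EE}\to \FF$ together with an isomorphism $F \cong \widecheck{F}\circ i_{\EE}$. To see $\widecheck{F}$ is exact, take a conflation in $\widecheck{\EE}$; by definition it is a retract (in the arrow category of $\widecheck{\EE}$) of a conflation in $\EE$. Applying $\widecheck{F}$ produces a retract in $\FF$ of an honest conflation in $\FF$; by Proposition~\ref{proposition:RetractConflation}, which applies precisely because $\FF$ satisfies \ref{R3}, this retract is itself a conflation in $\FF$. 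Uniqueness of $\widecheck{F}$ up to unique natural isomorphism is inherited from the $2$-universal property of the additive idempotent completion.
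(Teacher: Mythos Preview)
Your treatment of \ref{R0}--\ref{R2} and of the $2$-universal property is fine and matches the paper's approach (the paper simply leaves \ref{R0}--\ref{R2} to the reader and dispatches the $2$-universal property in one line; your use of Proposition~\ref{proposition:RetractConflation} in the target category $\FF$ makes explicit why \ref{R3} is needed there).

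The problem is your sketch for \ref{R3}. You propose to ``replay the argument of Proposition~\ref{proposition:RetractConflation} verbatim,'' claiming that idempotent completeness replaces the uses of \ref{R3} there. That is not quite right: in the proof of Proposition~\ref{proposition:RetractConflation}, axiom \ref{R3} is invoked not only to promote retractions to deflations (which idempotent completeness plus \ref{R0*} would indeed handle), but also to conclude that $\begin{psmallmatrix} g & i \end{psmallmatrix}$ is a deflation from the fact that a composition through it is a deflation. That step is exactly the statement of \ref{R3} you are trying to prove, so the argument is circular as written. Your phrase ``a splitting of $pi$'' is also unclear, since $pi$ is merely a deflation and need not split.

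The paper's argument for \ref{R3} is different and avoids this trap entirely. One writes down the factorization
\[
\begin{psmallmatrix}p & 0\end{psmallmatrix}
=\begin{psmallmatrix}0 & 1_C\end{psmallmatrix}
\begin{psmallmatrix}1_B & 0\\ p & 1_C\end{psmallmatrix}
\begin{psmallmatrix}1_B & 0\\ 0 & pi\end{psmallmatrix}
\begin{psmallmatrix}1_B & -i\\ 0 & 1_A\end{psmallmatrix}
\colon B\oplus A\to C.
\]
The outer two factors are isomorphisms, the middle factor is a direct sum of deflations (hence a deflation), and $\begin{psmallmatrix}0&1_C\end{psmallmatrix}$ is a split deflation by \ref{R0*}. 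By \ref{R1}, $\begin{psmallmatrix}p & 0\end{psmallmatrix}$ is a deflation in $\widecheck{\EE}$. Now the very definition of conflations in $\widecheck{\EE}$ as direct summands of conflations in $\EE$ does the rest: the kernel--cokernel pair of $p$ is a direct summand (in $(\widecheck{\EE})^{\to\to}$) of that of $\begin{psmallmatrix}p&0\end{psmallmatrix}$, hence of a conflation in $\EE$, so $p$ is a deflation. No idempotent-splitting trick from Proposition~\ref{proposition:RetractConflation} is needed; the closure-under-summands built into the conflation structure on $\widecheck{\EE}$ is what makes \ref{R3} automatic.
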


\begin{proof}
	The reader may verify that the structure above endows $\widecheck{\EE}$ with the structure of a deflation-exact category satisfying axiom \ref{R0*}.
	
	We now show that $\widecheck{\EE}$ satisfies \ref{R3}. Let $i\colon A\rightarrow B$ and $p\colon B\rightarrow C$ be maps in $\widecheck{\EE}$ such that $pi$ is a deflation in $\widecheck{\EE}$. Consider the composition
	\[\xymatrix@C+5pt{
	B\oplus A \ar[r]^{\begin{psmallmatrix}1_B & -i\\0&1_A\end{psmallmatrix}} & B\oplus A \ar[r]^{\begin{psmallmatrix}1_B & 0\\0&pi\end{psmallmatrix}} & B\oplus C \ar[r]^{\begin{psmallmatrix}1_B&0\\p&1_C\end{psmallmatrix}} & B\oplus C\ar[r]^-{\begin{psmallmatrix}0&1_C\end{psmallmatrix}} & C
	}\] The first and third morphism are isomorphisms, the second a is a direct sum of deflations in $\widecheck{\EE}$ and the last is a deflation since $\EE$ satisfies axiom \ref{R0*}. By \ref{R1}, the composition is a deflation in $\widecheck{\EE}$. Note that the composition is simply $\begin{psmallmatrix}p&0\end{psmallmatrix}$. It follows that $p$ is a direct summand of a deflation in $\EE$ as we needed to show.\\
	The $2$-universal property follows from the additive $2$-universal property.
\end{proof}

\begin{corollary}\label{corollary:CharacterizationR3}
Let $\EE$ be an idempotent complete deflation-exact category satisfying \ref{R0*}.  Then $\EE$ satisfies \ref{R3} if and only if the class of conflations is closed under direct summands.
\end{corollary}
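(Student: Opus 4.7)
The plan is to prove the two directions separately, each by reducing to a result already in the paper.

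For the forward direction, assume $\EE$ satisfies \ref{R3} and suppose $A\to B\to C$ is a direct summand of a conflation $X\inflation Y\deflation Z$ in the arrow-of-arrows category $\EE^{\to\to}$. Since $\EE$ is idempotent complete (hence weakly idempotent complete), the idempotent on $(X\to Y\to Z)$ realizing this direct summand splits, and after identifying $X\cong A\oplus A'$, $Y\cong B\oplus B'$, $Z\cong C\oplus C'$, the inclusions and projections realize exactly the retract-diagram hypothesis of proposition \ref{proposition:RetractConflation}. That proposition then yields that $A\to B\to C$ is a conflation.

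For the reverse direction, assume conflations are closed under direct summands and let $i\colon A\to B$, $p\colon B\to C$ be given with $p$ admitting a kernel and $pi$ a deflation; I want to show that $p$ is a deflation. Mimicking the factorization trick from the proof of proposition \ref{proposition:IdempotentCompletionHasR3}, I consider the composition
\begin{equation*}
B\oplus A \xrightarrow{\begin{psmallmatrix} 1 & -i \\ 0 & 1 \end{psmallmatrix}} B\oplus A \xrightarrow{\begin{psmallmatrix} 1 & 0 \\ 0 & pi \end{psmallmatrix}} B\oplus C \xrightarrow{\begin{psmallmatrix} 1 & 0 \\ p & 1 \end{psmallmatrix}} B\oplus C \xrightarrow{\begin{psmallmatrix} 0 & 1 \end{psmallmatrix}} C
\end{equation*}
which computes to $\begin{psmallmatrix} p & 0 \end{psmallmatrix}$. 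The first and third factors are isomorphisms. Using \ref{R0*} the identity $1_B$ and the map $B\to 0$ are deflations, and combined with the standard fact that finite direct sums of deflations are deflations in a right exact category (a routine application of \ref{R1} and \ref{R2}), the second and fourth factors are deflations as well. By \ref{R1}, the composition $\begin{psmallmatrix} p & 0 \end{psmallmatrix}\colon B\oplus A\deflation C$ is a deflation; its kernel is $\ker p\oplus A$, producing a conflation
\begin{equation*}
\ker p\oplus A \inflation B\oplus A \deflation C.
\end{equation*}

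To finish, I observe that $\ker p\inflation B\xrightarrow{p} C$ embeds as a direct summand of this conflation in $\EE^{\to\to}$ via the canonical inclusions and projections with respect to the $A$-summands (with $C\to C$ the identity); the required squares commute on the nose. Invoking the hypothesis, $\ker p\to B\xrightarrow{p} C$ is itself a conflation, so $p$ is a deflation, establishing \ref{R3}. The main obstacle is essentially bookkeeping: verifying that direct sums of deflations are deflations under \ref{R0*}, \ref{R1}, \ref{R2}, and then recognising the three-term summand structure inside the enlarged conflation. Beyond that, the corollary falls out directly from proposition \ref{proposition:RetractConflation} and the factorization already appearing in the proof of proposition \ref{proposition:IdempotentCompletionHasR3}.
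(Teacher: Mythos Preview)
Your proof is correct and follows essentially the same route as the paper. The corollary has no separate proof in the paper; it is meant to be read off from proposition \ref{proposition:IdempotentCompletionHasR3}, and your reverse direction reproduces exactly the factorization argument from that proof (now carried out in $\EE$ itself rather than in $\widecheck{\EE}$), while your forward direction invokes proposition \ref{proposition:RetractConflation} directly, which is the natural way to unwind the implication. One small remark: in the forward direction you do not actually need idempotent completeness to obtain the retract diagram---being a direct summand in $\EE^{\to\to}$ already supplies the section/retraction pair required by proposition \ref{proposition:RetractConflation}.
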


\subsection{Weak idempotent completion of additive categories}

We now show the existence of a weak idempotent completion of a (small) additive category satisfying a $2$-universal property.

\begin{definition}
	Let $\AA$ be an additive category. A \emph{retraction} is a map $r\colon B\to C$ such that there is a \emph{section} $s\colon C\to B$ in the sense that $rs=1_C$. A \emph{coretraction} is defined dually.
\end{definition}

\begin{lemma}\label{lemma:EquivalentCharacterizationWeaklyIdempotentComplete}
	Let $\AA$ be an additive category. The following are equivalent:
	\begin{enumerate}
		\item Every retraction has a kernel.
		\item Every coretraction has a cokernel.
	\end{enumerate}
\end{lemma}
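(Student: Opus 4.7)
By duality it suffices to establish one implication; the other is obtained by passing to the opposite category. The plan is to show that (1) implies (2). So assume every retraction in $\AA$ admits a kernel, and let $s\colon A\to B$ be a coretraction with corresponding retraction $r\colon B\to A$, so that $rs=1_A$. The goal is to exhibit a cokernel of $s$.

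The key observation is that $r(1_B-sr)=r-rsr=r-r=0$, so by hypothesis $1_B-sr$ factors through the kernel $k\colon K\to B$ of $r$, giving a unique map $h\colon B\to K$ with $kh=1_B-sr$. I would first verify that the pair $\bigl([k,s]\colon K\oplus A\to B,\;\begin{psmallmatrix}h\\ r\end{psmallmatrix}\colon B\to K\oplus A\bigr)$ are mutually inverse: the composition in one direction is $kh+sr=1_B$ by construction, while in the other direction the off-diagonal entries $hs$ and $rk$ are forced to vanish (for $hs$: compute $k(hs)=(1-sr)s=s-s=0$ and use that $k$ is monic; for $rk=0$ by definition of $k$), and the diagonal entries are $1_K$ (from $k(hk)=(1-sr)k=k$ together with $k$ being monic) and $rs=1_A$.

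From this direct sum decomposition the claim that $h\colon B\to K$ is a cokernel of $s$ is almost immediate. The relation $hs=0$ has just been verified. For universality, given any $f\colon B\to X$ with $fs=0$, write $f=f(kh+sr)=fkh+fsr=fkh$, so $f$ factors through $h$ via $fk$; uniqueness of the factorization follows by precomposing any factorization $f=gh$ with $k$ and using $hk=1_K$.

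The reverse implication is entirely dual: given a retraction $r\colon B\to A$ with section $s$, take the cokernel of $s$ (which exists by hypothesis) and verify by the same symmetric calculation that it serves as a kernel of $r$. There is no genuine obstacle here; the whole argument is a bookkeeping exercise with the idempotent $sr$ and its complement $1_B-sr$, and the only nontrivial input is that $k$ (respectively the dual cokernel map) is monic (respectively epic), which is automatic for kernels (respectively cokernels).
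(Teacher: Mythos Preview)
Your argument is correct and is the standard proof of this well-known fact (see, e.g., B\"uhler's survey on exact categories). The paper itself does not supply a proof for this lemma, so there is nothing to compare against; your write-up would serve perfectly well as a proof here.
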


\begin{definition}
	An additive category $\AA$ is called \emph{weakly idempotent complete} if the conditions of the previous lemma hold.
\end{definition}

\begin{definition}
	Let $\AA$ be an additive category. Let $\widehat{\AA}$ be the intersection of all weakly idempotent complete additive subcategories of $\widecheck{\AA}$ containing $\AA$. We write $j_{\AA}\colon \AA\hookrightarrow \widehat{\AA}$ for the inclusion.
\end{definition}

\begin{proposition}
	The embedding $j_{\AA}\colon \AA\hookrightarrow \widehat{\AA}$ is $2$-universal among additive functors to weakly idempotent complete additive categories:
	\begin{enumerate}
		\item	For any additive functor $F\colon \AA\to \WW$ where $\WW$ is a weakly idempotent complete additive category, there exists a functor $\widehat{F}\colon \widehat{\AA}\to \WW$ and a natural isomorphism $F\rightarrow \widehat{F}j_{\AA}$.
		\item For any additive functor $G\colon \widehat{\AA}\to \WW$ and any natural transformation $\gamma\colon F\Rightarrow Gj_{\AA}$ there is a unique natural transformation $\beta\colon \widehat{F}\Rightarrow G$ such that $\gamma=\beta*\alpha$.
	\end{enumerate}
	Equivalently, the functor $j_{\AA}$ induces an equivalence $- \circ j_\AA\Hom(\widehat{\AA},\WW)\to\Hom(\AA,\WW)$ for each weakly idempotent complete category $\WW$.
\end{proposition}

\begin{proof}
	It is straightforward to see that the intersection of weakly idempotent complete categories is weakly idempotent complete, it follows that $\widehat{\AA}$ is weakly idempotent complete.
	
	Let $F\colon \AA\to \WW$ be an additive functor to a weakly idempotent complete category $\WW$. Consider the following diagram:
	\[\xymatrix{
		\AA\ar[rr]^F\ar@{^{(}->}[d]_{j_{\AA}}& & \WW\ar[dd]_{i_{\WW}}\\
		\widehat{\AA}\ar@{.>}[r]\ar@{^{(}->}[d] &{\widecheck{F}}^{-1}(\WW)\ar@{^{(}->}[ld]\ar[ru]&\\
		\widecheck{\AA}\ar[rr]^{\widecheck{F}} && \widecheck{\WW}
	}\] Here ${\widecheck{F}}^{-1}(\WW)$ is the full and replete subcategory of $\widecheck{\AA}$ generated by all objects which are mapped to $i_{\WW}(\WW)\subseteq \widecheck{\WW}$ under $\widecheck{F}$.	As additive functors preserve retractions, ${\widecheck{F}}^{-1}(\WW)$ is weakly idempotent complete. By construction, we have that $\widehat{\AA}\subseteq {\widecheck{F}}^{-1}(\WW)$. Denote the composition $\widehat{\AA}\to {\widecheck{F}}^{-1}(\WW)\to \WW$ by $\widehat{F}$. The statement now easily follows from the corresponding $2$-universal property of the idempotent completion.
\end{proof}

In practice it is convenient to have a explicit construction of a weak idempotent completion.

\begin{proposition}
Let $\AA$ be an additive category.  The weak idempotent completion is given by
\[\widehat{\AA} = \left\{(A,1-p)\in \widecheck{\AA}\mid p=sr \mbox{ where $r$ is a retraction in $\AA$ and $s$ the corresponding section}\right\}.\]
\end{proposition}

\begin{proof}
We only need to check that, with this definition, $\widehat{\AA}$ is closed under retracts.  For this, consider a retraction $f\colon (A,1-p) \to (A',1-p')$ in $\widehat{\AA}$ with section $g\colon (A',1-p') \to (A,1-p).$  By assumption, we have that $p = sr$ and $p' = s'r'$ for composable maps $A \xrightarrow{r} B \xrightarrow{s} A$ and $A' \xrightarrow{r'} B' \xrightarrow{s'} A'$ with $rs = 1_B$ and $r's' = 1_{B'}.$  One can verify that
\[ \mbox{$\begin{psmallmatrix} f & r' \\ r & 0 \end{psmallmatrix} \colon A \oplus B' \to A' \oplus B$ and $\begin{psmallmatrix} g & s \\ s' & 0 \end{psmallmatrix} \colon A' \oplus B \to A \oplus B'$} \]
is a retraction together with its corresponding section.  Moreover, the kernel of $f\colon (A,1-p) \to (A',1-p')$ is given by the kernel of the retraction $\begin{psmallmatrix} f & r' \\ r & 0 \end{psmallmatrix} \colon A \oplus B' \to A' \oplus B.$
\end{proof}

\subsection{Weak idempotent completion of deflation-exact categories}

Given a deflation-exact category $\EE$ satisfying axiom \ref{R0*}, we endow the weak idempotent completion $\widehat{\EE}$ with a deflation-exact structure satisfying a $2$-universal property. Moreover, we show that $\Db(\EE)\simeq \Db(\widehat{\EE})$ as triangulated categories.

We start with an obvious characterization of weakly idempotent complete deflation-exact categories satisfying axiom \ref{R0*}.

\begin{proposition}
	Let $\EE$ be a deflation-exact category satisfying axiom \ref{R0*}. The following are equivalent:
	\begin{enumerate}
		\item\label{item:Weak} The category $\EE$ is weakly idempotent complete.
		\item\label{item:Retract} Every retraction is a deflation.
		\item\label{item:Coretract} Every coretraction is an inflation.
	\end{enumerate}
\end{proposition}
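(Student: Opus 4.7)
I propose the following plan. The implications \eqref{item:Retract}$\Rightarrow$\eqref{item:Weak} and \eqref{item:Coretract}$\Rightarrow$\eqref{item:Weak} are essentially trivial: a deflation is by definition part of a conflation, hence has a kernel, so if every retraction is a deflation then every retraction has a kernel, giving weak idempotent completeness. Dually for coretractions (using lemma \ref{lemma:EquivalentCharacterizationWeaklyIdempotentComplete} to translate ``every coretraction has a cokernel'' to weak idempotent completeness).

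The content is therefore in the directions \eqref{item:Weak}$\Rightarrow$\eqref{item:Retract} and \eqref{item:Weak}$\Rightarrow$\eqref{item:Coretract}. Both are instances of the same idea. For \eqref{item:Weak}$\Rightarrow$\eqref{item:Retract}, I start with a retraction $r\colon B\to C$ with section $s\colon C\to B$, so that $sr\colon B\to B$ is an idempotent. By \eqref{item:Weak}, the retraction $r$ admits a kernel $K\stackrel{k}{\rightarrow} B$; a standard calculation shows that then $B \cong K \oplus C$ as a biproduct with $s$ and $k$ as the coprojections and $r$ and the induced map $B \to K$ as the projections. In particular, $K \stackrel{k}{\rightarrow} B \stackrel{r}{\twoheadrightarrow} C$ is a split kernel-cokernel pair. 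By remark \ref{remark:R0*SplitKernelCokernel}(3), axiom \ref{R0*} ensures that every split kernel-cokernel pair is a conflation in $\EE$, so $r$ is a deflation. The implication \eqref{item:Weak}$\Rightarrow$\eqref{item:Coretract} is entirely analogous: a coretraction has a cokernel by weak idempotent completeness, giving the same split biproduct decomposition, and R0* again promotes the resulting split kernel-cokernel pair into a conflation, so the coretraction is an inflation.

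I expect no serious obstacle; the one point to be careful about is that in a right exact category we cannot simply invoke duality to deduce \eqref{item:Coretract} from \eqref{item:Retract} (the category $\EE^{\mathrm{op}}$ is left, not right, exact). However this is avoided by proving \eqref{item:Weak}$\Rightarrow$\eqref{item:Coretract} directly, as the argument for \eqref{item:Weak}$\Rightarrow$\eqref{item:Retract} uses only the splitting of idempotents coming from weak idempotent completeness together with remark \ref{remark:R0*SplitKernelCokernel}(3), both of which are available on either side. Hence the proof can be organized as a cycle \eqref{item:Weak}$\Rightarrow$\eqref{item:Retract}$\Rightarrow$\eqref{item:Weak} together with \eqref{item:Weak}$\Rightarrow$\eqref{item:Coretract}$\Rightarrow$\eqref{item:Weak}, with no appeal to duality needed.
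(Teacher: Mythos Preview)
Your proposal is correct and follows essentially the same approach as the paper: both arguments use weak idempotent completeness to produce a kernel (resp.\ cokernel), observe that the resulting sequence is isomorphic to a split kernel-cokernel pair, and then invoke remark \ref{remark:R0*SplitKernelCokernel}(3) (that under \ref{R0*} split kernel-cokernel pairs are conflations) to conclude. Your explicit remark that one cannot simply dualize in a right exact category is a useful clarification, but otherwise the structure is identical to the paper's proof.
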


\begin{proof}
Assume (\ref{item:Weak}) and let $r\colon B\to C$ be a retraction. By assumption $r$ has a kernel $A\to B$. It is straightforward to show that the sequences $A\to B\to C$ and $A\to A\oplus C\to C$ are isomorphic. By axiom \ref{R0*} (see remark \ref{remark:R0*SplitKernelCokernel}), the latter sequence is a conflation. As conflations are closed under isomorphisms, $r\colon B\to C$ is a deflation.\\
The implication (\ref{item:Retract})$\Rightarrow$(\ref{item:Weak}) is trivial. The equivalence of (\ref{item:Weak}) and (\ref{item:Coretract}) is similar.
\end{proof}

The following lemma is left to the reader.

\begin{lemma}
	Let $\EE$ be a deflation-exact category and let $\FF$ be an extension-closed subcategory (i.e.~for any conflation $A\rightarrowtail B\twoheadrightarrow C$ in $\EE$, we have that $A,C\in \FF$ implies $B\in \FF$). The category $\FF$ inherits a deflation-exact structure from $\EE$ such that the inclusion $\FF\hookrightarrow \EE$ is a fully exact embedding. 
\end{lemma}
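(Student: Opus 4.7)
The plan is to equip $\FF$ with the class of conflations obtained by restriction: a sequence $A \inflation B \deflation C$ in $\FF$ is declared a conflation precisely when it is a conflation in $\EE$ (with three terms already in $\FF$). This class is closed under isomorphism since $\FF$ is a replete subcategory, and each such sequence remains a kernel-cokernel pair in $\FF$ because $\FF$ is full, so the defining universal property transfers automatically. (Additivity of $\FF$ — closure under $0$ and direct sums — follows from extension-closedness once we assume $0 \in \FF$, via the split conflation $A \inflation A \oplus B \deflation B$.) The task then reduces to verifying axioms \ref{R0}, \ref{R1}, \ref{R2}.

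Axiom \ref{R0} is immediate since $0 \to 0$ is a conflation in $\EE$ with all three terms in $\FF$. For \ref{R1}, given deflations $f \colon X \deflation Y$ and $g \colon Y \deflation Z$ in $\FF$, invoke axiom \ref{R2} of $\EE$ to pull back $f$ along the inflation $\ker g \inflation Y$; the resulting pullback object is canonically isomorphic to $\ker(gf)$, and the pullback square yields a conflation
\[\ker f \inflation \ker(gf) \deflation \ker g\]
in $\EE$. Since $\ker f, \ker g \in \FF$, extension-closedness forces $\ker(gf) \in \FF$, so the conflation $\ker(gf) \inflation X \deflation Z$ realizing $gf$ as a deflation lies entirely in $\FF$. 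For \ref{R2}, given a deflation $p \colon Y \deflation Z$ in $\FF$ and a morphism $W \to Z$ in $\FF$, form the pullback in $\EE$; this yields a deflation $P \deflation W$ with kernel isomorphic to $\ker p \in \FF$, hence a conflation $\ker p \inflation P \deflation W$ in $\EE$ whose outer terms are in $\FF$. Extension-closedness places $P \in \FF$, and fullness of $\FF$ ensures the square remains a pullback in $\FF$.

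The inclusion $\FF \hookrightarrow \EE$ is exact by the very definition of conflations in $\FF$, and is fully faithful because $\FF$ is full; so it is a fully exact embedding. The only real mathematical content lies in verifying \ref{R1} and \ref{R2}, and in both cases the argument is the same: produce the desired kernel or pullback inside $\EE$ using the ambient right exact structure, then apply extension-closedness to a conflation whose outer terms are manifestly in $\FF$ to conclude the middle term lies in $\FF$ as well. There is no real obstacle beyond this observation.
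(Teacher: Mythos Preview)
Your proof is correct; the paper leaves this lemma to the reader, so there is no argument to compare against. One minor remark: closure of the conflation class under isomorphism in $\FF$ does not actually require $\FF$ to be replete in $\EE$ --- it follows directly from the fact that conflations in $\EE$ are closed under isomorphism and that an isomorphism in $\FF$ between two sequences with terms in $\FF$ is already an isomorphism in $\EE$.
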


In the following lemma, we consider the idempotent completion $\widecheck{\EE}$ of a deflation-exact category $\EE$.  We will use the deflation-exact structure induced on $\widecheck{\EE}$ from proposition \ref{proposition:IdempotentCompletionHasR3}.

\begin{lemma}
	Let $\EE$ be a deflation-exact category satisfying axiom \ref{R0*} and let $\widecheck{\EE}$ be the idempotent completion.  Let $\FF$ be a weakly idempotent complete fully exact subcategory of $\widecheck{\EE}$ containing $\EE$.  Then $\FF$ is an extension-closed subcategory of $\widecheck{\EE}$.
\end{lemma}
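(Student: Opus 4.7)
The conflation $A \rightarrowtail B \twoheadrightarrow C$ in $\widecheck{\EE}$ is, by the description of conflations in the idempotent completion (proposition \ref{proposition:IdempotentCompletionHasR3}), a direct summand in $(\widecheck{\EE})^{\rightarrow\rightarrow}$ of a conflation $X \rightarrowtail Y \twoheadrightarrow Z$ in $\EE$. I shall write $X \cong A \oplus A'$, $Y \cong B \oplus B'$, $Z \cong C \oplus C'$ in $\widecheck{\EE}$, where $A' \rightarrowtail B' \twoheadrightarrow C'$ is the complementary conflation.

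The first step is to establish that $A', C' \in \FF$. Since $A \in \FF$ and $X \in \EE \subseteq \FF$, the split inclusion $A \to X$ is a coretraction in $\FF$, using the fullness of the inclusion $\FF \hookrightarrow \widecheck{\EE}$. Weak idempotent completeness of $\FF$ then yields a cokernel $Q \in \FF$ with $X \cong A \oplus Q$ in $\FF$. Comparing with the decomposition $X \cong A \oplus A'$ in $\widecheck{\EE}$, and noting that both isomorphisms restrict to the identity on the $A$-summand, I obtain $Q \cong A'$ in $\widecheck{\EE}$, so $A' \in \FF$; symmetrically $C' \in \FF$.

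Next I form the pullback of $Y \twoheadrightarrow Z$ along the split inclusion $C \hookrightarrow Z$ both in $\FF$ (using axiom \ref{R2} for the right exact category $\FF$) and in $\widecheck{\EE}$, obtaining conflations $X \rightarrowtail P_\FF \twoheadrightarrow C$ in $\FF$ and $X \rightarrowtail P \twoheadrightarrow C$ in $\widecheck{\EE}$. A direct computation in $\widecheck{\EE}$ using the block-diagonal structure of $Y \twoheadrightarrow Z$ relative to the direct sum decompositions yields $P \cong B \oplus A'$. The universal property of $P$ in $\widecheck{\EE}$ produces a comparison morphism $\phi \colon P_\FF \to P$ commuting with the inflations and deflations. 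The main obstacle is to show that $\phi$ is an isomorphism.

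To see this, I pass to $\Db(\widecheck{\EE})$. By proposition \ref{proposition:ConflationsYieldTriangles} applied in $\widecheck{\EE}$ (which satisfies \ref{R0*} by proposition \ref{proposition:IdempotentCompletionHasR3}), both conflations lift to triangles whose connecting morphisms coincide, each equalling $\delta_0 \circ (C \hookrightarrow Z)$ where $\delta_0$ is the connecting morphism of $X \rightarrowtail Y \twoheadrightarrow Z$. Thus $(1_X, \phi, 1_C, 1_{\Sigma X})$ is a morphism of triangles, and the triangulated five-lemma combined with the fully faithfulness of $\widecheck{\EE} \hookrightarrow \Db(\widecheck{\EE})$ (proposition \ref{proposition:StalkEmbeddingFunctors}) forces $\phi$ to be an isomorphism in $\widecheck{\EE}$, so $P_\FF \cong B \oplus A'$ in $\widecheck{\EE}$. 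The composition $A' \hookrightarrow X \rightarrowtail P_\FF$ therefore becomes the coretraction onto the $A'$-summand, and its splitting lives in $\FF$ by fullness; invoking weak idempotent completeness once more produces a cokernel $\tilde B \in \FF$ with $P_\FF \cong A' \oplus \tilde B$ in $\FF$. Matching this with $P_\FF \cong A' \oplus B$ in $\widecheck{\EE}$ on the $A'$-summand gives $\tilde B \cong B$, and hence $B \in \FF$ as required.
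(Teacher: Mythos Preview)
Your proof is correct, but it takes a different route from the paper's.  The paper never forms the pullback in $\FF$; instead it ``stabilises'' by adding a complement $C'\in\FF$ with $C\oplus C'\in\EE$, so that the pullback of $Y\deflation Z$ along $C\oplus C'\to Z$ already lands in $\EE$.  Then a direct application of the short five lemma \cite[lemma~5.3]{BazzoniCrivei13} identifies $B\oplus A'\oplus C'$ with this pullback, and the characterisation ``$L\in\FF$ iff $L\oplus L'\in\EE$ for some $L'\in\FF$'' finishes the argument.

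Your approach---forming the pullback $P_\FF$ in $\FF$ and comparing it with the pullback $P$ in $\widecheck{\EE}$---is conceptually clean, but the passage through $\Db(\widecheck{\EE})$ and the triangulated five-lemma is heavier machinery than needed.  Once you have the morphism of conflations $(1_X,\phi,1_C)$ between $X\rightarrowtail P_\FF\twoheadrightarrow C$ and $X\rightarrowtail P\twoheadrightarrow C$ in $\widecheck{\EE}$, the short five lemma for right exact categories already forces $\phi$ to be an isomorphism; the derived category is not required.  The paper's route is therefore more elementary, while yours makes the role of $\FF$'s own exact structure (via axiom \ref{R2}) more explicit.
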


\begin{proof}
	As $\FF$ is weakly idempotent complete, remark \ref{remark:KeyPropertyIdempotentCompletion} implies that for each $L\in \widecheck{\EE}$, one has that $L\in \FF$ if and only if there exists an object $L'\in \FF$ such that $L\oplus L'\in \EE$.
	
	Let $A\rightarrowtail B\twoheadrightarrow C$ be a conflation in $\widecheck{\EE}$ and assume that $A,C\in \FF$. By definition, there is a commutative diagram
	\[\xymatrix{
		A \ar@{>->}[r]\ar@{^{(}->}[d]& B\ar@{->>}[r]\ar@{^{(}->}[d] & C\ar@{^{(}->}[d]\\
		X \ar@{>->}[r]& Y\ar@{->>}[r] & Z
	}\] in $\widecheck{\EE}$ whose rows are conflations, the second row belongs to $\EE$ and the vertical maps are corectractions. Since $\widecheck{\EE}$ is a deflation-exact category, \cite[proposition 5.2]{BazzoniCrivei13} or \cite[proposition~3.9]{HenrardvanRoosmalen19} yields a commutative diagram
	\[\xymatrix{
		A\ar@{>->}[r]\ar@{^{(}->}[d] & B\ar@{->>}[r]\ar@{^{(}->}[d] & C\ar@{=}[d]\\
		X\ar@{>->}[r]\ar@{=}[d] & P\ar@{->>}[r]\ar@{^{(}->}[d] & C\ar@{^{(}->}[d]\\
		X\ar@{>->}[r] & Y\ar@{->>}[r] & Z	
	}\] in $\widehat{\EE}$ such that upper left and lower right squares are bicartesian squares. Since $C\in \FF$, there exists an $C'\in \FF$ such that $C\oplus C'\in \EE$, moreover, the square 
	\[\xymatrix{
		P\oplus C'\ar@{->>}[r]\ar[d] & C\oplus C'\ar[d]\\
		Y\ar@{->>}[r] & Z
	}\] is a pullback square in $\widehat{\EE}$ with $Y,Z,(C\oplus C')\in \EE$. Since $\EE$ is a deflation-exact category, the pullback is contained in $\EE$. Hence $P\oplus C'\in \EE$.
	
	As $A \in \FF$, there is an $A' \in \FF$ such that $A \oplus A' \cong X$.  Note that there is a map from the conflation $A' = A' \to 0$ to the conflation $X \inflation P \oplus C' \deflation C \oplus C'$.  Here, we use \ref{R0*} to see that the former is indeed a conflation.  Applying the short five lemma \cite[lemma~5.3]{BazzoniCrivei13} to the commutative diagram (the middle map is determined by the commutativity of the diagram)
	\[\xymatrix{
		A\oplus A'\ar@{>->}[r]\ar[d]_{\rotatebox{90}{$\cong$}} & B\oplus A'\oplus C'\ar@{->>}[r] \ar[d]& C\oplus C'\ar@{=}[d]\\
		X\ar@{>->}[r] & P\oplus C'\ar@{->>}[r] & C\oplus C'
	}\] yields that $B\oplus A'\oplus C'\cong P\oplus C'\in \EE$. Since $(A'\oplus C')\in \FF$, we conclude that $B\in \FF$. Note that we used that $\EE$ satisfies axiom \ref{R0*} to see that the upper row of the previous diagram is a conflation.
\end{proof}

\begin{proposition}\label{proposition:WeakIdempotentCompletionRightExact}
	Let $\EE$ be a deflation-exact category satisfying axiom \ref{R0*}. The weak idempotent completion $\widehat{\EE}$ has a canonical deflation-exact structure satisfying axiom \ref{R3} such that the embedding $j_{\EE}$ is $2$-universal among exact functors to weakly idempotent complete deflation-exact categories satisfying axiom \ref{R3}.
	
	In particular, each conflation in $\widehat{\EE}$ can be realized as a direct summand of a conflation in $\EE \subseteq \widehat{\EE}$.
\end{proposition}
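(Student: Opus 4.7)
The plan is to equip $\widehat{\EE}$ with the restriction of the right exact structure on $\widecheck{\EE}$ and then check the three desired properties (\ref{R3}, $2$-universality, and the summand description of conflations) one at a time. First I would invoke proposition \ref{proposition:IdempotentCompletionHasR3} to endow $\widecheck{\EE}$ with its canonical right exact structure (satisfying \ref{R3}, with conflations the summands of conflations from $\EE$), then invoke the preceding lemma to see that $\widehat{\EE}$ is extension-closed in $\widecheck{\EE}$. The lemma before that endows $\widehat{\EE}$ with an inherited right exact structure making $\EE \hookrightarrow \widehat{\EE} \hookrightarrow \widecheck{\EE}$ a chain of fully exact embeddings. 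The ``in particular'' statement then comes for free: a conflation in $\widehat{\EE}$ is, by construction, a conflation in $\widecheck{\EE}$, hence a summand of a conflation in $\EE$.

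Next I would verify axiom \ref{R3}. Since $\widehat{\EE}$ is weakly idempotent complete, it suffices by remark \ref{remark:R0*SplitKernelCokernel}(4) to show that whenever $i\colon A \to B$ and $p\colon B \to C$ are morphisms in $\widehat{\EE}$ with $pi$ a deflation in $\widehat{\EE}$, the map $p$ is itself a deflation in $\widehat{\EE}$. Viewing the situation in $\widecheck{\EE}$, which satisfies \ref{R3} (and is idempotent, hence weakly idempotent, complete), one obtains a conflation $\ker p \inflation B \deflation C$ in $\widecheck{\EE}$. Extension-closedness of $\widehat{\EE}$ in $\widecheck{\EE}$ then forces $\ker p \in \widehat{\EE}$, so this conflation already lives in $\widehat{\EE}$ and $p$ is a deflation there. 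For the $2$-universal property, let $F\colon \EE \to \DD$ be an exact functor into a weakly idempotent complete right exact category $\DD$ satisfying \ref{R3}. The additive $2$-universal property of $\widehat{\EE}$ produces a unique (up to natural isomorphism) additive extension $\widehat{F}\colon \widehat{\EE} \to \DD$. Exactness of $\widehat{F}$ follows because any conflation in $\widehat{\EE}$ is a summand of a conflation in $\EE$, whose image under $\widehat{F}$ is a summand of a conflation in $\DD$; proposition \ref{proposition:RetractConflation} applied in $\DD$ (whose \ref{R3} is crucial here) then ensures this summand is again a conflation in $\DD$.

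The main obstacle is not any single computation but rather keeping the three exact structures (on $\EE$, $\widehat{\EE}$, and $\widecheck{\EE}$) compatibly aligned: one needs axiom \ref{R3} in $\widecheck{\EE}$ to transport conflations downward into $\widehat{\EE}$ (via extension-closedness) and axiom \ref{R3} in $\DD$ to transport them across $\widehat{F}$. Both uses rely on proposition \ref{proposition:RetractConflation}/remark \ref{remark:R0*SplitKernelCokernel}(4), and it is precisely the requirement that the target category satisfy \ref{R3} (not merely \ref{R0*}) that makes the $2$-universal statement well-posed. Once the exact structure on $\widehat{\EE}$ is correctly identified, the remaining verifications are formal.
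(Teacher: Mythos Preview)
Your overall strategy matches the paper's: endow $\widecheck{\EE}$ with the right exact structure from proposition \ref{proposition:IdempotentCompletionHasR3}, use the two preceding lemmas to see that $\widehat{\EE}$ is extension-closed and hence inherits a fully exact structure, and then read off \ref{R3} and the $2$-universal property. Your treatment of the $2$-universal property is in fact more detailed than the paper's (which simply says ``the result follows''), and your use of proposition \ref{proposition:RetractConflation} in the target $\DD$ to push summands of conflations to conflations is exactly the right mechanism.

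There is, however, a small gap in your verification of \ref{R3}. You invoke remark \ref{remark:R0*SplitKernelCokernel}(4) to reduce to the kernel-free formulation, pass to $\widecheck{\EE}$ to get that $p$ is a deflation there, and then write ``extension-closedness of $\widehat{\EE}$ in $\widecheck{\EE}$ then forces $\ker p \in \widehat{\EE}$.'' But extension-closedness only tells you that the \emph{middle} term of a conflation lies in $\widehat{\EE}$ when the outer terms do; it says nothing about $\ker p$ when only $B$ and $C$ are known to lie in $\widehat{\EE}$. The fix is simply not to pass to the kernel-free form: verify \ref{R3} in its original formulation, where $p$ is already assumed to have a kernel in $\widehat{\EE}$. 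Then the conflation $\ker p \inflation B \deflation C$ obtained in $\widecheck{\EE}$ has all three terms in $\widehat{\EE}$, and since $\widehat{\EE}$ is a \emph{fully exact} subcategory (the inherited conflations are precisely those of $\widecheck{\EE}$ with all terms in $\widehat{\EE}$), this is a conflation in $\widehat{\EE}$. This is the argument the paper sketches in one line.
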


\begin{proof}
	Combining the previous two lemmas, the category $\widehat{\EE}$ inherits a deflation-exact structure. By proposition \ref{proposition:IdempotentCompletionHasR3}, the idempotent completion satisfies axiom \ref{R3}. As the weak idempotent completion $\widehat{\EE}$ is a fully exact subcategory of $\widecheck{\EE}$, axiom \ref{R3} is satisfied. The result follows.
\end{proof}

We are now in a position to prove the following theorem.

\begin{theorem}\label{theorem:WeakidempotentCompleteTriangleEquivalence}
	Let $\EE$ be a deflation-exact category satisfying axiom \ref{R0*}. The embedding $j_{\EE}\colon \EE\hookrightarrow \widehat{\EE}$ lifts to a triangle equivalence $\Db(\EE)\simeq \Db(\widehat{\EE})$.
\end{theorem}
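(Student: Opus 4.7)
The plan is to adapt the strategy of the proof of theorem \ref{theorem:DerivedEquivalenceExactHull}.\ref{enumerate:Equivalence2}. The embedding $j_{\EE}\colon \EE \hookrightarrow \widehat{\EE}$ is exact by proposition \ref{proposition:WeakIdempotentCompletionRightExact}, so it induces a triangle functor $F\colon \Db(\EE) \to \Db(\widehat{\EE})$; the task is to show $F$ is fully faithful and essentially surjective.

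For fully faithfulness, I will invoke \cite[proposition~7.2.1]{KashiwaraSchapira06}. Since $\widehat{\EE}$ satisfies axiom \ref{R3} and is weakly idempotent complete (both by proposition \ref{proposition:WeakIdempotentCompletionRightExact}), proposition \ref{proposition:NecessityOfWeakIdempotentCompleteness} shows that $\Acb(\widehat{\EE})$ is already a thick subcategory of $\Kb(\widehat{\EE})$, so the criterion reduces to the containment
\[\Acb(\widehat{\EE}) \cap \Kb(\EE) \subseteq \langle \Acb(\EE) \rangle_{\textrm{thick}}\]
inside $\Kb(\EE)$ (the reverse inclusion is clear). Given a complex $X^\bullet \in \Cb(\EE)$ acyclic in $\widehat{\EE}$, its differentials factor via conflations $K^i \inflation X^i \deflation K^{i+1}$ in $\widehat{\EE}$ with $K^i \in \widehat{\EE}$. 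By proposition \ref{proposition:WeakIdempotentCompletionRightExact}, each such conflation is a direct summand (in $\widehat{\EE}^{\to\to}$) of a conflation in $\EE$; choosing complements $(K^i)^c \in \widehat{\EE}$ with $K^i \oplus (K^i)^c \in \EE$ and splicing the resulting augmentations, I construct an acyclic complex $\tilde X^\bullet \in \Cb(\EE)$ of which $X^\bullet$ is a direct summand in $\Cb(\EE)$, placing $X^\bullet$ in $\langle \Acb(\EE) \rangle_{\textrm{thick}}$.

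For essential surjectivity, I observe that the essential image of $F$ is a triangulated subcategory of $\Db(\widehat{\EE})$, and every bounded complex in $\Cb(\widehat{\EE})$ is built from its stalk components via iterated brutal-truncation triangles, so it suffices to verify that every stalk $X \in \widehat{\EE}$ lies in the essential image. Using the filtration $\widehat{\EE} = \bigcup_{n\geq 0} \EE_n$ from construction \ref{construction:WeakIdempotentCompletion}, I induct on $n$. The case $n=0$ is trivial. For $X \in \EE_n$ with $n\geq 1$, write $X = \ker(r)$ for a retraction $r\colon B \to C$ in $\EE_{n-1}$; since $\widehat{\EE}$ is weakly idempotent complete and satisfies \ref{R3}, $r$ is a deflation in $\widehat{\EE}$, yielding a triangle $X \to B \xrightarrow{r} C \to \Sigma X$ in $\Db(\widehat{\EE})$ and hence $X \simeq \Sigma^{-1}\cone(r)$. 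When $n=1$, both $B, C \in \EE$ and $\cone(r) = [B \xrightarrow{r} C] \in \Cb(\EE)$ already represents $X$ in the essential image; when $n > 1$, the inductive hypothesis puts $B, C$ in the essential image, and the fully faithfulness of $F$ established above lifts $r$ to a morphism in $\Db(\EE)$ between chosen representatives, whose shifted cone realizes $X$.

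The main obstacle is the construction of $\tilde X^\bullet$ in the fully faithfulness step: while proposition \ref{proposition:WeakIdempotentCompletionRightExact} guarantees each individual conflation $K^i \inflation X^i \deflation K^{i+1}$ completes to a conflation in $\EE$, the complements $(K^i)^c$ and the corresponding augmentations of the middle terms $X^i$ must be chosen coherently so that the augmentations at adjacent degrees splice together into a genuine complex in $\Cb(\EE)$ that is still acyclic in $\EE$ (not merely in $\widehat{\EE}$) and retains $X^\bullet$ as a summand. Everything else is formal bookkeeping that parallels the proof of theorem \ref{theorem:DerivedEquivalenceExactHull}.\ref{enumerate:Equivalence2}.
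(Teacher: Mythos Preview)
Your proposal has a genuine gap in the full-faithfulness step, precisely at the point you flag as ``the main obstacle''. The coherence problem you describe is real and you do not resolve it: choosing complements $(K^i)^c$ with $K^i \oplus (K^i)^c \in \EE$ does not ensure that the augmented middle terms $X^i \oplus (K^i)^c \oplus (K^{i+1})^c$ lie in $\EE$ (the $(K^i)^c$ are only in $\widehat{\EE}$), nor that the resulting spliced sequence is acyclic in $\EE$ rather than merely in $\widehat{\EE}$. Your analogy with the proof of theorem \ref{theorem:DerivedEquivalenceExactHull}.\ref{enumerate:Equivalence2} breaks down here: in that argument the intermediate objects $K^i$ already live in $\overline{\EE} \subseteq \Db(\EE)$ by construction, so the triangles they participate in are triangles in $\Db(\EE)$; no analogous embedding $\widehat{\EE} \subseteq \Db(\EE)$ is available. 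Your essential-surjectivity argument for $n>1$ then inherits this gap, since it invokes the full faithfulness you have not established.

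The paper avoids the problem by reversing the order of the two steps and working one level up. It first observes that the split conflation $X \inflation B \deflation C$ (coming from the retraction $r$) gives a \emph{homotopy} equivalence between the stalk $X$ and the two-term complex $[B \to C]$ in $\Kb(\widehat{\EE})$; iterating, every stalk in $\widehat{\EE}$ is homotopy equivalent to a complex with entries in $\EE$, so $\Kb(\EE) \simeq \Kb(\widehat{\EE})$. With this equivalence in hand, the comparison of thick closures of acyclics can be carried out inside $\Kb(\widehat{\EE})$, where the intermediate $K^i \in \widehat{\EE}$ cause no trouble: each three-term conflation $K^i \inflation X^i \deflation K^{i+1}$ is a direct summand of a conflation in $\EE$ (proposition \ref{proposition:WeakIdempotentCompletionRightExact}), hence lies in $\langle \Acb(\EE) \rangle_{\textrm{thick}}$, and the full acyclic complex is an iterated extension of these. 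The key point you are missing is that the retractions produce \emph{split} conflations, hence homotopy equivalences rather than mere quasi-isomorphisms, which lets one settle the $\Kb$-level equivalence before ever touching acyclics.
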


\begin{proof}
	The embedding $j_{\EE}\colon \EE\hookrightarrow \widehat{\EE}$ lifts to a fully faithful triangle functor $F\colon\Kb(\EE)\hookrightarrow \Kb(\widehat{\EE})$. We claim that $F$ is essentially surjective.  To see that this holds, let $X = \ker r \in \widehat{\EE}$ where $r\colon Y \to Z$ is a retraction.  This yields the commutative diagram
	\[\xymatrix{
		\cdots\ar[r] & 0\ar[r]\ar[d] & X\ar[r]\ar[d] & 0\ar[r]\ar[d] & 0\ar[r]\ar[d] &\cdots\\
		\cdots\ar[r] & 0\ar[r] & Y\ar[r] & Z\ar[r] & 0\ar[r] &\cdots
	}\] in $\Kb(\widehat{\EE})$. The cone of this map is a split conflation and hence the rows are homotopy equivalent in $\Kb(\widehat{\EE})$. It follows that $F$ is essentially surjective and hence $\Kb(\EE)\simeq \Kb(\widecheck{\EE})$ as triangulated categories.
	
	Consider now the following diagram:
	\[\xymatrix{
		\Ac^b(\EE)\ar[r]\ar[d] & \Kb(\EE)\ar[r]\ar[d]_F & \Db(\EE)\\
		\Ac^b(\widehat{\EE})\ar[r] & \Kb(\widehat{\EE})\ar[r] & \Db(\widehat{\EE})
	}\]
	We claim that $\Ac^b(\EE)$ and $\Ac^b(\widehat{\EE})$ have the same thick closure when viewed as subcategories of $\Kb(\widehat{\EE})$. Obviously $\Ac^b(\EE)\subseteq \Ac^b(\widehat{\EE})$. On the other hand, any bounded acyclic complex in $\Ac^b(\widehat{\EE})$ can be written as a finite extension of conflations in $\widehat{\EE}$.  By proposition \ref{proposition:WeakIdempotentCompletionRightExact} each such conflation arises as a direct summand of a conflation in $\EE$. The claim follows.
	
Combining both claims we find that $\Db(\EE)$ and $\Db(\widehat{\EE})$ are triangle equivalent categories.	
\end{proof}

\providecommand{\bysame}{\leavevmode\hbox to3em{\hrulefill}\thinspace}
\providecommand{\MR}{\relax\ifhmode\unskip\space\fi MR }
\providecommand{\MRhref}[2]{%
  \href{http://www.ams.org/mathscinet-getitem?mr=#1}{#2}
}
\providecommand{\href}[2]{#2}

\end{document}